\theoremstyle{plain}% Theorem-like structures
\newtheorem{theorem}{Theorem}[section]
\newtheorem{lemma}[theorem]{Lemma}
\newtheorem{corollary}[theorem]{Corollary}
\newtheorem{proposition}[theorem]{Proposition}
\theoremstyle{definition}
\newtheorem{definition}[theorem]{Definition}
\theoremstyle{remark}
\newtheorem{remark}{Remark}
\newtheorem{notation}{Notation}
\newcommand{\norm}[1]{\left\lVert#1\right\rVert}
\DeclareMathOperator{\sech}{sech}
\DeclareMathOperator{\val}{val}
\DeclareMathOperator{\I}{Im}
\begin{document}

%\articletype{ARTICLE TEMPLATE}

\title{Approximate kink-kink solutions for the $\phi^{6}$ model
in the low-speed limit}

\author{
\name{Abdon Moutinho \textsuperscript{a}\thanks{CONTACT Abdon Moutinho Email: moutinho@math.univ-paris13.fr}}
\affil{\textsuperscript{a} LAGA, Université Sorbonne Paris Nord, Villetaneuse, France}
}

\maketitle

\begin{abstract}
This paper is the first part of a series of two papers that study the problem of elasticity and stability of the collision of two kinks with low speed $v$ for the  nonlinear wave equation known as the $\phi^{6}$ model in dimension $1+1$.  In this paper, we construct a sequence of approximate solutions $(\phi_{k}(v,t,x))_{k\in\mathbb{N}_{\geq 2}}$ for this nonlinear wave equation such that each function $\phi_{k}(v,t,x)$ converges in the energy norm to the traveling kink-kink with speed $v$ when $t$ goes to $+\infty.$ The methods used in this paper are not restricted only to the $\phi^{6}$ model. 
\end{abstract}

\begin{keywords}
Kinks; solitons; $\phi^{6}$ model; non-integrable model; scalar field; $1+1$; collision
\end{keywords}

\section{Introduction}
\par We consider for the potential function $U(\phi)=\phi^{2}(1-\phi^{2})^{2}$ the partial differential equation 
\begin{equation}\label{nlww}
    \partial_{t}^{2}\phi(t,x)-\partial_{x}^{2}\phi(t,x)+ U^{'}(\phi(t,x))=0, \quad (t,x) \in \mathbb{R}\times \mathbb{R},
\end{equation}
 which is known as the $\phi^{6}$ model in the theory of scalar fields.
 The energy associated with \eqref{nlww} is given by 
 \begin{equation}\label{energy}\tag{Energy}
    E(\phi)(t)=\int_{\mathbb{R}}\frac{\partial_{t}\phi(t,x)^{2}}{2}+\frac{\partial_{x}\phi(t,x)^{2}}{2}+U(\phi(t,x))\,dx,
\end{equation}
and the momentum associated to \eqref{nlww} is 
\begin{equation}\label{momentum}\tag{Momentum}
    P(\phi)(t)={-}\int_{\mathbb{R}}\partial_{t}\phi(t,x)\partial_{x}\phi(t,x)\,dx,
\end{equation}
we consider the potential energy 
\begin{equation*}
    E_{pot}(\phi)(t)=\int_{\mathbb{R}}\frac{\partial_{x}\phi(t,x)^{2}}{2}+U(\phi(t,x))\,dx.
\end{equation*}
If the solution of the integral equation associated to the partial differential equation \eqref{nlww} has finite energy, the quantities \eqref{energy} and \eqref{momentum} are preserved for all $t\in\mathbb{R}.$ 
\par The space of solutions of \eqref{nlww} with finite energy is invariant under space translation, time translation and space reflection. Also, if $H$ is a stationary solution of \eqref{nlww}, then, for any $-1<v<1,$ the Lorentz transformation of $H$ given by 
\begin{equation}\label{lorentztransform}
   \phi(t,x)= H\left(\frac{x-vt}{\sqrt{1-v^{2}}}\right)
\end{equation}
is also a solution of \eqref{nlww}. 
\par The only non-constant stationary solutions  of \eqref{nlww} with finite energy are the topological solitons denominated kinks and anti-kinks.
The kinks of \eqref{nlww} are the space translation of 
\begin{equation}\label{kinkformula}
    H_{0,1}(x)=\frac{e^{\sqrt{2}x}}{\sqrt{1+e^{2\sqrt{2}x}}},\quad H_{-1,0}(x)=-H_{0,1}(-x)=-\frac{e^{-\sqrt{2}x}}{\sqrt{1+e^{-2\sqrt{2}x}}},
\end{equation}
and the anti-kinks are the space reflection around $0$ of the kinks. The kinks and anti-kinks are the critical points of the potential energy function. Clearly,
$ H_{0,1}(x)< e^{\sqrt{2}\min(x,0)}$ for all $x\in\mathbb{R}.$ Moreover, we also have the Bogomolny identity, $ H^{'}_{0,1}(x)=\sqrt{2U(H_{0,1}(x))},$
from which with the formula \eqref{kinkformula} of $H_{0,1}$ we can verify by induction for any $k\geq 1$ the existence of a constant $C(k)>0$ such that
\begin{equation}\label{le2}
    \left\vert \frac{d^{k}}{dx^{k}}H_{0,1}(x) \right\vert\leq  C(k)\min\left(e^{\sqrt{2}x},e^{-2\sqrt{2}x}\right) \text{for all $x\in\mathbb{R}.$}
\end{equation}
Finally, since $ H^{'}_{0,1}(x)=\sqrt{2}\frac{e^{\sqrt{2}x}}{\left(1+e^{2\sqrt{2}x}\right)^{\frac{3}{2}}},$ we have that
$
    \norm{ H^{'}_{0,1}}_{L^{2}_{x}(\mathbb{R})}^{2}=\frac{1}{2\sqrt{2}}.
$
\par In \cite{multison}, it was obtained for any $-1<v<1$ the existence of a solution $\phi(t,x) $ of \eqref{nlww} satisfying 
\begin{align}\label{rere1}
    \lim_{t\to+\infty}\norm{\phi(t,x)-H_{0,1}\left(\frac{x-vt}{\sqrt{1-v^{2}}}\right)-H_{-1,0}\left(\frac{x+vt}{\sqrt{1-v^{2}}}\right)}_{H^{1}_{x}(\mathbb{R})}=0,\\ \label{rere2}
    \lim_{t\to+\infty}\norm{\partial_{t}\phi(t,x)+\frac{v}{\sqrt{1-v^{2}}} H^{'}_{0,1}\left(\frac{x-vt}{\sqrt{1-v^{2}}}\right)-\frac{v}{\sqrt{1-v^{2}}} H^{'}_{-1,0}\left(\frac{x+vt}{\sqrt{1-v^{2}}}\right)}_{L^{2}_{x}(\mathbb{R})}=0.
\end{align}
However, the uniqueness of a solution $\phi(t,x)$ satisfying \eqref{rere1} and \eqref{rere2} is still an open problem.
In \cite{first}, the dynamics and orbital stability of two kinks of \eqref{nlww} with energy slightly bigger than two times the energy of a kink were studied. The asymptotic stability of a kink for the $\phi^{6}$ model was obtained in \cite{asympt}. See also the references \cite{klg3}, \cite{munoz}, \cite{klg} and \cite{sinegordon1} for more information on the stability and asymptotic stability of a kink for other one-dimension nonlinear wave equation models. For more information about kinks and other topological solitons, see the book \cite{solitonss}.
\par The objective of this manuscript is to construct a sequence of approximate solutions $\phi_{k}(v,t,x)$ satisfying for any $0<v\ll 1$ and $s>0$
\begin{equation*}
   \norm{ \partial^{2}_{t}\phi_{k}(v,t,x)-\partial^{2}_{x}\phi_{k}(v,t,x)+\dot U(\phi_{k}(v,t,x))}_{L^{\infty}_{t}H^{s}_{x}(\mathbb{R})}\ll v^{2k-\frac{1}{2}}, 
\end{equation*}
 and
\begin{equation*}
    \lim_{t\to+\infty}\norm{\overrightarrow{\phi_{k}}(v,t,x)-\overrightarrow{H_{0,1}}\left(\frac{x-vt}{\sqrt{1-v^{2}}}\right)-\overrightarrow{H_{-1,0}}\left(\frac{x+vt}{\sqrt{1-v^{2}}}\right)}_{H^{s}_{x}(\mathbb{R})}=0,
\end{equation*}
with $\overrightarrow{f}(t,x)=(f(t,x),\partial_{t}f(t,x))$ for any function $f\in C^{1}\left(\mathbb{R}^{2}\right).$ This result is the first part of our work about the study of the collision of two kinks with low speed $v.$  
\par The study of dynamics of multi-kink solutions for the $\phi^{6}$ is motivated from condensed matter, see  \cite{condensed}, and cosmology \cite{cosmic}. Also, there is a large literature about the numerical study of collision of multi-kinks for the $\phi^{6},$ for example in high energy physics see \cite{collision} and \cite{kinkcollision}. More precisely, in the article \cite{kinkcollision} it was numerically proved that there is a critical velocity $v_{c},$ so that if two kinks collide with a velocity smaller than $v_{c},$ the collision is very close to an elastic collision.
\par Motivated by \cite{kinkcollision}, we theoretically study the high elasticity of the collision of two kinks with low speed for the $\phi^{6}$ model. The sequence of approximate solutions $\phi_{k}(v,t,x)$ will be useful later in a second manuscript to study the elasticity of collision of two kinks with low speed. Since the $\phi^{6}$ model is a non-integrable system, there are many issues and difficulties in the studying of the collision problem for two kinks of this model. 
\par There exist few mathematical results about the inelasticity of the collision of two solitons for other dispersive models. In \cite{collision1}, Martel and Merle proved the inelasticity of the collision of two solitons with low speed for the quartic $gKdV$. There are results on the elasticity and inelasticity of the collision of solitons for $gKdV$ for a certain class of nonlinearities, see \cite{munozkdv1} and \cite{munozkdv2} by Muñoz, see also \cite{stabcol} by Martel and Merle. For nonlinear Schrödinger equation, in \cite{perelman}, Perelman studied the collision of two solitons of different sizes and obtained that after the collision the solution doesn't preserve the two solitons' structure.
\subsection{Main Results}
\begin{definition}
We define $\Lambda:C^{2}(\mathbb{R}^{2},\mathbb{R})\to C(\mathbb{R}^{2},\mathbb{R})$ as the nonlinear operator satisfying
\begin{equation*}
    \Lambda (\phi_{1})(t,x)=\partial_{t}^{2}\phi_{1}(t,x)-\partial_{x}^{2}\phi_{1}(t,x)+\dot U(\phi_{1}(t,x)),
\end{equation*}
for any function $\phi_{1} \in C^{2}(\mathbb{R}^{2},\mathbb{R}).$
And, for any smooth functions $w:(0,1)\times \mathbb{R}\to\mathbb{R},\,\phi:\mathbb{R}^{2}\to\mathbb{R},$ let $\phi_{2}(t,x)\coloneqq\phi\left(t,w(t,x)\right),$ then we define
\begin{equation*}
    \Lambda\left(\phi\left(t,w(t,x)\right)\right)=\Lambda\left(\phi_{2}\right)(t,x) \text{, for all $(t,x)\in\mathbb{R}^{2}.$}
\end{equation*}
\end{definition}
The main Theorem obtained in this manuscript is the following result:
\begin{theorem}\label{approximated theorem}
There exist a sequence of functions $\left(\phi_{k}(v,t,x)\right)_{k\geq 2},$ a sequence of real values $\delta(k)>0$ and a sequence of numbers $n_{k}\in\mathbb{N}$ such that for any $0<v<\delta(k),\,\phi_{k}(v,t,x)$ satisfies 
\begin{align*}%\label{+inftymatch}
    \lim_{t\to +\infty}\norm{\phi_{k}(v,t,x)-H_{0,1}\left(\frac{x-vt}{\sqrt{1-v^{2}}}\right)-H_{-1,0}\left(\frac{x+vt}{\sqrt{1-v^{2}}}\right)}_{H^{1}_{x}(\mathbb{R})}=0,\\
    \lim_{t\to +\infty}\norm{\partial_{t}\phi_{k}(v,t,x)+\frac{v}{\sqrt{1-v^{2}}}H_{0,1}\left(\frac{x-vt}{\sqrt{1-v^{2}}}\right)-\frac{v}{\sqrt{1-v^{2}}}H_{-1,0}\left(\frac{x+vt}{\sqrt{1-v^{2}}}\right)}_{L^{2}_{x}(\mathbb{R})}=0,\\
    %\label{-inftymatch} 
    \lim_{t\to -\infty}\norm{\phi_{k}(v,t,x)-H_{0,1}\left(\frac{x+vt-e_{v,k}}{\sqrt{1-v^{2}}}\right)-H_{-1,0}\left(\frac{x-vt+e_{v,k}}{\sqrt{1-v^{2}}}\right)}_{H^{1}_{x}(\mathbb{R})}=0,\\
     \lim_{t\to -\infty}\norm{\partial_{t}\phi_{k}(v,t,x)-\frac{v}{\sqrt{1-v^{2}}}H_{0,1}\left(\frac{x+vt-e_{v,k}}{\sqrt{1-v^{2}}}\right)+\frac{v}{\sqrt{1-v^{2}}}H_{-1,0}\left(\frac{x-vt+e_{v,k}}{\sqrt{1-v^{2}}}\right)}_{ L^{2}_{x}(\mathbb{R})}=0,
    \end{align*}
    with $e_{v,k}\in\mathbb{R}$ satisfying
\begin{equation*}
    \lim_{v\to 0}\frac{\left\vert e_{v,k} -\frac{\ln{\left(\frac{8}{v^{2}}\right)}}{\sqrt{2}}\right\vert}{v\vert\ln{(v)}\vert^{3}}=0.
\end{equation*} 
Moreover, if $0<v<\delta(k),$ then for any $s\geq 0$ and $l\in\mathbb{N}\cup\{0\},$ there exists $C(k,s,l)>0$ such that
     \begin{equation*}
     \norm{\frac{\partial^{l}}{\partial t^{l}}\Lambda(\phi_{k})(v,t,x)}_{H^{s}_{x}(\mathbb{R})}\leq C(k,s,l)v^{2k+l}\left(\vert t\vert v+\ln{\left(\frac{1}{v^{2}}\right)}\right)^{n_{k}}e^{-2\sqrt{2}\vert t\vert v}.
    \end{equation*}
\end{theorem}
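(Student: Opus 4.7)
The plan is to construct $\phi_{k}$ recursively, combining a collective-coordinate ansatz with successive elliptic corrections. Begin with
\[
\phi_{0}(v,t,x) = H_{0,1}\!\left(\frac{x - y(t)}{\sqrt{1-\dot y(t)^{2}}}\right) + H_{-1,0}\!\left(\frac{x + y(t)}{\sqrt{1-\dot y(t)^{2}}}\right),
\]
where $y(v,\cdot)$ is a modulation parameter representing half the inter-kink separation, and set $\phi_{k}=\phi_{k-1}+R_{k}(v,t,x)$ with a simultaneous refinement of $y$. Applying $\Lambda$ to $\phi_{0}$ and using that each boosted kink solves \eqref{nlww} individually, the remainder $\Lambda(\phi_{0})$ decomposes into a tail-interaction contribution $\dot U(\phi_{0})-\dot U(H_{0,1}(\cdot-y))-\dot U(H_{-1,0}(\cdot+y))$ localized in the overlap region $|x|\lesssim y$, together with terms proportional to $\ddot y$ and higher time derivatives of $y$; these are small because one expects $\dot y = O(v)$ and $\ddot y = O(e^{-2\sqrt{2}y})$.

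To fix $y(t)$ at leading order, I would project the leading error onto the translation null modes $\dot H_{0,1}(\cdot-y)$ and $\dot H_{-1,0}(\cdot+y)$ of the linearization of $\dot U$ at $\phi_{0}$, and require these projections to vanish. This produces a Newton-type ODE
\[
\norm{\dot H_{0,1}}_{L^{2}}^{2}\,\ddot y = F(y),\qquad F(y)\sim c_{0}\,e^{-2\sqrt{2}y}\text{ as }y\to+\infty,
\]
with an explicit constant $c_{0}>0$ extracted from $H_{0,1}(x)\sim e^{\sqrt{2}x}$ as $x\to-\infty$, $H_{-1,0}(x)\sim -e^{-\sqrt{2}x}$ as $x\to+\infty$, and the Taylor coefficients of $\dot U$ near $\phi=0$. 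Among solutions, pick the unique even one with $|\dot y(t)|\to v$ as $|t|\to\infty$; its first integral is integrable in closed form and yields $y(t)=v|t|+\tfrac{1}{2\sqrt{2}}\ln(8/v^{2})+o(1)$, from which $e_{v,k}$ is read off. The stated $o(v|\ln v|^{3})$ error comes from higher-order corrections to $F$ made at later induction steps.

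The inductive step is the following: assume $\Lambda(\phi_{k-1})$ satisfies the desired bound at order $2k-2$. For each fixed $t$, solve the linear elliptic equation
\[
(-\partial_{x}^{2}+\ddot U(\phi_{0}))R_{k}=-\Lambda(\phi_{k-1})+\Delta_{k},
\]
where $\Delta_{k}$ is a finite-dimensional correction chosen to cancel the projections of the right-hand side onto the null modes, and is itself absorbed into an update of the ODE for $y$. Inversion on the orthogonal complement of the null modes is possible because of the spectral gap of $-\partial_{x}^{2}+\ddot U(H_{0,1})$ (and its mirror) away from zero. The solution $R_{k}$ inherits the decay structure $v^{2k}(|t|v+\ln v^{-2})^{n_{k}}e^{-2\sqrt{2}|t|v}$ in $H^{s}_{x}$, with $n_{k}$ growing by a bounded amount per step. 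Bounds on $\partial_{t}^{l}R_{k}$ follow from the observation that each time derivative landing on a profile produces either $\dot y=O(v)$ or $\ddot y=O(v^{2})$ (using that $y\geq y_{\min}\sim \frac{1}{2\sqrt{2}}\ln v^{-2}$).

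The hardest part will be preserving this sharp product structure of the bound on $\Lambda(\phi_{k})$ through infinitely many iterations, since naive Sobolev estimates tend to lose either the exponential decay in $y$ or powers of the polynomial weight $(|t|v+\ln v^{-2})^{n_{k}}$. I would handle this by decomposing every object into pieces supported in the inner region $|x|\leq y/2$, where the two kink tails are comparable, and in the outer regions $|x\mp y|\ll y$, where one boosted kink dominates; parametrices are built separately in each region using the pointwise decay \eqref{le2}, and then glued via smooth cutoffs whose derivative contributions are absorbed into the next induction step. Once the inductive estimate is closed, the $t\to\pm\infty$ statements follow from $R_{k}(v,t,\cdot)\to 0$ in $H^{s}_{x}$ combined with the explicit asymptotics of $y(t)$ derived above, completing the proof.
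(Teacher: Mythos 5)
Your skeleton matches the paper's strategy at a high level: two Lorentz-boosted kinks whose half-separation $y$ solves a Newton-type ODE with force $\sim e^{-2\sqrt{2}y}$ (the paper's $d_{v}(t)=\frac{1}{\sqrt{2}}\ln\bigl(\frac{8}{v^{2}}\cosh(\sqrt{2}vt)^{2}\bigr)$ is exactly your "unique even solution with $|\dot y|\to v$", written in closed form), followed by an induction in which one inverts a linearized operator subject to orthogonality conditions, absorbs the projections into a correction of the kink positions, and gains $v^{2}$ per step. The first elliptic correction you describe is the paper's profile $\mathcal{G}$, and your "update of the ODE for $y$" corresponds to the paper's translation corrections $r_{j}(v,t)$, which solve the linearization $\ddot r=-32e^{-\sqrt{2}d_{v}}r+\text{forcing}$ by variation of parameters with the explicit kernel $\tanh(\sqrt{2}vt)$, $\sqrt{2}vt\tanh(\sqrt{2}vt)-1$.

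The genuine gap is precisely at the point you flag as "the hardest part", and the cutoff-gluing scheme you propose there does not fill it. Closing the induction requires more than a bound on $\Lambda(\phi_{k})$: one must show that $\Lambda(\phi_{k})$ again has the \emph{structure} of a finite sum $\sum_{i}p_{i}(\sqrt{2}vt)\,[h_{i}(w(t,x))-h_{i}(w(t,-x))]$ with $h_{i}$ one-sided (Schwartz, with prescribed exponential expansions) and $p_{i}$ slowly varying and even in $t$, plus a genuinely smaller remainder — otherwise there is no candidate forcing for the next elliptic problem and the next position ODE, no proof that the projection terms are even in $t$ with $e^{-2\sqrt{2}|t|v}$ decay (which is what makes $r_{k+1}$ even, bounded, with limits at $t\to\pm\infty$, and hence yields $e_{v,k}$), and no control that $n_{k}$ grows only boundedly. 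The paper achieves this with two tools you have no substitute for: the algebraic spaces $S^{\pm}_{m}$ together with the Separation Lemma (Proposition \ref{separation}), which decomposes every cross term $f(x-\zeta)g(x)$ \emph{exactly} into one-sided profiles times powers of $e^{-\sqrt{2}\zeta}$ (no cutoffs, no transition-region errors), and Lemma \ref{secondinvert}, which shows the inverse $L_{1}$ of the \emph{single-kink} operator maps $S^{+}_{m}\cap Ort(\dot H_{0,1})$ into $S^{+}_{m+1}$, so the corrections remain in the class to which the Separation Lemma applies. Your scheme instead inverts the linearization around the full two-kink profile $\phi_{0}$ for each fixed $t$ and glues regional parametrices; the cutoff commutator terms live where both tails are of size $e^{-\sqrt{2}y}\sim v$, so they are not automatically of relative order $v^{2}$, and "absorbed into the next induction step" presupposes exactly the structural statement that is missing. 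Without an analogue of the Separation Lemma and of the structure-preserving inversion, the induction as you set it up cannot be closed at the claimed rate.
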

\subsection{Organization of the manuscript}
\par In this manuscript, we denote by
$\mathcal{G}\in\mathscr{S}(\mathbb{R})$ the following function
\begin{equation}\label{G(x)}
    \mathcal{G}(x)=e^{-\sqrt{2}x}-\frac{e^{-\sqrt{2}x}}{(1+e^{2\sqrt{2}x})^{\frac{3}{2}}}+2\sqrt{2}x\frac{e^{\sqrt{2}x}}{(1+e^{2\sqrt{2}x})^{\frac{3}{2}}}+k_{1}\frac{e^{\sqrt{2}x}}{(1+e^{2\sqrt{2}x})^{\frac{3}{2}}},
\end{equation}
where $k_{1}\in\mathbb{R}$ is the unique real number such that $
    \left\langle \mathcal{G}(x),\,H^{'}_{0,1}(x) \right\rangle_{L^{2}_{x}(\mathbb{R})}=0.
$
The function $\mathcal{G}$ satisfies
\begin{equation}\label{idddG}
    -\frac{\partial^{2}}{\partial x^{2}}\mathcal{G}(x)+ U^{''}\left(H_{0,1}(x)\right)\mathcal{G}(x)=\left( U^{''}(H_{0,1}(x))-2\right)e^{{-}\sqrt{2}x}+8\sqrt{2}H^{'}_{0,1}(x),
\end{equation}
see Remark \ref{formulaG} in the Appendix section for the proof.
Next, we consider for $0<v<1$ the function
\begin{equation}\label{defd(t)}
d_{v}(t)=\frac{1}{\sqrt{2}}\ln{\left(\frac{8}{v^{2}}\cosh{\left(\sqrt{2}vt\right)}^{2}\right)},
\end{equation}
which is a solution to the ordinary differential equation
\begin{equation}\label{odedvv}
    \ddot d_{v}(t)=16\sqrt{2}e^{-\sqrt{2}d_{v}(t)}.
\end{equation}
\par In Section \ref{functionalan}, we are going to develop the main techniques necessary to construct each approximate solution $\phi_{k}$ of Theorem \ref{approximated theorem}. More precisely, we are going to construct function spaces in Subsection \ref{separationsection} and study the applications of Fredholm alternative of the linear operator $-\partial^{2}_{x}+ U^{''}(H_{0,1}(x))$ restricted to these function spaces in Subsection \ref{fred}. 
\par In Section \ref{prpp}, we will prove auxiliary estimates with the objective of simplify, in the next sections, the computation and evaluation of $\Lambda(\phi_{k})(v,t,x)$  for each $k\in\mathbb{N}_{\geq 2}$ and $0<v\ll 1.$
\par In Section \ref{sub31}, we are going to prove Theorem \ref{approximated theorem} for the case $k=2.$ More precisely, we will first choose the function
\begin{align*}
\varphi_{2,0}\left(t,x\right)=&H_{0,1}\left(\frac{x-\frac{d_{v}(t)}{2}}{\sqrt{1-\frac{\dot d_{v}(t)^{2}}{4}}}\right)-H_{0,1}\left(\frac{{-}x-\frac{d_{v}(t)}{2}}{\sqrt{1-\frac{\dot d_{v}(t)^{2}}{4}}}\right)\\
&{+}e^{-\sqrt{2}d_{v}(t)}\mathcal{G}\left(\frac{x-\frac{d_{v}(t)}{2}}{\sqrt{1-\frac{\dot d_{v}(t)^{2}}{4}}}\right)
-e^{-\sqrt{2}d_{v}(t)}\mathcal{G}\left(\frac{{-}x-\frac{d_{v}(t)}{2}}{\sqrt{1-\frac{\dot d_{v}(t)^{2}}{4}}}\right)
\end{align*}
as a candidate for the case $k=2.$ The next argument is to use the main results of Subsection \ref{separationsection} and Section \ref{prpp} to estimate $\Lambda(\varphi_{2,0})(t,x),$ see also Remark \ref{sepapplic} to understand better the ideas behind this argument. More precisely, we are going to verify the existence of two finite sets of Schwartz functions with exponential decay in both directions
$
    \left(h_{i}(x)\right)_{i\in I}$ and $\left(p_{i}(t)\right)_{i\in I}
$
such that
\begin{gather*}
    \Lambda(\varphi_{2,0})(t,x)=\sum_{i\in I}p_{i}(\sqrt{2}vt)\left[h_{i}\left(\frac{x-\frac{d_{v}(t)}{2}}{\sqrt{1-\frac{\dot d_{v}(t)^{2}}{4}}}\right)-h_{i}\left(\frac{{-}x-\frac{d_{v}(t)}{2}}{\sqrt{1-\frac{\dot d_{v}(t)^{2}}{4}}}\right)\right]+u_{v}(t,x),
\end{gather*}
where the function $u_{v}:\mathbb{R}^{2}\to\mathbb{R}^{2}$ is smooth and satisfies, for a real constant $q>0,$ any $l\in\mathbb{N}\cup\{0\}$ and any $s>0,$ the estimate 
\begin{equation*}
    \norm{\frac{\partial^{l}}{\partial t^{l}}u_{v}(t,x)}_{H^{s}_{x}(\mathbb{R})}\leq C(s,l) v^{6+l}\left[\ln{\left(\frac{1}{v}\right)}+\vert t\vert v\right]^{q}e^{-2\sqrt{2}\vert t\vert v} \text{, for all $t\in\mathbb{R},$ if $0<v\ll 1,$} 
\end{equation*}
 where $C(s,l)$ is a positive number depending only on $l$ and $s.$
Next, using the estimate above of $\Lambda(\varphi_{2,0})(t,x),$ we are going to construct a linear ordinary differential equation with a solution being a smooth function $r_{v}(t)$ with $L^{\infty}(\mathbb{R})$ norm of order $v^{2}\ln{\left(\frac{1}{v}\right)}.$ Using the function $r_{v}(t)$, we are going to verify, for 
\begin{align*}
    \varphi_{2,1}(t,x)=& H_{0,1}\left(\frac{x+r_{v}(t)-\frac{d_{v}(t)}{2}}{\sqrt{1-\frac{\dot d_{v}(t)^{2}}{4}}}\right)-H_{0,1}\left(\frac{{-}x+r_{v}(t)-\frac{d_{v}(t)}{2}}{\sqrt{1-\frac{\dot d_{v}(t)^{2}}{4}}}\right)\\
&{+}e^{-\sqrt{2}d_{v}(t)}\mathcal{G}\left(\frac{x+r_{v}(t)-\frac{d_{v}(t)}{2}}{\sqrt{1-\frac{\dot d_{v}(t)^{2}}{4}}}\right)
-e^{-\sqrt{2}d_{v}(t)}\mathcal{G}\left(\frac{{-}x+r_{v}(t)-\frac{d_{v}(t)}{2}}{\sqrt{1-\frac{\dot d_{v}(t)^{2}}{4}}}\right),
\end{align*}
and an explicit real value $e_{2,v},$ that the function $\phi_{2}(v,t,x)\coloneqq\varphi_{2,1}(t+e_{2,v},x)$ satisfies Theorem \ref{approximated theorem} for the case $k=2.$
\par In Section \ref{sub32}, we are going to prove Theorem \ref{approximated theorem} by an argument of induction on $k\in\mathbb{N}_{\geq 2}.$ The proof of complementary information is done in the Appendix section \ref{auxi}. 
\subsection{Notation}
  In this subsection, we will present the notations that are going to be used in the next sections of the manuscript.
 \begin{notation}
\par For any pair of functions $w:\mathbb{R}^{2}\to \mathbb{R},\,h\in L^{\infty}_{x}(\mathbb{R})$ we denote $h^{w}(t,x)$ by the following function \begin{equation*}
    h^{w}(t,x)=h\left(w(t,x)\right)-h\left(w(t,{-}x)\right) \text{ for any $(t,x)\in\mathbb{R}^{2}.$}
\end{equation*} 
\par For any non-negative function $k:D\subset \mathbb{R}\to \mathbb{R}_{\geq 0},$ we say that $f(x)=O(k(x)),$ if $f$ has the same domain $D$ as $k$ and there is a constant $C>0$ such that
$\vert f(x) \vert \leq C k(x) \text{ for any $x \in D.$}$
For any two non-negative real functions $f_{1}(x)$ and $f_{2}(x),$ we have that the condition $f_{1}\lesssim f_{2}$ is true if there is a constant $C>0$ such that $f_{1}(x)\leq C f_{2}(x) \text{ for any $x\in\mathbb{R}.$} $
Furthermore, for a finite number of real variables $\alpha_{1},\,...,\,\alpha_{n},$ we say that any two non-negative functions $f(\alpha_{1},...,\alpha_{n},x)$ and $g(\alpha_{1},...,\alpha_{n},x)$ satisfy $f\lesssim_{\alpha_{1},\,..., \alpha_{n}}g,$ if and only if $f$ and $g$ have the same domain $\mathcal{D}_{n}\times \mathbb{R}$ for some set $\mathcal{D}_{n}\subset\mathbb{R}^{n}$ and there exists a positive function $C_{n}:\mathcal{D}_{n}\to \mathbb{R}_{+}$ such that
\begin{equation}
    f(\alpha_{1},...,\alpha_{n},x)\leq C_{n}(\alpha_{1},\, ... , \alpha_{n})g(\alpha_{1},...,\alpha_{n},x) \text{ for all $x$ in $\mathbb{R}$ and all $(\alpha_{1},...,\alpha_{n})\in \mathcal{D}_{n}.$}
\end{equation}
 \par We consider for any $f\in H^{1}_{x}(\mathbb{R})$ and any $g\in L^{2}_{x}(\mathbb{R})$ the following notations
\begin{equation*}
    \norm{f}_{H^{1}_{x}}=\norm{f}_{H^{1}_{x}(\mathbb{R})}=\left(\norm{f}_{L^{2}_{x}(\mathbb{R})}^{2}+\norm{\frac{df}{dx}}_{L^{2}_{x}(\mathbb{R})}^{2}\right)^{\frac{1}{2}},\quad
    \norm{g}_{L^{2}_{x}}=\norm{g}_{L^{2}_{x}(\mathbb{R})},
\end{equation*}
and the norm $\norm{\cdot}_{H^{1}_{x}\times L^{2}_{x}}$ given by
\begin{equation*}
    \norm{(f_{1}(x),f_{2}(x))}_{H^{1}_{x}\times L^{2}_{x}}=\left(\norm{f_{1}}_{H^{1}_{x}(\mathbb{R})}^{2}+\norm{f_{2}(x)}_{L^{2}_{x}(\mathbb{R})}^{2}\right)^{\frac{1}{2}},
\end{equation*}
for any $(f_{1},f_{2})\in H^{1}_{x}(\mathbb{R})\times L^{2}_{x}(\mathbb{R}).$ For any $(f_{1},f_{2}) \in L^{2}_{x}(\mathbb{R})\times L^{2}_{x}(\mathbb{R})$ and any $(g_{1},g_{2})\in L^{2}_{x}(\mathbb{R})\times L^{2}_{x}(\mathbb{R}),$ we denote
\begin{equation*}
    \left\langle (f_{1},f_{2}),\,(g_{1},g_{2}) \right\rangle=\int_{\mathbb{R}} f_{1}(x)g_{1}(x)+f_{2}(x)g_{2}(x)\,dx.
\end{equation*}
For any functions $f_{1}(x),\,g_{1}(x)\in L^{2}_{x}(\mathbb{R}),$ we denote
\begin{equation*}
    \left\langle f_{1},g_{1} \right\rangle=\int_{\mathbb{R}}f_{1}(x)g_{1}(x)\,dx.
\end{equation*}
Moreover, for any $s\geq 0,$ we consider the norm $\norm{\cdot}_{H^{s}_{x}}$ given by
\begin{equation*}
    \norm{f}_{H^{s}_{x}}=\norm{f}_{H^{s}_{x}(\mathbb{R})}=\left(\int_{\mathbb{R}}(1+\vert x\vert)^{2s}\vert \hat{f}(x)\vert^{2}\,dx\right)^{\frac{1}{2}} \text{, for any $f\in H^{s}_{x}(\mathbb{R}),$}
\end{equation*}
where $\hat{f}$ is the fourier transform of the function $f.$
We denote $\mathbb{D}$ as the set given by $\{z\in\mathbb{C}\big\vert \, \vert z\vert<1\}.$ In this manuscript, we consider the set $\mathbb{N}$ as the set of all positive integers.
\end{notation}
\section{Functional analysis methods}\label{functionalan}
\subsection{Asymptotic analysis methods}\label{separationsection}
We will use the following Lemma on several occasions.
\begin{lemma}\label{interactt}
For any real numbers $x_{2},x_{1}$, such that $\zeta=x_{2}-x_{1}>0$ and $\alpha,\,\beta,\,m>0$ with $\alpha\neq \beta$ the following bound holds:
\begin{equation*}
    \int_{\mathbb{R}}\vert x-x_{1}\vert ^{m} e^{-\alpha(x-x_{1})_{+}}e^{-\beta(x_{2}-x)_{+}}\lesssim_{\alpha,\beta,m} \max\left(\left(1+\zeta^{m}\right)e^{-\alpha \zeta},e^{-\beta \zeta}\right),
\end{equation*}
For any $\alpha>0$, the following bound holds
\begin{equation*}
    \int_{\mathbb{R}}\vert x-x_{1}\vert^{m} e^{-\alpha(x-x_{1})_{+}}e^{-\alpha(x_{2}-x)_{+}}\lesssim_{\alpha}\left[1+\zeta^{m+1}\right] e^{-\alpha \zeta}.
\end{equation*}
\end{lemma}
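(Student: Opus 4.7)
The plan is to split the real line at the points $x_{1}$ and $x_{2}$ into the three regions on which the positive-part cutoffs in the integrand simplify, and then evaluate or bound each piece by an elementary change of variables. Concretely, I would write
\begin{equation*}
\int_{\mathbb{R}} = \int_{-\infty}^{x_{1}} + \int_{x_{1}}^{x_{2}} + \int_{x_{2}}^{+\infty},
\end{equation*}
and use $(x-x_{1})_{+}=0$ on the first piece, $(x-x_{1})_{+}=x-x_{1}$ and $(x_{2}-x)_{+}=x_{2}-x$ on the middle piece, and $(x_{2}-x)_{+}=0$ on the third piece.

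On the first region, with $u=x_{1}-x\geq 0$ and $x_{2}-x=\zeta+u$, the contribution is $\int_{0}^{\infty}u^{m}e^{-\beta(\zeta+u)}\,du = \Gamma(m+1)\beta^{-(m+1)}e^{-\beta\zeta}$, which is $\lesssim_{\beta,m}e^{-\beta\zeta}$. On the third region, with $v=x-x_{1}\geq \zeta$, the contribution is $\int_{\zeta}^{\infty}v^{m}e^{-\alpha v}\,dv$, which by integration by parts (or by splitting $v^{m}\leq 2^{m-1}(\zeta^{m}+(v-\zeta)^{m})$) is $\lesssim_{\alpha,m}(1+\zeta^{m})e^{-\alpha\zeta}$. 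These two contributions already lie inside the claimed bound in both statements.

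The main point is the middle region. With $u=x-x_{1}\in[0,\zeta]$ and $x_{2}-x=\zeta-u$, it becomes
\begin{equation*}
e^{-\beta\zeta}\int_{0}^{\zeta}u^{m}e^{(\beta-\alpha)u}\,du.
\end{equation*}
If $\alpha>\beta$, one extends the integral to $[0,\infty)$ and gets $\lesssim_{\alpha,\beta,m}e^{-\beta\zeta}$. If $\beta>\alpha$, one integrates by parts (or uses the elementary estimate $\int_{0}^{\zeta}u^{m}e^{\lambda u}\,du\lesssim_{\lambda,m}(1+\zeta^{m})e^{\lambda\zeta}$ with $\lambda=\beta-\alpha>0$) to obtain $\lesssim_{\alpha,\beta,m}(1+\zeta^{m})e^{-\alpha\zeta}$. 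Either way the bound is $\lesssim_{\alpha,\beta,m}\max\bigl((1+\zeta^{m})e^{-\alpha\zeta},e^{-\beta\zeta}\bigr)$, proving the first statement. For the equal-exponent case $\alpha=\beta$, the same middle integral collapses to $e^{-\alpha\zeta}\int_{0}^{\zeta}u^{m}\,du=(m+1)^{-1}\zeta^{m+1}e^{-\alpha\zeta}$, giving the extra power of $\zeta$ that appears in the second statement.

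There is no real obstacle here: once the splitting is made, each piece is an explicit Gamma-type integral. The only mild care required is in the subcase $\beta>\alpha$ of the middle integral, where the exponential inside the integrand grows and one has to ensure that the endpoint contribution at $u=\zeta$ is correctly traced so that the factor $e^{(\beta-\alpha)\zeta}$ combines with $e^{-\beta\zeta}$ to yield exactly $e^{-\alpha\zeta}$ (with the harmless polynomial prefactor $1+\zeta^{m}$). Summing the three region estimates concludes the proof.
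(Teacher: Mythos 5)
Your proof is correct, and it is exactly the ``elementary computations'' that the paper invokes without writing them out: splitting at $x_{1}$ and $x_{2}$, reducing each piece to a Gamma-type integral, and treating the middle region according to the sign of $\beta-\alpha$ (with the degenerate case $\alpha=\beta$ producing the extra factor $\zeta^{m+1}$). Nothing is missing; if anything, your version makes explicit that the constant in the second bound also depends on $m$, which the paper's statement suppresses.
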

\begin{proof}
Elementary computations.
\end{proof}
Next, we define the function spaces $S^{+}$ and $S^{-}.$ They will be used to construct the approximate solutions $\phi_{k}(v,t,x)$ of Theorem \ref{approximated theorem} for each $k\in \mathbb{N}_{\geq 2}.$ 
\begin{definition}\label{s+}
 $S^{+}$ is the linear subspace of $L^{\infty}(\mathbb{R})$ such that $f\in S^{+},$ if and only if all the following conditions are true 
\begin{itemize}
    \item $ f^{'} \in \mathscr{S}(\mathbb{R})$ and there is a holomorphic function $F:\{z\in \mathbb{C}|\,{-}1<\I{(z)}<1\}\to\mathbb{C}$ such that $F(e^{\sqrt{2}x})=f(x)$ for all $x\in\mathbb{R}.$
    \item $F$ satisfies $F(z)=\sum_{k=0}^{+\infty}a_{k}z^{2k+1},$
   for some sequence of real numbers $(a_{k})$ and all $z\in\mathbb{D}.$
\end{itemize}
\end{definition}
\begin{definition}\label{s-}
$S^{-}$ is the linear subspace of $L^{\infty}(\mathbb{R})$ such that $g\in S^{-},$ if and only if all the following conditions are true 
\begin{itemize}
    \item $ g^{'} \in \mathscr{S}(\mathbb{R})$ and there is a holomorphic function $G:\{z\in \mathbb{C}|\,{-}1<\I{(z)}<1\}\to\mathbb{C}$ such that $G(e^{{-}\sqrt{2}x})=g(x)$ for all $x\in\mathbb{R}.$
    \item $G$ satisfies
    $G(z)=\sum_{k=1}^{+\infty}b_{k}z^{2k},$
   for some sequence of real numbers $(b_{k})$ and all $z\in\mathbb{D}.$
\end{itemize}
\end{definition}
\begin{remark}
In Definitions \ref{s+} and \ref{s-}, from standard complex analysis theory, the holomorphic functions $F$ and $G$ are unique.
\end{remark}
\begin{remark}\label{alg}
     From Definition \ref{s-}, if $f_{1},\,f_{2}\in S^{-},$ then $f_{1}f_{2}\in S^{-}.$ Therefore, $S^{-}$ is an algebra.
\end{remark}
\begin{remark}\label{dsremark}
 From Definitions \eqref{s+} and \ref{s-}, if $f\in S^{+}$ and $g\in S^{-},$ then, for any $l\in\mathbb{N},$ $f^{(l)}\in S^{+}$ and $g^{(l)}\in S^{-}.$ 
\end{remark}The following Lemma is a direct consequence of Definitions \ref{s+} and \ref{s-}. 
\begin{lemma}[Multiplicative Lemma]\label{multiplicative}
    If $f_{1},\,f_{2},\,f_{3}\in S^{+},$  then the function $g_{1}(x)\coloneqq f_{1}({-}x)f_{2}({-}x)$ is in $S^{-}$ and the function $g_{2}(x)\coloneqq f_{1}(x)f_{2}(x)f_{3}(x)$ is in $S^{+}.$
\end{lemma}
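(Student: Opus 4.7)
The plan is to promote everything to the complex strip and exploit the Leibniz rule. Both halves of the lemma reduce to the same two observations: the product of holomorphic functions is holomorphic, and the power series of a product of functions vanishing to prescribed orders at the origin vanishes to the sum of those orders.

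For $g_{4}(x)=f_{1}(-x)f_{2}(-x)$, I would start from the representation $f_{i}(-x)=F_{i}(e^{-\sqrt{2}x})$ coming from Definition \ref{s+} and set $G_{4}(z):=F_{1}(z)F_{2}(z)$. This is holomorphic on $\{z\in\mathbb{C}\mid -1<\I(z)<1\}$ and plainly satisfies $G_{4}(e^{-\sqrt{2}x})=g_{4}(x)$. The series condition for $S^{-}$ is obtained by factoring each odd expansion as $F_{i}(z)=z\,\widehat{F}_{i}(z^{2})$ with $\widehat{F}_{i}$ holomorphic near $0$, giving $G_{4}(z)=z^{2}\widehat{F}_{1}(z^{2})\widehat{F}_{2}(z^{2})$, i.e.\ an expansion in even powers of $z$ starting from $z^{2}$, which matches $\sum_{k\geq 1}b_{k}z^{2k}$. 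To verify $\dot g_{4}\in\mathscr{S}(\mathbb{R})$, I would write $\dot g_{4}(x)=-\dot f_{1}(-x)f_{2}(-x)-f_{1}(-x)\dot f_{2}(-x)$ and argue factorwise: $\dot f_{i}(-x)$ is Schwartz by hypothesis, while $f_{i}(-x)$ is bounded, with all its higher derivatives bounded because they coincide with derivatives of the Schwartz function $\dot f_{i}$. A Leibniz-rule induction on the order of differentiation then shows $\frac{d^{n}\dot g_{4}}{dx^{n}}$ decays faster than any inverse polynomial.

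The case of $f_{3}(x)=g_{1}(-x)g_{2}(-x)g_{3}(-x)$ is completely analogous: I would write $g_{i}(-x)=G_{i}(e^{\sqrt{2}x})$, set $F_{3}(z):=G_{1}(z)G_{2}(z)G_{3}(z)$, holomorphic on the strip with $F_{3}(e^{\sqrt{2}x})=f_{3}(x)$, and factor $G_{i}(z)=z^{2}\widetilde{G}_{i}(z^{2})$ by Definition \ref{s-} to obtain
\begin{equation*}
F_{3}(z)=z^{6}\widetilde{G}_{1}(z^{2})\widetilde{G}_{2}(z^{2})\widetilde{G}_{3}(z^{2}),
\end{equation*}
so that $F_{3}$ vanishes to high order at the origin with the power-series structure required by Definition \ref{s+}. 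The Schwartz decay of $\dot f_{3}$ is established by the same product-rule induction, using $\dot g_{i}\in\mathscr{S}(\mathbb{R})$ and the boundedness of all higher derivatives of $g_{i}$.

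I do not expect any step to present a genuine obstacle: the lemma is purely algebraic, combining a power-series factorization on $\mathbb{D}$ with a standard Leibniz-rule argument for Schwartz decay. The only minor point requiring care is the verification that every higher derivative of an element of $S^{\pm}$ is bounded on all of $\mathbb{R}$, which is immediate from $\dot f,\dot g\in\mathscr{S}(\mathbb{R})$.
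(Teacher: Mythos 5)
Your verification of the first assertion is correct and is exactly the ``direct check'' the paper has in mind: $G_{4}=F_{1}F_{2}$ is holomorphic on the strip, the product of two odd series is an even series starting at $z^{2}$, and the Leibniz induction gives $\dot g_{4}\in\mathscr{S}(\mathbb{R})$. The problem is the second assertion, and your own formula exposes it: $F_{3}(z)=z^{6}\widetilde{G}_{1}(z^{2})\widetilde{G}_{2}(z^{2})\widetilde{G}_{3}(z^{2})$ is a power series containing \emph{only even} powers of $z$, whereas Definition \ref{s+} demands an expansion $\sum_{k\geq 0}a_{k}z^{2k+1}$ in \emph{odd} powers only. These two structures are incompatible unless the series vanishes identically: the holomorphic representative is unique (it is determined by the values $F_{3}(e^{\sqrt{2}x})=f_{3}(x)$ on $(0,1)\subset\mathbb{D}$, and $\mathbb{D}$ lies inside the strip), and a nonzero power series cannot be simultaneously even and odd. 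So the phrase ``with the power-series structure required by Definition \ref{s+}'' is precisely where the proof fails; the parity was asserted, not checked.

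Moreover, this gap cannot be repaired, because the second clause of the lemma is false as literally stated against Definitions \ref{s+}--\ref{s-}. Take $g_{1}=g_{2}=g_{3}=H_{-1,0}^{2}\in S^{-}$ (Remark \ref{basisS}); then $f_{3}(x)=g_{1}(-x)g_{2}(-x)g_{3}(-x)=H_{0,1}(x)^{6}$, whose unique representative is $F_{3}(z)=z^{6}\left(1+z^{2}\right)^{-3}$, an even series, so $f_{3}\notin S^{+}$ (it is not in $S^{-}$ either, since its expansion in $e^{-\sqrt{2}x}$ has constant term $1$). What your computation actually proves --- and what the paper actually uses, e.g.\ in Remark \ref{sepapplic} --- is the parity bookkeeping: the product of two (more generally, an even number of) reflected $S^{+}$ functions lies in $S^{-}$; the product of any number of $S^{-}$ functions lies in $S^{-}$, so that $f_{3}(x)=(g_{1}g_{2}g_{3})(-x)$ is the space reflection of an element of $S^{-}$ rather than an element of $S^{+}$; and the product of an odd number of unreflected $S^{+}$ functions lies in $S^{+}$, because a sum of an odd number of odd integers is odd. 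A correct write-up should prove these statements and explicitly flag that the second assertion of Lemma \ref{multiplicative} must be restated in one of these forms.
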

\begin{definition}
We define, for any $n\in \mathbb{N}\cup\{0\},$ the linear spaces
$S^{+,n}=\{x^{n}f(x)|\,f(x)\in S^{+}\cap \mathscr{S}(\mathbb{R})\}$
and
$S^{-,n}=\{x^{n}f(x)|\,f(x)\in S^{-}\cap \mathscr{S}(\mathbb{R})\},$
and for any $m\in\mathbb{N}\cup\{0\},$ we define 
\begin{equation*}
    S^{+}_{m}=\bigoplus_{n=0}^{m} S^{+,n},\,S^{-}_{m}=\bigoplus_{n=0}^{m} S^{-,n},\,
    S^{+}_{\infty}=\bigoplus_{n=0}^{+\infty} S^{+,n},\,S^{-}_{\infty}=\bigoplus_{n=0}^{+\infty} S^{-,n}.
\end{equation*}
\end{definition}
\begin{remark}\label{genemult}
From Remark \ref{alg}, $S^{-}_{\infty}$ is an algebra. Furthermore, the result of Lemma \ref{multiplicative} is also true if we replace the function spaces $S^{+}$ and $S^{-},$ respectively, with $S^{+}_{\infty}$ and $S^{-}_{\infty}$ in the statement of this lemma.
\end{remark}
\begin{remark}
We will prove later in Lemma \ref{independent} that the linear space generated by the union of all subspaces $S^{+}_{n}\subset\mathscr{S}(\mathbb{R})$ is a direct  sum. By analogy, the same result is true for the union of all subspaces $S^{-}_{n}.$ \end{remark}
\begin{remark}\label{infinite}
In the definition \ref{s+}, we can verify that if $F(z)$ is a polynomial function, then $F\equiv 0.$ Otherwise, the identity $f(x)=F(e^{\sqrt{2}x})$ would imply that $\lim_{x\to +\infty}\vert f(x)\vert=\lim_{x\to+\infty}\vert F(e^{\sqrt{2}x})\vert =+\infty,$ if  $F(z)$ is a non-trivial polynomial, which contradicts first condition in definition \ref{s+}. Similarly, we can verify that $G(z)$ in definition \ref{s-} cannot be a non-zero polynomial.
\end{remark}
\begin{remark}\label{basisS}
 For any odd number $m$ and even number $n,$ we have that $H_{0,1}(x)^{m}\in S^{+}$ and $H_{-1,0}(x)^{n} \in S^{-}.$
\end{remark}
\begin{definition}
In notation of definition \ref{s+}, If $f\in S^{+},$ we define
\begin{equation*}
    \val_{+}(f)=\min\{2k+1\vert\, k\in\mathbb{N}\cup\{0\},\,  a_{k}\neq 0 \}.
\end{equation*}
And in notation of definition \ref{s-}, if $g \in S^{-},$ we define
\begin{equation*}
    \val_{-}(g)=\min\{2k\vert\,k \in\mathbb{N},\, b_{k}\neq 0 \}.
\end{equation*}
\end{definition}
\begin{remark}
The exponential decay of the functions in $S^{+}\cap \mathscr{S}(\mathbb{R}),\,S^{-}\cap \mathscr{S}(\mathbb{R})$ and $S^{+}_{m}$ are  going to be very important to obtain high precision in the approximate solutions of the main theorem.
\end{remark}
\par Now, we can prove the main proposition of this subsection. 
\begin{proposition}[Separation Lemma]\label{separation}
If $f \in S^{+},\,g \in S^{-},$ then there exist a sequence of pairs $(h_{n},d_{n})_{n\geq 1}$ and a set $\Delta\subset \mathbb{N}$ such that $h_{n}(x) \in S^{+}\cap\mathscr{S}(\mathbb{R})$ for all $n\in \Delta,\,h_{n}(-x)$ is in $S^{+}\cap \mathscr{S}(\mathbb{R})$ for all $n\in \Omega=\mathbb{N}\setminus \Delta$ and $(d_{n})_{n\geq 1}\subset \mathbb{N}$ is a strictly increasing sequence satisfying, for any $\mathcal{M} \in \mathbb{N}$ and any $\zeta\geq 1,$ the following equation
\begin{equation}\label{idsepl}
    f(x-\zeta)g(x)=\sum_{\substack{1 \leq n\leq \mathcal{M},\\ n\in \Delta}}h_{n}(x-\zeta)e^{-\sqrt{2}d_{n}\zeta}+\sum_{\substack{1 \leq n\leq \mathcal{M},\\ n\in \Omega}}h_{n}(x)e^{-\sqrt{2}d_{n}\zeta}+e^{-\sqrt{2}d_{\mathcal{M}} \zeta}f_{\mathcal{M}}(x-\zeta)g_{\mathcal{M}}(x),
\end{equation}
where $f_{\mathcal{M}}\in S^{+},$ $g_{\mathcal{M}} \in S^{-}$ with $f_{\mathcal{M}}$ or $g_{\mathcal{M}}$ in $\mathscr{S}(\mathbb{R}).$ 
Also,
$\norm{f_{\mathcal{M}}(x-\zeta)g_{\mathcal{M}}(x)}_{H^{k}_{x}(\mathbb{R})}\lesssim_{k,\mathcal{M}} 1$ for any $\zeta\geq 1.$
\end{proposition}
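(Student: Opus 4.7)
The plan is to exploit the holomorphic structures of $f$ and $g$. Writing $u := e^{\sqrt{2}(x-\zeta)}$ and $v := e^{-\sqrt{2}x}$ so that $uv = \epsilon := e^{-\sqrt{2}\zeta}$, the product reads $f(x-\zeta)g(x) = F(u)G(v)$ with $F(z) = \sum_{j \geq 0} a_j z^{2j+1}$ and $G(z) = \sum_{k \geq 1} b_k z^{2k}$. In the overlap strip $0 < x < \zeta$, where both $|u|, |v| < 1$, both series converge absolutely, so I would multiply them term-wise and rearrange the resulting double sum by collecting monomials $a_j b_k u^{2j+1} v^{2k}$ according to the power of $\epsilon$ they carry.

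Two natural groupings emerge. Summing for fixed $k \geq 1$ over $j \geq k$ produces a "right-centered" contribution $b_k \epsilon^{2k} \hat f_k(x-\zeta)$ with $\hat f_k(y) := \sum_{m \geq 0} a_{m+k} e^{\sqrt{2}(2m+1)y}$; summing for fixed $j \geq 0$ over $k \geq j+1$ produces a "left-centered" contribution $a_j \epsilon^{2j+1} \hat g_j(x)$ with $\hat g_j(x) := \sum_{l \geq 0} b_{l+j+1} e^{-\sqrt{2}(2l+1)x}$. Using the representations $\hat f_k(y) = u^{-2k}[F(u) - \sum_{j<k} a_j u^{2j+1}]$ and the analogous formula for $\hat g_j$, combined with the boundedness of $F$ and $G$ on the positive real axis, one verifies that $\hat f_k \in S^+ \cap \mathscr{S}(\mathbb{R})$ and $\hat g_j(-\cdot) \in S^+ \cap \mathscr{S}(\mathbb{R})$. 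Ordering the resulting exponents $\{2k : k \geq 1\} \cup \{2j+1 : j \geq 0\}$ increasingly defines the sequence $(d_n)_{n \geq 1} \subset \mathbb{N}$, with $\Delta$ the indices for which $d_n$ is even (Case A) and $\Omega$ those for which $d_n$ is odd (Case B).

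For the truncated remainder, the key Cauchy-product identity is
$$\sum_{j \geq K,\; k \geq K+1} a_j b_k u^{2j+1} v^{2k} = (uv)^{2K} \Bigl(\sum_{m \geq 0} a_{m+K} u^{2m+1}\Bigr)\Bigl(\sum_{n \geq 1} b_{n+K} v^{2n}\Bigr) = e^{-2\sqrt{2}K\zeta}\,\hat f_K(x-\zeta)\,\tilde g_K(x),$$
which exhibits the tail of the expansion after cutting at $d_\mathcal{M} = 2K$ as a product of a Schwartz element of $S^+$ and a Schwartz element of $S^-$. The formal identity then extends from $0 < x < \zeta$ to all $x \in \mathbb{R}$ by real-analyticity of both sides in $x$, and the norm bound $\|f_\mathcal{M}(x-\zeta)g_\mathcal{M}(x)\|_{H^k_x} \lesssim_{k,\mathcal{M}} 1$ follows because translated Schwartz functions have uniformly bounded Sobolev norms. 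The main subtlety is the parity alignment: if $\mathcal{M}$ is chosen so that $d_\mathcal{M}$ is odd, the naive factorization produces a residual $g_\mathcal{M}$ whose Taylor expansion has a nonzero constant term, violating the $G(0)=0$ condition of $S^-$. I would resolve this by absorbing the offending constant into the next extracted $h_n$-term and recasting the factorization one step further, so that each truncation level ultimately yields a remainder lying in $S^+ \times S^-$ as required; this careful case-by-case bookkeeping of valuations and parities is the technical crux of the argument.
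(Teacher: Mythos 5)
Your proposal is correct in substance but follows a genuinely different route from the paper. The paper argues by induction on $\mathcal{M}$, peeling off one lowest-order term per step: in the case $\val_{+}(f)>\val_{-}(g)=2w_{2}$ it writes $f(x-\zeta)g(x)$ as the extracted term $b_{w_{2}}f(x-\zeta)e^{-2\sqrt{2}w_{2}(x-\zeta)}e^{-2\sqrt{2}w_{2}\zeta}$ plus $e^{-2\sqrt{2}w_{2}\zeta}f_{1}(x-\zeta)g_{1}(x)$ with $f_{1}=f\,e^{-2\sqrt{2}w_{2}\cdot}\in S^{+}\cap\mathscr{S}(\mathbb{R})$ and $g_{1}=g\,e^{2\sqrt{2}w_{2}\cdot}-b_{w_{2}}\in S^{-}$, checking holomorphy on the strip for the new factors, and then iterates on $(f_{1},g_{1})$. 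You instead expand the whole double series $\sum_{j,k}a_{j}b_{k}u^{2j+1}v^{2k}$ at once, regroup by the power of $\varepsilon=e^{-\sqrt{2}\zeta}$, and factor the tail in closed form; the passage from the overlap region $0<x<\zeta$ (absolute convergence) to all of $\mathbb{R}$ by real-analyticity is valid, and your explicit formulas for $\hat f_{k}$, $\hat g_{j}$ and the tails are correct. What your route buys is that all the $h_{n}$ and every remainder are exhibited simultaneously and explicitly, which makes $d_{1}=\min\left(\val_{+}(f),\val_{-}(g)\right)$ and the uniqueness in Corollary \ref{unique1} transparent; what the paper's induction buys is that it never manipulates infinite series beyond a single subtraction and division per step.

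However, the parity obstruction you flag at odd truncation levels cannot be repaired in the way you suggest, and it deserves a precise statement. If $d_{\mathcal{M}}=2J+1$, the tail is
\begin{equation*}
\sum_{j\geq J+1,\,k\geq J+1}a_{j}b_{k}u^{2j+1}v^{2k}=e^{-\sqrt{2}(2J+1)\zeta}\,\sigma(x-\zeta)\,\tau(x),\quad \sigma(y)=\sum_{j\geq J+1}a_{j}e^{2\sqrt{2}(j-J)y},\ \tau(x)=\sum_{k\geq J+1}b_{k}e^{-\sqrt{2}(2(k-J)-1)x},
\end{equation*}
and since two product decompositions of the same nonzero function of the two variables $(x-\zeta,x)$ can only differ by a multiplicative constant, the factor depending on $x-\zeta$ is forced to carry even exponents and the factor depending on $x$ odd exponents; hence no choice of $f_{\mathcal{M}}\in S^{+}$, $g_{\mathcal{M}}\in S^{-}$ represents this tail. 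Absorbing the constant $b_{J+1}$ into the next term does not help: it simply replaces the truncation level $2J+1$ by the even level $2J+2$, rather than producing the required form at level $2J+1$. The honest conclusion is that at odd levels the remainder is of the reflected type, $\sigma(-\cdot)\in S^{-}$, $\tau(-\cdot)\in S^{+}\cap\mathscr{S}(\mathbb{R})$ with $\tau$ Schwartz. This is not a defect of your argument relative to the paper: the paper's written proof only treats the case $\val_{+}(f)>\val_{-}(g)$ and declares the other case analogous, and in that other case its own one-step formula (see \eqref{general2} in the proof of Lemma \ref{l1}) produces exactly this reflected remainder, which is what Remark \ref{reflection} and the applications in Remark \ref{sepapplic} actually use. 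So your proof, completed with the reflected form at odd levels, establishes precisely what the paper's proof establishes; stated literally for every $\mathcal{M}$, the orientation claim in the Proposition needs this same amendment in both arguments.
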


\begin{lemma}\label{prelema}
Let $f\in S^{+},\,g\in S^{-},$ then: 
\begin{itemize}
\item If $\val_{+}(f)>\val_{-}(g),$ then there exist $h_{1}\in S^{+}\cap\mathscr{S}(\mathbb{R})$ and functions $f_{1}\in S^{+},\,g_{1}\in S^{-}$ satisfying, for any $\zeta\geq 1,$ the following identity
\begin{equation*}
    f(x-\zeta)g(x)=h_{1}(x-\zeta)e^{{-}\val_{-}(g)\zeta}+e^{{-}\sqrt{2}\val_{-}(g)\zeta}f_{1}(x-\zeta)g_{1}(x),
\end{equation*}
and  at least one of the functions $f_{1},\,g_{1}$ is in $\mathscr{S}(\mathbb{R}).$
\item If $\val_{-}(g)>\val_{+}(f),$ then there exist $\hat{h}_{1}\in\mathscr{S}(\mathbb{R})\cap S^{+}$  and functions $f_{1}\in S^{+},\,g_{1}\in S^{-}$ satisfying, for any $\zeta\geq 1,$ the following identity
\begin{equation*}
    f(x-\zeta)g(x)=\hat{h}_{1}({-}x)e^{{-}\val_{+}(f)\zeta}+e^{{-}\sqrt{2}\val_{+}(f)\zeta}f_{1}({-}x+\zeta)g_{1}({-}x),
\end{equation*}
and at least one of the functions $f_{1},\,g_{1}$ is in $\mathscr{S}(\mathbb{R}).$
\end{itemize}
\end{lemma}
\begin{proof}[Proof of Lemma \ref{prelema}]
   We consider the notation of Definition \ref{s+} and Definition \ref{s-}. 
   For $2w_{1}+1=\val_{+}(f)$ and $2w_{2}=\val_{-}(g)$ there are only two cases to consider, which are $2w_{1}+1>2w_{2}$ and $2w_{1}+1<2w_{2}.$ 
   \par First, we consider the case where $2w_{1}+1>2w_{2}.$ 
\begin{align*}
    f(x-\zeta)g(x)=&f(x-\zeta)b_{w_{2}}e^{-2w_{2}\sqrt{2}x}+f(x-\zeta)\left[g(x)-b_{w_{2}}e^{-2w_{2}\sqrt{2}x}\right]\\
    =&f(x-\zeta)b_{w_{2}}e^{-2\sqrt{2}w_{2}(x-\zeta)}e^{-2\sqrt{2}w_{2}\zeta}\\&{+}e^{-2\sqrt{2}w_{2}\zeta}\left[f(x-\zeta)e^{-2\sqrt{2}w_{2}(x-\zeta)}\left(g(x)e^{+2\sqrt{2}w_{2}x}-b_{w_{2}}\right)\right].
\end{align*}
Because $2w_{1}+1>2w_{2}$ and $f\in S^{+},$ we have that $f(x)e^{-2w_{2}\sqrt{2}x}\in S^{+}\cap \mathscr{S}(\mathbb{R}).$
Clearly, if $g(x)e^{+2\sqrt{2}w_{2}x}-b_{w_{2}}\in S^{-},$ then, from the identity above, Lemma \ref{prelema} would be true for the case where $\val_{+}(f)>\val_{-}(g).$ Moreover, for any $x>0,$ we have that
\begin{equation}\label{+++1}
    g(x)e^{+2\sqrt{2}w_{2}x}-b_{w_{2}}=\sum_{n=w_{2}+1}^{+\infty}b_{n}e^{-2(n-w_{2})\sqrt{2}x}.
\end{equation}
Since $G(z)$ is analytic in the region $\mathbb{D},$ we clearly have that the following function
\begin{equation}\label{ddd2}
    Q(z)=\frac{G(z)}{z^{2w_{2}}}-b_{w_{2}}=\sum_{n=w_{2}+1}^{+\infty}b_{n}z^{2(n-w_{2})}
\end{equation}
is analytic in $\mathbb{D},$ from which, using the product rule of the derivative, for any $x>1$ and $l,\,m\in\mathbb{N},$ we deduce that
\begin{equation}\label{schwartz+}
    \left|\left(1+|x|^{m}\right)\frac{d^{l}}{dx^{l}}\left[g(x)e^{+2\sqrt{2}w_{2}x}-b_{w_{2}}\right]\right|\lesssim_{l,m} 1.
\end{equation}
From equation \eqref{ddd2} and from the fact that $G(z)$ has a holomorphic extension in the region $\mathcal{B}=\{z\vert\, -1<\I{z}<1\}$
since $g \in S^{-},$ we conclude that $Q(z)$ has a holomorphic extension in the region $\mathcal{B}.$ Moreover, since $g\in S^{-},$ then $g\in L^{\infty}_{x}(\mathbb{R})\cap C^{\infty}(\mathbb{R})$ and $g^{'}\in \mathscr{S}(\mathbb{R})$, from which we deduce the following estimate
\begin{equation*}
    \left|(1+|x|^{m})\frac{d^{l}}{dx^{l}}\left[g(x)e^{+2\sqrt{2}w_{2}x}\right]\right|\lesssim_{l,m} 1 \text{ for any $x<-1$ and $l\in\mathbb{N}_{\geq 1},$}
\end{equation*}
and so, we conclude that $ \frac{d}{dx}\left[g(x)e^{+2\sqrt{2}w_{2}x}-b_{w_{2}}\right]\in \mathscr{S}(\mathbb{R}).$ 
\par Analogously, if $2w_{2}=val_{-}(g)>val_{+}(f)=2w_{1}+1,$ then we can deduce from the Definition \ref{s+} and Definition \ref{s-} that 
\begin{equation*}
h_{1}(x)=g({-}x)e^{{-}(2w_{1}+1)\sqrt{2}x}\in S^{+}\cap\mathscr{S}(\mathbb{R}),\, g_{1}(x)=f({-}x)e^{(2w_{1}+1)\sqrt{2}x}-a_{w_{1}}\in S^{-},
\end{equation*}
and
\begin{multline}\label{general2}
f(x-\zeta)g(x)=g(x)a_{w_{1}}e^{\sqrt{2}(2w_{1}+1)x} e^{{-}\sqrt{2}(2w_{1}+1)\zeta}\\+e^{{-}\sqrt{2}(2w_{1}+1)\zeta}\left[g(x)e^{\sqrt{2}(2w_{1}+1)x}\left(f(x-\zeta)e^{{-}\sqrt{2}(2w_{1}+1)(x-\zeta)}-a_{w_{1}}\right)\right].
\end{multline}
\end{proof}
\begin{proof}[Proof of Proposition \ref{separation}]
If $f\equiv0$ or $g\equiv0$, we can take $h_{n}=0$ and $d_{n}=n$ for all $n\in\mathbb{N}.$ 
From now on, we consider the case where both $f$ and $g$ are not identically zero. Clearly, Lemma \ref{prelema} implies Proposition \ref{separation} for the case where $\mathcal{M}=1.$ 
\par Moreover, if Proposition \ref{separation} is true when $\mathcal{M}=M_{0}\in\mathbb{N},$ we can repeat the argument above of the proof of Lemma \ref{prelema} using $f_{M_{0}},\,g_{M_{0}}$ in the place of $f,\,g$ and conclude that Proposition \ref{separation} is also true when $\mathcal{M}=M_{0}+1,$ so by induction on $\mathcal{M},$ Proposition \ref{separation} is true for all $\mathcal{M}\in\mathbb{N}.$ 
\end{proof}
\begin{corollary}\label{unique1}
If $f\in S^{+}\,,g\in S^{-}$ and $f\not\equiv 0,\,g\not\equiv 0,$ then the sequence $(h_{n},d_{n})_{n\in\mathbb{N}}$ satisfying Proposition \ref{separation} is unique. Furthermore, $d_{1}=\min\left(\val_{+}(f),\val_{-}(g)\right).$ 
\end{corollary}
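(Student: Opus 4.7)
The plan is to establish the corollary by induction on $n \in \mathbb{N}$, showing at each step that $d_{n}$ and $h_{n}$ are uniquely determined by $f$ and $g$, with the base case simultaneously producing the formula $d_{1}=\min(\val_{+}(f),\val_{-}(g))$.

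For the base case $n=1$, I will test the identity of Proposition \ref{separation} at two families of points and extract leading asymptotics as $\zeta\to+\infty$. Fixing $y_{0}\in\mathbb{R}$ and setting $x=y_{0}$, the $\Delta$-type terms $h_{n}(y_{0}-\zeta)$ are of order $O(e^{-\sqrt{2}\val_{+}(h_{n})\zeta})$ with $\val_{+}(h_{n})\geq 1$, so they contribute only at exponents strictly larger than $d_{n}$, while the $\Omega$-type terms reduce to constants $h_{n}(y_{0})$ multiplying $e^{-\sqrt{2}d_{n}\zeta}$. On the left-hand side, the convergent expansion
\begin{equation*}
f(y_{0}-\zeta)g(y_{0})=g(y_{0})\sum_{k\geq 0}a_{k}e^{\sqrt{2}(2k+1)y_{0}}e^{-\sqrt{2}(2k+1)\zeta}
\end{equation*}
has leading exponent $\val_{+}(f)=2w_{1}+1$. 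Symmetrically, at $x=y_{0}+\zeta$ the $\Omega$ contributions decay rapidly (since $h_{n}(-\cdot)\in S^{+}\cap\mathscr{S}$ forces $h_{n}(y_{0}+\zeta)=O(e^{-\sqrt{2}\zeta})$), while $\Delta$ terms become constants $h_{n}(y_{0})$, and the left-hand side $f(y_{0})g(y_{0}+\zeta)$ has leading exponent $\val_{-}(g)=2w_{2}$. Since $\val_{+}(f)$ is odd and $\val_{-}(g)$ is even, these two numbers are always distinct, and by the uniqueness of asymptotic expansions in $e^{-\sqrt{2}\zeta}$, the smaller of them must equal $d_{1}$; the parity then determines whether $1\in\Omega$ or $1\in\Delta$, and reading off the leading coefficient of the left-hand side gives $h_{1}$ explicitly.

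For the inductive step, having identified $(h_{1},d_{1})$, I will subtract the leading contribution from both sides. As observed in the constructive proof of Proposition \ref{separation}, the result can be written as $e^{-\sqrt{2}d_{1}\zeta}f_{1}(x-\zeta)g_{1}(x)$ with explicit $f_{1}\in S^{+}$ and $g_{1}\in S^{-}$. Any competing sequence $(h_{n}',d_{n}')$ for the original pair $(f,g)$ must, by the base-case argument applied again, produce the same $(h_{1},d_{1})$; after subtraction and rescaling by $e^{\sqrt{2}d_{1}\zeta}$, it therefore induces a decomposition of $f_{1}(x-\zeta)g_{1}(x)$ with shifted indices. The induction hypothesis applied to $(f_{1},g_{1})$ then yields uniqueness of the tail $(h_{n},d_{n})_{n\geq 2}$.

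The main obstacle will be ensuring the clean asymptotic separation of the $\Omega$ and $\Delta$ contributions: at fixed $x=y_{0}$, the $\Delta$-type terms generate the double series
\begin{equation*}
\sum_{n\in\Delta}\sum_{k\geq 0}\alpha_{n,k}e^{\sqrt{2}(2k+1)y_{0}}e^{-\sqrt{2}(d_{n}+2k+1)\zeta},
\end{equation*}
whose exponents could in principle interleave with the $\Omega$-exponents $\{d_{n}:n\in\Omega\}$. Verifying that no such interference corrupts the extraction of leading coefficients relies on the bound $\val_{+}(h_{n})\geq 1$ (pushing all $\Delta$ contributions to exponents at least $d_{n}+1$), together with the observation that the explicit left-hand side expansion of $f(y_{0}-\zeta)g(y_{0})$ has only odd exponents $2k+1$; a symmetric parity/rate argument works at $x=y_{0}+\zeta$. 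Once this is established, uniqueness of asymptotic expansions in $e^{-\sqrt{2}\zeta}$ applied pointwise in $y_{0}$ pins down each $d_{n}$ and each $h_{n}(y_{0})$ simultaneously.
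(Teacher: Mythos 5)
Your proposal is correct, and its overall skeleton --- identify the leading pair explicitly, show it is forced, subtract it, and recurse on the pair $(f_{1},g_{1})$ produced by the constructive proof of Proposition \ref{separation} --- coincides with the paper's; your second paragraph is essentially the paper's induction verbatim. The genuine difference is in how the base case is forced. The paper, in the case $\val_{+}(f)>\val_{-}(g)=2w_{2}$, takes the constructed profile $\hat h_{1}(x)=b_{w_{2}}f(x)e^{-2\sqrt{2}w_{2}x}$ and uses Lemma \ref{interactt} to prove the Sobolev limit $\lim_{\zeta\to+\infty}\norm{f(x-\zeta)g(x)e^{2\sqrt{2}w_{2}\zeta}-\hat h_{1}(x-\zeta)}_{H^{s}_{x}}=0$, so any competing $(h_{1}',d_{1}')$ contradicts uniqueness of this translated-profile limit: a smaller $d_{1}'$ makes the rescaled norm blow up, a larger one makes it vanish, and an equal one with a different profile is incompatible because translation preserves $H^{s}_{x}$ norms. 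You instead fix $y_{0}$ and match coefficients of the convergent expansions in powers of $e^{-\sqrt{2}\zeta}$ along the two families $x=y_{0}$ and $x=y_{0}+\zeta$, exploiting that $S^{+}$ functions contribute only odd exponents at $-\infty$ while $S^{-}$ functions contribute only even exponents at $+\infty$. This is more elementary (no Sobolev norms, no Lemma \ref{interactt}), it produces $d_{1}=\min(\val_{+}(f),\val_{-}(g))$ symmetrically rather than through the paper's WLOG case split, and the parity observation also makes explicit why the two valuations can never coincide, a fact the paper uses silently.

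One caveat about your final paragraph. The interference you worry about is genuine at levels beyond the first: a $\Delta$-term at (even) level $d_{n}$ sheds tail contributions at the odd levels $d_{n}+\val_{+}(h_{n}),\,d_{n}+\val_{+}(h_{n})+2,\dots$, which is exactly where $\Omega$-terms sit, so parity together with $\val_{+}(h_{n})\geq 1$ does not by itself disentangle the coefficient at a general level $m$. What actually protects you is the shape of your induction: in the subtract-and-recurse scheme you only ever read off the leading level of the current product $f_{k}(x-\zeta)g_{k}(x)$, below which every term has already been removed, so there is nothing left to interfere. (Alternatively, one can run a strong induction on the level $m$ itself, subtracting the already-determined tails of lower-level terms before reading off level $m$.) Either way the argument closes, but it is the recursion, not the parity, that neutralizes the interference, so you should phrase that step accordingly.
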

\begin{proof}
\par  From an argument of analogy, it is enough to consider the case where $2w_{1}+1=val_{+}(f)>val_{-}(g)=2w_{2}.$ In this case, from the proof of Proposition \ref{separation}, we have that the real function $\hat{h}_{1}(x)=b_{w_{2}}f(x)e^{{-}2\sqrt{2}w_{2}x}$ satisfies $\hat{h}_{1}\in S^{+}\cap \mathscr{S}(\mathbb{R}),\,\hat{h}_{1}\not\equiv 0$ and the following identity
\begin{equation}\label{eqh00}
    f(x-\zeta)g(x)=\hat{h}_{1}(x-\zeta)e^{-2\sqrt{2}w_{2}\zeta}+e^{-2\sqrt{2}w_{2}\zeta}f_{1}(x-\zeta)g_{1}(x),
\end{equation}
where $f_{1}\in S^{+},\,g_{1}\in S^{-}$ and either $f_{1}$ or $g_{1}$ is in $\mathscr{S}(\mathbb{R}).$ In conclusion, Lemma \ref{interactt} and equation \eqref{eqh00} imply for any $s\geq 0$ that
\begin{equation}\label{h00}
   \lim_{\zeta\to+\infty} \norm{f(x-\zeta)g(x)e^{2\sqrt{2}w_{2}\zeta}-\hat{h}_{1}(x-\zeta)}_{H^{s}_{x}}=0,
\end{equation}
and so, 
\begin{equation}\label{h01}
    0<\lim_{\zeta\to+\infty} \norm{f(x-\zeta)g(x)e^{2\sqrt{2}w_{2}\zeta}}_{H^{s}_{x}}<\infty.
\end{equation}
\par Since $\hat{h}_{1}\in \mathscr{S}(\mathbb{R})$ and $\hat{h}_{1}\not \equiv 0,$ $\norm{\hat{h}_{1}}_{H^{s}_{x}}\neq 0$ for all $s\geq 0.$ Therefore, using equations \eqref{h00} and \eqref{h01}, we can verify that the unique possible choice for $d_{1}$ is $2w_{2}.$ And so, the function $h_{1}$ satisfying Proposition \ref{separation} for $f$ and $g$ is unique and equal to $\hat{h}_{1},$ otherwise \eqref{h00} would be false. Similarly, we can repeat the argument above for the case $\val_{+}(f)<\val_{-}(g)$ and obtain in this situation that $d_{1}=\val_{+}(f)$ and $h_{1}(x)=a_{w_{1}}g(x)e^{(2w_{1}+1)\sqrt{2}x}.$
\par Next, assuming that $(h_{n},d_{n})$ is unique for all $1\leq n\leq \mathcal{M}_{0}\in\mathbb{N},$ we can repeat the argument above in $f_{\mathcal{M}_{0}}(x-\zeta)g_{\mathcal{M}_{0}}(x)$ and conclude that $(h_{\mathcal{M}_{0}+1},d_{\mathcal{M}_{0}+1})$ is unique too. In conclusion, from the principle of finite induction applied on $n\in\mathbb{N},$ we obtain the uniqueness of $(h_{n},d_{n})_{n\in\mathbb{N}}$ satisfying Proposition \ref{separation} when both functions $f$ and $g$ are not identically zero. 
\end{proof}
\begin{remark}\label{algounique}
   When $f\not\equiv 0$ and $g\not \equiv 0,$ we can find explicitly the sequence $(h_{n},d_{n})$  satisfying \eqref{idsepl} from the proof of Proposition \ref{separation}.
\end{remark}
\begin{remark}\label{sepc}
If $
f(x)=x^{m}f_{0}(x),\,g(x)=x^{l}g_{0}(x)$
such that $m,\, l \in \mathbb{N},\,f_{0} \in S^{+}\cap \mathscr{S}(\mathbb{R})$ and $g_{0} \in S^{-}\cap \mathscr{S}(\mathbb{R}),$ then there exist a sequence of pairs $(h_{n},d_{n})_{n\geq 1}$ and a set $\Delta\subset \mathbb{N}$ satisfying $h_{n}(x) \in S^{+}\cap\mathscr{S}(\mathbb{R})$ for all $n\in \Delta,\,h_{n}(-x)$ is in $S^{+}\cap \mathscr{S}(\mathbb{R})$ for all $n\in \Omega=\mathbb{N}\setminus \Delta,$ $d_{n}\in \mathbb{N}$ is strictly increasing such that for any $\zeta\geq 1,\,x\neq 0,\,x\neq \zeta$ and $\mathcal{M} \in \mathbb{N}$ we have the following equation
\begin{equation*}
   \frac{f(x-\zeta)g(x)}{(x-\zeta)^{m}x^{l}}=\sum_{\substack{1\leq n \leq \mathcal{M},\\ n\in \Delta}}h_{n}(x-\zeta)e^{-\sqrt{2}d_{n}\zeta}+\sum_{\substack{1\leq n \leq \mathcal{M},\\ n\in \Omega}}h_{n}(x)e^{-\sqrt{2}d_{n}\zeta}+e^{-\sqrt{2}d_{\mathcal{M}}\zeta}f_{\mathcal{\mathcal{M}}}(x-\zeta)g_{\mathcal{M}}(x),
\end{equation*}
where
 $f_{\mathcal{M}} \in S^{+},\,g_{\mathcal{M}}\in S^{-}$ and $f_{\mathcal{M}}$ or $g_{\mathcal{M}}$ is in $\mathscr{S}(\mathbb{R}).$ Furthermore, the sequence $(h_{n},d_{n})_{n\in\mathbb{N}}$ is unique.
\end{remark}
\begin{remark}\label{reflection}
From Proposition \ref{separation}, we can deduce if $f(-x)\in S^{+},\,g(-x)\in S^{-},\,f\not\equiv 0$ and $g\not\equiv 0,$ then
there exists a sequence of 
pairs $(h_{n},d_{n})_{n\geq 1}$ and a set $\Delta\subset \mathbb{N}$ such that $h_{n}(x)$ is in $S^{+}\cap \mathscr{S}(\mathbb{R})$ for all $n\in \Delta,\,h_{n}(-x)$ is in $S^{+}\cap \mathscr{S}(\mathbb{R})$ for all $n\in\Omega=\mathbb{N}\setminus \Delta$ and $(d_{n})_{n\geq 1}\subset \mathbb{N}$ is a strictly increasing sequence satisfying, for any $\mathcal{M} \in \mathbb{N}$ and any $\zeta\geq 1,$ the following equation
\begin{equation}\label{idsepl2}
    f({-}x+\zeta)g({-}x)=\sum_{\substack{1 \leq n\leq \mathcal{M},\\ n\in \Delta}}h_{n}(x-\zeta)e^{-\sqrt{2}d_{n}\zeta}+\sum_{\substack{1 \leq n\leq \mathcal{M},\\ n\in \Omega}}h_{n}(x)e^{-\sqrt{2}d_{n}\zeta}+e^{-\sqrt{2}d_{\mathcal{M}} \zeta}f_{\mathcal{M}}(x-\zeta)g_{\mathcal{M}}(x),
\end{equation}
where
 $f_{\mathcal{M}} \in S^{+},\,g_{\mathcal{M}}\in S^{-}$ and $f_{\mathcal{M}}$ or $g_{\mathcal{M}}$ is in $\mathscr{S}(\mathbb{R}).$ Furthermore, the sequence $(h_{n},d_{n})_{n\in\mathbb{N}}$ is unique.
\end{remark}
\par We also demonstrate the following lemma, which will be essential to obtain the results in the next subsection.
\begin{lemma}\label{independent}
Let $m\in\mathbb{N}$ and $f_{j}\in S^{+}\cap \mathscr{S}(\mathbb{R})$ for $0\leq j\leq m,\,\sum_{j=0}^{m}x^{j}f_{j}(x)=0,$
if and only if $f_{j}\equiv 0$ for all $0\leq j\leq m.$
\end{lemma}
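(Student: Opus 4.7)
The reverse implication is immediate, so the task is to show that $\sum_{j=0}^{m}x^{j}f_{j}(x)\equiv 0$ with $f_{j}\in S^{+}\cap\mathscr{S}(\mathbb{R})$ forces each $f_{j}\equiv 0$. My plan is to exploit the holomorphic representation from Definition \ref{s+} to convert the problem into a statement about vanishing polynomial coefficients in an asymptotic expansion at $x\to-\infty$.

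By Definition \ref{s+}, for each $0\leq j\leq m$ there exist real sequences $(a_{j,k})_{k\geq 0}$ such that the holomorphic function $F_{j}(z)=\sum_{k=0}^{+\infty}a_{j,k}z^{2k+1}$ on $\mathbb{D}$ satisfies $f_{j}(x)=F_{j}(e^{\sqrt{2}x})$ for all $x\in\mathbb{R}$. In particular, for every $x<0$ we have $e^{\sqrt{2}x}<1$, so the series converges absolutely and
\begin{equation*}
f_{j}(x)=\sum_{k=0}^{+\infty}a_{j,k}\,e^{(2k+1)\sqrt{2}x}.
\end{equation*}
Substituting into the hypothesis and interchanging the (finite) sum in $j$ with the absolutely convergent series in $k$ yields, for all $x<0$,
\begin{equation*}
\sum_{k=0}^{+\infty}P_{k}(x)\,e^{(2k+1)\sqrt{2}x}=0,\qquad\text{where }P_{k}(x):=\sum_{j=0}^{m}a_{j,k}x^{j}.
\end{equation*}
The goal is then to deduce $P_{k}\equiv 0$ for every $k\geq 0$, since this is equivalent to $a_{j,k}=0$ for all $j,k$, hence to $F_{j}\equiv 0$ and $f_{j}\equiv 0$.

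I would conclude by a peeling argument on $k$. Dividing the identity by $e^{\sqrt{2}x}$ gives $P_{0}(x)=-\sum_{k\geq 1}P_{k}(x)e^{2k\sqrt{2}x}$; the right-hand side tends to $0$ as $x\to-\infty$ because each $P_{k}$ is a polynomial and $e^{2k\sqrt{2}x}\to 0$ exponentially, uniformly in $k$ on $(-\infty,-1]$ thanks to the geometric bound coming from the radius of convergence of $F_{j}$. Since a polynomial vanishing at $-\infty$ is identically zero, $P_{0}\equiv 0$. Assuming inductively $P_{0}\equiv\cdots\equiv P_{N-1}\equiv 0$, the identity reduces to $\sum_{k\geq N}P_{k}(x)e^{(2k+1)\sqrt{2}x}=0$; dividing by $e^{(2N+1)\sqrt{2}x}$ and repeating the limit argument gives $P_{N}\equiv 0$.

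The only point requiring real care is the uniform control needed to take the limit inside the tail series. I would handle this by fixing $0<r<1$ and observing that for $x\leq \frac{1}{\sqrt{2}}\ln r$, the Cauchy estimates for $F_{j}$ on a slightly larger disk give $|a_{j,k}|\leq C_{j}\rho^{-(2k+1)}$ for some $r<\rho<1$, so the tail $\sum_{k\geq N+1}P_{k}(x)e^{2(k-N)\sqrt{2}x}$ is dominated by a convergent geometric series multiplied by $\max_{j}|x|^{j}$, and this product tends to $0$ as $x\to-\infty$. This is the only nontrivial step; once it is in place, the induction closes immediately and completes the proof.
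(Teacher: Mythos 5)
Your proof is correct and follows essentially the same route as the paper's: both expand each $f_{j}$ in powers of $e^{\sqrt{2}x}$ as $x\to-\infty$ and exploit the fact that a nonzero polynomial cannot tend to zero there. The only difference is mechanical — the paper stops at the leading order, taking the minimal valuation $d_{\min}$ among the nonzero $f_{j}$ and deriving a contradiction from $0=e^{(2d_{\min}+1)\sqrt{2}x}p(x)+O\left(e^{(2d_{\min}+3)\sqrt{2}x}\vert x\vert^{m+1}\right)$, whereas you peel off all orders inductively, which is why you need the Cauchy-estimate tail control that the paper's leading-order argument avoids.
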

\begin{proof}
For each $0\leq j\leq m,$ since $f_{j}\in S^{+},$ we have that either $f_{j}\equiv 0$ or there exists a natural $d_{j}\in\mathbb{N}\cup\{0\}$ and $a_{j}\in\mathbb{R}$ with $a_{j}\neq 0$ such that $f_{j}(x)=a_{j}e^{(2d_{j}+1)\sqrt{2}x}+O\left(e^{(2d_{j}+3)\sqrt{2}x}\right)$ for all $x\leq -1.$ So, there are only two possible cases to consider. \\
\textbf{Case 1.}($\exists f_{j}$ such that $f_{j}(x)\neq 0$ for some $x\leq -1.$) In this situation, we have that there is a natural $d_{min}\geq 0$ and a non-trivial real polynomial $p(x)$ of degree at most $m$ such that 
\begin{equation}\label{identityper}
    0=\sum_{j=0}^{m}x^{j}f_{j}(x)=e^{(2d_{\min}+1)\sqrt{2}x}p(x)+O\left(e^{(2d_{\min}+3)\sqrt{2}x}|x|^{m+1}\right) \text{ for all $x\leq -1,$}
\end{equation}
which is not possible since if $p(x)$ is a non-identically zero polynomial, then $p(x)=c$ for $c\neq 0$ or $\lim_{|x|\to+\infty}|p(x)|=+\infty,$ but both cases contradict identity \eqref{identityper}.\\
\textbf{Case 2.}($f_{j}\equiv 0$ for all $0\leq j\leq m$)
Clearly, the second case is the only possible.
\end{proof}
\subsection{Applications of Fredholm alternative}\label{fred}
We consider the self-adjoint unbounded linear operator $L:H^{2}_{x}(\mathbb{R})\subset L^{2}_{x}(\mathbb{R})\to L^{2}_{x}(\mathbb{R})$ defined by
\begin{equation}\label{operatorL}
    L(f)(x)=-f^{''}(x)+ U^{''}(H_{0,1}(x))f(x) \text{ for all $x\in\mathbb{R}.$}
\end{equation}
From Lemma $2.6$ of \cite{first} we know for a constant $\lambda>0$ that
$\sigma(L)\subset \{0\}\cup [\lambda,+\infty),\,\ker(L)=\{c H^{'}_{0,1}(x)|\, c\in\mathbb{C}\}.
$
From this, in the proof of Lemma $2.5$ of \cite{first}, we have deduced the existence of a constant $k>0$ such that if $g \in H^{1}_{x}(\mathbb{R})$ satisfies
$
    \langle g,\, H^{'}_{0,1}\rangle=0,
$
we have that
\begin{equation}\label{coerc}
    \left\langle L(g),\,g \right\rangle\geq  k \norm {g}_{H^{1}_{x}}^{2}.
\end{equation}
\par Next, we consider the linear space
\begin{equation*}
  Ort( H^{'}_{0,1})=\left\{g\in L^{2}_{x}(\mathbb{R})\vert\,\langle g, H^{'}_{0,1}\rangle=0\right\}.
\end{equation*}
Since $0<H_{0,1}<1$ and $U$ is a smooth function, Cauchy-Schwarz inequality implies for any $u,\,v\in Ort(H^{'}_{0,1})\cap H^{1}_{x}\left(\mathbb{R}\right)$ that
\begin{equation}\label{bound}
    \left|\left\langle L(u),v\right\rangle \right|\leq \norm{u^{'}}_{L^{2}_{x}(\mathbb{R})}\norm{ v^{'}}_{L^{2}_{x}(\mathbb{R})}+\norm{U^{''}}_{L^{\infty}_{x}[-1,1]}\norm{u}_{L^{2}_{x}(\mathbb{R})}\norm{v}_{L^{2}_{x}(\mathbb{R})}. 
\end{equation}
\par In conclusion, from Lax-Milgram Theorem and inequalities \eqref{coerc}, \eqref{bound}, we obtain for any bounded linear map
$\mathcal{A}:\left(Ort(H^{'}_{0,1})\cap H^{1}_{x}\left(\mathbb{R}\right),\norm{\cdot}_{H^{1}_{x}(\mathbb{R})}\right)\to \mathbb{R}$
the existence of a unique $h_{\mathcal{A}} \in Ort( H^{'}_{0,1})\cap H^{1}_{x}\left(\mathbb{R}\right)$ such that,
for any $u \in Ort( H^{'}_{0,1})\cap H^{1}_{x}(\mathbb{R}),$ we have
\begin{equation}\label{inversefunction}
    \left\langle L(h_{\mathcal{A}}),\,u \right\rangle=\mathcal{A}(u).
\end{equation}
As a consequence, we can obtain, for any $v \in L^{2}_{x}(\mathbb{R}),$ the existence of a unique $h(v) \in Ort\left( H^{'}_{0,1}\right)\cap H^{1}_{x}\left(\mathbb{R}\right)$ satisfying for any $u\in  H^{1}_{x}\left(\mathbb{R}\right)$ the following identity
\begin{equation*}
     \left\langle L(h(v)),\,u \right\rangle=\left\langle v,u\right\rangle.
\end{equation*}
Then, inequalities \eqref{coerc}, \eqref{bound} imply the existence of $\beta>0$ such that for any $v \in Ort\left(H^{'}_{0,1}\right)\cap H^{1}_{x}\left(\mathbb{R}\right),$ 
$\norm{h(v)}_{H^{1}_{x}(\mathbb{R})}\leq\beta\norm{v}_{L^{2}_{x}(\mathbb{R})}.
$
In conclusion, from the density of $H^{1}_{x}(\mathbb{R})$ in $L^{2}_{x}(\mathbb{R})$ and the fact that $h(v)\in Ort\left(H^{'}_{0,1}\right)\cap H^{1}_{x}\left(\mathbb{R}\right),$ we deduce the following lemma:
\begin{lemma}\label{firstinvert}
There is a unique injective and bounded map
\begin{equation*}
    L_{1}: \left(Ort(H^{'}_{0,1}),\norm{\cdot}_{L^{2}_{x}(\mathbb{R})}\right)\to \left( Ort\left(H^{'}_{0,1}\right)\cap H^{1}_{x}\left(\mathbb{R}\right),\norm{\cdot}_{H^{1}_{x}(\mathbb{R})}\right),
\end{equation*}
such that for any $v\in Ort(H^{'}_{0,1}),$ $L(L_{1}(v))=v.$  
\end{lemma}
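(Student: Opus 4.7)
The plan is to build $L_{1}$ by a variational argument, using the coercivity estimate \eqref{coerc} and the continuity bound \eqref{bound} as the two Lax--Milgram hypotheses on the closed Hilbert subspace $V:=Ort(\dot H_{0,1})\cap H^{1}_{x}(\mathbb{R})$ equipped with the $H^{1}_{x}$ inner product. First I would fix $v\in V$ (dense in $Ort(\dot H_{0,1})$ for the $L^{2}_{x}$ topology) and consider the linear functional $\mathcal{A}_{v}\colon u\mapsto \langle v,u\rangle$ on $V$, which is bounded by $\norm{v}_{L^{2}_{x}}$. Lax--Milgram applied to the symmetric bilinear form $(u,w)\mapsto \langle L(u),w\rangle$ on $V$ yields a unique $h(v)\in V$ with $\langle L(h(v)),u\rangle=\langle v,u\rangle$ for every $u\in V$. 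Testing with $u=h(v)$ and invoking coercivity then gives the a priori estimate $\norm{h(v)}_{H^{1}_{x}}\leq (1/k)\norm{v}_{L^{2}_{x}}$.

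Next I would extend $h$ to all of $Ort(\dot H_{0,1})$ by density: given $v\in Ort(\dot H_{0,1})$, pick a sequence $(v_{n})\subset V$ with $v_{n}\to v$ in $L^{2}_{x}$, and set $L_{1}(v):=\lim_{n}h(v_{n})$. The above bound makes $(h(v_{n}))$ Cauchy in $H^{1}_{x}$, and since $V$ is closed in $H^{1}_{x}$ the limit lies in $V$. By construction $L_{1}$ is linear and bounded with $\norm{L_{1}(v)}_{H^{1}_{x}}\leq (1/k)\norm{v}_{L^{2}_{x}}$. To check that $L(L_{1}(v))=v$ in $L^{2}_{x}$ I would first pass to the limit in the weak equation to obtain $\langle L(L_{1}(v)),u\rangle=\langle v,u\rangle$ for every $u\in V$. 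To upgrade to arbitrary test functions $u\in H^{1}_{x}(\mathbb{R})$, decompose $u=u_{1}+c\dot H_{0,1}$ with $u_{1}\in V$; the extra piece contributes $c\langle L(L_{1}(v)),\dot H_{0,1}\rangle=c\langle L_{1}(v),L(\dot H_{0,1})\rangle=0$ by self-adjointness and $L(\dot H_{0,1})=0$, while on the other side $c\langle v,\dot H_{0,1}\rangle=0$ because $v\in Ort(\dot H_{0,1})$. Hence $L_{1}(v)$ solves $-\ddot w+\ddot U(H_{0,1})\,w=v$ in the sense of distributions, and standard elliptic regularity (the coefficient $\ddot U(H_{0,1})$ is smooth and bounded, since $0<H_{0,1}<1$) places $L_{1}(v)\in H^{2}_{x}(\mathbb{R})$ with the equation holding in $L^{2}_{x}$.

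Injectivity is then immediate, since $L_{1}(v)=0$ forces $v=L(L_{1}(v))=0$. Uniqueness of $L_{1}$ follows because, for any rival map $L_{1}'$ with the same properties, the difference $L_{1}(v)-L_{1}'(v)$ lies in $Ort(\dot H_{0,1})$ and is annihilated by $L$; but $\ker L=\mathbb{R}\dot H_{0,1}$, so $\ker L \cap Ort(\dot H_{0,1})=\{0\}$. The main obstacle, and the reason a bare Lax--Milgram statement does not suffice, is the two bookkeeping steps just described: working on the constrained space $V$ to sidestep the kernel of $L$, and then recovering both the full test-function identity against $H^{1}_{x}(\mathbb{R})$ (via the orthogonality of $v$ to $\dot H_{0,1}$) and the $L^{2}_{x}$-strong form of the equation via elliptic regularity, so that the extension by density actually produces a genuine right inverse of $L$ on $Ort(\dot H_{0,1})$.
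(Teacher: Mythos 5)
Your proposal is correct and takes essentially the same route as the paper: Lax--Milgram for the bilinear form $(u,w)\mapsto\left\langle L(u),w\right\rangle$ on $Ort(\dot H_{0,1})\cap H^{1}_{x}(\mathbb{R})$ using the coercivity \eqref{coerc} and the continuity bound \eqref{bound}, the a priori estimate $\norm{h(v)}_{H^{1}_{x}}\lesssim\norm{v}_{L^{2}_{x}}$, and then extension to all of $Ort(\dot H_{0,1})$ by $L^{2}_{x}$-density. Your extra bookkeeping (upgrading the weak identity to all test functions in $H^{1}_{x}$ via the decomposition $u=u_{1}+c\dot H_{0,1}$, elliptic regularity for the strong form, and the injectivity and uniqueness checks through $\ker L\cap Ort(\dot H_{0,1})=\{0\}$) simply makes explicit steps the paper leaves implicit.
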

Now, for all $m\in \mathbb{N}\cup\{0\},$ we are going to consider the linear spaces $S^{+}_{m}\cap Ort\left(H^{'}_{0,1}\right)$ and study the applications of the operator $L_{1}$ in these subspaces. More precisely, we are going to prove the following lemma: 
\begin{lemma}\label{secondinvert}
The map $L_{1}$ defined in Lemma \ref{firstinvert} satisfies
$L_{1}\left(S^{+}_{m}\cap Ort(H^{'}_{0,1})\right)\subset S^{+}_{m+1}\cap Ort(H^{'}_{0,1})$
for all $m\in\mathbb{N}\cup{\{0\}}.$
\end{lemma}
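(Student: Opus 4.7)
The plan is to construct $u = L_{1}(v)$ explicitly via variation of parameters around the kernel element $\dot H_{0,1}\in\ker L$. Setting $u = \dot H_{0,1}\,g$, the identity $L(\dot H_{0,1}\,g) = -\dot H_{0,1}^{-1}(\dot H_{0,1}^{2}\,g')'$ (which follows from $L(\dot H_{0,1})=0$ via the Leibniz rule) reduces $L(u)=v$ to $(\dot H_{0,1}(x)^{2}g'(x))' = -\dot H_{0,1}(x)\,v(x)$. Integrating once and using the orthogonality hypothesis $\langle v,\dot H_{0,1}\rangle=0$ to kill the boundary term at $-\infty$, I set
\begin{equation*}
g'(x) = \frac{1}{\dot H_{0,1}(x)^{2}}\int_{x}^{+\infty}\dot H_{0,1}(y)\,v(y)\,dy,
\end{equation*}
and obtain $g$ by integrating $g'$; the remaining integration constant is fixed so that $\langle u,\dot H_{0,1}\rangle=0$, which only adds a multiple of $\dot H_{0,1}\in S^{+}_{0}\subset S^{+}_{m+1}$. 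By the uniqueness in Lemma \ref{firstinvert}, this yields $u=L_{1}(v)$.

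The bulk of the proof is to show $u\in S^{+}_{m+1}$, which I do by tracking the holomorphic extension in the variable $z = e^{\sqrt{2}x}$. Since $v\in S^{+}_{m}$, I write $v(y)=\sum_{n=0}^{m}y^{n}F_{n}(e^{\sqrt{2}y})$ with each $F_{n}$ holomorphic in the strip $\{-1<\I z<1\}$ and $F_{n}(z)=\sum_{k\geq 0}a_{n,k}z^{2k+1}$ near $z=0$; also $\dot H_{0,1}(y) = \sqrt{2}\,z(1+z^{2})^{-3/2}\big|_{z=e^{\sqrt{2}y}}$. Then $\dot H_{0,1}(y)v(y) = \sum_{n=0}^{m}y^{n}\widetilde F_{n}(e^{\sqrt{2}y})$ with $\widetilde F_{n}$ holomorphic in the strip and $\widetilde F_{n}(z)=O(z^{2})$ near $z=0$ (only even powers). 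The elementary identity $\int_{-\infty}^{x}y^{n}e^{a\sqrt{2}y}\,dy = e^{a\sqrt{2}x}Q_{n,a}(x)$ for $a>0$, where $Q_{n,a}$ is a polynomial of degree $n$ in $x$, then gives
\begin{equation*}
\int_{-\infty}^{x}\dot H_{0,1}(y)v(y)\,dy = \sum_{n=0}^{m}x^{n}\Psi_{n}(e^{\sqrt{2}x}),
\end{equation*}
with each $\Psi_{n}$ holomorphic in the strip and $\Psi_{n}(z)=O(z^{2})$ at $z=0$. The double zero of $\Psi_{n}$ at $z=0$ exactly cancels the double pole of $\dot H_{0,1}(x)^{-2}=(1+z^{2})^{3}/(2z^{2})$, so $g'$ and, after a second integration in $x$, $g$ decompose as polynomial-in-$x$ combinations of holomorphic-in-$z$ functions. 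Multiplying by $\dot H_{0,1}$ restores the odd-power structure at $z=0$ and produces $u(x) = \sum_{n=0}^{m+1}x^{n}h_{n}(x)$, with each $h_{n}(x) = H_{n}(e^{\sqrt{2}x})$ where $H_{n}$ is holomorphic in the strip and has only odd powers in its series at $z=0$.

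At $+\infty$, the Schwartz decay of $v\in S^{+}_{m}\subset\mathscr{S}(\mathbb{R})$ combined with the exponential decay of $\dot H_{0,1}$ makes $\dot H_{0,1}v$ Schwartz and exponentially small; repeated integration by parts shows $\int_{x}^{+\infty}\dot H_{0,1}v\,dy = O(\dot H_{0,1}(x)^{2})$ times a Schwartz factor, so $g'$ is Schwartz, $g$ has at most polynomial growth, and $u=\dot H_{0,1}g$ is Schwartz at $+\infty$. Combined with the $-\infty$ analysis, this yields $\dot h_{n}\in\mathscr{S}(\mathbb{R})$ and hence $h_{n}\in S^{+}\cap\mathscr{S}(\mathbb{R})$, completing the proof. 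The step I expect to be the main obstacle is the bookkeeping in the second integration in $x$: under the substitution $z=e^{\sqrt{2}x}$ one has $x=(\log z)/\sqrt{2}$, and the $(\log z)^{n}$ factors introduced by integrating $s^{n}$ against exponentials must be carefully regrouped with the explicit $x^{n}$ powers so that the holomorphic-in-$z$ pieces $H_{n}$ can be isolated. Verifying that these $H_{n}$ indeed extend holomorphically across the whole strip (not only near $z=0$, where the series expansion is immediate) requires a supplementary analytic-continuation argument using the explicit meromorphic structure of $\dot H_{0,1}(y)=\sqrt{2}\,z(1+z^{2})^{-3/2}$, checking that the combined factors $(1+z^{2})^{3}$ and $z^{-2}$ do not generate new singularities inside $\{-1<\I z<1\}$.
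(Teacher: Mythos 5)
Your construction is genuinely different from the paper's proof. The paper never writes down an explicit inverse: for $m=0$ it posits the ansatz $f(x)=c_{0}x\dot H_{0,1}(x)+F_{1}(e^{\sqrt{2}x})$, determines the coefficients of $F_{1}$ through the linear recurrence \eqref{recurrence}, proves that the resulting series is analytic on $\mathbb{D}$, and then extends the solution to the whole strip by invoking the theory of ODEs with analytic coefficients; for $m\geq 1$ it argues by induction on $m$, using Lemma \ref{independent} to identify the top-order coefficient of $T_{M}(g)$ with $\sigma\dot H_{0,1}$ and correcting it by the explicit function $\sigma x^{M+2}\dot H_{0,1}/(M+2)$. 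Your variation-of-parameters formula treats all $m$ simultaneously and trades the recurrence and the induction for direct bookkeeping on iterated integrals; your parity and degree counts are correct, and the use of orthogonality to tame the formula at $-\infty$ is the right mechanism. However, as written the proposal has one genuine gap and two incorrect intermediate claims.

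The gap is the one you flag, and it is not a side issue but the crux: membership in $S^{+}$ is, by definition, holomorphy of the associated function of $z$ on the whole strip $\{z\,\vert\,{-}1<\I(z)<1\}$, and your term-by-term integration via $\int_{-\infty}^{x}y^{n}e^{a\sqrt{2}y}\,dy=e^{a\sqrt{2}x}Q_{n,a}(x)$ only produces convergent series near $z=0$. This is precisely the point where the paper has to appeal to analytic ODE theory, so a proof that stops here has not yet established the $S^{+}$ structure at all. In your framework there is a clean substitute which you should carry out instead of the continuation argument you sketch: integrate by parts in $y$ using the antiderivative $\Phi_{n}(z)=\int_{0}^{z}\widetilde F_{n}(w)/(\sqrt{2}\,w)\,dw$. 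Since the strip is simply connected and $\widetilde F_{n}(w)/w$ has a removable singularity at $w=0$ (this is exactly where the double zero of $\widetilde F_{n}$ enters), $\Phi_{n}$ is holomorphic on the whole strip, $\Phi_{n}(e^{\sqrt{2}y})$ is an antiderivative of $\widetilde F_{n}(e^{\sqrt{2}y})$ vanishing as $y\to-\infty$, and induction on the polynomial degree $n$ then gives strip-wide holomorphy of all your $\Psi_{n}$ and, after the second integration, of the $H_{n}$.

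Two further repairs are needed. First, your decay claims at $+\infty$ are false as stated: $\int_{x}^{+\infty}\dot H_{0,1}v\,dy$ is generically of size $\dot H_{0,1}(x)$ times a rapidly decaying factor, not $\dot H_{0,1}(x)^{2}$, so $g'$ need not be Schwartz; for instance, if $v\sim e^{-\sqrt{2}x}$ at $+\infty$ (as for $M$, $N$, or $\mathcal{G}$), then $g'\sim e^{\sqrt{2}x}$ grows exponentially. What is true, and suffices, is that $g$ grows at most like $e^{2\sqrt{2}x}$ times a rapidly decaying factor, so that the product $u=\dot H_{0,1}g$ and all its derivatives still decay rapidly. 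Second, $S^{+}_{m+1}$ requires each coefficient in the decomposition $u=\sum_{n}x^{n}h_{n}$ to lie in $S^{+}\cap\mathscr{S}(\mathbb{R})$, but the individual pieces your integrations produce tend to nonzero constants at $+\infty$ (e.g.\ $\Phi_{n}(e^{\sqrt{2}x})\to\int_{\mathbb{R}}\widetilde F_{n}(e^{\sqrt{2}y})\,dy$), while only the full sum is known to decay. You must show that the limits $c_{n}=\lim_{x\to+\infty}h_{n}(x)$ all vanish; this follows, for example, by observing that $\sum_{n}x^{n}\left(h_{n}-c_{n}\right)$ decays rapidly while $u$ does as well, forcing the polynomial $\sum_{n}c_{n}x^{n}$ to tend to zero at $+\infty$ and hence to vanish identically, but some argument of this kind must appear explicitly.
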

\begin{proof}
From Lemma \ref{schwartzinver} in Appendix section, we have that if $f\in \mathscr{S}(\mathbb{R})\cap Ort\left(H^{'}_{0,1}\right),$ then $L_{1}(f)\in \mathscr{S}(\mathbb{R}).$ Since $L_{1}$ is a linear map, it is enough to prove for any $g(x) \in S^{+}\cap \mathscr{S}(\mathbb{R})$ and any $m\in\mathbb{N}\cup\{0\}$ that 
\begin{equation}\label{essentialcond}
    L_{1}\left(x^{m}g(x)-\kappa H^{'}_{0,1}(x)\right)\in S^{+}_{m+1},
\end{equation}
with $\kappa$ satisfying 
$
    \left\langle x^{m}g(x),H^{'}_{0,1}(x)\right\rangle=\kappa\norm{H^{'}_{0,1}}_{L^{2}_{x}}^{2}.
$
To simplify our notation, we denote $h(x)=x^{m}g(x)-kH^{'}_{0,1}(x).$ From Lemma \ref{firstinvert}, $L_{1}\left(x^{m}g(x)-kH^{'}_{0,1}(x)\right)$ is well defined, so it is only necessary to prove \eqref{essentialcond} by induction on $m\in \mathbb{N}\cup\{0\}.$ 
We also observe that we can apply a change of variable $z(x)=e^{\sqrt{2}x}$ to rewrite the ordinary differential equation
\begin{equation}\label{odeprincipal}
{-} f^{''}(x)+ U^{''}(H_{0,1}(x))f(x)=h(x)
\end{equation}
as
\begin{equation}\label{analyticode}
    -2z^{2}\frac{d^{2}F_{0}(z)}{dz^{2}}-2z\frac{dF_{0}(z)}{dz}+\left(2+E(z)\right)F_{0}(z)=H(z), 
\end{equation}
where $F_{0}(e^{\sqrt{2}x})=f(x),\,H(e^{\sqrt{2}x})=h(x)$  and 
\begin{equation*}
E:\{z\in \mathbb{C}|\,-1<\I{(z)}<1\}\to \mathbb{C}
\end{equation*}
is the analytic function 
\begin{equation*}
E(z)=-24\frac{z^{2}}{1+z^{2}}+30\frac{z^{4}}{\left(1+z^{2}\right)^{2}},
\end{equation*}
because of the following identity
$ U^{''}(H_{0,1}(x))=2-24\frac{e^{2\sqrt{2}x}}{(1+e^{2\sqrt{2}x})}+30\frac{e^{4\sqrt{2}x}}{(1+e^{2\sqrt{2}x})^{2}}.$
\par We also recall that the operator $L$ defined in \eqref{operatorL} satisfies $L\left(H^{'}_{0,1}\right)=0$ and $H^{'}_{0,1}(x)=\frac{\sqrt{2}e^{\sqrt{2}x}}{(1+e^{2\sqrt{2}x})^{\frac{3}{2}}}.$ Also, using the method of variation of parameters, we have that the real function
\begin{equation}\label{c(x)}
    c(x)=\frac{1-e^{-2\sqrt{2}x}}{4\sqrt{2}}+\frac{3x}{2}+\frac{3(e^{2\sqrt{2}x}-1)}{4\sqrt{2}}+\frac{e^{4\sqrt{2}x}-1}{8\sqrt{2}},
\end{equation}
satisfies $L\left(c(x)H^{'}_{0,1}(x)\right)=0.$ In conclusion, from the Picard–Lindelöf Theorem, we deduce that
\begin{equation}\label{kernel}
  L^{-1}\{0\}=\left\{\left[c_{1}\left(\frac{-e^{-2\sqrt{2}x}}{4\sqrt{2}}+\frac{3x}{2}+\frac{3e^{2\sqrt{2}x}}{4\sqrt{2}}+\frac{e^{4\sqrt{2}x}}{8\sqrt{2}}\right)+c_{2}\right]\frac{e^{\sqrt{2}x}}{(1+e^{2\sqrt{2}x})^{\frac{3}{2}}}\Bigg|\,c_{1},\,c_{2}\in\mathbb{R}\right\}.
\end{equation}
Moreover, we can verify that $c(x)H^{'}_{0,1}(x)$ satisfies
\begin{equation*}
\int_{0}^{+\infty}c(x)^{2}H^{'}_{0,1}(x)^{2}\,dx=+\infty,\,\int_{-\infty}^{0}c(x)^{2}H^{'}_{0,1}(x)^{2}\,dx=+\infty,
\end{equation*}
from which we deduce with identity \eqref{kernel} that
\begin{equation}\label{verygood}
     L^{-1}\{0\}\cap L^{2}_{x}(\mathbb{R}_{\leq -1})=L^{-1}\{0\}\cap L^{2}_{x}(\mathbb{R})=\left\{c_{1}H^{'}_{0,1}(x)|\,c_{1}\in\mathbb{R}\right\}.
\end{equation}
\par In conclusion, from Theorem \ref{firstinvert} and identity \eqref{verygood}, we deduce that if $h\in Ort\left(H^{'}_{0,1}\right),$ $f\in L^{2}_{x}(\mathbb{R}_{\leq -1})$ and $- f(x)+U^{''}(H_{0,1}(x))f(x)=h(x)$ for all $x\in\mathbb{R}$, then there exists a constant $\kappa_{1}\in\mathbb{R}$ such that $L_{1}(h)(x)-f(x)=\kappa_{1}H^{'}_{0,1}(x)$ for all $x\in\mathbb{R}.$ So, to prove Lemma \ref{secondinvert} it is enough to find one $f\in S^{+}_{m+1}$ such that $L(f)(x)=h(x).$\\
\textbf{Case ($m=0.$)}
If $h\in S^{+}_{0},$ there exist an analytic function
\begin{equation*}
    H:\{z\in \mathbb{C}|\,-1<\I{(z)}<1\}\to\mathbb{C},
\end{equation*}
and a sequence $(h_{k})_{k\in\mathbb{N}}$ such that $H(z)=\sum_{k=0}^{+\infty}h_{k}z^{2k+1}$ for any $z\in\mathbb{D}$ and $h(x)=H\left(e^{\sqrt{2}x}\right)$ for all $x\in\mathbb{R}.$
We are going to construct a sequence $(c_{k})_{k\in\mathbb{N}\cup\{0\}}$ such that there exists a solution $f\in S^{+}_{1}\cap \mathscr{S}(\mathbb{R})$ of $L(f)(x)=h(x)$ satisfying for all $x<0$
\begin{equation}\label{ff}
    f(x)=c_{0}xH^{'}_{0,1}(x)+\sum_{k=0}^{+\infty}c_{k}e^{(2k+1)\sqrt{2}x}.
\end{equation}
 \par First, since $L(H^{'}_{0,1})(x)=0,$ we have for any smooth function $g(x)$ that
\begin{equation}\label{initialL}
    L(g)(x)=-2c_{0} H^{''}_{0,1}(x)-\frac{d^{2}}{dx^{2}}\left[g(x)-c_{0}xH^{'}_{0,1}(x)\right]+ U^{''}(H_{0,1}(x))\left[g(x)-c_{0}x H^{''}_{0,1}(x)\right].
\end{equation}
Next, if $(c_{k})_{k\in\mathbb{N}}$ is a real sequence such that the function $F_{1}(z)=\sum_{k=0}^{+\infty}c_{k}z^{2k+1}$ is analytic in the open unitary disk $\mathbb{D},$ then the chain rule of derivative implies for any $x<0$ that 
\begin{equation}\label{aa}
    \frac{d F_{1}(e^{\sqrt{2}x})}{dx}=\sqrt{2}\sum_{k=0}^{+\infty}c_{k}(2k+1)e^{(2k+1)\sqrt{2}x},\,
    \frac{d^{2} F_{1}(e^{\sqrt{2}x})}{dx^{2}}=2\sum_{k=0}^{+\infty}c_{k}(2k+1)^{2}e^{(2k+1)\sqrt{2}x}.
\end{equation}
We also denote the analytic expansion of $E(z)$ in the open complex unitary disk as
\begin{equation}\label{anal1}
    E(z)=\sum_{k=1}^{+\infty} p_{k}z^{2k},
\end{equation}
and since $H^{''}_{0,1}=2H_{0,1}-8H_{0,1}^{3}+6H_{0,1}^{5}\in S^{+},$ we have for $x<0$ that
$ H^{''}_{0,1}(x)=2e^{\sqrt{2}x}+\sum_{k=1}^{+\infty}u_{k}e^{(2k+1)\sqrt{2}x},$
with $\mathcal{U}(z)=\sum_{k=1}^{+\infty}u_{k}z^{k}$ analytic in $\mathbb{D}.$
\par Moreover, using identity \eqref{initialL}, we would obtain that if $L(g)=h,$ 
\begin{equation*}
    g(x)=c_{0}xH^{'}_{0,1}(x)+\sum_{k=0}^{{+}\infty} c_{k} e^{(2k+1)\sqrt{2}x} \text{, for any $x<0,$}
\end{equation*}
and $\limsup_{k\to+\infty} \vert c_{k} \vert^{\frac{1}{k}}\leq 1,$ then $\left(c_{k}\right)_{k\in\mathbb{N}\cup\{0\}}$ should satisfy the following equations: 
\begin{equation}\label{recurrence}
   \begin{cases}
  {-} 4c_{0}=h_{0},\\
\left(2-2(2k+1)^{2}\right)c_{k}=\left[h_{k}+2c_{0}u_{k}-\sum_{j+m=k,\,j\geq 1}c_{m}p_{j}\right]\text{, for any $k\geq 1.$}
    \end{cases}
\end{equation}
From now on, we consider the sequence $(c_{k})_{k\in\mathbb{N}\cup\{0\}}$ to be the unique solution of the linear recurrence \eqref{recurrence}.
 Clearly, for any $0<\epsilon<1,$ we have that $\lim_{k\to+\infty}|c_{k}|\epsilon^{k}=0,$ which implies 
 \begin{equation*}
 \limsup_{k\to+\infty}\vert c_{k} \vert^{\frac{1}{k}}\leq 1. 
 \end{equation*}
 Otherwise, $(c_{k}\epsilon^{\frac{k}{2}})_{k\in\mathbb{N}}$ would be an unbounded sequence and there would be a subsequence $(c_{k_{j}})_{j\in\mathbb{N}},$ so that $\vert c_{l}\vert \epsilon^{\frac{l}{2}} <\vert c_{k_{j}}\vert \epsilon^{\frac{k_{j}}{2}}$ for all $0\leq l<k_{j},$ from which we would obtain with the identities $\lim_{n\to+\infty}p_{n}\epsilon^{\frac{n}{2}}=\lim_{n\to+\infty} h_{n} \epsilon^{\frac{n}{2}}=\lim_{n\to+\infty} u_{n}\epsilon^{\frac{n}{2}}=0$ that
\begin{equation*}
    \epsilon^{\frac{k_{j}}{2}}|c_{k_{j}}|(2(2k_{j}+1)^{2}-2)\gg 2|c_{0}u_{k_{j}}|\epsilon^{\frac{k_{j}}{2}}+|h_{k_{j}}|\epsilon^{\frac{k_{j}}{2}}+2(k_{j}+1)|c_{k_{j}}|\epsilon^{\frac{k_{j}}{2}}\norm{(\epsilon^{\frac{j}{2}}p_{j})}_{L^{\infty}(\mathbb{N})},
\end{equation*}
but this estimate would contradict \eqref{recurrence}. So, we deduced that
\begin{equation*}
F_{1}(z)=\sum_{k=0}^{+\infty}c_{k}z^{2k+1}
\end{equation*}
is analytic in $\mathbb{D}.$ In conclusion, the recurrence \eqref{recurrence} implies that the function $f(x)$ denoted in \eqref{ff} satisfies $L(f)(x)=h(x)$ for all $x<0.$   
\par  Moreover, because $E(z),\,\frac{1}{z}$ are analytic in the simply connected regions 
\begin{align*}
    \mathcal{B}_{\delta,+}=&\left\{z\in \mathbb{C}\vert\, -1<\I(z)<1,\,\vert z\vert>\delta,\,\operatorname{Re}(z)>-\frac{4}{5}\delta \right\},\\ \mathcal{B}_{\delta,-}=&\left\{z\in \mathbb{C}\vert\, -1<\I(z)<1,\,\vert z\vert>\delta,\,\operatorname{Re}(z)<\frac{4}{5}\delta\right\}
\end{align*}
for any $0<\delta<1,$ we obtain, from $h\in S^{+}$ and the ordinary differential equation \eqref{analyticode}, the existence of a unique holomorphic function $F_{+}$ in the region  $\mathcal{B}_{\delta,+}$ which is a solution of \eqref{analyticode} and satisfies   $F_{1}(e^{\sqrt{2}x})+c_{0}xH^{'}_{0,1}(x)=F_{+}(e^{\sqrt{2}x})$ for all $e^{\sqrt{2}x}\in \mathcal{B}_{\delta,+}\cap\mathbb{D},$ see Chapter $3.7$ of \cite{ode}. By analogy, there exists a unique holomorphic function $F_{-}$ with domain $\mathcal{B}_{\delta,-}$ which is a solution of \eqref{analyticode} and satisfies $F_{1}(e^{\sqrt{2}x})+c_{0}xH^{'}_{0,1}(x)=F_{-}(e^{\sqrt{2}x})$ for all $e^{\sqrt{2}x}\in \mathcal{B}_{\delta,-}\cap \mathbb{D}.$ In conclusion, there exists a unique analytic function $F_{2}$ in the region $\mathcal{B}=\{z\in \mathbb{C}\vert\, -1<\I(z)<1\}$ such that $F_{2}(z)=F_{1}(z)$ for all $z\in \mathbb{D}$ and the real function
\begin{equation*}
    c_{0}xH^{'}_{0,1}(x)+F_{2}\left(e^{\sqrt{2}x}\right) \in L^{-1}\{h\}.
\end{equation*}
\par Indeed, from the recurrence relation \eqref{recurrence} and identities \eqref{initialL}, \eqref{aa}, \eqref{anal1}, we conclude that if   \begin{equation}\label{idtodo}
    f(x)=c_{0}xH^{'}_{0,1}(x)+F_{2}(e^{\sqrt{2}x}),
\end{equation} then $f(x) \in L^{2}_{x}(\mathbb{R}_{\leq -1}),$ and $L(f)(x)=h(x)$ for all $x\in\mathbb{R}.$ In conclusion, there exists $\tau\in\mathbb{R}$ such that
$L_{1}(h)(x)=f(x)-\tau H^{'}_{0,1}(x),$ and since $L_{1}(h)(x)\in \mathscr{S}(\mathbb{R}),$ identity \eqref{idtodo} implies that
$L_{1}(h)(x)$ is in $S^{+}_{1}.$\\ 
\\
\textbf{General case}($m\geq1.$)
 Based on the observation made in \eqref{essentialcond}, it suffices to check for any $g\in S^{+}\cap\mathscr{S}(\mathbb{R})$ that
\begin{equation}\label{tootooeasy}
   T_{m}(g)\coloneqq L_{1}\left(x^{m}g-\left\langle x^{m}g,\,H^{'}_{0,1}\right\rangle\frac{H^{'}_{0,1}}{\norm{H^{'}_{0,1}}_{L^{2}_{x}}^{2}}\right)\in S^{+}_{m+1},
\end{equation}
for all $m\in\mathbb{N}\cup\{0\}.$ 
Clearly, we checked \eqref{tootooeasy} when $m=0$ in the first case. Now, we assume that \eqref{tootooeasy} is true for all $m\in\mathbb{N}\cup\{0\}$ satisfying $0\leq m\leq M,$ for some number $M\in\mathbb{N}\cup\{0\}.$ From the inductive hypothesis, if $g\in S^{+}\cap\mathscr{S}(\mathbb{R}),$ then $T_{M}(g)\in S^{+}_{M+1},$ which implies the existence of a finite set of functions $\left(f_{m}\right)_{0\leq m\leq M+1}\subset S^{+}\cap \mathscr{S}(\mathbb{R})$ such that
\begin{equation}\label{trivialidentity}
    T_{M}(g)=\sum_{m=0}^{M+1}x^{m}f_{m}.
\end{equation}
\par Moreover, since $L\left(T_{M}(g)\right)\in S^{+}_{M},$ we derive from Lemma \ref{independent} and identity \eqref{trivialidentity} the identity
$x^{M+1}L(f_{M+1})(x)=0,$
which is possible only if $f_{M+1}=\sigma H^{'}_{0,1}$ for a real number $\sigma.$ Therefore, we have 
\begin{equation}\label{eqtM}
    T_{M}(g)(x)= \sigma x^{M+1}H^{'}_{0,1}(x)+\sum_{m=0}^{M}x^{m}f_{m}(x) \text{ for any $x\in\mathbb{R}.$}
\end{equation}
Consequently,
\begin{equation*}
    \frac{d}{dx}T_{M}(g)(x)-\sigma x^{M+1} H^{''}_{0,1}(x) \text{ is in $S^{+}_{M},$}
\end{equation*}
from which, using \eqref{tootooeasy} and identity $L(H^{'}_{0,1})(x)=0,$
we obtain that
\begin{equation*}
    -\frac{d^{2}}{dx^{2}}\Big[xT_{M}(g)(x)\Big]+ U^{''}(H_{0,1}(x))xT_{M}(g)(x)-\left[x^{M+1}g(x)-\tau_{M} xH^{'}_{0,1}(x)-2\sigma x^{M+1} H^{''}_{0,1}(x)\right] 
\end{equation*}
is in $S^{+}_{M},$ where
\begin{equation*}
    \tau_{M}=
   \frac{ \left\langle x^{M}g,\,H^{'}_{0,1}\right\rangle}{\norm{H^{'}_{0,1}}_{L^{2}_{x}}^{2}}.
\end{equation*}
\par  With identity $L(H^{'}_{0,1})(x)=0$ we also obtain that
\begin{multline*}
    -\frac{d^{2}}{dx^{2}}\left[-\frac{\sigma x^{M+2}H^{'}_{0,1}(x)}{M+2}\right]+U^{''}(H_{0,1}(x))\left[-\frac{\sigma x^{M+2}H^{'}_{0,1}(x)}{M+2}\right]\\=2\sigma x^{M+1} H^{''}_{0,1}(x)+\sigma(M+1)x^{M}H^{'}_{0,1}(x).
\end{multline*}
Therefore, using that $xH^{'}_{0,1}$ and $x^{M}H^{'}_{0,1}$ are in $S^{+}_{M},$ we deduce
\begin{equation*}
    L\left(xT_{M}(g)(x)+\frac{\sigma x^{M+2}H^{'}_{0,1}(x)}{M+2}\right)-x^{M+1}g(x)  \text{ is in $S^{+}_{M},$}
\end{equation*}
from which, for $\tau_{M+1}\norm{H^{'}_{0,1}}_{L^{2}_{x}}^{2}=\langle H^{'}_{0,1},x^{M+1}g(x)\rangle,$ we obtain that
\begin{equation}
    L_{1}\left(x^{M+1}g-\tau_{M+1}H^{'}_{0,1}\right)-\left[x T_{M}(g)+\frac{\sigma x^{M+2}H^{'}_{0,1}}{M+2}\right] \text{ is in $S^{+}_{M+1}.$}
\end{equation}
\par In conclusion, we obtain that \eqref{tootooeasy} is true for $m=M+1,$ so by induction, it is true for all $m\in\mathbb{N}\cup\{0\},$ so Lemma \ref{secondinvert} is true for all $m\in\mathbb{N}\cup\{0\}.$
\end{proof}
\section{Auxiliary estimates}\label{prpp}
In this section, we will prove useful lemmas, which will be used later to estimate $\frac{d^{l}}{dt^{l}}\Lambda(\phi_{k})(v,t,x)$ for all $k\in\mathbb{N}_{\geq 2}$ and $l\in\mathbb{N}\cup\{0\}.$ 
\par First, we can verify by induction that 
$
    \vert d^{(l)}(t)\vert \lesssim_{l} v^{l},
$
for any $l\in\mathbb{N},$ more precisely:
\begin{lemma}\label{dlemma}
The function $d_{v}(t)=\frac{1}{\sqrt{2}}\ln{\left(\frac{8}{v^{2}}\cosh{\left(\sqrt{2}vt\right)}^{2}\right)}$ satisfies $\norm{\dot d_{v}(t)}_{L^{\infty}(\mathbb{R})}=2v$ and
\begin{equation*}
    \vert d_{v}^{(l)}(t)\vert\lesssim_{l} v^{l}e^{-2\sqrt{2}\vert t\vert v} \text{ for all natural number $l\geq 2.$} 
\end{equation*}
\end{lemma}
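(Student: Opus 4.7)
The statement is elementary and the strategy is direct computation followed by a decay estimate for the derivatives of $\tanh$.

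My plan is to first expand the logarithm to write
\[
d_{v}(t)=\frac{1}{\sqrt{2}}\bigl[\ln(8)-2\ln(v)+2\ln\cosh(\sqrt{2}vt)\bigr],
\]
and differentiate once to obtain $\dot d_{v}(t)=2v\tanh(\sqrt{2}vt)$. Since $\|\tanh\|_{L^{\infty}(\mathbb{R})}=1$ (and the supremum is attained in the limit), this immediately yields the first claim $\|\dot d_{v}\|_{L^{\infty}(\mathbb{R})}=2v$.

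For the higher-order estimate, I would differentiate once more to get
\[
\ddot d_{v}(t)=2\sqrt{2}\,v^{2}\sech^{2}(\sqrt{2}vt),
\]
and then for general $l\geq 2$ write
\[
d_{v}^{(l)}(t)=2v\,(\sqrt{2}v)^{l-1}\tanh^{(l-1)}(\sqrt{2}vt).
\]
The key auxiliary fact, which I would prove by a short induction on $j\geq 1$, is that $\tanh^{(j)}(x)=P_{j}\bigl(\tanh(x)\bigr)\sech^{2}(x)$ for some polynomial $P_{j}$; this follows from $\tanh'=\sech^{2}$ and the identity $\frac{d}{dx}\sech^{2}(x)=-2\tanh(x)\sech^{2}(x)$. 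Combined with the elementary bound $\sech^{2}(x)\leq 4 e^{-2|x|}$ and the fact that $\tanh$ is bounded, this gives $|\tanh^{(j)}(x)|\lesssim_{j} e^{-2|x|}$ for all $j\geq 1$.

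Substituting this estimate with $j=l-1\geq 1$ into the expression for $d_{v}^{(l)}(t)$ yields
\[
|d_{v}^{(l)}(t)|\lesssim_{l} v\cdot v^{l-1}\cdot e^{-2\sqrt{2}|t|v}=v^{l}e^{-2\sqrt{2}|t|v},
\]
which is the required bound. There is no genuine obstacle here; the only mildly non-trivial ingredient is the structural lemma for the derivatives of $\tanh$, and that is a one-line induction.
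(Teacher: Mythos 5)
Your proof is correct, and it reaches the bound by a genuinely more elementary route than the paper. Both arguments share the same skeleton: each $t$-derivative pulls out a factor $\sqrt{2}v$ by the chain rule, so everything reduces to showing that the derivatives of the relevant hyperbolic function decay like $e^{-2\vert x\vert}$. The difference is in how that decay is established. The paper works with $\ddot d_{v}(t)=2\sqrt{2}v^{2}\sech^{2}(\sqrt{2}vt)$ and proves $\vert \frac{d^{l}}{dx^{l}}\sech(x)\vert\lesssim_{l}e^{-\vert x\vert}$ by a complex-analytic argument: it writes $\sech(x)=q(e^{-x})$ with $q(z)=\frac{2z}{1+z^{2}}$, and uses the analyticity of $q$ on the strip $\{-1<\operatorname{Im}(z)<1\}$ together with the symmetry $q(z)=q(z^{-1})$ to bound all derivatives, then squares. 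You instead work one derivative lower, with $\dot d_{v}(t)=2v\tanh(\sqrt{2}vt)$, and prove the structural identity $\tanh^{(j)}(x)=P_{j}(\tanh(x))\sech^{2}(x)$ by induction; boundedness of $\tanh$ and $\sech^{2}(x)\leq 4e^{-2\vert x\vert}$ then finish the job. Your route avoids any complex analysis and yields an explicit algebraic form for the derivatives, which is arguably cleaner; the paper's analyticity device is heavier here but is a tool it reuses elsewhere (e.g.\ in Lemma \ref{explemma} and Lemma \ref{dt2kink}) for functions where an explicit polynomial structure is less convenient. One small point to tighten in your induction: when you differentiate $P_{j}(\tanh(x))\sech^{2}(x)$, the term $P_{j}'(\tanh(x))\sech^{2}(x)\cdot\sech^{2}(x)$ carries an extra factor of $\sech^{2}$, so to keep the prefactor a polynomial in $\tanh$ you also need the identity $\sech^{2}(x)=1-\tanh^{2}(x)$, giving $P_{j+1}(y)=(1-y^{2})P_{j}'(y)-2yP_{j}(y)$; the two identities you cite are not quite sufficient on their own, though this is a one-line fix.
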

\begin{proof}
From $\dot d_{v}(t)=2v\tanh{\left(\sqrt{2}vt\right)},$ we obtain that $\norm{\dot d_{v}(t)}_{L^{\infty}(\mathbb{R})}=2v.$ Moreover, because of
\begin{equation*}
    \ddot d_{v}(t)=16\sqrt{2}e^{-\sqrt{2}d_{v}(t)}=2\sqrt{2}v^{2}\sech{(\sqrt{2}vt)}^{2},
\end{equation*}
Lemma \ref{dlemma} is also true for $l=2.$  Since for any $l\in\mathbb{N}\cup\{0\},$ 
$
  \left\vert  \frac{d^{l}}{dx^{l}}\sech{(x)}\right\vert\lesssim_{l} e^{-\vert x\vert},
$
we can deduce the result of Lemma \ref{dlemma} from the chain rule of derivative. 
\end{proof}
From now on, we denote the function $w_{0}:\mathbb{R}^{2}\to\mathbb{R}$ by
\begin{equation}\label{w0}
    w_{0}(t,x)=\frac{x-\frac{d_{v}(t)}{2}}{\sqrt{1-\frac{\dot d_{v}(t)^{2}}{4}}}
\end{equation}
We will use several times the function $w_{0}(t,x)$ in the next sections too.
Clearly, from \eqref{odedvv}, for any $h\in C^{\infty}(\mathbb{R}),$ we have the following identity
\begin{equation}\label{dfw0}
\frac{\partial }{\partial t}\left[h\left(w_{0}(t,x)\right)\right]={-}\frac{\dot d_{v}(t)}{\sqrt{4-\dot d_{v}(t)^{2}}} h^{'}(w_{0}(t,x))+\frac{16\sqrt{2}\dot d_{v}(t)}{4-\dot d_{v}(t)^{2}}e^{-\sqrt{2}d_{v}(t)}w_{0}(t,x) h^{'}(w_{0}(t,x)).
\end{equation}
 Moreover, we have: 
\begin{lemma}\label{geraldt}
If $f\in S^{+}_{m}$ for some $m\in \mathbb{N}\cup\{0\},$ then for any numbers $l,\,k_{1}\in\mathbb{N}\cup\{0\}$ the function
$f(w_{0}(t,x))$
satisfies the following estimate
\begin{equation}\label{popo}
   \norm{\frac{\partial^{l}f(w_{0}(t,x))}{\partial t^{l}}}_{H^{k_{1}}_{x}(\mathbb{R})}\lesssim_{l,k_{1}} v^{l}\norm{(1+\vert x\vert)^{l}\max_{0\leq j\leq k_{1}+l}\left \vert f^{(j)}(x) \right \vert}_{L^{2}_{x}(\mathbb{R})}\lesssim_{f,l,k_{1}} v^{l}.
\end{equation}
%$\frac{d^{k}f(w_{0}(t,x))}{dt^{k}}=\sum_{i}f^{(l)}\left(w_{0}(t,x)\right)\prod_{j=1}^{l}\partial^{n_{j}+1}_{t}w_{0}(t,x)$ such that all $n_{j}\in\mathbb{N}$ and $l+\sum_{j=1}^{l}n_{j}=k$
% \frac{d^{k}}{dt^{k}}\left[f(t)g(t)\right]=\sum_{l=0}^{k}\begin{pmatrix} k
%l
%\end{pmatrix}f^{(l)}(t)g^{(k-l)}(t)
More precisely, there exist a natural number $N_{l}$ and a finite set $\{(h_{i,l},p_{i,l,v})\in S^{+}_{m+l}\times C^{\infty}\vert\, 1\leq i\leq N_{l}\}$ such that
\begin{equation}\label{induct0}
    \frac{\partial^{l}f(w_{0}(t,x))}{\partial t^{l}}=\sum_{i=1}^{N_{l}}h_{i,l}(w_{0}(t,x))p_{i,l,v}(t),
\end{equation}
and, for all $1\leq i\leq N_{l}$ and all $k_{1}\in\mathbb{N}\cup\{0\}$
\begin{equation}\label{induct1}
    \left\vert \frac{\partial^{k_{1}}h_{i,l}(x)}{\partial x^{k_{1}}} \right\vert\lesssim_{k_{1},l} (1+\vert x \vert)^{l}\max_{0\leq j\leq k_{1}+l}\left\vert f^{(j)} (x)\right\vert,\quad
    \norm{\frac{\partial^{k_{1}}p_{i,l,v}(t)}{\partial t^{k_{1}}}}_{L^{\infty}(\mathbb{R})}\lesssim_{l,k_{1}} v^{k_{1}+l} \text{, if $0<v\ll 1.$}
\end{equation}
Furthermore, if $l$ is odd, then $p_{i,l,v}(t)$ is an odd function for all $1\leq i\leq N_{l},$ otherwise they are all even functions.
\end{lemma}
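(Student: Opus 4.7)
My plan is to establish the structural decomposition \eqref{induct0}, the pointwise derivative bound \eqref{induct1}, and the parity statement on $p_{i,l,v}$ simultaneously by induction on $l$, and then to deduce the norm estimate \eqref{popo} at each level from these structural facts via change of variable. The base case $l=0$ is immediate with $N_{0}=1$, $h_{1,0}=f\in S^{+}_{m}$, and $p_{1,0,v}(t)\equiv 1$ (which is even); the $H^{k_{1}}_{x}$-bound follows from the substitution $y=w_{0}(t,x)$, whose Jacobian $\sqrt{1-\dot d_{v}(t)^{2}/4}$ lies in $[\sqrt{1-v^{2}},1]$ by Lemma \ref{dlemma}, so that $L^{2}$ and $H^{k_{1}}$ norms transfer up to constants uniform in $t$.

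For the inductive step I would differentiate each summand $h_{i,l}(w_{0})\,p_{i,l,v}$ of \eqref{induct0} via the product rule and identity \eqref{dfw0}, producing the three contributions
\begin{equation*}
\dot p_{i,l,v}(t)\,h_{i,l}(w_{0}),\qquad -\frac{\dot d_{v}(t)\,p_{i,l,v}(t)}{\sqrt{4-\dot d_{v}(t)^{2}}}\,\dot h_{i,l}(w_{0}),\qquad \frac{16\sqrt{2}\,\dot d_{v}(t)\,e^{-\sqrt{2}d_{v}(t)}\,p_{i,l,v}(t)}{4-\dot d_{v}(t)^{2}}\,w_{0}\,\dot h_{i,l}(w_{0}).
\end{equation*}
A direct check from Definition \ref{s+} shows that $S^{+}_{m+l}$ is closed under differentiation (if $h\in S^{+,n}$ then $\dot h\in S^{+,n}\oplus S^{+,n-1}$), so the spatial factors of the first two terms stay in $S^{+}_{m+l}\subset S^{+}_{m+l+1}$, while in the third the multiplier $w_{0}$ raises the degree by exactly one, placing the factor in $S^{+}_{m+l+1}$. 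Applying Leibniz to $(y\dot h_{i,l}(y))^{(k_{1})}$ and using the inductive \eqref{induct1} shows that the weight $(1+|y|)^{l}$ becomes $(1+|y|)^{l+1}$ and the maximal order of $f$-derivatives rises from $k_{1}+l$ to $k_{1}+l+1$, matching the new level.

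For the scalar bounds I would use Lemma \ref{dlemma} together with $e^{-\sqrt{2}d_{v}(t)}=v^{2}/(8\cosh^{2}(\sqrt{2}vt))$ to obtain $|\dot d_{v}|\le 2v$, $|d_{v}^{(j)}|\lesssim v^{j}$ for $j\ge 2$, and $|(e^{-\sqrt{2}d_{v}})^{(k)}|\lesssim v^{k+2}$. The two new scalar multipliers $C_{1}(t)=-\dot d_{v}/\sqrt{4-\dot d_{v}^{2}}$ and $C_{2}(t)=16\sqrt{2}\,\dot d_{v}\,e^{-\sqrt{2}d_{v}}/(4-\dot d_{v}^{2})$ are smooth in $t$ and each carries an explicit factor $\dot d_{v}$; applying Fa\`a di Bruno to $C_{1}=q(\dot d_{v})$ with $q$ analytic near $0$ and $q(0)=0$, every term in the expansion of $C_{1}^{(k_{1})}(t)$ contains either a $\dot d_{v}$ or some $d_{v}^{(j)}$ with $j\ge 2$, giving $|C_{1}^{(k_{1})}(t)|\lesssim v^{k_{1}+1}$; the analogous argument for $C_{2}$ yields $|C_{2}^{(k_{1})}(t)|\lesssim v^{k_{1}+1}$. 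Leibniz combined with the inductive $|p_{i,l,v}^{(j)}|\lesssim v^{j+l}$ then gives $|p_{i,l+1,v}^{(k_{1})}|\lesssim v^{k_{1}+l+1}$, which is \eqref{induct1} at the next level.

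Parity propagates because $d_{v}$ is even, so $\dot d_{v}$ is odd, $e^{-\sqrt{2}d_{v}}$ is even, and hence $C_{1}$ and $C_{2}$ are both odd in $t$, while differentiation flips parity; each of the three constructions in the displayed formula therefore sends a $p_{i,l,v}$ of parity matching $l$ to a coefficient of parity matching $l+1$. Finally, \eqref{popo} at level $l+1$ is recovered from the structural decomposition by the same change of variable as in the base case, combined with the pointwise bound \eqref{induct1}. The main bookkeeping obstacle I anticipate is checking carefully that the explicit $\dot d_{v}$ factor in $C_{1}$ and $C_{2}$ survives through all $t$-derivatives to produce the $v^{k_{1}+1}$ scaling, and that the degree count in $S^{+}_{m+l}$ rises by exactly one only when the multiplier $w_{0}$ appears; once these two structural closure properties are confirmed, the induction proceeds mechanically.
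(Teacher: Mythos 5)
Your proposal is correct and follows essentially the same route as the paper: induction on $l$ using identity \eqref{dfw0}, the identical three-term decomposition of each differentiated summand, closure of the spaces $S^{+}_{m+l}$ under differentiation and multiplication by $x$, the scalar bounds from Lemma \ref{dlemma} (the paper invokes smoothness of $\theta\mapsto(1-\theta^{2})^{-1/2}$ on compact sets where you invoke Fa\`a di Bruno, which is the same estimate), and the same parity propagation from the evenness of $d_{v}$. The paper even makes the same explicit choice $N_{l+1}=3N_{l}$ with exactly your three coefficient--profile pairs, so no gap to report.
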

\begin{proof}
  We will prove by induction for all $l\in\mathbb{N}\cup\{0\}$ the existence of $N_{l}\in\mathbb{N}$ such that \eqref{induct0} holds, 
and for all $1\leq i\leq N_{l}$ $h_{i,l}\in S^{+}_{m+l},\,p_{i,l,v}(t)=({-}1)^{l}p_{i,l,v}({-}t)$
and they also satisfy
\eqref{induct1}
for all $1\leq i\leq N_{l}$ and all $k_{1}\in\mathbb{N}\cup\{0\}.$   
\par  The case $l=0$ is trivial, we can just take the unitary set $\{(f,1)\}\subset S^{+}_{m}\times C^{\infty}.$ So, there exists $l_{0}\in\mathbb{N}\cup\{0\}$ such that Lemma \ref{geraldt}
is true for all $l\in\mathbb{N}\cup\{0\}$ satisfying $0\leq l\leq l_{0}.$  In conclusion, using the identity \eqref{induct0} for $l=l_{0}$ and identity \eqref{dfw0}, we obtain that
\begin{align}\nonumber
    \frac{\partial^{l_{0}+1}f(w_{0}(t,x))}{\partial t^{l_{0}+1}}\\ \label{k01}=&\sum_{i=1}^{N_{l_{0}}}\frac{\partial h_{i,l_{0}}(w_{0}(t,x))}{\partial t}p_{i,l_{0}}(t)+h_{i,l_{0}}(w_{0}(t,x))\dot p_{i,l_{0},v}(t)\\\nonumber
    =&\sum_{i=1}^{N_{l_{0}}}{-} h^{'}_{i,l_{0}}(w_{0}(t,x))\frac{\dot d_{v}(t) p_{i,l_{0},v}(t)}{\sqrt{4-\dot d_{v}(t)^{2}}}\\ \nonumber &{+}\sum_{i=1}^{N_{l_{0}}}w_{0}(t,x) h^{'}_{i,l_{0}}(w_{0}(t,x))\frac{16\sqrt{2}\dot d_{v}(t) p_{i,l_{0},v}(t)}{4-\dot d_{v}(t)^{2}}e^{-\sqrt{2}d_{v}(t)}\\\label{k012}&{+}\sum_{i=1}^{N_{l_{0}}}h_{i,l_{0}}(w_{0}(t,x))\dot p_{i,l_{0},v}(t).
\end{align}
Since $h_{i,l_{0}}\in S^{+}_{m+l_{0}},$ we deduce that $ h^{'}_{i,l_{0}}\in S^{+}_{m+l_{0}}\subset S^{+}_{m+l_{0}+1}$ and $x h^{'}_{i,l_{0}}\in S^{+}_{m+l_{0}+1}.$ 
Also, we recall that the function $d_{v}(t)=\frac{1}{\sqrt{2}}\ln{\left(\frac{8}{v^{2}}\cosh{\left(\sqrt{2}vt\right)}^{2}\right)}$ satisfies for all $l\in\mathbb{N}$
\begin{equation}\label{ostost}
    \norm{d_{v}^{(l)}(t)}_{L^{\infty}(\mathbb{R})}\lesssim_{l} v^{l} \text{, if $0<v\ll 1.$}
\end{equation}
Moreover, for any $m\in\mathbb{N}\cup\{0\}$ and any $0<\delta<1,$
\begin{equation}\label{impooo}
    \norm{\frac{d^{m}}{d\theta^{m}}\left[\frac{1}{\sqrt{1-\theta^{2}}}\right]}_{L^{\infty}_{\theta}(\vert\theta\vert<\delta)}<+\infty,
\end{equation}
because the function $q(\theta)=\left(1-\theta^{2}\right)^{-\frac{1}{2}}$ is smooth in the set $\{\theta\vert\,\vert\theta\vert \leq \delta\}.$
Therefore, since the functions $h_{i,l_{0}}$ and $p_{i,l_{0},v}$ satisfy \eqref{induct1}, using the chain rule of derivative, estimate \eqref{ostost} and \eqref{k01}, \eqref{k012}, we deduce the existence of a natural number $N_{l_{0}+1}$ such that
\begin{equation*}
    F_{l_{0}+1,t}(x)=\sum_{i=1}^{N_{l_{0}+1}}h_{i,l_{0}+1}(x)p_{i,l_{0}+1,v}(t),
\end{equation*}
 and, for all $1\leq i\leq N_{l_{0}+1},$ the functions $h_{i,l_{0}+1},\,p_{i,l_{0}+1,v}$ satisfy \eqref{induct1}, $h_{i,l_{0}+1}\in S^{+}_{m+l_{0}+1}.$ More precisely, from \eqref{k01} and \eqref{k012}, we choose $N_{l_{0}+1}=3N_{l_{0}}$ and 
\begin{equation*}
    \begin{cases}
    \left(h_{i,l_{0}+1}(x),p_{i,l_{0+1},v}(t)\right)=\left(- h^{'}_{i,l_{0}}(x),\frac{\dot d_{v}(t) p_{i,l_{0},v}(t)}{\sqrt{4-\dot d(t)^{2}}}\right) \text{, if $\leq i\leq N_{l_{0}},$}\\
     \left(h_{i,l_{0}+1}(x),p_{i,l_{0+1},v}(t)\right)=\left (x h^{'}_{i-N_{l_{0}},l_{0}}(x),\frac{16\sqrt{2}\dot d_{v}(t) p_{i,l_{0},v}(t)}{1-\dot d_{v}(t)^{2}}e^{{-}\sqrt{2}d_{v}(t)} \right) \text{, if $N_{l_{0}}+1\leq i\leq 2N_{l_{0}},$}\\
     \left(h_{i,l_{0}+1}(x),p_{i,l_{0+1},v}(t)\right)=\left(h_{i-2N_{l_{0}}}(x),\dot p_{i,l_{0},v}(t)\right) \text{, if $2N_{l_{0}}+1\leq i\leq 3N_{l_{0}},$}
    \end{cases}
\end{equation*}
for all $(t,x)\in\mathbb{R}^{2}.$
In conclusion, \eqref{induct0}, \eqref{induct1} are true for $l=l_{0}+1$ and $h_{i,l_{0}+1}\in S^{+}_{m+l_{0}+1}$ for all $1\leq i\leq N_{l_{0}+1}.$ Finally, since $d_{v}(t)$ is an even smooth function and, for any $1\leq i\leq N_{l_{0}},$ $p_{i,l_{0},v}(t)=({-}1)^{l_{0}}p_{i,l_{0},v}({-}t),$ then, from \eqref{k01} and \eqref{k012}, we deduce that $p_{i,l_{0}+1,v}(t)=({-}1)^{l_{0}+1}p_{i,l_{0}+1,v}({-}t)$ for all $1\leq i\leq N_{l_{0}+1}.$ In conclusion, the statement of Lemma \ref{geraldt} is true for $l=l_{0}+1,$ and so, by induction, it is true for all $l\in\mathbb{N}\cup\{0\}.$   \end{proof}
\begin{remark}\label{perturbt}
If $\gamma:(0,1)\times \mathbb{R}\to\mathbb{R}$  is a continuous function such that $\gamma(v,\cdot):\mathbb{R}\to\mathbb{R}$ is smooth for all $0<v<1$ and    
\begin{equation*}
    \left\vert\frac{\partial^{l}\gamma(v,t)}{\partial t^{l}}\right\vert\lesssim_{l} v^{l} \text{ for any $l\in\mathbb{N}\cup\{0\}$ and all $t\in\mathbb{R},$ if $0<v\ll 1,$}
\end{equation*}
then for any Schwartz function $f$ and
\begin{equation*}
    \omega(t,x)=\frac{x-\frac{d_{v}(t)}{2}+\gamma(v,t)}{\sqrt{1-\frac{\dot d_{v}(t)^{2}}{4}}},
\end{equation*}
we obtain similarly to the proof of Lemma \ref{geraldt} that if $v\ll 1,$ then, for all $l\in\mathbb{N}\cup\{0\}$ and $k_{1}\in\mathbb{N},$ 
\begin{equation}\label{grt}
     \norm{\frac{\partial^{l}f(\omega(t,x))}{\partial t^{l}}}_{H^{k_{1}}_{x}(\mathbb{R})}\lesssim_{l,k_{1}} v^{l}\norm{(1+\vert x\vert)^{l}\max_{0\leq j\leq k_{1}+l}\left \vert f^{(j)}(x) \right \vert}_{L^{2}_{x}(\mathbb{R})}\lesssim_{f,l,k_{1}} v^{l}.
\end{equation}
Furthermore, if $f\in C^{\infty}(\mathbb{R})$ and $\dot f \in\mathscr{S}(\mathbb{R}),$ for example $f=H_{0,1},$ then
from identity 
\begin{align*}
    \frac{\partial}{\partial t}f\left(\omega(t,x)\right)=&\left[\partial_{t}\gamma(v,t)-\frac{\dot d_{v}(t)}{2}\right]\frac{1}{\sqrt{1-\frac{\dot d_{v}(t)^{2}}{4}}} f^{'}\left(\omega(t,x)\right)\\ &{+}\sqrt{1-\frac{\dot d_{v}(t)^{2}}{4}}\frac{d}{dt}\left[\frac{1}{\sqrt{1-\frac{\dot d_{v}(t)^{2}}{4}}}\right]\omega (t,x)\ f^{'}(\omega(t,x)),
\end{align*}
we obtain from the same argument above any $l,\,k_{1}\in\mathbb{N}$ that estimate \eqref{grt} holds.
We are going to use this remark later in Section \ref{sub32}.
\end{remark}
\begin{lemma}\label{porrataylor}
For any $n_{1}\in\mathbb{N}$ and $n_{2}\in \mathbb{N}\cup\{0\},$ let $r:(0,1)\times \mathbb{R}\to\mathbb{R}$ be a function such that $r_{v}\coloneqq r(v,\cdot):\mathbb{R}\to\mathbb{R}$ is smooth for all $0<v<1$ and satisfies for $n_{1}\in\mathbb{N},\,n_{2}\in\mathbb{N}\cup\{0\}$ 
\begin{equation*}
   \left\vert\frac{d^{l}r_{v}(t)}{dt^{l}}\right\vert\lesssim_{l}v^{n_{1}+l}\ln{\left(\frac{1}{v}\right)}^{n_{2}},
\end{equation*}
for all $l\in\mathbb{N}\cup\{0\},$ if $0<v\ll 1.$
 Then, for any $s\geq 1$ and any smooth function $h:\mathbb{R}\to\mathbb{R}$ such that $\dot h\in \mathscr{S}(\mathbb{R}),$ we have
\begin{align*}
    \norm{\frac{\partial^{l}}{\partial t^{l}}\left[h\left(w_{0}(t,x+r_{v}(t))\right)-h\left(w_{0}(t,x)\right)\right]}_{H^{s}_{x}(\mathbb{R})}&\lesssim_{h,s,l} v^{n_{1}+l}\ln{\left(\frac{1}{v}\right)}^{n_{2}},\\
    \norm{\frac{\partial^{l}}{\partial t^{l}}\left[h\left(w_{0}(t,x+r_{v}(t))\right)-h\left(w_{0}(t,x)\right)-\frac{r_{v}(t)}{\sqrt{1-\frac{\dot d(t)^{2}}{4}}} h^{'}\left(w_{0}(t,x)\right)\right]}_{H^{s}_{x}(\mathbb{R})}&\lesssim_{h,s,l} v^{2n_{1}+l}\ln{\left(\frac{1}{v}\right)}^{2n_{2}},
\end{align*}
if $0<v\ll 1.$
\end{lemma}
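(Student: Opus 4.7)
The plan is to express each difference via the integral form of Taylor's theorem and reduce all time derivatives to the bounds provided by Remark~\ref{perturbt}. Define $\alpha_v(t) \coloneqq \bigl(1 - \dot d_v(t)^2/4\bigr)^{-1/2}$; by Lemma~\ref{dlemma}, $\alpha_v$ is smooth in $t$ with $|\partial_t^k \alpha_v(t)| \lesssim_k v^k$ when $0 < v \ll 1$. The key algebraic identity is
\begin{equation*}
    w_0(t, x + \tau r_v(t)) = w_0(t, x) + \tau \alpha_v(t) r_v(t) \quad \text{for all } \tau \in \mathbb{R},
\end{equation*}
so the fundamental theorem of calculus applied in the $\tau$ variable gives
\begin{equation*}
    h(w_0(t, x + r_v(t))) - h(w_0(t, x)) = \alpha_v(t) r_v(t) \int_0^1 \dot h(w_0(t, x + \tau r_v(t))) \, d\tau.
\end{equation*}

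Set $A_v(t) \coloneqq \alpha_v(t) r_v(t)$ and $I(t, x) \coloneqq \int_0^1 \dot h(w_0(t, x + \tau r_v(t))) \, d\tau$. By Leibniz and the hypothesis on $r_v$, $\bigl|\partial_t^k A_v(t)\bigr| \lesssim_k v^{n_1+k} \ln{\left(\frac{1}{v}\right)}^{n_2}$. For each fixed $\tau \in [0, 1]$, I would apply Remark~\ref{perturbt} with $\gamma(v, t) \coloneqq \tau r_v(t)$; the hypothesis $|\partial_t^l \gamma| \lesssim v^l$ required there is verified because $n_1 \geq 1$ and $0 < v \ll 1$. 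This yields
\begin{equation*}
    \norm{\partial_t^k \dot h(w_0(t, x + \tau r_v(t)))}_{H^s_x(\mathbb{R})} \lesssim_{h, s, k} v^k
\end{equation*}
uniformly in $\tau \in [0, 1]$, hence by Minkowski's inequality $\norm{\partial_t^k I(t, \cdot)}_{H^s_x(\mathbb{R})} \lesssim_{h, s, k} v^k$. Applying Leibniz to the product $A_v(t) I(t, x)$ then produces the first inequality, since in every term the powers of $v$ add to at least $n_1 + l$.

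For the second inequality, use the integral form of the second-order Taylor remainder:
\begin{equation*}
    h(w_0(t, x + r_v(t))) - h(w_0(t, x)) - A_v(t) \dot h(w_0(t, x)) = A_v(t)^2 \int_0^1 (1 - \tau) \ddot h(w_0(t, x + \tau r_v(t))) \, d\tau.
\end{equation*}
Now $\bigl|\partial_t^k A_v(t)^2\bigr| \lesssim_k v^{2 n_1 + k} \ln{\left(\frac{1}{v}\right)}^{2 n_2}$, and repeating the previous argument with $\ddot h \in \mathscr{S}(\mathbb{R})$ in place of $\dot h$ produces the improved bound $v^{2 n_1 + l} \ln{\left(\frac{1}{v}\right)}^{2 n_2}$.

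The delicate point will be ensuring uniformity in $\tau \in [0, 1]$ of the bounds extracted from Remark~\ref{perturbt}; I would check this by going back to the inductive construction in Lemma~\ref{geraldt}, where the constants produced depend only on bounds for derivatives of $d_v$, derivatives of the perturbation $\tau r_v$, and seminorms of the target Schwartz function, all of which are controlled uniformly for $\tau \in [0, 1]$. Once this uniformity is secured, commuting $\partial_t^l$ with $\int_0^1 \cdot \, d\tau$ is legitimate by dominated convergence, which completes the argument.
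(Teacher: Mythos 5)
Your proposal is correct and follows essentially the same route as the paper: the same first- and second-order Taylor/FTC identities (your $A_v(t)$ is exactly the paper's prefactor $r_v(t)\bigl(1-\dot d_v(t)^2/4\bigr)^{-1/2}$), the same application of Remark \ref{perturbt} with $\gamma(v,t)=\tau r_v(t)$ (valid since $n_1\geq 1$ makes $|\partial_t^l(\tau r_v)|\lesssim_l v^l$ uniformly in $\tau$), and the same conclusion via the Leibniz rule together with Lemma \ref{dlemma}. Your extra care about uniformity in $\tau$ and commuting $\partial_t^l$ with the $\tau$-integral is a more explicit rendering of steps the paper leaves implicit, not a different argument.
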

\begin{proof}[Proof of Lemma \ref{porrataylor}]
From the Fundamental Theorem of Calculus and the definition of $w_{0}(t,x)$, we have
\begin{equation}\label{taylor1}
    h\left(w_{0}(t,x+r_{v}(t))\right)-h\left(w_{0}(t,x)\right)=\frac{r_{v}(t)}{\sqrt{1-\frac{\dot d(t)^{2}}{4}}}\bigintsss_{0}^{1} h^{'}\left(\frac{x-\frac{d_{v}(t)}{2}+\theta r_{v}(t)}{\sqrt{1-\frac{\dot d_{v}(t)^{2}}{4}}}\right)\,d\theta,
\end{equation}
and
\begin{multline}\label{taylor2}
   h\left(w_{0}(t,x+r_{v}(t))\right)-h\left(w_{0}(t,x)\right)-\frac{r_{v}(t)}{\sqrt{1-\frac{\dot d(t)^{2}}{4}}} h^{'}\left(w_{0}(t,x)\right)\\
  =\frac{r_{v}(t)^{2}}{1-\frac{\dot d(t)^{2}}{4}} \bigintsss_{0}^{1} h^{''}\left(\frac{x-\frac{d_{v}(t)}{2}+\theta r_{v}(t)}{\sqrt{1-\frac{\dot d(t)^{2}}{4}}}\right)(1-\theta)\,d\theta.
\end{multline}
From Remark \ref{perturbt}, we obtain for all $0\leq \theta\leq 1$ and $0<v\ll 1$ that
\begin{equation*}
   \norm{\frac{\partial^{l}}{\partial t^{l}}\left[ h^{''}\left(w_{0}(t,x+\theta r_{v}(t))\right)\right]}_{H^{s}_{x}(\mathbb{R})} +\norm{\frac{\partial^{l}}{\partial t^{l}}\left[h^{'}\left(w_{0}(t,x+\theta r_{v}(t))\right)\right]}_{H^{s}_{x}(\mathbb{R})}\lesssim_{l} v^{l} \text{ for all $l\in\mathbb{N}\cup\{0\}.$}
\end{equation*}
In conclusion, from identities \eqref{taylor1} and \eqref{taylor2}, we conclude Lemma \ref{porrataylor} using the product rule of derivative and Lemma \ref{dlemma}.
\end{proof}
\begin{lemma}\label{explemma}
For any $n_{1}\in\mathbb{N}$ and $n_{2}\in \mathbb{N}\cup\{0\}$ and for $0<v<1,$ let $r_{v}:\mathbb{R}\to\mathbb{R}$ being a smooth function satisfying
\begin{equation*}
   \left\vert\frac{d^{l}r_{v}(t)}{dt^{l}}\right\vert\lesssim_{l}v^{n_{1}+l}\ln{\left(\frac{1}{v}\right)}^{n_{2}} \text{, if $0<v\ll 1$}
\end{equation*}
for all $l\in\mathbb{N}\cup\{0\}.$ For any $m_{1}\in\mathbb{N},\, m_{2}\in\mathbb{N}\cup\{0\}$ and $m_{3}\in \mathbb{Z},$ let $p:(0,1)\times\mathbb{R}\to\mathbb{R}$ be the function
\begin{equation*}
    p(v,t)=\left(1-\frac{\dot d_{v}(t)^{2}}{4}\right)^{\frac{m_{3}}{2}}\exp\left(\frac{{-}m_{1}\sqrt{2}(d_{v}(t)+r_{v}(t))}{\left(1-\frac{\dot d_{v}(t)^{2}}{4}\right)^{\frac{m_{2}}{2}}}\right)-e^{{-}m_{1}\sqrt{2}d_{v}(t)}. 
\end{equation*}
If $m_{2}=m_{3}=0$ and $0<v\ll 1,$ then for all $l\in\mathbb{N}\cup\{0\}$
\begin{equation}\label{T1}
    \left\vert \frac{\partial^{l}}{\partial t^{l}}p(v,t) \right\vert\lesssim_{m_{1},l} v^{2m_{1}+n_{1}+l}\left(\ln{\left(\frac{1}{v}\right)}+\vert t\vert v\right)^{n_{2}}e^{{-}2\sqrt{2}\vert t\vert v}.
\end{equation}
If $m_{3}\neq 0,\,m_{2}=0$ and $0<v\ll 1,$ then for all $l\in\mathbb{N}\cup\{0\}$
\begin{equation}\label{T2}
     \left\vert \frac{\partial^{l}}{\partial t^{l}}p(v,t) \right\vert\lesssim_{l,m_{1}}\max\left( v^{2m_{1}+2+l},v^{2m_{1}+n_{1}+l}\left(\ln{\left(\frac{1}{v}\right)}+\vert t\vert v\right)^{n_{2}}\right)e^{{-}2\sqrt{2}\vert t\vert v}.
\end{equation}
If $m_{2}\neq 0$ and $0<v\ll 1,$ then for all $l\in\mathbb{N}\cup\{0\}$
\begin{equation}\label{T3}
     \left\vert \frac{\partial^{l}}{\partial t^{l}}p(v,t) \right\vert\lesssim_{l,m_{1}} \max{\left(v^{2m_{1}+2+l}\left(\vert t\vert v+\ln{\left(\frac{1}{v}\right)}\right),v^{2m_{1}+n_{1}+l}\left(\vert t\vert v+\ln{\left(\frac{1}{v}\right)}\right)^{n_{2}}\right)}e^{-2\sqrt{2}\vert t\vert v}.
\end{equation}
\end{lemma}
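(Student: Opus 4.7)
The plan is to factor out the common factor $e^{-m_{1}\sqrt{2}d_{v}(t)}$ from $p(v,t)$ and analyze the remaining expression in each of the three cases. Using the explicit formula $e^{-m_{1}\sqrt{2}d_{v}(t)}=v^{2m_{1}}/[8^{m_{1}}\cosh(\sqrt{2}vt)^{2m_{1}}]$ together with the bound $|(d/dx)^{l}\cosh(x)^{-2m_{1}}|\lesssim e^{-2m_{1}|x|}$ (obtained as in \eqref{induid}), one gets $|\partial_{t}^{l}e^{-m_{1}\sqrt{2}d_{v}(t)}|\lesssim v^{2m_{1}+l}e^{-2m_{1}\sqrt{2}|t|v}$. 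The hypothesis on $r_{v}$ together with the chain rule and Leibniz yields $|\partial_{t}^{l}(e^{-m_{1}\sqrt{2}r_{v}(t)}-1)|\lesssim v^{n_{1}+l}\ln(1/v)^{n_{2}}$.

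Case 1 ($m_{2}=m_{3}=0$) is immediate: $p=e^{-m_{1}\sqrt{2}d_{v}}(e^{-m_{1}\sqrt{2}r_{v}}-1)$ and Leibniz gives \eqref{T1} with the stronger decay $e^{-2m_{1}\sqrt{2}|t|v}$, which implies the claim since $m_{1}\geq 1$. For Case 2 ($m_{3}\neq 0$, $m_{2}=0$), I would write
\begin{equation*}
p=e^{-m_{1}\sqrt{2}d_{v}}\left[B(t)e^{-m_{1}\sqrt{2}r_{v}}+(e^{-m_{1}\sqrt{2}r_{v}}-1)\right],\qquad B(t):=(1-\dot d_{v}^{2}/4)^{m_{3}/2}-1.
\end{equation*}
Since $F(u)=(1-u^{2}/4)^{m_{3}/2}$ satisfies $F(0)=1$ and $F'(0)=0$, the bound $|\dot d_{v}|\leq 2v$ gives $|B|\lesssim v^{2}$. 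By Fa\`a di Bruno applied to $F(\dot d_{v}(t))$ and Lemma \ref{dlemma}, every term in $\partial_{t}^{l}B$ for $l\geq 1$ contains a factor $d_{v}^{(k)}$ with $k\geq 2$, so $|\partial_{t}^{l}B|\lesssim v^{l+2}e^{-2\sqrt{2}|t|v}$. Combining via Leibniz proves \eqref{T2}.

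The main difficulty lies in Case 3 ($m_{2}\neq 0$). I would set $A(t):=(1-\dot d_{v}^{2}/4)^{-m_{2}/2}-1$ and write
\begin{equation*}
p=e^{-m_{1}\sqrt{2}d_{v}}\left[(1+B)\,e^{-m_{1}\sqrt{2}r_{v}}\,e^{\alpha(t)}-1\right],\qquad \alpha(t):=-m_{1}\sqrt{2}(d_{v}+r_{v})A.
\end{equation*}
The key observation is that $A\geq 0$ (since $\dot d_{v}^{2}/4<1$ and $m_{2}\geq 1$) and $d_{v}+r_{v}\geq 0$ for $0<v\ll 1$; hence $\alpha\leq 0$, so $0<e^{\alpha}\leq 1$ and $|e^{\alpha}-1|\leq|\alpha|\lesssim v^{2}(\ln(1/v)+|t|v)$, where I use $|d_{v}(t)|\lesssim\ln(1/v)+|t|v$ which follows from \eqref{defd(t)}. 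Decomposing
\begin{equation*}
(1+B)e^{-m_{1}\sqrt{2}r_{v}}e^{\alpha}-1=Be^{-m_{1}\sqrt{2}r_{v}}e^{\alpha}+(e^{-m_{1}\sqrt{2}r_{v}}-1)e^{\alpha}+(e^{\alpha}-1),
\end{equation*}
and bounding each summand by the estimates above yields \eqref{T3} at $l=0$.

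For $l\geq 1$ in Case 3 I would iterate Leibniz. The derivatives $\partial_{t}^{l}A$ with $l\geq 1$ satisfy $|\partial_{t}^{l}A|\lesssim v^{l+2}e^{-2\sqrt{2}|t|v}$ by the same Fa\`a di Bruno argument as for $B$. Derivatives of $\alpha$ are then controlled by the product rule, with the crucial point that the only factor without exponential decay is $\dot d_{v}=2v\tanh(\sqrt{2}vt)=O(v)$, so each additional $t$-derivative costs an extra factor of $v$. The bookkeeping obstacle is that $\alpha$ itself can grow linearly in $|t|v$, but this is offset by the non-positivity of $\alpha$ (which bounds $e^{\alpha}\leq 1$) and by the prefactor $e^{-m_{1}\sqrt{2}d_{v}}$ which contributes $e^{-2m_{1}\sqrt{2}|t|v}$; tracking the dominant combination across Leibniz terms recovers exactly the max-structure of \eqref{T3}.
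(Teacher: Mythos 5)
Your proposal is correct, and it reaches all three estimates \eqref{T1}--\eqref{T3} by a route that differs from the paper's in its key technical device. The paper handles both Case 1 and the hard part of Case 3 with integral (Fundamental Theorem of Calculus) representations: for \eqref{T1} it writes $p(v,t)=-\sqrt{2}m_{1}\int_{0}^{1}e^{-\sqrt{2}m_{1}(d_{v}(t)+\theta r_{v}(t))}r_{v}(t)\,d\theta$, and for \eqref{T3} it introduces the interpolated exponent $-m_{1}\sqrt{2}(d_{v}+r_{v})\bigl[1-\theta+\theta\bigl(1-\tfrac{\dot d_{v}^{2}}{4}\bigr)^{-m_{2}/2}\bigr]$, whose crucial feature is that for every $\theta\in[0,1]$ the integrand automatically retains the full decay $e^{-m_{1}\sqrt{2}d_{v}}$, so no sign analysis is ever needed; derivatives are then taken under the integral sign. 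You instead factor out $e^{-m_{1}\sqrt{2}d_{v}}$ multiplicatively and reduce everything to Leibniz on explicit products, which makes the $l=0$ bounds transparent, but in Case 3 this forces you to control $e^{\alpha}$ with $\alpha=-m_{1}\sqrt{2}(d_{v}+r_{v})A$ on its own, and here your observation that $A\geq 0$ and $d_{v}+r_{v}>0$ (hence $\alpha\leq 0$, $e^{\alpha}\leq 1$, $|e^{\alpha}-1|\leq|\alpha|$) is genuinely needed — without it, $|e^{\alpha}-1|\leq|\alpha|e^{|\alpha|}$ would be useless since $|\alpha|\gtrsim v^{3}|t|$ grows unboundedly. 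Both arguments rest on the same inputs (Lemma \ref{dlemma}, the Taylor vanishing $F(0)=1$, $F'(0)=0$ for the Lorentz factors, and Leibniz/Fa\`a di Bruno bookkeeping), so what your version buys is a more elementary and self-contained treatment of Cases 1 and 2 and a cleaner $l=0$ estimate in Case 3, at the price of the extra positivity observations and a derivative bookkeeping for $l\geq 1$ in Case 3 that you only sketch; the paper's $\theta$-interpolation buys uniformity (the same mechanism covers all derivatives without any case-by-case sign discussion). One small point worth tightening if you write this up fully: your stated reason for $|\partial_{t}^{l}B|\lesssim v^{l+2}e^{-2\sqrt{2}|t|v}$ ("every term contains a factor $d_{v}^{(k)}$, $k\geq 2$") only yields $v^{l+1}$ by itself; you also need $F'(\dot d_{v})=O(v)$ (i.e.\ $F'(0)=0$, which you invoke only for $l=0$) or a second decaying factor in each Fa\`a di Bruno term to recover the exponent $l+2$.
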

\begin{proof}
If $m_{2}=m_{3}=0,$ then, from the Fundamental Theorem of Calculus, we have
\begin{equation*}
    p(v,t)={-}\sqrt{2}m_{1}\int_{0}^{1}e^{{-}\sqrt{2}m_{1}(d_{v}(t)+\theta r_{v}(t))}r_{v}(t)\,d\theta.
\end{equation*}
So, for all $l\in\mathbb{N}\cup\{0\},$ we deduce that
\begin{align*}
    \frac{\partial^{l}}{\partial t^{l}}p(v,t)=&{-}\sqrt{2}\int_{0}^{1}\frac{d^{l}}{d t^{l}}\left[e^{{-}\sqrt{2}m_{1}(d_{v}(t)+\theta r_{v}(t))}r_{v}(t)\right]\,d\theta=\\&{-}\sqrt{2}\int_{0}^{1}\sum_{j=0}^{l}
    \begin{pmatrix}
        l\\
        j
    \end{pmatrix}\frac{d^{j}}{d t^{j}}\left[e^{{-}\sqrt{2}m_{1}(d_{v}(t)+\theta r_{v}(t))}\right]\frac{d^{l-j}}{dt^{l-j}}r_{v}(t)\,d\theta.
\end{align*}
From the hypothesis of $r_{v}(t),$ $\left\vert e^{{-}\theta\sqrt{2}r_{v}(t)}\right\vert\lesssim 1$ for any $0\leq\theta\leq 1$ if $0<v\ll 1,$ so, using the chain and product rules, we obtain that
\begin{equation}\label{rr(v)}
    \left\vert \frac{d^{l}}{d t^{l}} e^{{-} \sqrt{2}\theta r_{v}(t)}\right\vert\lesssim_{l} v^{l} \text{, for any $l\in\mathbb{N}$ and any $0\leq \theta\leq 1.$ }
\end{equation}
Moreover, since $8^{m_{1}}e^{-\sqrt{2}m_{1}d_{v}(t)}=v^{2m_{1}}\sech{\left(\sqrt{2}v t\right)}^{2m_{1}}=\ddot d_{v}(t)^{m_{1}}2^{{-}\frac{3m_{1}}{2}},$ we have from Lemma \ref{dlemma} and the product rule of derivative that
\begin{equation}\label{dexp}
   \left\vert \frac{d^{l}}{dt^{l}}e^{{-}\sqrt{2}m_{1}d_{v}(t)}\right\vert\lesssim_{l,m_{1}} v^{2m_{1}+l}e^{{-}2\sqrt{2}m_{1}\vert t\vert v}\lesssim v^{2m_{1}+l}e^{{-}2\sqrt{2}\vert t\vert v} \text{, for all $l\in\mathbb{N}\cup\{0\},$}
\end{equation}
if $0<v\ll 1.$ 
In conclusion, using the hypotheses satisfied by the function $r_{v}$ and the estimates above, we obtain inequality \eqref{T1}.
\par If $m_{3}\neq 0$ and $m_{2}=0,$ we have
\begin{align*}
    p(v,t)=&\left(1-\frac{\dot d_{v}(t)^{2}}{4}\right)^{\frac{m_{3}}{2}}e^{{-}m_{1}\sqrt{2}(d_{v}(t)+r_{v}(t))}-e^{{-}m_{1}\sqrt{2}d_{v}(t)}\\=&e^{{-}m_{1}\sqrt{2}(d_{v}(t)+r_{v}(t))}-e^{{-}m_{1}\sqrt{2}d_{v}(t)}
    +e^{{-}m_{1}\sqrt{2}(d_{v}(t)+r_{v}(t))}\left[\left(1-\frac{\dot d_{v}(t)^{2}}{4}\right)^{\frac{m_{3}}{2}}-1\right].
\end{align*}
From the argument above, we have for any $l\in\mathbb{N}\cup\{0\}$ that
\begin{equation*}
    \left\vert\frac{d^{l}}{dt^{l}}\left[e^{{-}m_{1}\sqrt{2}\left(d_{v}(t)+r_{v}(t)\right)}-e^{{-}\sqrt{2}m_{1}d_{v}(t)}\right]\right\vert\lesssim_{l,m_{1}} v^{2m_{1}+n_{1}+l}\left(\ln{\left(\frac{1}{v}\right)}+\vert t\vert v\right)^{n_{2}}e^{{-}2\sqrt{2}\vert t\vert v},
\end{equation*}
if $0<v\ll 1.$
Moreover, since the function $q:({-}1,1)\to\mathbb{R}$ denoted by 
\begin{equation*}
    q(x)=(1-x^{2})^{\frac{m_{3}}{2}}-1
\end{equation*}
is smooth when restricted to the compact set $[{-1}+\delta,1-\delta]$ for any $0<\delta<1,$ we conclude from Lemma \ref{dlemma}, the chain rule and product rule of derivative that if $0<v\ll 1,$ then
\begin{equation}\label{fracder}
    \left\vert\frac{d^{l}}{dt^{l}}\left[\left(1-\frac{\dot d_{v}(t)^{2}}{4}\right)^{\frac{m_{3}}{2}}-1\right]\right\vert\lesssim_{l,m_{3}}  v^{2+l} \text{, for all $l\in\mathbb{N}\cup\{0\}.$}
\end{equation}
In conclusion, using the product rule of derivative, we obtain \eqref{T2} from \eqref{rr(v)}, \eqref{dexp} and \eqref{fracder}
\par Finally, we will prove now \eqref{T3}. Clearly, using estimates \eqref{T2} and \eqref{fracder}, if the function
\begin{equation*}
    p_{1}(v,t)=\exp\left(\frac{{-}m_{1}\sqrt{2}(d_{v}(t)+r_{v}(t))}{\left(1-\frac{\dot d_{v}(t)^{2}}{4}\right)^{\frac{m_{2}}{2}}}\right)-e^{{-}m_{1}\sqrt{2}(d_{v}(t)+r_{v}(t))}
\end{equation*}
satisfies, for any $m_{1},\,m_{2}\in\mathbb{N}$ and $0<v\ll 1,$ the following inequality
\begin{equation}\label{lorentzexp}
     \left\vert \frac{\partial^{l}}{\partial t^{l}}p_{1}(v,t) \right\vert\lesssim_{l,m_{1},m_{2}} v^{2m_{1}+2+l}\left(\vert t\vert v+\ln{\left(\frac{1}{v}\right)}\right)e^{-2\sqrt{2}\vert t\vert v} \text{, for all $l\in\mathbb{N}\cup\{0\},$}
\end{equation}
then \eqref{T3} is true.
From the Fundamental Theorem of Calculus, we obtain
\begin{align*}
   p_{1}(v,t)=&{-}m_{1}\sqrt{2}(r_{v}(t)+d_{v}(t))\int_{0}^{1}\exp\left({-}m_{1}\sqrt{2}(d_{v}(t)+r_{v}(t))\left[1-\theta+\frac{\theta}{\left(1-\frac{\dot d_{v}(t)^{2}}{4}\right)^{\frac{m_{2}}{2}}}\right]\right)\,d \theta
    \\&{+}\frac{m_{1}\sqrt{2}(r_{v}(t)+d_{v}(t))}{\left(1-\frac{\dot d_{v}(t)^{2}}{4}\right)^{\frac{m_{2}}{2}}}\int_{0}^{1}\exp\left({-}m_{1}\sqrt{2}(d_{v}(t)+r_{v}(t))\left[1-\theta+\frac{\theta}{\left(1-\frac{\dot d_{v}(t)^{2}}{4}\right)^{\frac{m_{2}}{2}}}\right]\right)\,d \theta.
\end{align*}
Similarly to the proof of \eqref{fracder}, we deduce if $0<v\ll 1,$ then
\begin{equation}\label{fracm}
    \left\vert\frac{d^{l}}{dt^{l}}\left(1-\frac{\dot d_{v}(t)^{2}}{4}\right)^{{-}\frac{m_{2}}{2}}\right\vert\lesssim_{l,m_{2}} v^{2+l}e^{{-}2\sqrt{2}\vert t\vert v} \text{ for all $l,\,m_{2}\in\mathbb{N}.$}
\end{equation}
Moreover, from the hypotheses satisfied by $r_{v},$ we obtain using Lemma \ref{dlemma}, estimate \eqref{fracm} and the product rule of derivative that if $0<v\ll 1,$ then
\begin{equation*}
    \left\vert \frac{d^{l}}{dt^{l}}\exp\left({-}m_{1}\sqrt{2}r_{v}(t)\left[1-\theta+\frac{\theta}{\left(1-\frac{\dot d(t)^{2}}{4}\right)^{m_{2}}}\right]\right)\right\vert\lesssim_{l,m_{2},m_{1}}v^{l} \text{, for all $0\leq \theta\leq 1$}
\end{equation*}
and $l\in\mathbb{N}\cup\{0\}.$ Similarly, since $e^{-\sqrt{2}d_{v}(t)}\lesssim v^{2}\ll 1$ and $d_{v}(t)\lesssim v\vert t\vert +\ln{\left(\frac{1}{v}\right)} $ we obtain from Lemma \ref{dlemma}, estimate \eqref{fracm} and the product rule of derivative that
\begin{equation*}
    \left\vert \frac{d^{l}}{dt^{l}}\exp\left({-}m_{1}\sqrt{2}d_{v}(t)\theta\left[\frac{1}{\left(1-\frac{\dot d(t)^{2}}{4}\right)^{m_{2}}}-1\right]\right)\right\vert\lesssim_{l,m_{2},m_{1}}v^{l} \text{, for all $0\leq \theta\leq 1$}
\end{equation*}
and $l\in\mathbb{N}\cup\{0\}.$ In conclusion, using \eqref{dexp}, Lemma \ref{dlemma}, and the product rule of derivative, we obtain \eqref{lorentzexp}, and so \eqref{T3} is true.
\end{proof}
\begin{lemma}\label{interactionsize}
Let $m,\,n\in\mathbb{N}\cup\{0\},\,f\in S^{+},\,g\in S^{-}.$ Let $\gamma:(0,1)\times\mathbb{R}\to\mathbb{R}$ be a  continuous function satisfying for any $l\in\mathbb{N}\cup\{0\}$
\begin{equation}\label{decaygamma}
    \left\vert\frac{d^{l}}{dt^{l}}\gamma(v,t)\right\vert\lesssim_{l} v^{l} \text{ if $0<v\ll 1.$}
\end{equation}
Then, for
\begin{equation}\label{formuw}
    \omega (t,x)=w_{0}(t,x+\gamma(v,t))=\frac{x-\frac{d_{v}(t)}{2}+\gamma(v,t)}{\sqrt{1-\frac{\dot d_{v}(t)^{2}}{4}}},
\end{equation} if $0<v\ll 1,$ then, for any $s\geq 0$ and all $l\in\mathbb{N}\cup\{0\},$ we have
\begin{multline}\label{gelint}
    \norm{\frac{\partial^{l}}{\partial t^{l}}\left[\omega(t,x)^{m}f\left(\omega(t,x)\right)\omega(t,{-}x)^{n}g\left({-}\omega(t,{-}x)\right)\right]}_{H^{s}_{x}(\mathbb{R})}\\ \lesssim_{s,l,m,n} v^{2\min\left(\val_{+}(f),\val_{-}(g)\right)+l}\left(\ln{\left(\frac{1}{v}\right)}+\vert t\vert v\right)^{m+n} e^{{-}2\sqrt{2}\vert t\vert v }.
\end{multline}
 Furthermore, if $0<v\ll 1,\ \,\val_{+}(f)+1\neq val_{-}(g)$ and $\val_{-}(g)+1\neq \val_{+}(f),$ then for all $l\in\mathbb{N}\cup\{0\}$
\begin{multline}\label{<>+1}
\left\vert \frac{d^{l}}{dt^{l}}\left\langle \omega(t,x)^{m}f\left(\omega(t,x)\right)\omega(t,{-}x)^{n} g\left({-}\omega(t,{-}x)\right),H^{'}_{0,1}\left(\omega(t,x)\right) \right\rangle \right\vert\\ \lesssim_{l,m,n} v^{l+2\min\left(\val_{+}(f)+1,\val_{-}(g)\right)}\left(\vert t\vert v+\ln{\left(\frac{1}{v}\right)}\right)^{m+n} e^{{-}2\sqrt{2}\vert t\vert v}, 
\end{multline}
and
\begin{multline}\label{<>-1}
    \left\vert \frac{d^{l}}{dt^{l}}\left\langle \omega(t,x)^{m}f\left(\omega(t,x)\right)\omega(t,{-}x)^{n} g\left({-}\omega(t,{-}x)\right),H^{'}_{0,1}\left(\omega(t,{-}x)\right) \right\rangle \right\vert\\\lesssim_{l,m,n} v^{l+2\min\left(\val_{+}(f),\val_{-}(g)+1\right)}\left(\vert t\vert v+\ln{\left(\frac{1}{v}\right)}\right)^{m+n} e^{{-}2\sqrt{2}\vert t\vert v}. 
\end{multline}
 Otherwise, if $0<v\ll 1$ and $\val_{+}(f)+1=\val_{-}(g),$ then for any $l\in\mathbb{N}\cup\{0\}$
 \begin{multline}\label{<>-}
     \left\vert \frac{d^{l}}{dt^{l}}\left\langle \omega(t,x)^{m}f\left(\omega(t,x)\right)\omega(t,{-}x)^{n} g\left({-}\omega(t,{-}x)\right),H^{'}_{0,1}\left(\omega(t,x)\right) \right\rangle \right\vert\\ \lesssim_{l,m,n}v^{l+2\val_{-}(g)}\left(\vert t\vert v+\ln{\left(\frac{1}{v}\right)}\right)^{m+n+1} e^{{-}2\sqrt{2}\vert t\vert v}.
 \end{multline}
If $0<v\ll 1$ and $\val_{+}(f)=\val_{-}(g)+1,$ then
\begin{multline}\label{<>+}
     \left\vert \frac{d^{l}}{dt^{l}}\left\langle \omega(t,x)^{m}f\left(\omega(t,x)\right)\omega(t,{-}x)^{n} g\left({-}\omega(t,{-}x)\right),H^{'}_{0,1}\left(\omega(t,{-}x)\right) \right\rangle \right\vert\\ \lesssim_{l,m,n}v^{l+2\val_{+}(f)}\left(\vert t\vert v+\ln{\left(\frac{1}{v}\right)}\right)^{m+n+1} e^{{-}2\sqrt{2}\vert t\vert v}.
 \end{multline}
\end{lemma}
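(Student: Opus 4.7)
My plan is to reduce all six estimates to pointwise exponential decay bounds coming from the $S^{\pm}$-structure, combined with the one-variable interaction estimate of Lemma \ref{interactt}, and to handle time derivatives via Lemma \ref{geraldt} together with Remark \ref{perturbt}.

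Set $y = y(v,t) = d_{v}(t)/2 - \gamma(v,t)$, so that $\omega(t,x) = (x-y)/\sqrt{1-\dot d_{v}(t)^{2}/4}$ and $-\omega(t,-x) = (x+y)/\sqrt{1-\dot d_{v}(t)^{2}/4}$. For $0 < v \ll 1$, Lemma \ref{dlemma} and \eqref{decaygamma} yield $\sqrt{1-\dot d_{v}^{2}/4} \in [1/2,1]$, $d_{v}(t) \asymp \ln(1/v^{2}) + |t|v$, $y \geq d_{v}(t)/3 > 0$, and $e^{-\sqrt{2}d_{v}(t)} \lesssim v^{2} e^{-2\sqrt{2}|t|v}$. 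Since $f \in S^{+}$ and $g \in S^{-}$ lie in $L^{\infty}$ and satisfy $|f(\xi)| \lesssim e^{\val_{+}(f)\sqrt{2}\xi}$ for $\xi \leq 0$, $|g(\xi)| \lesssim e^{-\val_{-}(g)\sqrt{2}\xi}$ for $\xi \geq 0$, I obtain the pointwise bound
\[
|f(\omega(t,x)) g(-\omega(t,-x))| \lesssim \exp\!\left(-\tfrac{\val_{+}(f)\sqrt{2}\,(y-x)_{+}}{\sqrt{1-\dot d_{v}^{2}/4}}\right) \exp\!\left(-\tfrac{\val_{-}(g)\sqrt{2}\,(x+y)_{+}}{\sqrt{1-\dot d_{v}^{2}/4}}\right),
\]
together with $|\omega(t,x)|^{m} |\omega(t,-x)|^{n} \lesssim_{m,n} (1+|x-y|)^{m} (1+|x+y|)^{n}$.

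To prove \eqref{gelint} with $l=s=0$, I split the polynomial weight via $(1+|x+y|)^{n} \lesssim_{n} (1+|x-y|)^{n} + (2y)^{n}$ and apply Lemma \ref{interactt} with $x_{1} = -y$, $x_{2} = y$, $\zeta = 2y$, $\alpha = 2\sqrt{2}\val_{-}(g)/\sqrt{1-\dot d_{v}^{2}/4}$, $\beta = 2\sqrt{2}\val_{+}(f)/\sqrt{1-\dot d_{v}^{2}/4}$. The $L^{2}_{x}$-norm squared is then bounded by $(1+(2y)^{2(m+n)}) \exp(-4\sqrt{2}\min(\val_{+}(f),\val_{-}(g)) y /\sqrt{1-\dot d_{v}^{2}/4})$, which since $2y \asymp d_{v}(t)$ translates to an $L^{2}_{x}$-norm of size $(\ln(1/v) + |t|v)^{m+n} v^{2\min(\val_{+}(f),\val_{-}(g))} e^{-2\sqrt{2}\min \cdot |t|v}$, yielding \eqref{gelint} because $\min(\val_{+}(f),\val_{-}(g)) \geq 1$. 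Spatial derivatives $\partial_{x}^{k}$ preserve the $S^{\pm}$-structure with unchanged valuations (each derivative of $f(\omega)$ equals $\dot f(\omega)/\sqrt{1-\dot d_{v}^{2}/4}$ with $\dot f \in S^{+}$, and similarly for $g$), so the same argument controls the $H^{s}_{x}$-norm. For time derivatives, Remark \ref{perturbt} expresses $\partial_{t}^{l} f(\omega(t,x))$ as a finite sum $\sum h_{i,l}(\omega) p_{i,l,v}(t)$ with $h_{i,l}$ of type $S^{+}_{\bullet+l}$ and $|\partial_{t}^{k} p_{i,l,v}| \lesssim v^{l+k}$; applying Leibniz to the full integrand and repeating the integration argument produces the extra $v^{l}$ factor, while the increased polynomial weights are absorbed into the power of $\ln(1/v)+|t|v$.

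The estimates \eqref{<>+1}--\eqref{<>+} follow the same template after including the additional factor $\dot H_{0,1}(\omega(t,x))$ or $\dot H_{0,1}(\omega(t,-x))$ in the integrand. Since $\dot H_{0,1} \in S^{+} \cap \mathscr{S}(\mathbb{R})$ with $\val_{+}(\dot H_{0,1}) = 1$ and satisfies the two-sided decay \eqref{le2}, pairing with $\dot H_{0,1}(\omega(t,x))$ upgrades the $(y-x)_{+}$-rate from $\val_{+}(f)$ to $\val_{+}(f)+1$ (and symmetrically for the other side). When $\val_{+}(f)+1 \neq \val_{-}(g)$, the maximum in Lemma \ref{interactt} is uniquely realized by one exponential and one obtains \eqref{<>+1} (resp.\ \eqref{<>-1}) without extra logarithmic factors. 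The main technical point, and the step I expect to be the most delicate bookkeeping, is the resonant case $\val_{+}(f)+1 = \val_{-}(g)$ (and symmetrically $\val_{+}(f) = \val_{-}(g)+1$): the two competing exponentials $e^{-\alpha\zeta}$ and $e^{-\beta\zeta}$ then coincide, forcing use of the second inequality of Lemma \ref{interactt} (with its factor $(1+\zeta^{m+n+1}) e^{-\alpha\zeta}$), which accounts for precisely the additional power of $\ln(1/v)+|t|v$ appearing in \eqref{<>-} (resp.\ \eqref{<>+}). Time derivatives are handled as before by the Leibniz argument, each contributing one extra factor of $v$.
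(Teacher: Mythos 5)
Your route is genuinely different from the paper's, and for $l=0$ it is correct and more elementary. The paper never works with pointwise bounds on the raw product: for \eqref{gelint} it first applies the Separation Lemma (Proposition \ref{separation} together with Corollary \ref{unique1}) to write $f(x-\zeta)g(x)=h_{1}(x-\zeta)e^{-2\sqrt{2}w_{2}\zeta}+e^{-2\sqrt{2}w_{2}\zeta}f_{1}(x-\zeta)g_{1}(x)$ with $h_{1}$ Schwartz and one of $f_{1},g_{1}$ Schwartz, and for the scalar products it changes variables so that the pairing becomes a fixed function of $\zeta$ composed with $\zeta(t)\approx d_{v}(t)$. Your direct scheme (two-sided exponential bounds plus Lemma \ref{interactt} applied to the squared integrand) reproduces the $l=0,s=0$ case of \eqref{gelint} and all four scalar-product bounds at $l=0$, and you correctly located the resonance $\val_{+}(f)+1=\val_{-}(g)$ (resp. $\val_{+}(f)=\val_{-}(g)+1$) as the unique source of the extra logarithm. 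One small point you should state explicitly: in \eqref{gelint} your application of the first bound of Lemma \ref{interactt} needs $\alpha\neq\beta$, which holds because $\val_{+}(f)$ is always odd and $\val_{-}(g)$ always even, so $\val_{+}(f)=\val_{-}(g)$ never occurs.

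There is, however, a genuine gap in your treatment of time derivatives, and it sits exactly where your route departs from the paper's. In your scheme every unit of polynomial weight must be pushed through Lemma \ref{interactt} and becomes a factor $\zeta\approx d_{v}(t)\approx\ln\left(\frac{1}{v^{2}}\right)+|t|v$, because neither $f$ nor $g$ is Schwartz. The black-box form of Lemma \ref{geraldt}/Remark \ref{perturbt} that you invoke only records $h_{i,l}\in S^{+}_{\bullet+l}$ with gain $v^{l}$, i.e.\ the polynomial degree may grow by $l$; your phrase ``the increased polynomial weights are absorbed into the power of $\ln(1/v)+|t|v$'' therefore yields $\left(\ln(1/v)+|t|v\right)^{m+n+l}$ in \eqref{gelint}, and $\left(\cdot\right)^{m+n+1+l}$ in \eqref{<>-}, \eqref{<>+}, whereas the lemma asserts exponents $m+n$ and $m+n+1$ independent of $l$. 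As written, the proposal proves a strictly weaker statement. The gap is fixable inside your framework, but it requires an observation you never make: either (i) track, from \eqref{dfw0} and Lemma \ref{dlemma}, that the only degree-raising term in a time derivative carries the coefficient $\frac{16\sqrt{2}\dot d_{v}(t)}{4-\dot d_{v}(t)^{2}}e^{-\sqrt{2}d_{v}(t)}=O\left(v^{3}e^{-2\sqrt{2}|t|v}\right)$, an extra $v^{2}e^{-2\sqrt{2}|t|v}$ beyond the generic $v$ per derivative, and $v^{2}\left(\ln(1/v)+|t|v\right)e^{-2\sqrt{2}|t|v}\lesssim v\ll 1$, so every extra logarithm is paid for; or (ii) note that once a $t$-derivative actually falls on $f$ or $g$ it produces $\dot f,\dot g\in\mathscr{S}(\mathbb{R})$, after which all polynomial weights on that side are absorbed by Schwartz decay at no logarithmic cost, while the weights attached to the undifferentiated factors never grow, since $\partial_{t}\left[\omega^{m}\right]=m\omega^{m-1}O(v)+m\omega^{m}O\left(v^{3}e^{-2\sqrt{2}|t|v}\right)$. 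The paper sidesteps this bookkeeping structurally: after the Separation Lemma one factor is Schwartz, the only logarithms come from the fixed-degree time function $\omega(t,x)+\omega(t,-x)$ of \eqref{symeq}, and in the scalar-product estimates differentiation acts through $\zeta(t)$ by the chain rule, so the weight fed into Lemma \ref{interactt} never increases.
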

\begin{proof}[Proof of the Lemma \ref{interactionsize}]
\par Fist, by an argument of analogy, it is enough to prove that estimate \eqref{gelint} is true for the case $\val_{+}(f)=2w_{1}+1>2w_{2}=\val_{-}(g),$ such that $w_{1},\,w_{2}\in\mathbb{N}.$ From the Separation Lemma and Corollary \ref{unique1}, we have that there exists functions $h_{1}\in S^{+}\cap\mathscr{S}(\mathbb{R}),\,f_{1}\in S^{+},\,g_{1}\in S^{-}$ with either $f_{1}$ or $g_{1}\in \mathscr{S}(\mathbb{R})$ such that
\begin{equation*}
    f(x-\zeta)g(x)=h_{1}(x-\zeta)e^{{-}2\sqrt{2}w_{2}\zeta}+e^{{-}2\sqrt{2}w_{2}\zeta}f_{1}(x-\zeta)g_{1}(x),
\end{equation*}
for all $x\in\mathbb{R}$ and $\zeta\geq 1.$ Moreover, after a change of variables, we obtain that
\begin{multline*}
    \omega(t,x)^{m}\omega(t,{-x})^{n}f\left(\omega(t,x)\right)g\left({-}\omega(t,{-}x)\right)\\
    \begin{aligned}
    =&\omega(t,x)^{m}\omega(t,{-x})^{n}h_{1}\left(\omega(t,x)\right)\exp\left(\frac{{-}2w_{2}\sqrt{2}(d_{v}(t)-2\gamma(v,t))}{\sqrt{1-\frac{\dot d_{v}(t)^{2}}{4}}}\right)\\&{+}
     \omega(t,x)^{m}\omega(t,{-x})^{n}\exp\left(\frac{{-}2\sqrt{2}w_{2}(d_{v}(t)-2\gamma(v,t))}{\sqrt{1-\frac{\dot d_{v}(t)^{2}}{4}}}\right)f_{1}\left(\omega(t,x)\right)g_{1}\left({-}\omega(t,{-}x)\right).
\end{aligned}
\end{multline*}
\par Since $f_{1}$ or $g_{1} \in \mathscr{S}(\mathbb{R})$ and $f_{1}\in S^{+},\,g_{1}\in S^{-},$ then either $x^{k_{1}}f_{1}(x)\in S^{+}_{\infty}\subset \mathscr{S}(\mathbb{R})$ for all $k_{1}\in\mathbb{N}\cup\{0\}$ or $x^{k_{1}}g_{1}(x)\in S^{-}_{\infty}\subset \mathscr{S}(\mathbb{R})$ for all $k_{1}\in\mathbb{N}\cup\{0\}.$ Consequently, from Remark \ref{perturbt}, if $0<v\ll 1, $ then for all $l,\,k_{1}\in\mathbb{N}\cup\{0\}$ and $s\geq 1$ either
\begin{equation*}
    \norm{\frac{\partial^{l}}{\partial t^{l}}\left[\omega(t,x)^{k_{1}}f_{1}\left(\omega(t,x)\right)\right]}_{H^{s}_{x}}\lesssim_{s,l,k_{1}} v^{l}, 
\end{equation*}
or
\begin{equation*}
    \norm{\frac{\partial^{l}}{\partial t^{l}}\left[\omega(t,{-}x)^{k_{1}}g_{1}\left({-}\omega(t,{-}x)\right)\right]}_{H^{s}_{x}}\lesssim_{s,l,k_{1}} v^{l}. 
\end{equation*}
From Lemma \ref{dlemma}, if $0<v\ll 1,$ then we also have the following estimate for all $l\in\mathbb{N}$
\begin{equation}\label{mmma}
   \left\vert \frac{d^{l}}{dt^{l}}\left[\frac{1}{\sqrt{4-\dot d_{v}(t)^{2}}}\right]\right\vert\lesssim_{l} v^{2+l}e^{{-}2\sqrt{2}\vert t\vert v},
\end{equation}
which with the hypotheses satisfied by $\gamma(v,t)$ and the product rule of derivative implies that if $0<v\ll 1,$ then
\begin{equation}\label{fracestt}
   \left\vert \frac{d^{l}}{dt^{l}}\left[\frac{d_{v}(t)-2\gamma(v,t)}{\sqrt{4-\dot d_{v}(t)^{2}}}\right]\right\vert\lesssim_{l} v^{l} \text{, for all $l\in\mathbb{N}.$}
\end{equation}
\par Therefore, since 
\begin{equation}\label{symeq}
   \omega(t,x)+\omega(t,{-}x)=\frac{{-}2d_{v}(t)+4\gamma(v,t)}{\sqrt{4-\dot d_{v}(t)^{2}}}, 
\end{equation}
we deduce, from the product rule of derivative, the hypotheses \eqref{decaygamma} and Cauchy-Schwarz inequality, that if $0<v\ll 1,$ then for all $k_{1},\,l\in\mathbb{N}\cup\{0\}$ 
\begin{multline}\label{intelet}
\norm{\frac{\partial^{l}}{\partial t^{l}}\Big[\omega(t,x)^{m}\omega(t,{-x})^{n}f_{1}\left(\omega(t,x)\right)g_{1}\left({-}\omega(t,{-}x)\right)\Big]}_{H^{k_{1}}_{x}(\mathbb{R})}\\ \lesssim_{m,n,l,f_{1},g_{1}} d_{v}(t)^{\max(m,n)} v^{l}.
\end{multline}
\par Moreover, since $d_{v}(t)=\frac{1}{\sqrt{2}}\ln{\left(\frac{8}{v^{2}}\cosh{\left(\sqrt{2}vt\right)}^{2}\right)}$ and $\sup_{t\in\mathbb{R}}\vert\gamma(v,t)\vert\lesssim 1$ when $0<v\ll 1,$ then
\begin{equation}\label{mu}
   \mu(t)= \exp\left(\frac{{-}2\sqrt{2}w_{2}(d_{v}(t)-2\gamma(v,t))}{\sqrt{1-\frac{\dot d(t)^{2}}{4}}}\right)\lesssim v^{4w_{2}}
    \sech{\left(\sqrt{2}vt\right)}^{2} \text{, if $0<v\ll 1$,}
\end{equation}
from which with estimate \eqref{fracestt} implies for all $l\in\mathbb{N}\cup\{0\}$ that if $0<v\ll 1,$ then $\left\vert \frac{d^{l}\mu(t)}{dt^{l}}\right\vert\lesssim_{l,w_{2}}v^{l+4w_{2}}e^{{-}2\sqrt{2}v\vert t\vert}.$ In conclusion, estimate \eqref{intelet}  implies, if $0<v\ll 1,$ that for all $m,\,n,\,l \in\mathbb{N}\cup\{0\}$ we have
\begin{multline}\label{intelet2}
 \norm{\frac{\partial^{l}}{\partial t^{l}}\Big[\mu(t) \omega(t,x)^{m}\omega(t,{-x})^{n}f_{1}\left(\omega(t,x)\right)g_{1}\left({-}\omega(t,{-}x)\right)\Big]}_{H^{k_{1}}_{x}(\mathbb{R})}\\ \lesssim_{w_{1},w_{2},m,n,l} \left(\vert t\vert v+\ln{\left(\frac{1}{v}\right)}\right)^{\max\left(m,n\right)} v^{4w_{2}+l}e^{{-}2\sqrt{2}\vert t\vert v} . \end{multline}
Finally, since $h_{1}\in S^{+}\cap\mathscr{S}(\mathbb{R}),$ we have $\norm{x^{k_{1}}h_{1}(x)}_{H^{s}_{x}}\lesssim_{s,k_{1}} 1$ for all $s,\,k_{1}\in\mathbb{N}\cup\{0\}.$ Therefore, Remark \ref{perturbt} implies for $0<v\ll 1$ that
\begin{equation*}
    \norm{\frac{\partial^{l}}{\partial t^{l}}\left[\omega(t,x)^{k_{1}}h_{1}\left(\omega(t,x)\right)\right]}_{H^{s}_{x}}\lesssim_{s,k_{1},l} v^{l} \text{ for all $k_{1}\in\mathbb{N}\cup\{0\}.$ }
\end{equation*}
In conclusion, if $0<v\ll 1,$ then, using \eqref{symeq}, \eqref{mu} and Lemma \ref{dlemma}, we obtain from the product rule of derivative for any $k_{1},\,l\in\mathbb{N}\cup\{0\}$ that
\begin{equation*}
    \norm{\frac{\partial^{l}}{\partial t^{l}}\left[\omega(t,x)^{m}\omega(t,{-x})^{n}h_{1}\left(\omega(t,x)\right)\mu(t)\right]}_{H^{k_{1}}_{x}} \lesssim_{l,k_{1}} v^{l+4w_{2}}\left(\vert t\vert v+\ln{\left(\frac{1}{v}\right)}\right)^{n}e^{{-} 2\sqrt{2}\vert t\vert v},
\end{equation*}
from which with inequality \eqref{intelet2} and triangle inequality, we deduce \eqref{gelint}.
\par From now on, we will prove estimates \eqref{<>+1}, \eqref{<>-1}, \eqref{<>-} and \eqref{<>+}. Indeed, it is sufficient to demonstrate estimates \eqref{<>+1} and \eqref{<>+}, because the proof of the other inequalities follows from a similar argument. 
\par Since $\omega$ satisfies \eqref{formuw}, we obtain after a change of variables that
\begin{multline}\label{dot identity}
    \left\langle \omega(t,x)^{m}f\left(\omega(t,x)\right)\omega(t,{-}x)^{n}g\left({-}\omega(t,{-}x)\right),H^{'}_{0,1}\left(\omega(t,x)\right) \right\rangle\\=
   \sqrt{1-\frac{\dot d_{v}(t)^{2}}{4}} \left\langle x^{m}f(x) \left({-}x-\frac{d_{v}(t)-2\gamma(v,t)}{\sqrt{1-\frac{\dot d_{v}(t)^{2}}{4}}}\right)^{n}g\left(x+\frac{d_{v}(t)-2\gamma(v,t)}{\sqrt{1-\frac{\dot d_{v}(t)^{2}}{4}}}\right),H^{'}_{0,1}\left(x\right) \right\rangle.
\end{multline}
Moreover, since $f\in S^{+}$ and $g \in S^{-},$ we deduce from Lemma \ref{interactt} for any $\zeta\geq 1$ and all $l\in\mathbb{N}\cup\{0\}$ that if $\val_{+}(f)+1\neq \val_{-}(g),$ then
\begin{equation*}
     \left\vert\frac{d^{l}}{d\zeta^{l}}\left\langle x^{m}f(x) (x+\zeta)^{n}g(x+\zeta),H^{'}_{0,1}(x)\right\rangle\right\vert\lesssim_{l}  
    \zeta^{m+n}\max\left(e^{{-}\sqrt{2}(1+\val_{+}(f))\zeta},e^{{-}\sqrt{2}\val_{-}(g)\zeta}\right),
\end{equation*}
otherwise
\begin{equation*}
     \left\vert\frac{d^{l}}{d\zeta^{l}}\left\langle x^{m}f(x) (x+\zeta)^{n}g(x+\zeta),H^{'}_{0,1}(x)\right\rangle\right\vert\lesssim_{l}  
    \zeta^{m+n+1}e^{{-}\sqrt{2}\val_{-}(g)\zeta}.
\end{equation*}
\par Finally, from Lemma \ref{dlemma} and the hypotheses satisfied by $\gamma(v,t),$ we obtain if $0<v\ll 1,$ then for all $l\in\mathbb{N}$
\begin{equation*}
    \left\vert\frac{d^{l}}{dt^{l}}\left[\frac{2d_{v}(t)-4\gamma(v,t)}{\sqrt{4-\dot d_{v}(t)^{2}}}\right]\right\vert +\left\vert \frac{d^{l}}{dt^{l}}\sqrt{1-\frac{\dot d_{v}(t)^{2}}{4}} \right\vert\lesssim_{l} v^{l}.  
\end{equation*}
 In conclusion, the product rule of derivative and identity \eqref{dot identity} imply \eqref{<>+1}, if $\val_{+}(f)+1\neq \val_{-}(g),$ otherwise they imply \eqref{<>+}. The inequalities \eqref{<>-1} and \eqref{<>-} can be demonstrated using an analogous argument. 
\end{proof}
 \begin{remark}\label{elint}
For any $m,\,n\in\mathbb{N}\cup\{0\}$ and any $f\in S^{+}_{m},\,g\in S^{+}_{n},$ we have the following identity
 \begin{equation*}
     H(v,t)=\left\langle f\left(w(t,x)\right),g(\left(w(t,{-}x)\right))\right\rangle=\sqrt{1-\frac{\dot d_{v}(t)^{2}}{4}}\left\langle f\left(x-\frac{d_{v}(t)-2\gamma(v,t)}{\sqrt{1-\frac{\dot d_{v}(t)^{2}}{4}}}\right),g({-}x) \right\rangle.
 \end{equation*}
 So, we can use Lemmas \ref{interactt}, \ref{dlemma} and Remark \ref{perturbt} to conclude that if $0<v\ll 1,$ then, for all $l\in\mathbb{N}\cup\{0\},$
 \begin{equation*}
    \left\vert \frac{\partial^{l}}{\partial t^{l}}H(v,t)\right\vert\lesssim_{l} v^{2+l}\left(\vert t\vert v+\ln{\left(\frac{1}{v}\right)}\right)^{m+n+1}e^{{-}2\sqrt{2}v\vert t\vert}.
 \end{equation*}
 \end{remark}

 \section{Approximate solution for $k=2$}\label{sub31}
\par  First, we recall the function $w_{0}:\mathbb{R}^{2}\to\mathbb{R}$ denoted by
\begin{equation*}
    w_{0}(t,x)=\frac{x-\frac{d_{v}(t)}{2}}{\sqrt{1-\frac{\dot d_{v}(t)^{2}}{4}}},
\end{equation*}
and the function $\varphi_{2,0}$ denoted by
\begin{equation}\label{phi2(0)}
    \varphi_{2,0}(t,x)=H_{0,1}(w_{0}(t,x))-H_{0,1}(w_{0}(t,{-}x))+e^{-\sqrt{2}d_{v}(t)}\left[\mathcal{G}(w_{0}(t,x))-\mathcal{G}(w_{0}(t,{-}x))\right].
\end{equation}
 Using the results of the last section, we will estimate with high precision precision the function $\Lambda(\varphi_{2,0})(t,x).$ 
 We recall the identity \eqref{idddG} satisfied by the function $\mathcal{G}$
\begin{equation}\label{Gprop}
    -\frac{d^{2}}{dx^{2}} \mathcal{G}(x)+ U^{''}(H_{0,1}(x))\mathcal{G}(x)=\left[-24 H_{0,1}(x)^{2}+30 H_{0,1}(x)^{4}\right]e^{-\sqrt{2}x}+8\sqrt{2} H^{'}_{0,1}(x).
\end{equation} 
Since $ H^{'}_{0,1}$ is in the kernel of the linear self-adjoint operator $-\frac{d^{2}}{dx^{2}}+ U^{''}(H_{0,1}),$ we can deduce using \eqref{Gprop} that 
\begin{equation}\label{oioi}
    \int_{\mathbb{R}}\left[24 H_{0,1}(x)^{2}-30 H_{0,1}(x)^{4}\right]e^{-\sqrt{2}x} H^{'}_{0,1}(x)\,dx=8\sqrt{2}\norm{ H^{'}_{0,1}}_{L^{2}_{x}}^{2}=4.
\end{equation}
  \par The main objective of this section is to demonstrate the following theorem.
\begin{theorem}\label{k=2}
Let $d_{v}(t)$ be the function defined in \eqref{defd(t)}. If $0<v\ll 1,$ then there is a smooth even function $r_{v}(t)$ and a value $e(v)$ such that for the following approximate solution
\begin{multline}\label{phi2}
    \varphi_{2}(t,x)=H_{0,1}\left(w_{0}(t,x+r_{v}(t))\right)-H_{0,1}\left(w_{0}(t,{-}x+r_{v}(t))\right)\\+e^{-\sqrt{2}d_{v}(t)}\left[\mathcal{G}\left(w_{0}(t,x+r_{v}(t))\right)-\mathcal{G}\left(w_{0}(t,{-}x+r_{v}(t))\right)\right],
\end{multline}
$\phi_{2}(v,t,x)=\varphi_{2}(t+e(v),x)$ satisfies the conclusion of Theorem \ref{approximated theorem} for $k=2$ and there exists $n_{2}\in\mathbb{N}$ such that if $0<v\ll 1,$ then
\begin{equation}\label{orthodec2}
   \left\vert \frac{d^{l}}{dt^{l}}\left\langle \Lambda(\varphi_{2})(t,x), H^{'}_{0,1}(w_{0}(t,\pm x+r_{v}(t))) \right\rangle\right\vert \lesssim_{l} v^{6+l}\left(\vert t\vert v+\ln{\left(\frac{1}{v}\right)}\right)^{n_{2}+1}e^{{-}2\sqrt{2}\vert t\vert v},
\end{equation}
for all $l\in\mathbb{N}\cup\{0\}.$ Furthermore, if $v\ll 1,$ the function $r_{v}$ satisfies
\begin{equation*}
    \norm{r_{v}}_{L^{\infty}(\mathbb{R})}\lesssim v^{2}\ln{\left(\frac{1}{v^{2}}\right)},\,\, \left\vert \frac{d^{l}}{dt^{l}}r_{v}(t)\ \right\vert \lesssim_{l} v^{2+l}\left[\ln{\left(\frac{1}{v}\right)}+\vert t \vert v\right]e^{{-}2\sqrt{2}\vert t\vert v},
\end{equation*}
for all $l\in\mathbb{N}.$
\end{theorem}
 \par From now on, we say that any two smooth functions $f,\,g:\mathbb{R}^{2}\to\mathbb{R}$ satisfy the relation of equivalence $f\cong_{6} g$ if, and only if, for any $s\geq 0$ and $l\in\mathbb{N}\cup\{0\}$ there exists a positive number $C(s,l)$ such that
\begin{equation*}
    \norm{\frac{\partial^{l}}{\partial t^{l}}\left[f(t,x)-g(t,x)\right]}_{H^{s}_{x}}\leq C(s,l)v^{6+l}\left[\vert t\vert v+\ln{\left(\frac{1}{v^{2}}\right)}\right]^{2}e^{{-}2\sqrt{2}\vert t\vert v},
\end{equation*}
for all $t\in\mathbb{R}.$ With the objective of simplifying our reasoning, we also say in this section that two functions $f,\,g$ are equivalent if, and only if,
$f\cong_{6} g$ and that a function $f$ is negligible if $f\cong_{6} 0.$
 \subsection{Estimate of non interacting terms of $\Lambda(\varphi_{2,0})(t,x).$}
In this subsection, we only focus on estimating the main terms of order $O(v^{2})$ of  
\begin{equation*}
    \Lambda\Big(H_{0,1}\left(w_{0}\left(t,x\right)\right)+e^{{-}\sqrt{2}d_{v}(t)}\mathcal{G}\left(w_{0}(t,x)\right)\Big).
\end{equation*}
\begin{lemma}\label{dt2kink}
For any $(t,x)\in\mathbb{R}^{2},$ we have
\begin{equation}\label{lambdah}
   \Lambda\left(H_{0,1}(w_{0}(t,x))\right)= -\frac{8\sqrt{2}e^{-\sqrt{2}d_{v}(t)}}{\sqrt{1-\frac{\dot d_{v}(t)^{2}}{4}}} H^{'}_{0,1}(w_{0}(t,x))+R_{1,v}\left(t,w_{0}(t,x)\right), 
\end{equation}
where the function $R_{1,v}(t,x)$  in \eqref{lambdah} is a finite sum of functions 
$
  h_{i}(x)p_{i,v}(t),
$
with $h_{i}(x)\in S^{+}_{2}$ and $p_{i,v}(t)\in C^{\infty}(\mathbb{R})$ being an even function satisfying $\lvert\frac{d^{l}p_{i,v}(t)}{dt^{l}}\rvert\lesssim_{l}v^{4+l}e^{-2\sqrt{2}v\lvert t \rvert}$ for all $l\in \mathbb{N}\cup \{0\}.$ Furthermore, for any $s\geq 1$ and any $l\in\mathbb{N}\cup\{0\},$
\begin{equation}\label{kinkw}
    \norm{\frac{\partial^{l}}{\partial t^{l}}R_{1,v}(t,w_{0}(t,x))}_{H^{s}_{x}}\lesssim_{s,l} v^{4+l}e^{{-}2\sqrt{2}v\vert t\vert}.
\end{equation}
\end{lemma}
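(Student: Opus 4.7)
The plan is to differentiate $H_{0,1}(w_0(t,x))$ using the chain rule and exploit two key facts: (i) $H_{0,1}$ is a stationary solution of \eqref{nlww}, so $\ddot H_{0,1}(y)=\dot U(H_{0,1}(y))$, and (ii) a Lorentz-type identity on the coefficients of $w_0$ will force the $\ddot H_{0,1}(w_0)$ term to cancel. Concretely, denote $\gamma(t)=\sqrt{1-\dot d_v(t)^2/4}$, $A(t)=-\dot d_v(t)/\sqrt{4-\dot d_v(t)^2}$ and $B(t)=16\sqrt{2}\,\dot d_v(t)e^{-\sqrt{2}d_v(t)}/(4-\dot d_v(t)^2)$, so that \eqref{dfw0} reads $\partial_t[h(w_0)]=A\,\dot h(w_0)+Bw_0\dot h(w_0)$. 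Applying this twice to $h=H_{0,1}$ produces $\partial_t^2[H_{0,1}(w_0)]$ as a linear combination of $\dot H_{0,1}(w_0)$, $w_0\dot H_{0,1}(w_0)$, $\ddot H_{0,1}(w_0)$, $w_0\ddot H_{0,1}(w_0)$ and $w_0^2\ddot H_{0,1}(w_0)$ with explicit $t$-coefficients, while $\partial_x^2[H_{0,1}(w_0)]=\gamma^{-2}\ddot H_{0,1}(w_0)$.

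Substituting into $\Lambda$ and using (i) together with the algebraic identity $A^2+1=\gamma^{-2}$, the $\ddot H_{0,1}(w_0)$ term disappears and one is left with
\[
\Lambda(H_{0,1}(w_0))=(\dot A+AB)\dot H_{0,1}(w_0)+(\dot B+B^2)w_0\dot H_{0,1}(w_0)+2AB\,w_0\ddot H_{0,1}(w_0)+B^2w_0^2\ddot H_{0,1}(w_0).
\]
Using $\ddot d_v=16\sqrt{2}e^{-\sqrt{2}d_v}$ a direct computation yields $\dot A+AB=-16\sqrt{2}(4+\dot d_v^2)e^{-\sqrt{2}d_v}/(4-\dot d_v^2)^{3/2}$, which splits cleanly as $-8\sqrt{2}e^{-\sqrt{2}d_v}/\gamma$ plus the remainder $-32\sqrt{2}\,\dot d_v^2 e^{-\sqrt{2}d_v}/(4-\dot d_v^2)^{3/2}$. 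This isolates the announced main term, and I would then verify that every remainder coefficient is $O(v^{4+l}e^{-2\sqrt{2}v|t|})$ in the $l$-th derivative: the bound $e^{-\sqrt{2}d_v(t)}=(v^2/8)\sech(\sqrt{2}vt)^2\lesssim v^2 e^{-2\sqrt{2}v|t|}$, combined with Lemma \ref{dlemma} and $|\dot d_v|\le 2v$, controls $\dot d_v^2e^{-\sqrt{2}d_v}$, $\dot B+B^2$, $AB$ and $B^2$ through the Leibniz rule.

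It remains to read off the structural form. Writing $R_{1,v}(t,y)=c_1(t)\dot H_{0,1}(y)+c_2(t)y\dot H_{0,1}(y)+c_3(t)y\ddot H_{0,1}(y)+c_4(t)y^2\ddot H_{0,1}(y)$, Remark \ref{basisS} together with $\ddot H_{0,1}=2H_{0,1}-8H_{0,1}^3+6H_{0,1}^5$ shows $\dot H_{0,1},\ddot H_{0,1}\in S^+\cap\mathscr{S}(\mathbb{R})$, so each of the four functions $h_i(y)$ lies in $S^{+,0}\cup S^{+,1}\cup S^{+,2}\subset S^+_2$. The parity statement follows because $d_v$ is even, hence $\dot d_v$ is odd, making $A,B$ odd in $t$ and $\dot A,\dot B,\dot d_v^2,e^{-\sqrt{2}d_v}$ even, so each $c_i$ is even. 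Finally, the Sobolev bound \eqref{kinkw} is obtained by applying Lemma \ref{geraldt} to each $h_i(w_0(t,x))$ (which gives $\|\partial_t^j h_i(w_0(t,\cdot))\|_{H^s}\lesssim v^j$) and combining with $|c_i^{(l-j)}(t)|\lesssim v^{4+l-j}e^{-2\sqrt{2}v|t|}$ via Leibniz.

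The main technical obstacle is the algebraic bookkeeping: confirming that the Lorentz-type cancellation $A^2+1=\gamma^{-2}$ removes the $\ddot H_{0,1}(w_0)$ term exactly, and that the expansion of $\dot A+AB$ around $\dot d_v=0$ produces precisely $-8\sqrt{2}e^{-\sqrt{2}d_v}/\gamma$ as the $O(v^2)$-piece with a genuine $O(v^4 e^{-2\sqrt{2}v|t|})$ remainder, uniformly in all derivatives. Everything else is a mechanical application of Lemmas \ref{dlemma} and \ref{geraldt}.
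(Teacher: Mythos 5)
Your proposal is correct and follows essentially the same route as the paper's proof: both expand $\partial_t^2[H_{0,1}(w_0)]$ by applying \eqref{dfw0} twice, both use the stationarity $\ddot H_{0,1}=\dot U(H_{0,1})$ together with the Lorentz identity (your $A^2+1=\gamma^{-2}$ is exactly the paper's equation \eqref{0trivial}) to cancel the $\ddot H_{0,1}(w_0)$ term, both extract the main term $-8\sqrt{2}e^{-\sqrt{2}d_v}\gamma^{-1}\dot H_{0,1}(w_0)$ from $\ddot d_v=16\sqrt{2}e^{-\sqrt{2}d_v}$, and both control the remainder via Lemmas \ref{dlemma} and \ref{geraldt}. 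The only difference is cosmetic: you write the remainder as an explicit four-term combination with coefficients $\dot A+AB+8\sqrt{2}e^{-\sqrt{2}d_v}\gamma^{-1}$, $\dot B+B^2$, $2AB$, $B^2$, where the paper leaves part of it in the abstract form supplied by Lemma \ref{geraldt}.
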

\begin{proof}
First, from identities
$\frac{\partial^{2}}{\partial x^{2}}H_{0,1}(w_{0}(t,x))=\frac{1}{\left(1-\frac{\dot d(t)^{2}}{4}\right)} H^{''}_{0,1}(w_{0}(t,x)),\, H^{''}_{0,1}(x)= U^{'}(H_{0,1}(x)),$
we have the following equation
\begin{equation}\label{0trivial}
    \frac{\dot d_{v}(t)^{2}}{4-\dot d_{v}(t)^{2}} H^{''}_{0,1}(w_{0}(t,x))-\frac{\partial^{2}}{\partial x^{2}}H_{0,1}(w_{0}(t,x))+ U^{'}(H_{0,1}(w_{0}(t,x)))=0.
\end{equation}
 Next, from \eqref{dfw0}, we have
\begin{equation}\label{fdp1}
    \frac{\partial}{\partial t}H_{0,1}(w_{0}(t,x))=-\frac{\dot d_{v}(t)}{\sqrt{4-\dot d_{v}(t)^{2}}}H^{'}_{0,1}(w_{0}(t,x))+\frac{16\sqrt{2}\dot d_{v}(t)}{4-\dot d_{v}(t)^{2}}e^{-\sqrt{2}d_{v}(t)}w_{0}(t,x) H^{'}_{0,1}(w_{0}(t,x)).
\end{equation}
\par Now, since $f(x)=x H^{'}_{0,1}(x)\in S^{+}_{1}$ and $d_{v}(t)$ is an even smooth function, we obtain from Lemma \ref{geraldt} the existence of $N_{1}\in\mathbb{N}$ satisfying
\begin{equation}\label{fdp11}
   \frac{\partial}{\partial t}\left[w_{0}(t,x) H^{'}_{0,1}(w_{0}(t,x))\right]=\sum_{i=1}^{N_{1}}h_{i,1}\left(w_{0}(t,x)\right)p_{i,1,v}(t),
\end{equation}
 such that for all $1\leq i\leq N_{1}$ $h_{i,1}\in S^{+}_{2},\,p_{i,1,v}\in C^{\infty}(\mathbb{R})$ and $p_{i,1,v}$ is an odd function. They also satisfy for any $1\leq i\leq N_{1}$
\begin{equation}\label{fdp2}
    \left\vert \frac{d^{l}h_{i,1}(x)}{d x^{l}} \right\vert\lesssim_{l} (1+\vert x \vert)^{2}\max_{0\leq j\leq 2+l}\left\vert f^{(j)} (x)\right\vert,\quad
    \norm{\frac{d^{l}p_{i,1,v}(t)}{dt^{l}}}_{L^{\infty}(\mathbb{R})}\lesssim_{l} v^{1+l} \text{ for all $l\in\mathbb{N}\cup\{0\}$}.
\end{equation}
In conclusion, we have 
\begin{multline}\label{fdp6}
    \frac{\partial}{\partial t}\left[\frac{16\sqrt{2}\dot d_{v}(t)e^{-\sqrt{2}d_{v}(t)}}{4-\dot d_{v}(t)^{2}}w_{0}(t,x) H^{'}_{0,1}(w_{0}(t,x))\right]\\=w_{0}(t,x) H^{'}_{0,1}(w_{0}(t,x))\frac{d}{d t}\left[\frac{16\sqrt{2}\dot d_{v}(t)e^{-\sqrt{2}d_{v}(t)}}{4-\dot d_{v}(t)^{2}}\right]\\{+}\frac{16\sqrt{2}\dot d_{v}(t)e^{-\sqrt{2}d_{v}(t)}}{4-\dot d_{v}(t)^{2}}\sum_{i=1}^{N_{1}}h_{i,1}(w_{0}(t,x))p_{i,1,v}(t). 
\end{multline}
\par Moreover, from estimate \eqref{impooo} and Lemma \ref{dlemma}, we deduce using the chain and product rule of derivative that
\begin{equation*}
    \left \vert \frac{d^{l}}{d t^{l}}\left[\frac{\dot d_{v}(t)e^{-\sqrt{2}d_{v}(t)}}{4-\dot d_{v}(t)^{2}}\right]\right\vert\lesssim_{l} v^{3+l}e^{-2\sqrt{2}\vert t\vert v} \text{ for all $l\in\mathbb{N}\cup\{0\}.$ }
\end{equation*}
\par Next, since $w_{0}(t,x)=\left(x-\frac{d_{v}(t)}{2}\right)\left(1-\frac{\dot d_{v}(t)^{2}}{4}\right)^{{-}\frac{1}{2}}$ and $\ddot d_{v}(t)=16\sqrt{2}e^{{-}\sqrt{2}d_{v}(t)},$ we can verify that
\begin{align}\label{fdp33}
    \frac{\partial}{\partial t}\left[-\frac{\dot d_{v}(t)}{\sqrt{4-\dot d_{v}(t)^{2}}} H^{'}_{0,1}(w_{0}(t,x))\right] =&{-}\frac{8\sqrt{2}e^{-\sqrt{2}d_{v}(t)}}{\sqrt{1-\frac{\dot d_{v}(t)^{2}}{4}}} H^{'}_{0,1}(w_{0}(t,x))+\frac{\dot d_{v}(t)^{2}}{4-\dot d_{v}(t)^{2}} H^{''}_{0,1}(w_{0}(t,x))\\ \nonumber
    &{-}\dot d_{v}(t)\left[\frac{d}{dt}\left(4-\dot d_{v}(t)^{2}\right)^{{-}\frac{1}{2}}\right]H^{'}_{0,1}(w_{0}(t,x))\\ \nonumber &{-}\dot d_{v}(t)\left[\frac{d}{dt}\left(4-\dot d_{v}(t)^{2}\right)^{{-}\frac{1}{2}}\right]w_{0}(t,x) H^{''}_{0,1}(w_{0}(t,x)).
\end{align}
Using the Remarks \ref{dsremark} and \ref{basisS}, we can verify that $ H^{'}_{0,1}\in S^{+}\cap\mathscr{S}(\mathbb{R}),$ and $x H^{''}_{0,1}\in S^{+}_{1}.$
We also recall the estimate \eqref{mmma} which is given by
\begin{equation*}
    \left\vert\frac{d^{l}}{dt^{l}}\left(1-\frac{\dot d_{v}(t)^{2}}{4}\right)^{-\frac{1}{2}}\right\vert\lesssim_{l} v^{2+l} e^{-2\sqrt{2}\vert t\vert v} \text{ for all $l\in\mathbb{N}.$}
\end{equation*}
In conclusion, from Lemmas \ref{dlemma}, \ref{geraldt}, identities \eqref{0trivial}, \eqref{fdp1}, equations \eqref{fdp6} and \eqref{fdp33}, we obtain that $R_{1,v}(t,x)$ is a finite sum of functions $p_{i,v}(t)h_{i}(x)$ with $h_{i}\in S^{+}_{2}$ and $p_{i,v}$ satisfying
\begin{equation*}
    \left\vert\frac{d^{l}}{dt^{l}}p_{i,v}(t)\right\vert\lesssim_{l} v^{4+l}e^{{-}2\sqrt{2}\vert t\vert v} \text{ for all $l\in\mathbb{N}\cup\{0\}.$}
\end{equation*}
Since $d_{v}(t)$ is an even function, equations \eqref{fdp6} and \eqref{fdp33} imply that all the functions $p_{i,v}(t)$ are also even.
Estimate \eqref{kinkw} is obtained from Lemma \ref{dlemma} and the product rule of derivative on time applied to each function
$
   p_{i,v}(t)h_{i}\left(w_{0}(t,x)\right). 
$
\end{proof}
\begin{lemma}\label{LambdaG}
The function $\mathcal{G}$ defined in \eqref{G(x)} satisfies the following identity
\begin{multline}\label{lambdag}
    \left[\frac{\partial^{2}}{\partial t^{2}}-\frac{\partial^{2}}{\partial x^{2}} + U^{''}\left(H_{0,1}(w_{0}(t,x))\right)\right]\left(e^{-\sqrt{2}d_{v}(t)}\mathcal{G}\left(w_{0}(t,x)\right)\right)\\ 
    \begin{aligned}
    =&8\sqrt{2} H^{'}_{0,1}(w_{0}(t,x))e^{-\sqrt{2}d_{v}(t)}\\ &{-}\left[24 H_{0,1}(w_{0}(t,x))^{2}-30H_{0,1}(w_{0}(t,x))^{4}\right]e^{-\sqrt{2}w_{0}(t,x)}e^{-\sqrt{2}d_{v}(t)} {+}R_{2,v}\left(t,w_{0}(t,x)\right),
    \end{aligned}
 \end{multline}
%\begin{remark}\label{R1R2}
 where $R_{2,v}(t,x)$ is a finite sum of functions
$h_{i}(x)p_{i,v}(t)$ with $h_{i}(x)\in S^{+}_{3}$ and $p_{i,v}(t)\in C^{\infty}(\mathbb{R})$ being an even function such that, for any $l\in\mathbb{N}\cup\{0\},$ $\left \vert\frac{d^{l}p_{i,v}(t)}{dt^{l}}\right \vert\lesssim_{l}v^{4+l}e^{-2\sqrt{2}v\lvert t \rvert}.$ Furthermore, if $0<v\ll1,$ then for any $s\geq 1,\,l\in\mathbb{N}\cup\{0\},$ we have
\begin{equation}\label{gw}
    \norm{\frac{\partial^{l}}{\partial t^{l}}R_{2,v}\left(t,w_{0}(t,x)\right)}_{H^{s}_{x}}\lesssim_{s,l} v^{4+l}e^{{-}2\sqrt{2}\vert t\vert v}.
\end{equation}
%\end{remark}
\end{lemma}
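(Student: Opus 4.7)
The plan is to split the left-hand side of \eqref{lambdag} into a ``spatial'' piece and a ``temporal'' piece, then identify each main term using \eqref{Gprop} and absorb the rest into $R_{2,v}$.

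First, I would compute $[{-}\partial_{x}^{2}+\ddot U(H_{0,1}(w_{0}))]\mathcal{G}(w_{0}(t,x))$. Using $\partial_{x}w_{0}=(1-\dot d_{v}^{2}/4)^{-1/2}$ and writing ${-}(1-\dot d_{v}^{2}/4)^{-1}={-}1-\dot d_{v}^{2}/(4-\dot d_{v}^{2})$, this piece decomposes as
\begin{equation*}
   \left[{-}\ddot{\mathcal{G}}(w_{0})+\ddot U(H_{0,1}(w_{0}))\mathcal{G}(w_{0})\right]{-}\frac{\dot d_{v}(t)^{2}}{4-\dot d_{v}(t)^{2}}\ddot{\mathcal{G}}(w_{0}(t,x)).
\end{equation*}
Applying \eqref{Gprop} at the point $x=w_{0}(t,x)$ and using $\ddot U(H_{0,1})-2={-}24H_{0,1}^{2}+30H_{0,1}^{4}$ turns the first bracket into exactly the two leading terms displayed in \eqref{lambdag} after multiplication by $e^{{-}\sqrt{2}d_{v}(t)}$. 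It remains to check that the correction
\begin{equation*}
    {-}\frac{\dot d_{v}(t)^{2}}{4-\dot d_{v}(t)^{2}}e^{{-}\sqrt{2}d_{v}(t)}\ddot{\mathcal{G}}(w_{0}(t,x))
\end{equation*}
has the required form: from the definition \eqref{G(x)} of $\mathcal{G}$, the function $\ddot{\mathcal{G}}$ lies in $S^{+}_{1}\subset S^{+}_{3}$, while the prefactor is an even smooth function of $t$ whose $l$-th derivative is bounded by $v^{4+l}e^{{-}2\sqrt{2}v\vert t\vert}$ thanks to Lemma \ref{dlemma} combined with the analyticity of $\theta\mapsto(1-\theta^{2})^{-1}$ near $0$ and the identity $\ddot d_{v}=16\sqrt{2}e^{{-}\sqrt{2}d_{v}}$.

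Next I would handle $\partial_{t}^{2}(e^{{-}\sqrt{2}d_{v}(t)}\mathcal{G}(w_{0}(t,x)))$ by Leibniz, obtaining the three pieces $\ddot f\,\mathcal{G}(w_{0})$, $2\dot f\,\partial_{t}\mathcal{G}(w_{0})$, and $f\,\partial_{t}^{2}\mathcal{G}(w_{0})$ with $f(t)=e^{{-}\sqrt{2}d_{v}(t)}$. From Lemma \ref{dlemma}, the even function $\ddot f=(2\dot d_{v}^{2}-\sqrt{2}\ddot d_{v})e^{{-}\sqrt{2}d_{v}}$ has $l$-th derivative of size $v^{4+l}e^{{-}2\sqrt{2}v\vert t\vert}$; multiplied by $\mathcal{G}(w_{0})$ with $\mathcal{G}\in S^{+}_{1}\subset S^{+}_{3}$, this is already of the required form. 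For the remaining two pieces, I would apply Lemma \ref{geraldt} to $\mathcal{G}\in S^{+}_{1}$ to represent
\begin{equation*}
   \partial_{t}^{l}\mathcal{G}(w_{0}(t,x))=\sum_{i=1}^{N_{l}}h_{i,l}(w_{0}(t,x))p_{i,l,v}(t),\quad h_{i,l}\in S^{+}_{1+l},\;\vert p_{i,l,v}^{(k)}\vert\lesssim_{k,l} v^{l+k},
\end{equation*}
with $p_{i,l,v}$ odd for $l=1$ and even for $l=2$. Since $\dot f$ is odd and of size $v^{3+l}$ in derivatives, the product $2\dot f\,p_{i,1,v}$ is even with derivatives $\lesssim v^{4+l}e^{{-}2\sqrt{2}v\vert t\vert}$, and the spatial coefficient $h_{i,1}\in S^{+}_{2}\subset S^{+}_{3}$; similarly $f\,p_{i,2,v}$ is even with derivatives $\lesssim v^{4+l}e^{{-}2\sqrt{2}v\vert t\vert}$, and $h_{i,2}\in S^{+}_{3}$. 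Collecting all these terms gives the claimed representation of $R_{2,v}(t,x)$.

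Finally, the Sobolev estimate \eqref{gw} follows by applying Lemma \ref{geraldt} to each $h_{i}\in S^{+}_{3}$, which yields $\Vert\partial_{t}^{l}h_{i}(w_{0}(t,x))\Vert_{H^{s}_{x}}\lesssim_{s,l} v^{l}$, and then using the Leibniz rule together with the bounds $\vert p_{i,v}^{(l)}\vert\lesssim_{l} v^{4+l}e^{{-}2\sqrt{2}v\vert t\vert}$. The main nuisance will be bookkeeping: verifying that the Lorentz-correction term from the spatial part and the mixed time derivatives genuinely pair an $S^{+}_{3}$ spatial factor with an even smooth temporal factor of the correct size, and checking parity throughout. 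No single step is deep; the difficulty is purely organizational, and the already established Lemmas \ref{dlemma} and \ref{geraldt} do most of the heavy lifting.
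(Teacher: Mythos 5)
Your proposal is correct and follows essentially the same route as the paper: identify the leading terms by evaluating \eqref{Gprop} at $w_{0}(t,x)$, and absorb everything else into $R_{2,v}$ using Lemma \ref{dlemma} together with the structure of time derivatives of $\mathcal{G}(w_{0})$ (which the paper writes out explicitly via \eqref{dfw0}--\eqref{fdp10} and you obtain by invoking Lemma \ref{geraldt}, whose proof is that same computation). The only cosmetic difference is that the paper cancels the Lorentz correction $\frac{\dot d_{v}^{2}}{4-\dot d_{v}^{2}}\mathcal{G}''(w_{0})$ arising from $\partial_{t}^{2}$ against the corresponding defect in $\partial_{x}^{2}$ to reconstruct the flat operator exactly, whereas you estimate the two pieces separately; this is harmless since each is individually of size $v^{4+l}e^{-2\sqrt{2}\vert t\vert v}$ with an $S^{+}_{3}$ spatial factor and even temporal factor.
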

\begin{proof}
First, using equation \eqref{Gprop}, we deduce that
\begin{multline*}
    \frac{\dot d_{v}(t)^{2}}{4-\dot d_{v}(t)^{2}}\mathcal{G}^{''}(w_{0}(t,x))-\frac{\partial^{2}}{\partial x^{2}}\mathcal{G}(w_{0}(t,x))+U^{''}\left(H_{0,1}(w_{0}(t,x))\right)\mathcal{G}(w_{0}(t,x))\\= {-}\left[24 H_{0,1}(w_{0}(t,x))^{2}-30H_{0,1}(w_{0}(t,x))^{4}\right]e^{-\sqrt{2}w_{0}(t,x)}+8\sqrt{2} H^{'}_{0,1}(w_{0}(t,x)).
\end{multline*}
Consequently, we have 
\begin{align}\nonumber
    R_{2,v}\left(t,w_{0}(t,x)\right)=&\left[\frac{d^{2}}{d t^{2}}e^{{-}\sqrt{2}d_{v}(t)}\right]\mathcal{G}\left(w_{0}(t,x)\right)+2\left[\frac{d}{d t}e^{{-}\sqrt{2}d_{v}(t)}\right]\frac{\partial}{\partial t}\mathcal{G}\left(w_{0}(t,x)\right)\\ \label{r2ident}
    &{+}e^{{-}\sqrt{2}d_{v}(t)}\frac{\partial^{2}}{\partial t^{2}}\mathcal{G}\left(w_{0}(t,x)\right)-\frac{\dot d_{v}(t)^{2}}{4-\dot d_{v}(t)^{2}}e^{{-}\sqrt{2}d_{v}(t)}\mathcal{G}^{''}\left(w_{0}(t,x)\right).
\end{align}
\par Clearly identities \eqref{dfw0} and $\ddot d_{v}(t)=16\sqrt{2}e^{{-}\sqrt{2}d_{v}(t)}$ imply the following equations
\begin{align}\label{fdp7}
    \frac{\partial}{\partial t}\mathcal{G}(w_{0}(t,x))=&\left[{-}\frac{\dot d_{v}(t)}{\sqrt{4-\dot d_{v}(t)^{2}}} +\frac{16\sqrt{2}\dot d_{v}(t)}{4-\dot d_{v}(t)^{2}}e^{-\sqrt{2}d_{v}(t)}w_{0}(t,x)\right] \mathcal{G}^{'}(w_{0}(t,x)),\\
\label{fdp8}
    \frac{\partial}{\partial t}\mathcal{G}^{'}(w_{0}(t,x))=&\left[{-}\frac{\dot d_{v}(t)}{\sqrt{4-\dot d_{v}(t)^{2}}} +\frac{16\sqrt{2}\dot d_{v}(t)}{4-\dot d_{v}(t)^{2}}e^{-\sqrt{2}d_{v}(t)}w_{0}(t,x) \right]\mathcal{G}^{''}(w_{0}(t,x)),\\ \nonumber
   \frac{\partial}{\partial t}\left[w_{0}(t,x) \mathcal{G}^{'}(w_{0}(t,x))\right]=&{-}\frac{\dot d_{V}(t)}{\sqrt{4-\dot d(t)^{2}}} \left[w_{0}(t,x)\mathcal{G}^{''}(w_{0}(t,x))+\mathcal{G}^{'}(w_{0}(t,x))\right]\\ \nonumber
   &{+}\frac{16\sqrt{2}\dot d_{v}(t)}{4-\dot d_{v}(t)^{2}}e^{-\sqrt{2}d_{v}(t)}w_{0}(t,x)\left[w_{0}(t,x)\mathcal{G}^{''}(w_{0}(t,x))\right]
   \\ \label{fdp10}&{+}\frac{16\sqrt{2}\dot d_{v}(t)}{4-\dot d_{v}(t)^{2}}e^{-\sqrt{2}d_{v}(t)}w_{0}(t,x)\left[\mathcal{G}^{'}(w_{0}(t,x))\right].
\end{align}
Moreover, since $\mathcal{G}\in S^{+}_{1},$ then $\mathcal{G}^{''}(x),$ and $x^{2}\mathcal{G}^{''}(x)$ are in $S^{+}_{3}.$ 
Therefore, using estimates \eqref{mmma}, Lemma \ref{dlemma} and identities \eqref{r2ident}, \eqref{fdp7}, \eqref{fdp8}, \eqref{fdp10}, we deduce from the time derivative of \eqref{fdp7} and the product rule that $R_{2,v}(t,x)$ is a finite sum of functions $h_{i}(x)p_{i,v}(t)$ satisfying, for any index $i,$ the conditions $h_{i}\in S^{3}_{+}$ and
\begin{equation*}
    \left\vert \frac{d^{l}}{dt^{l}} p_{i,v}(t)\right\vert\lesssim_{l}v^{4+l}e^{{-}2\sqrt{2}\vert t\vert v} \text{ for all $l\in\mathbb{N}\cup\{0\}.$}
\end{equation*}
Therefore, estimate \eqref{gw} follows from Lemmas \ref{dlemma}, \ref{geraldt} and the product rule of derivative. Finally, Since $d_{v}(t)$ is an even function, we can deduce from Lemma \ref{geraldt} applied on $\mathcal{G}$ and identity \eqref{fdp7} that all the functions $p_{i,v}$ are even.
\end{proof}
\subsection{Applications of Proposition \ref{separation} .}
\par  This subsection contains lemmas that are consequences of Proposition \ref{separation} and Remarks \ref{sepc}, \ref{reflection}. These lemmas are going to be used later to estimate the remaining terms of $\Lambda(\varphi_{2,0})(t,x).$
From now on,
we denote
\begin{equation}\label{def1}
    M(x)=\frac{H_{0,1}(x)}{\sqrt{1+e^{2\sqrt{2}x}}},\, N(x)=\frac{H_{0,1}(x)^{3}}{\sqrt{1+e^{2\sqrt{2}x}}},\, V(x)=\frac{H_{0,1}(x)}{1+\sqrt{1+e^{2\sqrt{2}x}}}.
\end{equation}
\begin{remark}\label{Gequationn}
From \eqref{Gprop}, $-\frac{d^{2}}{dx^{2}} \mathcal{G}(x)+ U^{''}(H_{0,1})\mathcal{G}(x)=-24M(x)+30N(x)+8\sqrt{2}H^{'}_{0,1}(x).$
\end{remark}
\begin{lemma}\label{l1}
For any $\zeta>1,$ we have that
\begin{multline}\label{m2}
     U^{'}\left(H_{0,1}(x-\zeta)+H_{-1,0}(x)\right)- U^{'}\left(H_{0,1}(x-\zeta)\right)- U^{'}\left(H_{-1,0}(x)\right)\\
    \begin{aligned}
    =&24e^{-\sqrt{2}\zeta}\left[M(x-\zeta)-M(-x)\right]{-}30e^{-\sqrt{2}\zeta}\left[N(x-\zeta)-N(-x)\right]\\&{+}24e^{-2\sqrt{2}\zeta}\left[V(x-\zeta)-V(-x)\right]+\frac{60e^{-2\sqrt{2}\zeta}}{\sqrt{2}}\left[ H^{'}_{0,1}(x-\zeta)- H^{'}_{-1,0}(x)\right]\\&{+}R(x,\zeta),
    \end{aligned}
\end{multline}
where $R(x,\zeta)$ is a finite sum of terms $m_{i}(x-\zeta)n_{i}(x)e^{-(2+d_{i})\sqrt{2}\zeta},$ with $m_{i} \in S^{+},\,n_{i}\in S^{-}$ and $d_{i}\in\mathbb{N}\cup\{0\}.$ 
\end{lemma}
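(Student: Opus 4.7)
\par The proof proceeds by a direct algebraic expansion combined with the Separation Lemma. Writing $A = H_{0,1}^{\zeta}(x)$ and $B = H_{-1,0}(x) = -H_{0,1}(-x)$ and using $\dot U(\phi) = 2\phi - 8\phi^{3} + 6\phi^{5}$, an elementary identity gives
\begin{equation*}
\dot U(A+B) - \dot U(A) - \dot U(B) = -24\,AB(A+B) + 30\,AB(A+B)(A^{2}+AB+B^{2}),
\end{equation*}
so the interaction decomposes as a finite combination of monomials $A^{i}B^{j}$ with $i+j\in\{3,5\}$ and $\min(i,j)\geq 1$. Each such monomial will be analyzed independently; the asymptotic scale at which it contributes to \eqref{m2} is governed by where its mass concentrates (near $x\approx\zeta$ or near $x\approx 0$).

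\par To extract the leading asymptotics I would exploit the identity $H_{0,1}(-y) = e^{-\sqrt{2}y}H_{0,1}(y)$, which yields the clean formulas $H_{0,1}(y)^{2} e^{-\sqrt{2}y} = M(y)$, $H_{0,1}(y)^{4} e^{-\sqrt{2}y} = N(y)$ and $H_{0,1}(y) e^{-2\sqrt{2}y} = e^{-\sqrt{2}y} - V(y)$. Writing $H_{0,1}(-x) = e^{-\sqrt{2}x} - \tfrac{1}{2}e^{-3\sqrt{2}x} + \ldots$ as a series in odd powers of $e^{-\sqrt{2}x}$, and similarly for $H_{0,1}(x-\zeta)$ in powers of $e^{\sqrt{2}(x-\zeta)}$, each monomial $A^{i}B^{j}$ admits an expansion in powers $e^{-k\sqrt{2}\zeta}$ whose coefficients are products $f(x-\zeta) g(x)$ with $f\in S^{+}$, $g\in S^{-}$ (using Remark \ref{basisS} to ensure the even powers of $H_{0,1}(-x)$ lie in $S^{-}$). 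Proposition \ref{separation} together with Remark \ref{reflection} then gives, for each product, the unique decomposition into pure $(x-\zeta)$-dependent and pure $(-x)$-dependent terms, plus a remainder of product type.

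\par At order $e^{-\sqrt{2}\zeta}$, only the cubic piece $-24(A^{2}B + AB^{2})$ and the extreme quintic pieces $30(A^{4}B + AB^{4})$ contribute, producing precisely $24 e^{-\sqrt{2}\zeta}[M(x-\zeta) - M(-x)]$ and $-30 e^{-\sqrt{2}\zeta}[N(x-\zeta) - N(-x)]$, obtained by picking out the leading behavior of $-B \sim e^{-\sqrt{2}x}$ near $x\approx\zeta$ and of $A \sim e^{\sqrt{2}(x-\zeta)}$ near $x\approx 0$. At order $e^{-2\sqrt{2}\zeta}$, the inner quintic terms $60(A^{3}B^{2} + A^{2}B^{3}) = 60A^{2}B^{2}(A+B)$ combine with the sub-leading pieces of the cubic and outer quintic monomials (those obtained by plugging the $e^{-2\sqrt{2}y}$-tail of $H_{0,1}(-x)^{2}$ into $AB^{2}$, and symmetrically for $A^{2}B$) to produce the $V(x-\zeta) - V(-x)$ and $\dot H_{0,1}^{\zeta}(x) - \dot H_{-1,0}(x)$ terms. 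Every further contribution at scale $e^{-(2+d_{i})\sqrt{2}\zeta}$ with $d_{i}\in\mathbb{N}\cup\{0\}$ inherits the product form $m_{i}(x-\zeta)n_{i}(x)$ from the Separation Lemma and is absorbed into $R(x,\zeta)$.

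\par The main obstacle is fixing the precise coefficients $24$ and $60/\sqrt{2}$ in front of $V$ and $\dot H_{0,1}$. This requires carefully collecting all sub-leading pieces at scale $e^{-2\sqrt{2}\zeta}$, noting that the naive Taylor expansion produces spurious $e^{-\sqrt{2}(x-\zeta)}$ contributions (from $H_{0,1}(y) e^{-2\sqrt{2}y} = e^{-\sqrt{2}y} - V(y)$) which must cancel in aggregate so that the resulting pure $(x-\zeta)$-dependent function lies in $S^{+}$, and then invoking uniqueness of the Separation Lemma decomposition via Corollary \ref{unique1} to recognize the remainder as the stated combination of $V(x-\zeta)$ and $\dot H_{0,1}^{\zeta}(x)$. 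The reflection symmetry $x \mapsto \zeta - x$ of the interaction can be used to transfer the result from one side to the other, unambiguously pinning down both coefficients.
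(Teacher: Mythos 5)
Your proposal is correct and takes essentially the same route as the paper's proof: expand $\dot U(A+B)-\dot U(A)-\dot U(B)$ into the six cross monomials $A^{i}B^{j}$, halve the work via the reflection $x\mapsto \zeta-x$ (Remark \ref{reflection}), and apply the Separation Lemma to each monomial, identifying the extracted profiles through exactly the identities you list, namely $H_{0,1}(y)^{2}e^{-\sqrt{2}y}=M(y)$, $H_{0,1}(y)^{4}e^{-\sqrt{2}y}=N(y)$, $H_{0,1}(y)e^{-2\sqrt{2}y}=e^{-\sqrt{2}y}-V(y)$ and $H_{0,1}(y)^{3}e^{-2\sqrt{2}y}=\dot H_{0,1}(y)/\sqrt{2}$. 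The only substantive difference is bookkeeping: the paper extracts tails in valuation order (in $-24AB^{2}$, first $A\sim e^{\sqrt{2}(x-\zeta)}$, and only afterwards the $e^{-2\sqrt{2}x}$ tail of the remaining $S^{-}$ factor), which produces Schwartz profiles at every step, so the spurious non-decaying $e^{-\sqrt{2}(x-\zeta)}$ contributions you anticipate never arise and no aggregate cancellation or appeal to Corollary \ref{unique1} is required.
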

\begin{remark}\label{reint} In notation of Lemma \ref{l1}, if we replace $x,\,\zeta,$ respectively, with $-w_{0}(t,{-}x)$ and $\frac{d_{v}(t)}{\sqrt{1-\frac{\dot d_{v}(t)^{2}}{4}}},$ we obtain from Lemmas \ref{geraldt} and \ref{interactionsize} the following estimate
\begin{multline*}
U^{'}\left(H^{w_{0}}_{0,1}(t,x)\right)- U^{'}\left(H_{0,1}\left(w_{0}(t,x)\right)\right)- U^{'}\left({-}H_{0,1}\left(w_{0}(t,{-}x)\right)\right)\\
   \begin{aligned}
   \cong_{6} &
   24\exp\left(\frac{{-}\sqrt{2}d_{v}(t)}{\sqrt{1-\frac{\dot d_{v}(t)^{2}}{4}}}\right)M^{w_{0}}(t,x)-30\exp\left(\frac{{-}\sqrt{2}d_{v}(t)}{\sqrt{1-\frac{\dot d_{v}(t)^{2}}{4}}}\right)N^{w_{0}}(t,x)\\&{+}24\exp\left(\frac{{-}2\sqrt{2}d_{v}(t)}{\sqrt{1-\frac{\dot d(t)^{2}}{4}}}\right)V^{w_{0}}(t,x) +\frac{60}{\sqrt{2}}\exp\left(\frac{{-}2\sqrt{2}d_{v}(t)}{\sqrt{1-\frac{\dot d_{v}(t)^{2}}{4}}}\right)\left(H^{'}_{0,1}\right)^{w_{0}}(t,x). 
\end{aligned}
\end{multline*}
\end{remark}
\begin{proof}[Proof of Lemma \ref{l1}.] 
Using Remarks \ref{alg}, \ref{basisS} and Lemmas \ref{multiplicative}, we can demonstrate Lemma \ref{l1} from Lemma \ref{prelema} and from the identity $U(\phi)=\phi^{2}(1-\phi^{2})^{2}.$ 
\end{proof}
\begin{lemma}\label{interg}
There exist $A,\, B,\, C,\,D\in S^{+}\cap \mathscr{S}(\mathbb{R})$ and there exists a finite set of quadruples $\left (h_{i,+},\,h_{i,-},\,d_{i},\,l_{i}\right) \in S^{+}\times S^{-}\times \mathbb{N}^{2},$ with $h_{i,+}$ or $h_{i,-}$ in $\mathscr{S}(\mathbb{R}),\,l_{i}\in\{0,1\}$ and $d_{i}\geq 0,$ satisfying the following identity
\begin{multline}\label{int23}
   \left[U^{''}\left(H_{0,1}(x-\zeta )+H_{-1,0}(x)\right)-U^{''}\left(H_{0,1}(x-\zeta)\right)\right]e^{-\sqrt{2}\zeta}\mathcal{G}(x-\zeta)\\=
    (x-\zeta)A(x-\zeta)e^{-2\sqrt{2}\zeta}+(x-\zeta) B(-x)e^{-2\sqrt{2}\zeta}+C(x-\zeta)e^{-2\sqrt{2}\zeta}+D(-x)e^{-2\sqrt{2}\zeta}\\+\sum_{i}(x-\zeta)^{l_{i}}h_{i,+}(x-\zeta)h_{i,-}(x)e^{-(2+d_{i})\sqrt{2}\zeta},
\end{multline}
for all $x\in\mathbb{R}$ and any $\zeta>1.$
\end{lemma}
\begin{remark}\label{remarkinterg}
In notation of Lemma \ref{interg}, for all $(x,\zeta)\in\mathbb{R}^{2},$ we denote the real function $\mathcal{Q}:\mathbb{R}^{2}\to\mathbb{R}$ by 
\begin{equation}\label{QQ}
    \mathcal{Q}(x,\zeta)=(x-\zeta)A(x-\zeta)e^{-\sqrt{2}\zeta}\\+(x-\zeta) B(-x)e^{-\sqrt{2}\zeta}+C(x-\zeta)e^{-\sqrt{2}\zeta}+D(-x)e^{-\sqrt{2}\zeta},
\end{equation}
and the function $R_{q}:\mathbb{R}^{2}\to\mathbb{R}$ by 
\begin{equation}\label{Rq}
    R_{q}(x,\zeta)=\sum_{i}(x-\zeta)^{l_{i}}h_{i,+}(x-\zeta)h_{i,-}(x)e^{-(1+d_{i})\sqrt{2}\zeta},
\end{equation}
 for any $(x,\zeta)\in\mathbb{R}^{2}.$
 If we change the variables $x,\,\zeta,$ respectively, with ${-}w_{0}(t,{-}x)$ and
$
    \frac{d_{v}(t)}{\sqrt{1-\frac{\dot d_{v}(t)^{2}}{4}}},
$
we obtain using Lemmas \ref{explemma} and \ref{interg} that
\begin{equation*}
    \left[ U^{''}\left(H^{w_{0}}_{0,1}(t,x)\right)- U^{''}\left(H_{0,1}\left(w_{0}(t,x)\right)\right)\right]e^{{-}\sqrt{2}d_{v}(t)}\mathcal{G}\left(w_{0}(t,x)\right)\cong_{6}
    \mathcal{Q}\left({-}w_{0}(t,{-}x),d_{v}(t)\right)e^{{-}\sqrt{2}d_{v}(t)}.
\end{equation*}
Indeed, using Lemma \ref{interactionsize} and Lemma \ref{dlemma}, we deduce that if $0<v\ll 1,$ then 
\begin{equation*}
R_{q}\left({-}w_{0}(t,{-}x),\frac{d_{v}(t)}{\sqrt{1-\frac{\dot d_{v}(t)^{2}}{4}}}\right)e^{{-}\sqrt{2}d_{v}(t)}\cong_{6} 0.
\end{equation*}
\end{remark}
\begin{proof}[Proof of Lemma \ref{interg}.]
The identity \eqref{G(x)} implies that $\mathcal{G}_{1}(x)=\mathcal{G}(x)-2x H^{'}_{0,1}(x)\in S^{+}\cap \mathscr{S}(\mathbb{R}).$ So,
the proof follows from Remark \ref{basisS}, applications of Proposition \ref{separation}, and Remark \ref{reflection}
in the following expressions
\begin{align*}
     \left(U^{''}\left(H_{0,1}(x-\zeta)+H_{-1,0}(x)\right)- U^{''}\left(H_{0,1}(x-\zeta)\right)\right)\mathcal{G}_{1}(x-\zeta)e^{-\sqrt{2}\zeta},\\
2\left( U^{''}\left(H_{0,1}(x-\zeta)+H_{-1,0}(x)\right)- U^{''}\left(H_{0,1}(x-\zeta)\right)\right)(x-\zeta)H^{'}_{0,1}(x-\zeta)e^{-\sqrt{2}\zeta}.
\end{align*}
\par More precisely, since
\begin{multline*}
    U^{''}\left(H_{0,1}(x-\zeta)+H_{{-}1,0}(x)\right)- U^{''}\left(H_{0,1}(x-\zeta)\right)\\
    \begin{aligned}
    =&{-}24H_{{-}1,0}(x)^{2}+30H_{{-}1,0}(x)^{4}\\ &{-}48H_{0,1}(x-\zeta)H_{-1,0}(x)
    +30\sum_{i=1}^{3}
    \begin{pmatrix}
    4\\
    i
    \end{pmatrix}
    H_{{-}1,0}(x)^{i}H_{0,1}(x-\zeta)^{4-i},
    \end{aligned}
\end{multline*}
we obtain that $ \left( U^{''}\left(H_{0,1}(x-\zeta)+H_{{-}1,0}(x)\right)-U^{''}\left(H_{0,1}(x-\zeta)\right)\right)\mathcal{G}_{1}(x-\zeta)$ is a linear combination of functions
$
    H_{0,1}(x-\zeta)^{m_{i}}H_{-1,0}(x)^{l_{i}}h_{i}(x-\zeta),
$
such that $h_{i}\in S^{+}\cap\mathscr{S}(\mathbb{R}),\,m_{i}\in\mathbb{N}\cup\{0\},\,l_{i}\in\mathbb{N}$ and $0<m_{i}+n_{i}$ is an even number. By similar reasoning, we can verify that
\begin{equation*}
     2\left[ U^{''}\left(H_{0,1}(x-\zeta)+H_{{-}1,0}(x)\right)- U^{''}\left(H_{0,1}(x-\zeta)\right)\right](x-\zeta) H^{'}_{0,1}(x-\zeta)
\end{equation*}
is also a linear combination of functions
$
    (x-\zeta)H_{0,1}(x-\zeta)^{m_{i}}H_{-1,0}(x)^{l_{i}} H^{'}_{0,1}(x-\zeta),
$
such that $m_{i}\in\mathbb{N}\cup\{0\},\,l_{i}\in\mathbb{N}$ and $0<m_{i}+l_{i}$ is an even number. Therefore, using Lemma \ref{multiplicative}, we can verify that 
\begin{equation*}
     \left[ U^{''}\left(H_{0,1}(x-\zeta)+H_{-1,0}(x)\right)-U^{''}\left(H_{0,1}(x-\zeta)\right)\right]\mathcal{G}(x-\zeta)
\end{equation*}
is a linear combination of functions
$
    (x-\zeta)^{\alpha_{i}}h_{i,1}\left(x-\zeta\right)h_{i,2}\left(x\right)
$
such that $\alpha_{i}\in\{0,1\},$ $h_{i,1}$ or $h_{i,2} \in \mathscr{S}(\mathbb{R})$ and either $h_{i,1}(x)\in S^{+}$ and $h_{i,2}(x)\in S^{-}$ or $h_{i,1}({-}x)\in S^{-}$ and $h_{i,2}({-}x)\in S^{+}.$ In conclusion, the statement of Lemma \ref{gelint} is a consequence of Proposition \ref{separation} and Remarks \ref{sepc}, \ref{reflection}.
\end{proof}
\begin{lemma}\label{Remaindertaylor}
For all $\zeta\geq 1,$ 
\begin{equation*}
    \mathcal{D}_{1}(x,\zeta)=\sum_{j=4}^{6}\frac{1}{(j-1)!} U^{(j)}\left(H_{0,1}(x-\zeta)+H_{-1,0}(x)\right)\left(\mathcal{G}(x-\zeta)-\mathcal{G}(-x)\right)^{j-1}e^{-(j-1)\sqrt{2}\zeta}
\end{equation*}
satisfies for any $l_{1},\,l_{2}\in\mathbb{N}\cup\{0\}$ the following estimate
\begin{equation*}
    \norm{\frac{\partial^{l_{1}+l_{2}}}{\partial x^{l_{1}}\partial\zeta^{l_{2}}}\mathcal{D}_{1}(x,\zeta)}_{L^{2}_{x}}\lesssim_{l_{1}+l_{2}}e^{-3\sqrt{2}\zeta}.
\end{equation*}
\end{lemma}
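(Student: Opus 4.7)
The plan is to exploit two separate pieces of structure in $\mathcal{D}_{1}(x,\zeta)$: the fact that the outer factor $U^{(j)}(H_{0,1}^{\zeta}(x)+H_{-1,0}(x))$ and all its $\partial_{x},\partial_{\zeta}$ derivatives are uniformly bounded in $(x,\zeta)$, and the fact that the inner factor $\mathcal{G}(x-\zeta)-\mathcal{G}(-x)$, together with all its derivatives, is uniformly bounded in $L^{\infty}_{x}\cap L^{2}_{x}$. Since the exponential prefactor $e^{-(j-1)\sqrt{2}\zeta}$ is at least $e^{-3\sqrt{2}\zeta}$ for $j\geq 4$, combining these ingredients gives the claim immediately.

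\textbf{Step 1.} Since $U(\phi)=\phi^{2}(1-\phi^{2})^{2}$ is a polynomial of degree $6$, each $U^{(j)}$ with $j\geq 4$ is a polynomial of degree at most $2$. As $H_{0,1}^{\zeta}(x)+H_{-1,0}(x)\in[-2,2]$ uniformly and $\partial_{x}^{k}H_{0,1}^{\zeta}$, $\partial_{x}^{k}H_{-1,0}$, $\partial_{\zeta}^{k}H_{0,1}^{\zeta}$ are all uniformly bounded in $L^{\infty}$ by \eqref{le2}, applying the chain and product rules gives, for every $k_{1},k_{2}\in\mathbb{N}\cup\{0\}$,
\begin{equation*}
\sup_{x\in\mathbb{R},\,\zeta\geq 1}\left\vert \frac{\partial^{k_{1}+k_{2}}}{\partial x^{k_{1}}\partial\zeta^{k_{2}}}U^{(j)}\bigl(H_{0,1}^{\zeta}(x)+H_{-1,0}(x)\bigr)\right\vert\lesssim_{k_{1}+k_{2},j}1.
\end{equation*}

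\textbf{Step 2.} From the explicit formula \eqref{G(x)} and the identification $\mathcal{G}\in S^{+}_{1}$ discussed in the proof of Lemma \ref{interg}, one sees directly that $\vert\mathcal{G}(x)\vert\lesssim(1+|x|)e^{-\sqrt{2}|x|}$ and, using $\dot{\mathcal{G}}\in\mathscr{S}(\mathbb{R})$ (from Definition \ref{s+}), that $\vert\mathcal{G}^{(k)}(x)\vert\lesssim_{k}e^{-\sqrt{2}|x|}$ for every $k\geq 1$. In particular $\mathcal{G}$ and all its derivatives are in $L^{\infty}(\mathbb{R})\cap L^{2}(\mathbb{R})$, with norms independent of $\zeta$. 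Hence, for every $k_{1},k_{2}\in\mathbb{N}\cup\{0\}$,
\begin{equation*}
\left\Vert\frac{\partial^{k_{1}+k_{2}}}{\partial x^{k_{1}}\partial\zeta^{k_{2}}}\bigl[\mathcal{G}(x-\zeta)-\mathcal{G}(-x)\bigr]\right\Vert_{L^{\infty}_{x}\cap L^{2}_{x}}\lesssim_{k_{1}+k_{2}}1.
\end{equation*}

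\textbf{Step 3.} For fixed $j\in\{4,5,6\}$, one applies the Leibniz rule to distribute the derivatives $\partial_{x}^{l_{1}}\partial_{\zeta}^{l_{2}}$ over the three factors in $\mathcal{D}_{1}(x,\zeta)$. Every derivative landing on the exponential $e^{-(j-1)\sqrt{2}\zeta}$ only pulls out a constant factor; every derivative landing on the $U^{(j)}$ factor is harmless by Step 1; every derivative landing on $[\mathcal{G}(x-\zeta)-\mathcal{G}(-x)]^{j-1}$ produces, after a Faà di Bruno expansion, a finite sum of products of factors of the form $(\mathcal{G}(x-\zeta)-\mathcal{G}(-x))^{a}$ multiplied by derivatives of $\mathcal{G}(x-\zeta)-\mathcal{G}(-x)$, with total multiplicity $a\leq j-1$. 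For each such product one then uses Hölder in the form
\begin{equation*}
\bigl\Vert F^{a}\cdot G\bigr\Vert_{L^{2}_{x}}\leq \Vert F\Vert_{L^{\infty}_{x}}^{a-1}\Vert F\Vert_{L^{2}_{x}}\Vert G\Vert_{L^{\infty}_{x}}
\end{equation*}
(or a straightforward variant when $a=0$), with $F=\mathcal{G}(x-\zeta)-\mathcal{G}(-x)$ and $G$ a product of a bounded derivative factor and a Schwartz-type derivative factor, to conclude that the $L^{2}_{x}$ norm of that summand is bounded by a constant depending only on $l_{1}+l_{2}$ and $j$, independent of $\zeta\geq 1$. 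Multiplying by $e^{-(j-1)\sqrt{2}\zeta}\leq e^{-3\sqrt{2}\zeta}$ and summing over $j\in\{4,5,6\}$ yields the desired estimate.

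There is no real obstacle here: the only subtlety is making sure the polynomial growth coming from the $x\dot H_{0,1}$ piece inside $\mathcal{G}$ does not spoil the uniform $L^{\infty}_{x}$ control of $\mathcal{G}(x-\zeta)-\mathcal{G}(-x)$, which is already settled by the pointwise bound in Step 2. Everything else is a direct product-rule expansion.
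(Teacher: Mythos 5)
Your proof is correct and follows essentially the same route as the paper: uniform $L^{\infty}$ bounds on all $(x,\zeta)$-derivatives of $U^{(j)}\left(H^{\zeta}_{0,1}+H_{-1,0}\right)$, Schwartz-type control of $\mathcal{G}$ (hence of $\mathcal{G}(x-\zeta)-\mathcal{G}(-x)$ uniformly in $\zeta$), and the observation that $e^{-(j-1)\sqrt{2}\zeta}\leq e^{-3\sqrt{2}\zeta}$ for $j\geq 4$; the paper merely packages your Leibniz--H\"older step as the algebra property $\norm{fg}_{H^{s}_{x}}\lesssim_{s}\norm{f}_{H^{s}_{x}}\norm{g}_{H^{s}_{x}}$ for $s\geq 1.$ The only blemish is inconsequential: for $k\geq 1$ your pointwise bound $\vert\mathcal{G}^{(k)}(x)\vert\lesssim_{k}e^{-\sqrt{2}\vert x\vert}$ should retain a factor $(1+\vert x\vert)$ near $-\infty$ coming from the $x\dot H_{0,1}(x)$ term in \eqref{G(x)}, but this does not affect the uniform $L^{\infty}_{x}\cap L^{2}_{x}$ control you actually use.
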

\begin{remark}\label{remarktaylorr}
Indeed, using Lemmas \ref{dlemma}, \ref{geraldt} and the product rule of derivative, we have that 
\begin{equation*}
    \norm{\frac{\partial^{l_{1}+l_{2}}}{\partial x^{l_{1}}\partial t^{l^{2}}}\left[\mathcal{D}_{1}\left({-}w_{0}(t,{-}x),\frac{d_{v}(t)}{\sqrt{1-\frac{\dot d_{v}(t)^{2}}{4}}}\right)\right]}_{L^{2}_{x}}\lesssim_{l_{1},l_{2}} v^{l_{2}+6}e^{{-}2\sqrt{2}\vert t\vert v}.
\end{equation*}
In conclusion, the following function
\begin{equation*}
    \mathcal{D}_{1,1}(t,x)=\sum_{j=4}^{6}\frac{1}{(j-1)!}U^{(j)}\left(H^{w_{0}}_{0,1}(t,x)\right)\mathcal{G}^{w_{0}}(t,x)^{j-1}e^{{-}(j-1)\sqrt{2}d_{v}(t)}
\end{equation*}
satisfies $D_{1,1}\cong_{6} 0.$
\end{remark}
\begin{proof}[Proof of Lemma \ref{Remaindertaylor}.]
First, since $U\in C^{\infty}(\mathbb{R}),\,0< H_{0,1}<1$ and $H^{'}_{0,1} \in \mathscr{S}(\mathbb{R}),$ we obtain for all $\zeta\in\mathbb{R}$ and any $l_{1},\,l_{2},\,l_{3}\in\mathbb{N}\cup\{0\}$ that
\begin{equation*}
    \norm{\frac{\partial^{l_{1}+l_{2}}}{\partial x^{l_{1}}\partial \zeta^{l_{2}}}U^{(l_{3})}\left(H_{0,1}(x-\zeta)+H_{{-}1,0}(x)\right)}_{L^{\infty}_{x}(\mathbb{R})}\lesssim_{l_{1},l_{2},l_{3}} 1.
\end{equation*} 
In conclusion, since $\mathcal{G}\in\mathscr{S}(\mathbb{R})$ and 
\begin{equation*}
\norm{f g}_{H^{s}_{x}}\lesssim_{s}\norm{f}_{H^{s}_{x}}\norm{g}_{L^{\infty}_{x}(\mathbb{R})}+\norm{f}_{H^{s}_{x}}\norm{g}_{L^{\infty}_{x}(\mathbb{R})},
\end{equation*}
$\norm{f g}_{H^{s}_{x}}\lesssim_{s} \norm{f }_{H^{s}_{x}}\norm{g}_{H^{s}_{x}}$ for all $f,\,g\in H^{s}_{x}$ when $s\geq 1$, we deduce for any $l_{1},\,l_{2}\in\mathbb{N}\cup\{0\}$ and all $\zeta\geq 1$
that
\begin{equation*}
   \norm{ \frac{\partial^{l_{1}+l_{2}}}{\partial x^{l_{1}}\partial\zeta^{l_{2}}}\mathcal{D}_{1}(x,\zeta)}_{L^{2}_{x}}\lesssim_{l_{1}+l_{2}}\left[\norm{\mathcal{G}}_{H^{l_{1}+l_{2}+1}_{x}}^{3}+\norm{\mathcal{G}}_{H^{l_{1}+l_{2}+1}_{x}}^{5}\right]e^{-3\sqrt{2}\zeta}\lesssim e^{-3\sqrt{2}\zeta}.
\end{equation*}
\end{proof}
Next, we  consider the following lemma.
\begin{lemma}\label{g3}
There exists a finite set of elements $\left(W_{i},\mathcal{W}_{i},d_{i},j_{i},l_{i}\right)\in S^{+}\times S^{-}\times \left(\mathbb{N}\cup\{0\}\right)^{3}$ such that $W_{i}$ or $\mathcal{W}_{i}$ is in $\mathscr{S}(\mathbb{R}),\, j_{i},\,l_{i}$
satisfy $0\leq j_{i}+l_{i}\leq 2$ for all $i$ and we have the following identity
\begin{align*}
  \mathcal{D}_{2}(x,\zeta)=&\frac{1}{2}  U^{(3)}\left(H_{0,1}(x-\zeta)+H_{-1,0}(x)\right)\left(\mathcal{G}(x-\zeta)-\mathcal{G}(-x)\right)^{2}e^{-2\sqrt{2}\zeta}\\=&
  \frac{1}{2}\left[U^{(3)}\left(H_{0,1}(x-\zeta)\right)\mathcal{G}(x-\zeta)^{2}e^{-2\sqrt{2}\zeta}+U^{(3)}\left(H_{-1,0}(x)\right)\mathcal{G}(-x)^{2}e^{-2\sqrt{2}\zeta}\right]\\&{+}\sum_{i}(x-\zeta)^{j_{i}}({-}x)^{l_{i}}W_{i}(x-\zeta)\mathcal{W}_{i}(x)e^{-(2+d_{i})\sqrt{2}\zeta}
  \\&{-}\sum_{i}({-}x)^{j_{i}}(x-\zeta)^{l_{i}}W_{i}({-}x)\mathcal{W}_{i}({-}x+\zeta)e^{-(2+d_{i})\sqrt{2}\zeta},
\end{align*}
for all $\zeta\geq 1.$
\end{lemma}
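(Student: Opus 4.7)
The plan is to expand algebraically and then apply the Separation Lemma to each cross term. Since $U(\phi)=\phi^{2}(1-\phi^{2})^{2}$, one has $U^{(3)}(\phi)=-48\phi+120\phi^{3}$, and because this is a polynomial of degree $3$,
\begin{equation*}
U^{(3)}(a+b)=U^{(3)}(a)+U^{(3)}(b)+360(a^{2}b+ab^{2}).
\end{equation*}
Applying this with $a=H_{0,1}^{\zeta}(x)$, $b=H_{-1,0}(x)$, and simultaneously expanding $(\mathcal{G}(x-\zeta)-\mathcal{G}(-x))^{2}=\mathcal{G}(x-\zeta)^{2}+\mathcal{G}(-x)^{2}-2\mathcal{G}(x-\zeta)\mathcal{G}(-x)$, I would split $\mathcal{D}_{2}$ into the two diagonal terms (which are precisely the ones written on the right-hand side of the claimed identity) plus a finite list of cross terms obtained by pairing each of the three remaining pieces of the $U^{(3)}$-expansion with each of the three pieces of the $\mathcal{G}$-square expansion.

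Next, using the expression \eqref{G(x)}, I would decompose $\mathcal{G}(y)=2y\dot H_{0,1}(y)+\mathcal{G}_{1}(y)$ with $\mathcal{G}_{1}\in S^{+}\cap\mathscr{S}(\mathbb{R})$ (as already observed in the proof of Lemma \ref{interg}). Then $\mathcal{G}(-x)^{2}$ is a linear combination of terms of the form $(-x)^{l}F_{l}(-x)$ with $F_{l}\in S^{-}\cap\mathscr{S}(\mathbb{R})$ and $l\in\{0,1,2\}$ (the $S^{-}$ membership of each $F_{l}$ follows from the Multiplicative Lemma applied to products of $\dot H_{0,1}(-x)$ and $\mathcal{G}_{1}(-x)$). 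Symmetrically, $\mathcal{G}(x-\zeta)^{2}$ is a sum of terms $(x-\zeta)^{j}\widetilde F_{j}(x-\zeta)$ with $\widetilde F_{j}\in S^{+}\cap\mathscr{S}(\mathbb{R})$, and the mixed factor $\mathcal{G}(x-\zeta)\mathcal{G}(-x)$ produces terms $(x-\zeta)^{j}(-x)^{l}A(x-\zeta)B(-x)$ with $j+l\le 2$, $A\in S^{+}\cap\mathscr{S}(\mathbb{R})$, $B\in S^{-}\cap\mathscr{S}(\mathbb{R})$. Since Remark \ref{basisS} gives $H_{0,1}^{2k+1}\in S^{+}$, we also have that $U^{(3)}(H_{0,1}^{\zeta}(x))$, viewed as a function of $x-\zeta$, lies in $S^{+}$; and using $H_{-1,0}(x)=-H_{0,1}(-x)$ together with the oddness of $U^{(3)}$, $U^{(3)}(H_{-1,0}(x))$ is of the form $-W(-x)$ for some $W\in S^{+}$.

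With this accounting, each of the six cross terms becomes a polynomial in $x-\zeta$ and $-x$ of total degree at most $2$, times an $S^{+}$ factor in $x-\zeta$, times an $S^{-}$ factor in $-x$ (or the reflected variant), with at least one of these factors in $\mathscr{S}(\mathbb{R})$ since $\mathcal{G}_{1}$ and $\dot H_{0,1}$ are Schwartz. Cross terms in which the $S^{+}$-factor is evaluated at $x-\zeta$ and the $S^{-}$-factor at $x$ go into the first sum of the claim with $d_{i}=0$; the reflected cross terms, in which the $S^{+}$-factor is evaluated at $-x$ and the $S^{-}$-factor at $\zeta-x$, go into the second sum. Where an exponent $d_{i}\ge 1$ is desired, I would apply Proposition \ref{separation} (together with Remarks \ref{sepc} and \ref{reflection} to handle the polynomial prefactors $(x-\zeta)^{j_{i}}$ and $(-x)^{l_{i}}$), stopping the expansion at $\mathcal{M}=1$: the explicit leading term provides one quadruple with $d_{i}\ge 1$, and the Schwartz-remainder $f_{1}(x-\zeta)g_{1}(x)$ fits the same template with one of $f_{1}$, $g_{1}$ being Schwartz.

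The main obstacle is purely the bookkeeping: verifying the degree bound $j_{i}+l_{i}\le 2$ and the correct assignment of each cross term to either the first or the second sum. The degree bound is automatic since each occurrence of $\mathcal{G}$ contributes at most one polynomial factor of $y$, and $\mathcal{G}$ enters squared. The assignment to the two sums is governed by the anti-symmetry of $\mathcal{D}_{2}$ under the reflection $x\mapsto -x+\zeta$, which follows from the oddness of $U^{(3)}$ together with the observation that both $H_{0,1}^{\zeta}(x)+H_{-1,0}(x)$ and $\mathcal{G}(x-\zeta)-\mathcal{G}(-x)$ change sign under this reflection while their squares do not; this anti-symmetry is mirrored in the sign difference between the two sums in the statement.
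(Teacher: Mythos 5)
Your plan follows the same route as the paper's own proof: expand $U^{(3)}(a+b)$ by the cubic polynomial identity, keep the two diagonal terms, and reduce every cross term to the product form required by Proposition \ref{separation} and Remarks \ref{sepc}, \ref{reflection}, using $\mathcal{G}(y)=2y\dot H_{0,1}(y)+\mathcal{G}_{1}(y)$, the Multiplicative Lemma, and the anti-symmetry under $x\mapsto\zeta-x$ to produce the two sums. However, there is a genuine error in your $S^{\pm}$ bookkeeping, precisely at the step on which the lemma hinges. You claim that $\mathcal{G}(x-\zeta)^{2}$ is a sum of terms $(x-\zeta)^{j}\widetilde F_{j}(x-\zeta)$ with $\widetilde F_{j}\in S^{+}\cap\mathscr{S}(\mathbb{R})$, and that the mixed factor yields $A(x-\zeta)B(-x)$ with $B\in S^{-}$. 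Both claims are false. The coefficients of $\mathcal{G}(y)^{2}$ are the products $\dot H_{0,1}(y)^{2}$, $\dot H_{0,1}(y)\mathcal{G}_{1}(y)$, $\mathcal{G}_{1}(y)^{2}$ of \emph{two} $S^{+}$ functions: if $f_{1},f_{2}\in S^{+}$ with representatives $F_{1},F_{2}$, then $F_{1}(z)F_{2}(z)$ has only even powers of $z$, so by uniqueness of the holomorphic representative $f_{1}f_{2}$ lies in $S^{+}$ only if it vanishes identically. What Lemma \ref{multiplicative} actually gives is that the \emph{reflection} $x\mapsto f_{1}(-x)f_{2}(-x)$ lies in $S^{-}$. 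Likewise a single factor $\dot H_{0,1}(-x)$ is a reflected $S^{+}$ function (odd powers of $e^{-\sqrt{2}x}$), not an $S^{-}$ function of $x$. So the situation is \emph{not} symmetric between $\mathcal{G}(-x)^{2}$ and $\mathcal{G}(x-\zeta)^{2}$: both are even products, hence both are of $S^{-}$ type, the first directly in the variable $x$, the second only after the reflection $y=\zeta-x$.

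This is not a cosmetic slip: under your labels the cross term $\tfrac{1}{2}U^{(3)}\left(H_{-1,0}(x)\right)\mathcal{G}(x-\zeta)^{2}e^{-2\sqrt{2}\zeta}$ fits neither of your two templates, since its $x$-side factor is $-W(-x)$ with $W\in S^{+}$ (not an $S^{-}$ function of $x$), while you have mislabeled its $(x-\zeta)$-side factor as $S^{+}$; consequently it cannot be routed into either sum and the application of Proposition \ref{separation} or Remark \ref{reflection} stalls. The correct bookkeeping, which is what the paper's proof encodes, classifies by the parity of the \emph{total} number of $S^{+}$-type factors on each side, including the $U^{(3)}(H)$ factors: every cross term is, up to polynomial prefactors, a monomial
\begin{equation*}
\mathcal{G}(x-\zeta)^{l_{1}}\mathcal{G}(-x)^{l_{2}}H^{\zeta}_{0,1}(x)^{l_{3}}H_{0,1}(-x)^{l_{4}},\qquad l_{1}+l_{2}=2,\quad l_{1}+l_{2}+l_{3}+l_{4} \text{ odd}.
\end{equation*}
If $l_{1}+l_{3}$ is odd (hence $l_{2}+l_{4}$ even), the Multiplicative Lemma makes the $(x-\zeta)$-side an $S^{+}_{\infty}$ function of $x-\zeta$ and the $x$-side an $S^{-}_{\infty}$ function of $x$, producing a term of the first sum; if $l_{1}+l_{3}$ is even, the roles reverse after reflection and the term lands in the second sum, the anti-symmetric pairing exchanging the two cases. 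With this parity rule replacing your factor-by-factor labels, your argument closes and coincides with the paper's.
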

\begin{remark}\label{remarkg3}
Using Lemmas \ref{dlemma}, \ref{interactionsize} and \ref{g3}, we can verify that
\begin{equation*}
     \frac{1}{2}U^{(3)}\left(H^{w_{0}}_{0,1}(t,x)\right)\mathcal{G}^{w_{0}}(t,x)^{2}e^{{-}2\sqrt{2}d_{v}(t)}\cong_{6} \frac{1}{2}\left[U^{(3)}\left(H_{0,1}\right)\mathcal{G}^{2}\right]^{w_{0}}(t,x)e^{{-}2\sqrt{2}d_{v}(t)}.
\end{equation*}
 \end{remark}
\begin{proof}[Proof of Lemma \ref{g3}.]
The proof follows from Proposition \ref{separation} and Remarks \ref{sepc}, \ref{reflection}. More precisely, since
\begin{align*} 
    U^{(3)}\left(H_{0,1}(x-\zeta)+H_{{-}1,0}(x)\right)= U^{(3)}\left(H_{0,1}(x-\zeta)\right)+U^{(3)}\left(H_{{-}1,0}(x)\right)\\+360\left[H_{0,1}(x-\zeta)^{2}H_{{-}1,0}(x)+H_{0,1}(x-\zeta)H_{{-}1,0}(x)^{2}\right],
\end{align*}
we deduce that
\begin{multline}\label{mmm}
        \frac{1}{2}  U^{(3)}\left(H_{0,1}(x-\zeta)+H_{{-}1,0}(x)\right)\left(\mathcal{G}(x-\zeta)-\mathcal{G}({-}x)\right)^{2}e^{-2\sqrt{2}\zeta}
  \\{-}\frac{1}{2}U^{(3)}\left(H_{0,1}(x-\zeta)\right)\mathcal{G}(x-\zeta)^{2}e^{-2\sqrt{2}\zeta}-\frac{1}{2}U^{(3)}\left(H_{{-}1,0}(x)\right)\mathcal{G}({-}x)^{2}e^{-2\sqrt{2}\zeta}\\
  \begin{aligned}
  =&\frac{1}{2}U^{(3)}\left(H_{0,1}(x-\zeta)\right)\left(\mathcal{G}({-}x)^{2}-2\mathcal{G}(x-\zeta)\mathcal{G}({-}x)\right)e^{{-}2\sqrt{2}\zeta}
  \\&{+}\frac{1}{2}U^{(3)}\left(H_{{-}1,0}(x)\right)\left(\mathcal{G}(x-\zeta)^{2}-2\mathcal{G}(x-\zeta)\mathcal{G}({-}x)\right)e^{{-}2\sqrt{2}\zeta}\\&{+}\left[360H_{0,1}(x-\zeta)^{2}H_{{-}1,0}(x)+360H_{0,1}(x-\zeta)H_{{-}1,0}(x)^{2}\right]\left(\mathcal{G}(x-\zeta)^{2}+\mathcal{G}({-}x)^{2}\right)e^{{-}2\sqrt{2}\zeta}\\
  &{-}2\left[360H_{0,1}(x-\zeta)^{2}H_{{-}1,0}(x)+360H_{0,1}(x-\zeta)H_{{-}1,0}(x)^{2}\right]\mathcal{G}(x-\zeta)\mathcal{G}({-}x)e^{{-}2\sqrt{2}\zeta}.
\end{aligned}
\end{multline}
Moreover, since $U^{(3)}(\phi)=-48\phi+120\phi^{3}$ is an odd polynomial and $H_{{-}1,0}(x)=-H_{0,1}({-}x),$ the right-hand side of  \eqref{mmm} is a finite sum of functions 
\begin{equation*}
    \mathcal{G}(x-\zeta)^{l_{1}}\mathcal{G}({-}x)^{l_{2}}H^{\zeta}_{0,1}\left(x\right)^{l_{3}}H_{0,1}({-}x)^{l_{4}}-\mathcal{G}(x-\zeta)^{l_{2}}\mathcal{G}({-}x)^{l_{1}}H^{\zeta}_{0,1}\left(x\right)^{l_{4}}H_{0,1}({-}x)^{l_{3}},
\end{equation*}
such that $l_{1},\,l_{2},\,l_{3},\,l_{4}\in\mathbb{N}\cup\{0\},\, l_{1}+l_{2}= 2,\,\sum_{i=1}^{4}l_{i}$ is odd and   $\min\left(l_{1}+l_{3},l_{2}+l_{4}\right)>0.$ Therefore, using Lemma \ref{multiplicative} and Remark \ref{genemult}, we deduce that \eqref{mmm} is a finite sum of functions
\begin{equation*}
    \mathcal{J}_{i}\left(x-\zeta\right)\mathcal{N}_{i}\left(x\right)-\mathcal{J}_{i}({-}x)\mathcal{N}_{i}\left({-}x+\zeta\right),
\end{equation*}
where $\mathcal{J}_{i}\in S^{+}\cup S^{+}_{\infty}$ and $\mathcal{N}_{i}\in S^{-}\cup S^{-}_{\infty}.$ 
In conclusion, we obtain the statement of Lemma \ref{g3} from the Proposition \ref{separation} and Remarks \ref{sepc}, \ref{reflection} applied in the right-hand side of \eqref{mmm}.  
\end{proof}
\par Now, we can start the estimate of $\Lambda(\varphi_{2,0})(t,x).$
First, from the definition of $\varphi_{2,0}(t,x)$ in \eqref{phi2(0)}, we have for any $(t,x)\in\mathbb{R}^{2}$ that $\Lambda\left(\varphi_{2,0}\right)(t,x)$ is equal to
\begin{multline*}
\left[\frac{\partial^{2}}{\partial t^{2}}-\frac{\partial^{2}}{\partial x^{2}}\right]\left(H^{w_{0}}_{0,1}(t,x)+e^{{-}\sqrt{2}d_{v}(t)}\mathcal{G}^{w_{0}}(t,x)\right)
    {+} U^{'}\left(H^{w_{0}}_{0,1}(t,x)+e^{{-}\sqrt{2}d_{v}(t)}\mathcal{G}^{w_{0}}(t,x)\right)\\
    =\left[\frac{\partial^{2}}{\partial t^{2}}-\frac{\partial^{2}}{\partial x^{2}}\right]\left[e^{{-}\sqrt{2}d_{v}(t)}\mathcal{G}^{w_{0}}(t,x)\right]+\Lambda\left(H_{0,1}\left(w_{0}(t,x)\right)\right)-\Lambda\left(H_{0,1}\left(w_{0}(t,{-}x)\right)\right)\\
   {+} U^{'}\left(H^{w_{0}}_{0,1}(t,x)+e^{{-}\sqrt{2}d_{v}(t)}\mathcal{G}^{w_{0}}(t,x)\right)-U^{'}\left(H_{0,1}\left(w_{0}(t,x)\right)\right)\\{-}U^{'}\left({-}H_{0,1}\left(w_{0}(t,{-}x)\right)\right).
\end{multline*}
Therefore, using Taylor's Expansion Theorem, we deduce that
\begin{multline}\label{sumbig0}
   \Lambda(\varphi_{2,0})(t,x)-\Lambda( H_{0,1}\left(w_{0}(t,x)\right))+\Lambda\left( H_{0,1}\left(w_{0}(t,{-}x)\right)\right)\\
   \begin{aligned}
   =& \left[\frac{\partial^{2}}{\partial t^{2}}-\frac{\partial^{2}}{\partial x^{2}}\right]\left[e^{{-}\sqrt{2}d_{v}(t)}\mathcal{G}^{w_{0}}(t,x)\right]
   \\&{+} U^{'}\left(H^{w_{0}}_{0,1}(t,x)\right)
   - U^{'}\left(H_{0,1}\left(w_{0}(t,x)\right)\right)-U^{'}\left({-}H_{0,1}\left(w_{0}(t,{-}x)\right)\right)
  \\ & {+}\sum_{j=2}^{6}\frac{U^{(j)}\left(H^{w_{0}}_{0,1}(t,x)\right)}{(j-1)!}\left[e^{{-}\sqrt{2}d_{v}(t)}\mathcal{G}^{w_{0}}\left(t,x\right)\right]^{j-1}.
\end{aligned}
\end{multline}
Consequently, we deduce using Lemma \ref{dt2kink} that
\begin{align}\nonumber
   \Lambda(\varphi_{2,0})(t,x)= &
   {-}\frac{8\sqrt{2}e^{{-}\sqrt{2}d_{v}(t)}}{\sqrt{1-\frac{\dot d_{v}(t)^{2}}{4}}} \left(H^{'}_{0,1}\right)^{w_{0}}(t,x)+R_{1}\left(t,w_{0}(t,x)\right)-R_{1}\left(t,w_{0}(t,{-}x)\right)
   \\ \label{intelgg1}
&{+}e^{{-\sqrt{2}}d_{v}(t)}\left[ U^{''}\left(H^{w_{0}}_{0,1}(t,x)\right)\mathcal{G}^{w_{0}}(t,x)-\left( U^{''}\left(H_{0,1}\right)\mathcal{G}\right)^{w_{0}}(t,x)\right]
  \\  \label{sumbig}
   &{+}\sum_{j=4}^{6}\frac{U^{(j)}\left(H^{w_{0}}_{0,1}\left(t,x\right)\right)}{(j-1)!}\left[e^{{-}\sqrt{2}d_{v}(t)}\mathcal{G}^{w_{0}}(t,x)\right]^{j-1}
   \\  \label{nolinteract}
   &{+} U^{'}\left(H^{w_{0}}_{0,1}(t,x)\right)
   -U^{'}\left(H_{0,1}\left(w_{0}(t,x)\right)\right)- U^{'}\left({-}H_{0,1}\left(w_{0}(t,{-}x)\right)\right)
  \\ \label{quadraticterm} &{+}\frac{U^{(3)}\left(H^{w_{0}}_{0,1}\left(t,x\right)\right)}{2}\left[e^{{-}\sqrt{2}d_{v}(t)}\mathcal{G}^{w_{0}}(t,x)\right]^{2}
  \\ \label{lineargterm} &{+}\left[\frac{\partial^{2}}{\partial t^{2}}-\frac{\partial^{2}}{\partial x^{2}}\right]\left[e^{{-}\sqrt{2}d_{v}(t)}\mathcal{G}^{w_{0}}(t,x)\right]+e^{{-}\sqrt{2}d_{v}(t)}\left[U^{''}\left(H_{0,1}\right)\mathcal{G}\right]^{w_{0}}(t,x).
\end{align}
\par Next, from Remark \ref{remarkinterg}, we have that the expression \eqref{intelgg1} is equivalent to
\begin{equation*}
    e^{{-}\sqrt{2}d_{v}(t)}\left[\mathcal{Q}\left({-}w_{0}(t,{-}x),d_{v}(t)\right)-\mathcal{Q}\left({-}w_{0}(t,x),d_{v}(t)\right)\right].
\end{equation*}
\par Moreover, Remark \ref{remarktaylorr} implies that the term \eqref{sumbig} is negligible.
 \par Additionally, using Remark \ref{reint}, we obtain that the expression \eqref{nolinteract} is equivalent to
\begin{align*}
    24\exp\left(\frac{{-}\sqrt{2}d_{v}(t)}{\sqrt{1-\frac{\dot d_{v}(t)^{2}}{4}}}\right)M^{w_{0}}(t,x)-30\exp\left(\frac{{-}\sqrt{2}d_{v}(t)}{\sqrt{1-\frac{\dot d_{v}(t)^{2}}{4}}}\right)N^{w_{0}}(t,x)\\{+}24\exp\left(\frac{{-}2\sqrt{2}d_{v}(t)}{\sqrt{1-\frac{\dot d(t)^{2}}{4}}}\right)V^{w_{0}}(t,x)+\frac{60}{\sqrt{2}}\exp\left(\frac{{-}2\sqrt{2}d_{v}(t)}{\sqrt{1-\frac{\dot d_{v}(t)^{2}}{4}}}\right)\left(H^{'}_{0,1}\right)^{w_{0}}(t,x).
\end{align*}
\par Finally, Remark \ref{remarkg3} implies that the term \eqref{quadraticterm} is equivalent to
\begin{equation*}
    \frac{e^{{-}2\sqrt{2}d_{v}(t)}}{2}\left[U^{(3)}\left(H_{0,1}\right)\mathcal{G}^{2}\right]^{w_{0}}(t,x),
\end{equation*}
and Lemma \ref{LambdaG} implies the equivalence between the expression \eqref{lineargterm} with
\begin{multline*}
{-}e^{{-}\sqrt{2}d_{v}(t)}\left[24M^{w_{0}}(t,x)-30N^{w_{0}}(t,x)-8\sqrt{2} \left(H^{'}_{0,1}\right)^{w_{0}}(t,x) \right]+R_{2,v}(t,w_{0}(t,x))\\{-}R_{2,v}(t,w_{0}(t,{-}x)).
\end{multline*}
\par Consequently, we have the following estimate
\begin{align*}
    \Lambda\left(\varphi_{2,0}\right)(t,x)\cong_{6}& {-}\frac{8\sqrt{2}e^{{-}\sqrt{2}d_{v}(t)}}{\sqrt{1-\frac{\dot d_{v}(t)^{2}}{4}}}\left(H^{'}_{0,1}\right)^{w_{0}}(t,x)+R_{1,v}\left(t,w_{0}(t,x)\right)-R_{1}\left(t,w_{0}(t,{-}x)\right)\\ 
&{+}e^{{-}\sqrt{2}d_{v}(t)}\left[\mathcal{Q}\left({-}w_{0}(t,{-}x),d_{v}(t)\right)-\mathcal{Q}\left({-}w_{0}(t,x),d_{v}(t)\right)\right]\\
&{+}24\exp\left(\frac{{-}\sqrt{2}d_{v}(t)}{\sqrt{1-\frac{\dot d_{v}(t)^{2}}{4}}}\right)M^{w_{0}}(t,x)-30\exp\left(\frac{{-}\sqrt{2}d_{v}(t)}{\sqrt{1-\frac{\dot d_{v}(t)^{2}}{4}}}\right)N^{w_{0}}(t,x)\\&{+}24\exp\left(\frac{{-}2\sqrt{2}d_{v}(t)}{\sqrt{1-\frac{\dot d(t)^{2}}{4}}}\right)V^{w_{0}}(t,x) +\frac{60}{\sqrt{2}}\exp\left(\frac{{-}2\sqrt{2}d_{v}(t)}{\sqrt{1-\frac{\dot d_{v}(t)^{2}}{4}}}\right)\left(H^{'}_{0,1}\right)^{w_{0}}(t,x)\\
&{-}e^{{-}\sqrt{2}d_{v}(t)}\left[24M^{w_{0}}(t,x)-30N^{w_{0}}(t,x)-8\sqrt{2}\left( H^{'}_{0,1}\right)^{w_{0}}(t,x) \right]\\&{+}R_{2,v}(t,w_{0}(t,x))-R_{2,v}(t,w_{0}(t,{-}x))
\\&{+}\frac{e^{{-}2\sqrt{2}d_{v}(t)}}{2}\left[U^{(3)}\left(H_{0,1}\right)\mathcal{G}^{2}\right]^{w_{0}}(t,x).
\end{align*}
Furthermore, using Lemma \ref{explemma} the following result, we deduce the following estimate
\begin{equation*}
   \left\vert \frac{d^{l}}{dt^{l}}\left[ \exp\left(\frac{{-}2\sqrt{2}d_{v}(t)}{\sqrt{1-\frac{\dot d_{v}(t)^{2}}{4}}}\right)-e^{{-}2\sqrt{2}d_{v}(t)}\right]\right\vert\lesssim_{l} v^{6+l}\left[\vert t\vert v+\ln{\left(\frac{1}{v}\right)}\right]e^{{-}2\sqrt{2}\vert t\vert v},
\end{equation*}
for any $l\in\mathbb{N}\cup\{0\}$ and $t\in\mathbb{R},$ if $0<v\ll 1.$ In conclusion, from Lemma \ref{geraldt}, Remark \ref{remarkinterg} and the estimate above of $\Lambda(\varphi_{2,0}),$ we deduce the following result:
\begin{lemma}\label{phi222}
The function $\varphi_{2,0}(t,x)$ satisfies if $1<v\ll 1,$ for all $l\in\mathbb{N}\cup\{0\}$ and $s\geq 0,$    \begin{equation*}
 \norm{\frac{\partial^{l}}{\partial t^{l}}\Lambda(\varphi_{2,0})(t,x)}_{H^{s}_{x}}\lesssim_{l,s}v^{4+l}\left[\vert t \vert v+\ln{\left(\frac{1}{v^{2}}\right)}\right]e^{-2\sqrt{2}\vert t\vert v}. 
\end{equation*}
Furthermore, we have that

\begin{equation*}
    \Lambda\left(\varphi_{2,0}\right)(t,x)\cong_{6} Sym\left(t,w_{0}(t,x)\right)-
    Sym\left(t,w_{0}(t,{-}x)\right),
\end{equation*}
where, for $0<v\ll 1,$ the function $Sym:\mathbb{R}^{2}\to\mathbb{R}$ satisfies, for all $(t,x)\in\mathbb{R},$ the following identity
\begin{align*}
 Sym(t,x)
    =& 8\sqrt{2} H^{'}_{0,1}(x)\left[e^{-\sqrt{2}d_{v}(t)}-\frac{e^{-\sqrt{2}d_{v}(t)}}{\sqrt{1-\frac{\dot d_{v}(t)^{2}}{4}}}\right]+\frac{1}{2}U^{(3)}\left(H_{0,1}(x)\right)\mathcal{G}(x)^{2}e^{-2\sqrt{2}d_{v}(t)}\\&{+}\left[-24 H_{0,1}(x)^{2}+30H_{0,1}(x)^{4}\right]e^{-\sqrt{2}x}\left[e^{-\sqrt{2}d_{v}(t)}-\exp\left(\frac{-\sqrt{2}d_{v}(t)}{\sqrt{1-\frac{\dot d(t)^{2}}{4}}}\right)\right]
    \\&{+}R_{1,v}(t,x)+R_{2,v}(t,x)
    \\&{+}e^{-2\sqrt{2}d_{v}(t)}\left[xA(x)+xB(x)-d(t)B(x)+C(x)-D(x)+24 V(x)+\frac{60}{\sqrt{2}} H^{'}_{0,1}(x)\right].
\end{align*}
\end{lemma}
Now, we can start the demonstration of Theorem \ref{k=2}. 
\subsection{Proof of Theorem \ref{k=2}.}
\begin{proof}[Proof of Theorem \ref{k=2}.]
\textbf{Step 1.}(Construction of $r_{v}(t)$ for $k=2.$)
\par  First, we recall $R_{1,v}(t,x),\,R_{2,v}(t,x)$ defined, respectively, in equation \eqref{lambdah} of Lemma \ref{dt2kink} and in equation \eqref{lambdag} of Lemma \ref{LambdaG}. To lighten more our notation, we denote $R_{1,v},\, R_{2,v},\,d_{v}(t)$ by $R_{1},\,R_{2},\,d(t)$ from now on. Also, we recall the functions $M(x),\,N(x)$ and $V(x)$ from \eqref{def1} and the functions $A,\,B,\,C,\,D$ from Lemma \ref{interg}.
Next, based on Lemma \ref{phi222}, we consider the following ordinary differential equation
\begin{multline}\label{ode1}
    \begin{cases}
    \norm{ H^{'}_{0,1}}_{L^{2}_{x}}^{2}\ddot r(t)={-}32e^{-\sqrt{2}d(t)}\norm{ H^{'}_{0,1}}_{L^{2}_{x}}^{2}r(t)-\left\langle H^{'}_{0,1}(x),\,Sym(t,x)\right\rangle, \\
    r(t)=r(-t).
    \end{cases}
\end{multline}
Indeed, from the definition of $Sym$ in the statement of Lemma \ref{phi222}, identities \eqref{oioi} and $\norm{ H^{'}_{0,1}}_{L^{2}_{x}}^{2}=\frac{1}{2\sqrt{2}},$ the ordinary differential equation \eqref{ode1} can be rewritten for fixed constants $c_{1},\,c_{2} \in \mathbb{R}$ as
\begin{multline}\label{odeasy}
    \begin{cases}
\begin{aligned}
    \norm{ H^{'}_{0,1}}_{L^{2}_{x}}^{2}\ddot r(t)=&{-}32e^{-\sqrt{2}d(t)}\norm{H^{'}_{0,1}}_{L^{2}_{x}}^{2}r(t)-\left\langle H^{'}_{0,1}(x),\,R_{1}(t,x)+R_{2}(t,x)\right\rangle\\&{+}c_{1}d(t)e^{-2\sqrt{2}d(t)}{+}c_{2}e^{-2\sqrt{2}d(t)}
     +4\left[\frac{e^{{-}\sqrt{2}d(t)}}{\sqrt{1-\frac{\dot d(t)^{2}}{4}}}-\exp\left(\frac{{-}\sqrt{2}d(t)}{\sqrt{1-\frac{\dot d(t)^{2}}{4}}}\right)\right],
    \end{aligned}
     \\
     r(t)= r(-t).
    \end{cases}
\end{multline}
\par Since $d(t)=\frac{1}{\sqrt{2}}\ln{\left(\frac{8}{v^{2}}\cosh{(\sqrt{2}vt)}^{2}\right)},$ we have that all the solutions of the linear ordinary differential equation
$\ddot r_{0}(t)=-32e^{-\sqrt{2}d(t)}r_{0}(t)
$
are a linear combination of
\begin{equation*}
sol_{1}(t)=\tanh{(\sqrt{2}vt)} \text{ and } sol_{2}(t)=\sqrt{2}v t\tanh{(\sqrt{2}vt)}-1.
\end{equation*}
From Lemma \ref{dlemma}, we obtain if $0<v\ll 1,$ and $l\in\mathbb{N}\cup\{0\},$ 
\begin{equation}\label{dd(t)}
   \left\vert \frac{d^{l}}{dt^{l}}d(t)e^{-2\sqrt{2}d(t)}\right\vert\lesssim_{l} v^{4+l}\left(v\vert t\vert+\ln{\left(\frac{8}{v^{2}}\right)}\right)e^{-4\sqrt{2}\vert t\vert v}.
\end{equation}
\par Next, to simplify more our notation, we denote
\begin{multline}\label{nonlinear11}
    NL(t)= -\left\langle H^{'}_{0,1}(x),\,R_{1}(t,x)+R_{2}(t,x)\right\rangle+c_{1}d(t)e^{-2\sqrt{2}d(t)}+c_{2}e^{-2\sqrt{2}d(t)}\\
    {-}4\left[\exp\left(\frac{-\sqrt{2}d(t)}{\left(1-\frac{\dot d(t)^{2}}{4}\right)^{\frac{1}{2}}}\right)-\frac{e^{-\sqrt{2}d(t)}}{\sqrt{1-\frac{\dot d(t)^{2}}{4}}}\right].
\end{multline}
Using the variation of parameters technique, we can write any $C^{2}$ solution $r(t)$ of \eqref{odeasy} as $r(t)=\theta_{1}(t)sol_{1}(t)+\theta_{2}(t)sol_{2}(t)$ such that $\theta_{1}(t)$ and $\theta_{2}(t)$ satisfy for any $t\in\mathbb{R}$
\begin{equation*}
    \begin{bmatrix}
    sol_{1}(t) && sol_{2}(t)\\
    \dot sol_{1}(t) && \dot sol_{2}(t)
    \end{bmatrix}
    \begin{bmatrix}
    \dot \theta_{1}(t)\\
    \dot \theta_{2}(t)
    \end{bmatrix}=\frac{1}{\norm{ H^{'}_{0,1}}_{L^{2}_{x}}^{2}}
    \begin{bmatrix}
    0\\
    NL(t)
    \end{bmatrix}=\begin{bmatrix}
    0\\
    2\sqrt{2} NL(t)
    \end{bmatrix}.
\end{equation*}
In conclusion,
since for all $t\in\mathbb{R}$
\begin{equation*}
    \det \begin{bmatrix}
    sol_{1}(t) && sol_{2}(t)\\
    \dot sol_{1}(t) && \dot sol_{2}(t)
    \end{bmatrix}=\sqrt{2}v,
\end{equation*}
we have
\begin{equation}\label{deriv}
    \dot \theta_{2}(t)=\frac{2}{v}NL(t)\tanh{(\sqrt{2}vt)},\,\dot \theta_{1}(t)= \frac{{-}2}{v}NL(t)\left[\sqrt{2}vt\tanh{(\sqrt{2}vt)}-1\right].
\end{equation}
From Lemmas \ref{dt2kink} and \ref{LambdaG}, we have that $R_{1}(t,x)$ and $R_{2}(t,x)$ are even in $t,$ so $NL(t)$ is also even. Since we are interested in an even solution $r(t)$ of \eqref{odeasy}, we need $\theta_{1}$ odd and $\theta_{2}$ even, so
we must choose
\begin{equation}\label{0deriv}
    \theta_{2}(t)=\frac{1}{\sqrt{2}v}\int_{-\infty}^{t}NL(s)\tanh{(\sqrt{2}vs)}\,ds,\,\theta_{1}(t)=\frac{-1}{\sqrt{2}v}\int_{0}^{t}NL(s) \left[\sqrt{2}vs\tanh{(\sqrt{2}vs)}-1\right]\,ds.
\end{equation}

 \par From Lemmas \ref{dt2kink} and \ref{LambdaG}, we deduce for any $j\in\{1,2\}$ that if $0<v\ll 1,$ then
\begin{equation}\label{Rdecay}
   \left\vert \frac{d^{l}}{dt^{l}}\left\langle R_{j}(t,x),\, H^{'}_{0,1}(x) \right\rangle\right\vert\lesssim_{l} v^{4+l}\sech{\left(\sqrt{2}vt\right)}^{2} \text{ for all $l\in\mathbb{N}\cup\{0\},$}
\end{equation}
and so, from the equations \eqref{dd(t)},\eqref{nonlinear11} and Lemma \ref{explemma}, we deduce for all $0<v\ll 1$ and any $l\in\mathbb{N}\cup\{0\}$ that
\begin{equation}\label{4.8}
    \left\vert \frac{d^{l}}{dt^{l}}NL(t)\right\vert\lesssim_{l} v^{4+l}\left(v\vert t\vert +\ln{\left(\frac{1}{v}\right)}\right)e^{{-}2\sqrt{2}\vert t\vert v}.
\end{equation}
Therefore, from the definition of $d(t),$ the identities \eqref{nonlinear11}, \eqref{deriv} and the estimates \eqref{Rdecay}, \eqref{4.8}, using the Fundamental Theorem of Calculus, we deduce the existence of a constant $C>0$ such that if $0<v\ll 1,$ then
\begin{equation}\label{a}
\norm{\theta_{1}}_{L^{\infty}(\mathbb{R})}<C v^{2}\ln{\left(\frac{1}{v^{2}}\right)}.
\end{equation}
Furthermore, since $NL(t)$ is an even function and $\tanh{(\sqrt{2}vs)}$ is an odd function, we have that
\begin{equation*}
    \int_{-\infty}^{t}NL(s)\tanh{\left(\sqrt{2}vs\right)}\,ds=-\int_{t}^{+\infty}NL(s)\tanh{\left(\sqrt{2}vs\right)}\,ds,
\end{equation*} from which with identity \eqref{0deriv}, we deduce the following estimate
\begin{equation}\label{idb}
    \lvert \theta_{2}(t)\rvert\leq \frac{1}{\sqrt{2}v}\int_{\vert t\vert}^{+\infty}\lvert NL(s)\rvert \tanh{\left(\sqrt{2}vs\right)}\,ds \text{, for all $t\in \mathbb{R}.$}
\end{equation}
Therefore, the estimate \eqref{4.8} implies that
$
\lvert \theta_{2}(t)\rvert 
    \lesssim v^{2}\left[\ln{\left(\frac{8}{v^{2}}\right)}+v\vert t\vert \right]e^{-2\sqrt{2}\vert t\vert v},$ for any $t\in\mathbb{R}.$
\par Finally, since $r(t)=\theta_{1}(t)sol_{1}(t)+\theta_{2}(t)sol_{2}(t)$ and $\dot r(t)=\theta_{1}(t)\dot sol_{1}(t)+\theta_{2}(t)\dot sol_{2}(t),$ we deduce for all $t\in \mathbb{R}$ that
\begin{equation}\label{r,dr,k=1}
    \lvert r(t)\rvert \lesssim v^{2}\ln{\left(\frac{1}{v^{2}}\right)},
\, \lvert \dot r(t)\rvert\lesssim v^{3}\left[\ln{\left(\frac{1}{v^{2}}\right)}+\vert t\vert v\right]\sech{\left(\sqrt{2}vt\right)}^{2}.
\end{equation}
 Moreover, \eqref{4.8} and the definitions of $sol_{1}$ and $sol_{2},$ we can verify by induction on $l\in \mathbb{N}$ for any $0<v\ll 1$ that
\begin{equation}\label{geraldr}
     \left\vert \frac{ d^{l}r}{dt^{l}}(t)\right\vert \lesssim_{l} v^{l+2}\left[\ln{\left(\frac{1}{v^{2}}\right)}+\vert t \vert v\right]\sech{(\sqrt{2}vt)}^{2} \text{ for all integers $l\geq 1$ and $t\in \mathbb{R}.$}
\end{equation}
\textbf{Step 2.}(Estimate of $\Lambda(\varphi_{2})(t,x).$)
From now on, we define the function $w_{1}:\mathbb{R}^{2}\to\mathbb{R}$ as the unique function satisfying
\begin{equation*}
    w_{1}(t,x)=w_{0}(t,x+r_{v}(t))=\frac{x-\frac{d_{v}(t)}{2}+r_{v}(t)}{\sqrt{1-\frac{\dot d_{v}(t)^{2}}{4}}},
\end{equation*}
 for every $(t,x)\in\mathbb{R}^{2}.$
 Furthermore, similarly to the identity \eqref{sumbig0}, we have the following equation  
  \begin{align}\label{H1}
   \Lambda(\varphi_{2})(t,x)=&\Lambda( H_{0,1}\left(w_{1}(t,x)\right))-\Lambda\left( H_{0,1}\left(w_{1}(t,{-}x)\right)\right)\\ \label{linearg1} &{+}\left[\frac{\partial^{2}}{\partial t^{2}}-\frac{\partial^{2}}{\partial x^{2}}\right]\left[e^{{-}\sqrt{2}d(t)}\mathcal{G}^{w_{1}}(t,x)\right]+ \left[U^{''}\left(H_{0,1}\right)\mathcal{G}\right]^{w_{1}}(t,x)e^{{-}\sqrt{2}d(t)}\\ \label{interg1}
&{+}U^{''}\left(H^{w_{1}}_{0,1}(t,x)\right)\mathcal{G}^{w_{1}}(t,x)e^{{-}\sqrt{2}d(t)}-\left[ U^{''}\left(H_{0,1}\right)\mathcal{G}\right]^{w_{1}}(t,x)e^{{-}\sqrt{2}d(t)}
   \\\label{nolint1}
&{+}U^{'}\left(H^{w_{1}}_{0,1}(t,x)\right)
   -U^{'}\left(H_{0,1}\left(w_{1}(t,x)\right)\right)-U^{'}\left({-}H_{0,1}\left(w_{1}(t,{-}x)\right)\right)
  \\ \label{g31} &{+}\frac{U^{(3)}\left(H^{w_{1}}_{0,1}(t,x)\right)}{2}\left[e^{{-}\sqrt{2}d(t)}\mathcal{G}^{w_{1}}\left(t,x\right)\right]^{2}
  \\ \label{sumbig1} &{+}\sum_{j=4}^{6}\frac{U^{(j)}\left(H^{w_{1}}_{0,1}(t,x)\right)}{(j-1)!}\left[e^{{-}\sqrt{2}d(t)}\mathcal{G}^{w_{1}}\left(t,x\right)\right]^{(j-1)}.
\end{align}
\par From identity $\norm{H^{'}_{0,1}}_{L^{2}_{x}}^{2}=\frac{1}{2\sqrt{2}},$ the definitions of $M(x),\,N(x)$ in \eqref{def1} and identity \eqref{oioi}, we have
\begin{align*}
     \left \langle\left[24M(w_{0}(t,x))-30N(w_{0}(t,x))\right],  H^{'}_{0,1}(w_{0}(t,x))\right\rangle=4\sqrt{1-\frac{\dot d(t)^{2}}{4}}.
\end{align*}
Therefore, we deduce the following identity 
\begin{gather}\nonumber
     \exp\left(\frac{-\sqrt{2}(d(t)-2r(t))}{\sqrt{1-\frac{\dot d(t)^{2}}{4})}}\right)\left \langle24M(w_{0}(t,x))-30N(w_{0}(t,x)), H^{'}_{0,1}(w_{0}(t,x))\right\rangle-4e^{{-}\sqrt{2}d(t)}
     \\ \nonumber
   = 4\exp\left(\frac{-\sqrt{2}(d(t)-2r(t))}{\sqrt{1-\frac{\dot d(t)^{2}}{4}}}\right)\sqrt{1-\frac{\dot d(t)^{2}}{4}}-4e^{{-}\sqrt{2}d(t)}
      \\ \label{expp1}
      =4\left[\exp\left(\frac{-\sqrt{2}(d(t)-2r(t))}{\sqrt{1-\frac{\dot d(t)^{2}}{4}}}\right)-e^{-\sqrt{2}(d(t)-2r(t))}\right]\sqrt{1-\frac{\dot d(t)^{2}}{4}}\\ \label{expp2}
     {+}4e^{-\sqrt{2}(d(t)-2r(t))}\left[\sqrt{1-\frac{\dot d(t)^{2}}{4}}-1\right]
   \\ \label{expp3}
     {+}4\left[e^{-\sqrt{2}(d(t)-2r(t))}-e^{-\sqrt{2}d(t)}-2\sqrt{2}e^{-\sqrt{2}d(t)}r(t)\right]
     \\ \nonumber
     {+}8\sqrt{2}e^{-\sqrt{2}d(t)}r(t).
\end{gather}
Since $e^{{-}\sqrt{2}d(t)}=\frac{v^{2}}{8}\sech{\left(\sqrt{2}vt\right)}^{2},$ using estimates \eqref{r,dr,k=1} and \eqref{geraldr} of the function $r,$ we deduce from an application of Lemma \ref{explemma} in the expressions \eqref{expp1}, \eqref{expp2} and from an application of Taylor's Expansion Theorem in the term \eqref{expp3} that the following function
\begin{multline*}
   Rem(t)=\exp\left(\frac{-\sqrt{2}(d(t)-2r(t))}{\sqrt{1-\frac{\dot d(t)^{2}}{4}}}\right)\left\langle 24M(w_{0}(t,x))-30N(w_{0}(t,x)),\, H^{'}_{0,1}(w_{0}(t,x))\right\rangle \\{-}4e^{-\sqrt{2}d(t)}-8\sqrt{2}e^{-\sqrt{2}d(t)}r(t)
\end{multline*}
satisfies 
$
    \left \vert \frac{d^{l}Rem(t)}{dt^{l}}\right\vert\lesssim_{l} v^{l+4}\left[\left\vert t\right\vert v+\ln{\left(\frac{8}{v^{2}}\right)}\right]e^{-2\sqrt{2}v\vert t\vert }
$ for all $t\in\mathbb{R}$ and any $l\in\mathbb{N}\cup\{0\}.$\\
\textbf{Substep 2.1.}(Estimate of $\Lambda\left(H_{0,1}(w_{1}(t,x))\right).$)
 From now on, we use the following notation
\begin{equation*}
\varphi_{2}(t,x)=H^{w_{1}}_{0,1}(t,x)+e^{-\sqrt{2}d(t)}\mathcal{G}^{w_{1}}(t,x) \text{, for all $(t,x)\in\mathbb{R}^{2}.$}
\end{equation*}
First, for all $(t,x)\in\mathbb{R}^{2},$ the  following identity
\begin{align*}
    \frac{\partial^{2}}{\partial t^{2}}H_{0,1}(w_{1}(t,x))=&\frac{\partial^{2}}{\partial t_{1}^{2}}\Big\vert_{t_{1}=t}H_{0,1}(w_{0}(t_{1},x+r(t)))+\frac{\ddot r(t)}{\sqrt{1-\frac{\dot d(t)^{2}}{4}}} H^{'}_{0,1}(w_{1}(t,x))
    \\&{-}\frac{\dot d(t)\dot r(t)}{1-\frac{\dot d(t)^{2}}{4}}H^{''}_{0,1}(w_{1}(t,x))+\frac{8\sqrt{2}\dot d(t)\dot r(t)e^{{-}\sqrt{2}d(t)}}{\left(1-\frac{\dot d(t)^{2}}{4}\right)^{\frac{3}{2}}}w_{1}(t,x) H^{'}_{0,1}(w_{1}(t,x))\\
    &{+}\frac{8\sqrt{2}\dot r(t)\dot d(t)e^{{-}\sqrt{2}d(t)}}{\left(1-\frac{\dot d(t)^{2}}{4}\right)^{\frac{3}{2}}}w_{1}(t,x)H^{''}_{0,1}(w_{1}(t,x))
\end{align*}
implies with the product rule, estimates \eqref{mmma}, \eqref{r,dr,k=1}, \eqref{geraldr}, Lemmas \ref{dlemma} and Remark \ref{perturbt}
that
\begin{align*}
    \frac{\partial^{2}}{\partial t^{2}}H_{0,1}(w_{1}(t,x))\cong_{6}\frac{\partial^{2}}{\partial t_{1}^{2}}\Big\vert_{t_{1}=t}H_{0,1}(w_{0}(t_{1},x+r(t)))+\frac{\ddot r(t)}{\sqrt{1-\frac{\dot d(t)^{2}}{4}}} H^{'}_{0,1}(w_{1}(t,x))
    \\{-}\frac{\dot d(t)\dot r(t)}{1-\frac{\dot d(t)^{2}}{4}} H^{''}_{0,1}(w_{1}(t,x)).
\end{align*}
Therefore, from Lemma \ref{porrataylor}, we deduce from the estimate above and the decay estimates \eqref{r,dr,k=1}, \eqref{geraldr} of $r$  that
\begin{multline}\label{Hr}
\frac{\partial^{2}}{\partial t^{2}}H_{0,1}(w_{1}(t,x))\cong_{6}\frac{\partial^{2}}{\partial t_{1}^{2}}\Big\vert_{t_{1}=t}H_{0,1}(w_{0}(t_{1},x+r(t)))+\frac{\ddot r(t)}{\sqrt{1-\frac{\dot d(t)^{2}}{4}}} H^{'}_{0,1}(w_{0}(t,x))
    \\{-}\frac{\dot d(t)\dot r(t)}{1-\frac{\dot d(t)^{2}}{4}}H^{''}_{0,1}(w_{0}(t,x)).
\end{multline}
Moreover, Lemma \ref{dt2kink} implies that
\begin{multline*}
    \frac{\partial^{2}}{\partial t_{1}^{2}}\Big\vert_{t_{1}=t}H_{0,1}(w_{0}(t_{1},x+r(t)))-\frac{\partial^{2}}{\partial x^{2}}\left[ H_{0,1}\left(w_{1}(t,x)\right)\right]+ U^{'}\left(H_{0,1}\left(w_{1}(t,x)\right)\right)\\
    ={-}\frac{8\sqrt{2}e^{-\sqrt{2}d(t)}}{(1-\frac{\dot d(t)^{2}}{4})^{\frac{1}{2}}} H^{'}_{0,1}\left(w_{1}(t,x)\right)+R_{1}\left(t,w_{1}(t,x)\right),
\end{multline*}
from which with Lemma \ref{porrataylor} and estimates \eqref{r,dr,k=1}, \eqref{geraldr}, we obtain the following estimate
\begin{multline}\label{lambdanew}
\frac{\partial^{2}}{\partial t_{1}^{2}}\Big\vert_{t_{1}=t}H_{0,1}(w_{0}(t_{1},x+r(t)))-\frac{\partial^{2}}{\partial x^{2}}\left[ H_{0,1}\left(w_{1}(t,x)\right)\right]+U^{'}\left(H_{0,1}\left(w_{1}(t,x)\right)\right)\\
    \cong_{6}{-}\frac{8\sqrt{2}e^{-\sqrt{2}d(t)}}{\sqrt{1-\frac{\dot d(t)^{2}}{4}}} H^{'}_{0,1}\left(w_{0}(t,x)\right)-\frac{8\sqrt{2}r(t) e^{{-}\sqrt{2}d(t)} }{\sqrt{1-\frac{\dot d(t)^{2}}{4}}} H^{''}_{0,1}\left(w_{0}(t,x)\right)+R_{1}\left(t,w_{0}(t,x)\right).    
\end{multline}
Therefore, we obtain using estimates \eqref{Hr} and \eqref{lambdanew} that
\begin{multline*}
    \Lambda\left(H_{0,1}\left(w_{1}(t,x)\right)\right)\cong_{6}
    {-}\frac{8\sqrt{2}e^{{-}\sqrt{2}d(t)}}{\sqrt{1-\frac{\dot d(t)^{2}}{4}}}H^{'}_{0,1}\left(w_{0}(t,x)\right)-\frac{8\sqrt{2}r(t)e^{{-}\sqrt{2}d(t)}}{\sqrt{1-\frac{\dot d(t)^{2}}{4}}} H^{''}_{0,1}\left(w_{0}(t,x)\right)
\\{+}\frac{\ddot r(t)}{\sqrt{1-\frac{\dot d(t)^{2}}{4}}} H^{'}_{0,1}\left(w_{0}(t,x)\right)-\frac{\dot d(t)\dot r(t)}{1-\frac{\dot d(t)^{2}}{4}}H^{''}_{0,1}\left(w_{0}(t,x)\right)+R_{1}(t,w_{0}(t,x)).
\end{multline*}
Consequently, using Lemma \ref{dt2kink}, we deduce the following estimate
\begin{align}\nonumber
\Lambda\left(H_{0,1}\left(w_{1}(t,x)\right)\right)-\Lambda\left(H_{0,1}\left(w_{1}(t,{-}x)\right)\right)\cong_{6}&\Lambda\left(H_{0,1}\left(w_{0}(t,x)\right)\right)-\Lambda\left(H_{0,1}\left(w_{0}(t,{-}x)\right)\right)
    \\ \nonumber &{-}\frac{8\sqrt{2}r(t)e^{{-}\sqrt{2}d(t)}}{\sqrt{1-\frac{\dot d(t)^{2}}{4}}} \left(H^{''}_{0,1}\right)^{w_{0}}(t,x)
    \\\nonumber &{+}\frac{\ddot r(t)}{\sqrt{1-\frac{\dot d(t)^{2}}{4}}} \left(H^{'}_{0,1}\right)^{w_{0}}(t,x)\\ \label{estlambdaH} &{-}\frac{\dot d(t)\dot r(t)}{1-\frac{\dot d(t)^{2}}{4}} \left(H^{''}_{0,1}\right)^{w_{0}}\left(t,x\right).
\end{align}
\textbf{Substep 2.2.}(Estimate of \eqref{linearg1}.)
 Next, from Lemmas \ref{dlemma},  \ref{porrataylor}, we deduce with estimates \eqref{r,dr,k=1}, \eqref{geraldr} and the product rule that
\begin{align*}
    \frac{\partial^{2}}{\partial t^{2}}\left[e^{{-}\sqrt{2}d(t)}\mathcal{G}\left(w_{1}(t,x)\right)\right]\cong_{6} \frac{\partial^{2}}{\partial t^{2}}\left[e^{{-}\sqrt{2}d(t)}\mathcal{G}\left(w_{0}(t,x)\right)\right]\cong_{6}
    \frac{\partial^{2}}{\partial t_{1}^{2}}\Big\vert_{t_{1}=t}\left[e^{{-}\sqrt{2}d(t_{1})}\mathcal{G}(w_{0}(t_{1},x+r(t)))\right].
\end{align*}
Therefore, we deduce from
Lemma \ref{LambdaG} the following estimate
\begin{multline*}
    \left[ \frac{\partial^{2}}{\partial t^{2}}-\frac{\partial^{2}}{\partial x^{2}}+ U^{''}\left(H_{0,1}(w_{1}(t,x))\right)\right]\left(e^{-\sqrt{2}d(t)}\mathcal{G}(w_{1}(t,x))\right)\\\cong_{6}{-}\Big[24M\left(w_{1}(t,x)\right)-30N\left(w_{1}(t,x)\right)\Big]e^{{-}\sqrt{2}d(t)}{+}8\sqrt{2} H^{'}_{0,1}\left(w_{1}(t,x)\right)e^{{-}\sqrt{2}d(t)}\\{+}R_{2}(t,w_{1}(t,x)),
\end{multline*}
from which with Lemma \ref{porrataylor} and the decay estimates \eqref{r,dr,k=1}, \eqref{geraldr} of $r,$ we deduce that
\begin{multline*}
     \left[ \frac{\partial^{2}}{\partial t^{2}}-\frac{\partial^{2}}{\partial x^{2}}+ U^{''}\left(H_{0,1}(w_{1}(t,x))\right)\right]\left(e^{-\sqrt{2}d(t)}\mathcal{G}(w_{1}(t,x))\right)\\ 
     \begin{aligned}
     \cong_{6}&{-}\Big[24M\left(w_{0}(t,x)\right)-30N\left(w_{0}(t,x)\right)\Big]e^{{-}\sqrt{2}d(t)}+8\sqrt{2} H^{'}_{0,1}\left(w_{0}(t,x)\right)e^{{-}\sqrt{2}d(t)}\\ 
     &{-}\Big[24 M^{'}\left(w_{0}(t,x)\right)-30 N^{'}\left(w_{0}(t,x)\right)\Big]\frac{r(t)e^{{-}\sqrt{2}d(t)}}{\sqrt{1-\frac{\dot d(t)^{2}}{4}}}+\frac{8\sqrt{2}r(t)e^{{-}\sqrt{2}d(t)}}{\sqrt{1-\frac{\dot d(t)^{2}}{4}}} H^{''}_{0,1}\left(w_{0}(t,x)\right)
     \\ &{+}R_{2}(t,w_{0}(t,x)).
    \end{aligned}
    \end{multline*}
   Hence, using Lemma \ref{LambdaG}, we obtain the following estimate
    \begin{multline} \label{lambdagnew}
    \left[ \frac{\partial^{2}}{\partial t^{2}}-\frac{\partial^{2}}{\partial x^{2}}+U^{''}\left(H_{0,1}(w_{1}(t,x))\right)\right]\left(e^{-\sqrt{2}d(t)}\mathcal{G}(w_{1}(t,x))\right)\\
    \cong_{6}\left[\frac{\partial^{2}}{\partial t^{2}}-\frac{\partial^{2}}{\partial x^{2}}+U^{''}\left(H_{0,1}(w_{0}(t,x))\right)\right]\left(e^{{-}\sqrt{2}d(t)}\mathcal{G}\left(w_{0}(t,x)\right)\right)\\ 
    {-}\Big[24 M^{'}\left(w_{0}(t,x)\right)-30N^{'}\left(w_{0}(t,x)\right)\Big]\frac{r(t)e^{{-}\sqrt{2}d(t)}}{\sqrt{1-\frac{\dot d(t)^{2}}{4}}}\\{+}\frac{8\sqrt{2}r(t)e^{{-}\sqrt{2}d(t)}}{\sqrt{1-\frac{\dot d(t)^{2}}{4}}} H^{''}_{0,1}\left(w_{0}(t,x)\right) .
\end{multline}
\textbf{Substep 2.3.}(Estimate of \eqref{nolint1}.)
 In notation of Lemma \ref{l1}, we have the following identity
\begin{multline}\label{intnew242}
U^{'}\left(H^{w_{1}}_{0,1}(t,x)\right)- U^{'}\left(H_{0,1}(w_{1}(t,x))\right)- U^{'}\left(-H_{0,1}(w_{1}(t,{-}x))\right)\\ 
\begin{aligned}
     =&\exp\left(-\frac{\sqrt{2}(d(t)-2r(t))}{\sqrt{1-\frac{\dot d(t)^{2}}{4}}}\right)\left[24M^{w_{1}}(t,x)-30N^{w_{1}}(t,x)\right]\\ 
     &{+}\exp\left(-\frac{2\sqrt{2}(d(t)-2r(t))}{\sqrt{1-\frac{d(t)^{2}}{4})}}\right)\left[24V^{w_{1}}(t.x)+\frac{60}{\sqrt{2}} \left(H^{'}_{0,1}\right)^{w_{1}}(t,x)\right]\\&{+}R\left({-}w_{1}(t,x),\frac{d(t)-2r(t)}{\sqrt{1-\frac{\dot d(t)^{2}}{4}}}\right).
\end{aligned}
\end{multline}
Moreover, similarly to the proof of Remark \ref{reint}, Lemmas \ref{porrataylor} and \ref{l1} imply that
\begin{equation*}
    R\left({-}w_{1}(t,x),\frac{d(t)-2r(t)}{\sqrt{1-\frac{\dot d(t)^{2}}{4}}}\right)\cong_{6} 0.
\end{equation*}
Therefore, identity \eqref{intnew242} and Lemmas \ref{porrataylor}, \ref{explemma} imply the following estimate
\begin{multline}\label{intnew2}
U^{'}\left(H^{w_{1}}_{0,1}(t,x)\right)- U^{'}\left(H_{0,1}(w_{1}(t,x))\right)-U^{'}\left(-H_{0,1}(w_{1}(t,{-}x))\right)\\
   \cong_{6} \exp\left(-\frac{\sqrt{2}(d(t)-2r(t))}{\sqrt{1-\frac{\dot d(t)^{2}}{4}}}\right)\left[24M^{w_{1}}(t,x)-30N^{w_{1}}(t,x)\right]
     \\{+}e^{{-}2\sqrt{2}d(t)}\left[24V^{w_{0}}(t,x)+\frac{60}{\sqrt{2}} \left(H^{'}_{0,1}\right)^{w_{0}}(t,x)\right].
\end{multline}
\par Next, using the decay estimates \eqref{r,dr,k=1}, \eqref{geraldr} of $r$, we deduce from Lemma \ref{porrataylor} that
\begin{gather} \label{mexp}
    \left[M^{w_{1}}(t,x)-M^{w_{0}}(t,x)-\frac{r(t)}{\sqrt{1-\frac{\dot d(t)^{2}}{4}}}\left(M^{'}\right)^{w_{0}}(t,x)\right]\exp\left(\frac{{-}\sqrt{2}d(t)}{\sqrt{1-\frac{\dot d(t)^{2}}{4}}}\right)\cong_{6} 0,\\ \label{nexp}
    \left[N^{w_{1}}(t,x)-N^{w_{0}}(t,x)-\frac{r(t)}{\sqrt{1-\frac{\dot d(t)^{2}}{4}}}\left(N^{'}\right)^{w_{0}}(t,x)\right]\exp\left(\frac{{-}\sqrt{2}d(t)}{\sqrt{1-\frac{\dot d(t)^{2}}{4}}}\right)\cong_{6} 0.
\end{gather}
We also deduce from Taylor's Expansion Theorem and the decay  estimates \eqref{r,dr,k=1}, \eqref{geraldr} of the function $r$ that
\begin{align*}
    M^{w_{1}}(t,x)\exp\left(\frac{{-}\sqrt{2}(d(t)-2r(t))}{\sqrt{1-\frac{\dot d(t)^{2}}{4}}}\right)\cong_{6} M^{w_{1}}(t,x)\left[1+\frac{2r(t)}{\sqrt{1-\frac{\dot d(t)^{2}}{4}}}\right]\exp\left(\frac{{-}\sqrt{2}d(t)}{\sqrt{1-\frac{\dot d(t)^{2}}{4}}}\right)\\
    N^{w_{1}}(t,x)\exp\left(\frac{{-}\sqrt{2}(d(t)-2r(t))}{\sqrt{1-\frac{\dot d(t)^{2}}{4}}}\right)\cong_{6} N^{w_{1}}(t,x)\left[1+\frac{2r(t)}{\sqrt{1-\frac{\dot d(t)^{2}}{4}}}\right]\exp\left(\frac{{-}\sqrt{2}d(t)}{\sqrt{1-\frac{\dot d(t)^{2}}{4}}}\right),
\end{align*}
 therefore, using now Lemma \ref{porrataylor}, we conclude the following estimates
\begin{align*}
    M^{w_{1}}(t,x)\exp\left(\frac{{-}\sqrt{2}(d(t)-2r(t))}{\sqrt{1-\frac{\dot d(t)^{2}}{4}}}\right)\cong_{6} & M^{w_{1}}(t,x)\exp\left(\frac{{-}\sqrt{2}d(t)}{\sqrt{1-\frac{\dot d(t)^{2}}{4}}}\right)\\&{+}M^{w_{0}}(t,x)\frac{2r(t)}{\sqrt{1-\frac{\dot d(t)^{2}}{4}}}\exp\left(\frac{{-}\sqrt{2}d(t)}{\sqrt{1-\frac{\dot d(t)^{2}}{4}}}\right),\\
    N^{w_{1}}(t,x)\exp\left(\frac{{-}\sqrt{2}(d(t)-2r(t))}{\sqrt{1-\frac{\dot d(t)^{2}}{4}}}\right)\cong_{6} & N^{w_{1}}(t,x)\exp\left(\frac{{-}\sqrt{2}d(t)}{\sqrt{1-\frac{\dot d(t)^{2}}{4}}}\right)\\&{+}N^{w_{0}}(t,x)\frac{2r(t)}{\sqrt{1-\frac{\dot d(t)^{2}}{4}}}\exp\left(\frac{{-}\sqrt{2}d(t)}{\sqrt{1-\frac{\dot d(t)^{2}}{4}}}\right).
\end{align*}
As a consequence, we obtain from estimate \eqref{intnew242} and Lemma \ref{explemma} that
\begin{multline*}
U^{'}\left(H^{w_{1}}_{0,1}(t,x)\right)
- U^{'}\left(H_{0,1}\left(w_{1}(t,x)\right)\right)- U^{'}\left({-}H_{0,1}\left(w_{1}(t,{-}x)\right)\right)\\ 
\begin{aligned}
\cong_{6} &
\left[24M^{w_{0}}(t,x)-30N^{w_{0}}(t,x)\right]\frac{2r(t)}{\sqrt{1-\frac{\dot d(t)^{2}}{4}}}\exp\left(\frac{{-}\sqrt{2}d(t)}{\sqrt{1-\frac{\dot d(t)^{2}}{4}}}\right)\\ 
&{+}\left[24M^{w_{0}}(t,x)-30N^{w_{0}}(t,x)\right]\exp\left(\frac{{-}\sqrt{2}d(t)}{\sqrt{1-\frac{\dot d(t)^{2}}{4}}}\right)
\\ 
&{+}\frac{r(t)}{\sqrt{1-\frac{\dot d(t)^{2}}{4}}}\left[24 \left(M^{'}\right)^{w_{0}}(t,x)-30 \left(N^{'}\right)^{w_{0}}(t,x)\right]\exp\left(\frac{{-}\sqrt{2}d(t)}{\sqrt{1-\frac{\dot d(t)^{2}}{4}}}\right)
\\&{+}\left[24V^{w_{0}}(t,x)+\frac{60}{\sqrt{2}}\left(H^{'}_{0,1}\right)^{w_{0}}(t,x)\right]e^{{-}2\sqrt{2}d(t)}\\  
\cong_{6} & U^{'}\left(H^{w_{0}}_{0,1}(t,x)\right)- U^{'}\left(H_{0,1}\left(w_{0}(t,x)\right)\right)-U^{'}\left({-}H_{0,1}\left(w_{0}(t,{-}x)\right)\right)\\ 
&{+}\frac{r(t)}{\sqrt{1-\frac{\dot d(t)^{2}}{4}}}\left[24\left( M^{'}\right)^{w_{0}}(t,x)-30\left( N^{'}\right)^{w_{0}}(t,x)\right]\exp\left(\frac{{-}\sqrt{2}d(t)}{\sqrt{1-\frac{\dot d(t)^{2}}{4}}}\right)\\&{+}\frac{r(t)}{\sqrt{1-\frac{\dot d(t)^{2}}{4}}}\left[48M^{w_{0}}(t,x)-60N^{w_{0}}(t,x)\right]\exp\left(\frac{{-}\sqrt{2}d(t)}{\sqrt{1-\frac{\dot d(t)^{2}}{4}}}\right).
\end{aligned}
\end{multline*}
Therefore, using Remark \ref{reint}, we conclude that  
\begin{multline}\label{u-uinteract}
U^{'}\left(H^{w_{1}}_{0,1}(t,x)\right)
-U^{'}\left(H_{0,1}\left(w_{1}(t,x)\right)\right)- U^{'}\left({-}H_{0,1}\left(w_{1}(t,{-}x)\right)\right) \\ 
\begin{aligned}
\cong_{6}& 
U^{'}\left(H^{w_{0}}_{0,1}(t,x)\right)- U^{'}\left(H_{0,1}\left(w_{0}(t,x)\right)\right)- U^{'}\left({-}H_{0,1}\left(w_{0}(t,{-}x)\right)\right)\\   
&{+}\frac{r(t)e^{{-}\sqrt{2}d(t)}}{\sqrt{1-\frac{\dot d(t)^{2}}{4}}}\left[24\left( M^{'}\right)^{w_{0}}(t,x)-30\left( N^{'}\right)^{w_{0}}(t,x)\right]\\&{+}\frac{r(t)e^{{-}\sqrt{2}d(t)}}{\sqrt{1-\frac{\dot d(t)^{2}}{4}}}\left[48M^{w_{0}}(t,x)-60N^{w_{0}}(t,x)\right].
\end{aligned}
\end{multline}
\textbf{Substep 2.4.}(Estimate of \eqref{interg1}.)
\par Now, using identities \eqref{QQ} and \eqref{Rq}, Lemma \ref{interg} also implies the following equation
\begin{multline}\label{int2new}
    \left[ U^{''}\left(H^{w_{1}}_{0,1}(t,x)\right)- U^{''}\left(H_{0,1}(w_{1}(t,x))\right)\right]e^{-\sqrt{2}d(t)}\mathcal{G}(w_{1}(x,t))\\
    =\mathcal{Q}\left({-}w_{1}(t,{-}x),\frac{d_{v}(t)-2r(t)}{\sqrt{1-\frac{\dot d(t)^{2}}{4}}}\right)e^{{-}\sqrt{2}d(t)}
  +R_{q}\left({-}w_{1}(t,{-}x),\frac{d(t)-2r(t)}{\sqrt{1-\frac{\dot d(t)^{2}}{4}}}\right)e^{{-}\sqrt{2}d(t)}.
\end{multline}
Furthermore, from Lemma \ref{porrataylor} and the definition of $\mathcal{Q}$ in \eqref{QQ}, we deduce that
\begin{equation*}
     \mathcal{Q}\left({-}w_{1}(t,{-}x),\frac{d(t)-2r(t)}{\sqrt{1-\frac{\dot d(t)^{2}}{4}}}\right)e^{{-}\sqrt{2}d(t)}\cong_{6}   \mathcal{Q}\left({-}w_{0}(t,{-}x),\frac{d(t)-2r(t)}{\sqrt{1-\frac{\dot d(t)^{2}}{4}}}\right)e^{{-}\sqrt{2}d(t)},
\end{equation*}
from which with Lemmas \ref{dlemma}, \ref{explemma} and identity \eqref{QQ}, we obtain that
\begin{equation}\label{Qf}    
\mathcal{Q}\left({-}w_{1}(t,{-}x),\frac{d(t)-2r(t)}{\sqrt{1-\frac{\dot d(t)^{2}}{4}}}\right)e^{{-}\sqrt{2}d(t)}\cong_{6} \mathcal{Q}\left({-}w_{0}(t,{-}x),d(t)\right)e^{{-}\sqrt{2}d(t)}.
\end{equation}
Using identity \eqref{Rq} and Remark \ref{remarkinterg}, we can deduce similarly to the proof of estimate \eqref{Qf} that 
\begin{equation*}
   R_{q}\left({-}w_{1}(t,{-}x),\frac{d(t)-2r(t)}{\sqrt{1-\frac{\dot d(t)^{2}}{4}}}\right)e^{{-}\sqrt{2}d(t)}\cong_{6}  R_{q}\left({-}w_{0}(t,{-}x),d(t)\right)e^{{-}\sqrt{2}d(t)}\cong_{6} 0.
\end{equation*}
Consequently, in notation of Lemma \ref{interg}, we have from identity \eqref{QQ} that
\begin{multline*}
     \left[ U^{''}\left(H_{0,1}(w_{1}(t,x))-H_{0,1}(w_{1}(t,{-}x))\right)-U^{''}\left(H_{0,1}(w_{1}(t,x))\right)\right]e^{-\sqrt{2}d(t)}\mathcal{G}(w_{1}(t,x)) \\ \cong_{6}w_{0}(t,x)A(w_{0}(t,x))e^{-2\sqrt{2}d(t)}
    +w_{0}(t,x)B(w_{0}(t,{-}x))e^{-2\sqrt{2}d(t)}+C(w_{0}(t,x))e^{-2\sqrt{2}d(t)}\\{+}D(w_{0}(t,{-}x))e^{-2\sqrt{2}d(t)},
\end{multline*}
from which, using Remark \ref{remarkinterg}, we deduce
\begin{multline*}  
\left[ U^{''}\left(H_{0,1}(w_{1}(t,x))-H_{0,1}(w_{1}(t,{-}x))\right)- U^{''}\left(H_{0,1}(w_{1}(t,x))\right)\right]e^{-\sqrt{2}d(t)}\mathcal{G}(w_{1}(t,x))\\  \cong_{6}\left[U^{''}\left(H^{w_{0}}_{0,1}(t,x)\right)- U^{''}\left(H_{0,1}(w_{0}(t,x))\right)\right]e^{-\sqrt{2}d(t)}\mathcal{G}(w_{0}(t,x)).
\end{multline*}
In conclusion, since $ U^{''}$ is an even function, we have
\begin{multline}\label{intergnew}
   e^{{-}\sqrt{2}d_{v}(t)} U^{''}\left(H^{w_{1}}_{0,1}(t,x)\right)\mathcal{G}^{w_{1}}(t,x) -e^{{-}\sqrt{2}d_{v}(t)}\left[ U^{''}\left(H_{0,1}\right)\mathcal{G}\right]^{w_{1}}(t,x)\\
   \cong_{6} e^{{-}\sqrt{2}d_{v}(t)} U^{''}\left(H^{w_{0}}_{0,1}(t,x)\right)\mathcal{G}^{w_{0}}(t,x) -e^{{-}\sqrt{2}d_{v}(t)}\left[ U^{''}\left(H_{0,1}\right)\mathcal{G}\right]^{w_{0}}(t,x).
\end{multline}
\textbf{Substep 2.5.}(Estimate of \eqref{g31}.)
  Next, using Lemma \ref{porrataylor}, we can verify that
 \begin{multline*}
      \frac{1}{2}U^{(3)}\left(H_{0,1}(w_{1}(t,x))-H_{0,1}(w_{1}(t,{-}x))\right)\left[\mathcal{G}(w_{1}(t,x))-\mathcal{G}(w_{1}(t,{-}x))\right]^{2}e^{-2\sqrt{2}d(t)}
        \\
        \cong_{6}\frac{1}{2}U^{(3)}\left(H_{0,1}(w_{0}(t,x))-H_{0,1}(w_{0}(t,{-}x))\right)\left[\mathcal{G}(w_{0}(t,x))-\mathcal{G}(w_{0}(t,{-}x))\right]^{2}e^{-2\sqrt{2}d(t)}.
 \end{multline*}
Therefore, from Remark \ref{g3}, we obtain 
\begin{equation}\label{w1g3}
    \frac{1}{2}U^{(3)}\left(H^{w_{1}}_{0,1}(t,x)\right)\left[\mathcal{G}^{w_{1}}(t,x)\right]^{2}e^{-2\sqrt{2}d(t)}\cong_{6}\frac{1}{2} \left[U^{(3)}\left(H_{0,1}\right)\mathcal{G}^{2}\right]^{w_{0}}(t,x)e^{{-}2\sqrt{2}d(t)}.
\end{equation}
 \textbf{Substep 2.6.}(Estimate of \eqref{sumbig1}.)
 Furthermore, similarly to the proof of estimate \eqref{w1g3}, we can verify that
 \begin{equation*}
     \sum_{j=4}^{6}\frac{U^{(j)}\left(H^{w_{1}}_{0,1}(t,x)\right)}{(j-1)!}\left[\mathcal{G}^{w_{1}}(t,x)\right]^{j-1}e^{-\sqrt{2}d(t)(j-1)}\cong_{6} \sum_{j=4}^{6}\frac{U^{(j)}\left(H^{w_{0}}_{0,1}(t,x)\right)}{(j-1)!}\left[\mathcal{G}^{w_{0}}(t,x)\right]^{j-1}e^{-\sqrt{2}d(t)(j-1)}.
 \end{equation*}
Hence, we obtain using Remark \ref{remarktaylorr} that
\begin{equation}\label{taylorw1}
     \sum_{j=4}^{6}\frac{U^{(j)}\left(H^{w_{1}}_{0,1}(t,x)\right)}{(j-1)!}\left[\mathcal{G}^{w_{1}}(t,x)\right]^{j-1}e^{-\sqrt{2}d(t)(j-1)}\cong_{6} 0.
\end{equation}
\textbf{Substep 2.7.}(Conclusion of estimate of $\Lambda\left(\varphi_{2}\right)(t,x).$)
 From identity \eqref{sumbig0}, after we use the estimates \eqref{estlambdaH}, \eqref{lambdagnew}, \eqref{intergnew}, \eqref{u-uinteract}, \eqref{w1g3}, \eqref{taylorw1} respectively, in the terms \eqref{H1}, \eqref{linearg1}, \eqref{interg1}, \eqref{nolint1},  \eqref{g31}, \eqref{sumbig1}, we obtain
 \begin{align}\nonumber
    \Lambda(\varphi_{2})(t,x)-\Lambda(\varphi_{2,0})(t,x)\cong_{6} &\frac{\ddot r(t)}{(1-\frac{\dot d(t)^{2}}{4})^{\frac{1}{2}}}\left( H^{'}_{0,1}\right)^{w_{0}}(t,x)
    {-}\frac{\dot d(t)\dot r(t)}{1-\frac{\dot d(t)^{2}}{4}}\left( H^{''}_{0,1}\right)^{w_{0}}(t,x)
    \\ \nonumber &{+}\frac{8\sqrt{2}r(t)e^{{-}\sqrt{2}d(t)}}{\sqrt{1-\frac{\dot d(t)^{2}}{4}}}\left( H^{''}_{0,1}\right)^{w_{0}}(t,x)\\ \nonumber
    &{+}\left[48M^{w_{0}}(t,x)-60N^{w_{0}}(t,x)\right]\frac{r(t)}{\sqrt{1-\frac{\dot d(t)^{2}}{4}}}\exp\left(\frac{{-}\sqrt{2}d(t)}{\sqrt{1-\frac{\dot d(t)^{2}}{4}}}\right).
    \end{align}
    In conclusion, we deduce from Lemma \ref{explemma} and the estimate above that
    \begin{align}\nonumber
    \Lambda(\varphi_{2})(t,x)-\Lambda(\varphi_{2,0})(t,x)
    \cong_{6} 
    &\frac{\ddot r(t)}{\sqrt{1-\frac{\dot d(t)^{2}}{4}}}\left( H^{'}_{0,1}\right)^{w_{0}}(t,x)
    {-}\frac{\dot d(t)\dot r(t)}{1-\frac{\dot d(t)^{2}}{4}}\left( H^{''}_{0,1}\right)^{w_{0}}(t,x)
    \\ \nonumber &{+}\frac{8\sqrt{2}r(t)e^{{-}\sqrt{2}d(t)}}{\sqrt{1-\frac{\dot d(t)^{2}}{4}}}\left( H^{''}_{0,1}\right)^{w_{0}}(t,x)\\ \label{finalestimate} &{+}\left[48M^{w_{0}}(t,x)-60N^{w_{0}}(t,x)\right]\frac{r(t)}{\sqrt{1-\frac{\dot d(t)^{2}}{4}}}e^{{-}\sqrt{2}d(t)}.
\end{align}
\textbf{Step 3.}(Conclusion of the proof of Theorem \ref{k=2}.)
 \par Using Lemmas \ref{dlemma}, \ref{geraldt} and estimates \eqref{r,dr,k=1}, \eqref{geraldr}, we conclude from the product rule of derivative and estimate \eqref{finalestimate} that if $0<v\ll 1,$ then
\begin{equation}\label{firstconcl}
    \norm{\frac{\partial^{l}}{\partial t^{l}}\left[\Lambda(\varphi_{2})(t,x)-\Lambda(\varphi_{2,0})(t,x)\right]}_{H^{s}_{x}}\lesssim_{l,s}v^{4+l}\left(\vert t\vert v+\ln{\left(\frac{1}{v^{2}}\right)}\right)e^{-2\sqrt{2}\vert t\vert v}, 
\end{equation}
 for all $t\in\mathbb{R},\,s\geq 0$ and $l\in\mathbb{N}\cup\{0\}.$
\par Moreover, Remark \ref{elint} implies for all $m,l\in\mathbb{N}\cup\{0\}$ and $t\in\mathbb{R}$ that if $h\in S^{+}_{m},$ then
\begin{align*}
    \left\vert \frac{d^{l}}{dt^{l}}\left\langle h\left(w_{0}(t,x)\right), H^{'}_{0,1}(w_{0}(t,{-}x))\right\rangle \right\vert\lesssim_{h,l} v^{2+l}\left[\vert t\vert v+\ln{\left(\frac{1}{v}\right)}\right]^{m+1}e^{-2\sqrt{2}\vert t\vert v}.
\end{align*}
Consequently, using Lemma \ref{phi222}, the ordinary differential equation \eqref{ode1}, identity \eqref{oioi} and estimate \eqref{finalestimate},  if $0<v\ll 1,$ there exists $n_{2}\in\mathbb{N}$ satisfying for all $l\in \mathbb{N}\cup\{0\}$ the following estimate
 \begin{equation*}
 \left\vert \frac{d^{l}}{dt^{l}} \left\langle \Lambda(\varphi_{2})(t,x),\, H^{'}_{0,1}\left(\frac{x-\frac{d(t)}{2}}{\sqrt{1-\frac{\dot d(t)^{2}}{4})}}\right)\right\rangle\right\vert\lesssim_{l} v^{l+6}\left[\ln{\left(\frac{1}{v^{2}}\right)}+\vert t\vert v\right]^{n_{2}+1}e^{-2\sqrt{2}\vert t\vert v}.
 \end{equation*}
Therefore, if $0<v\ll 1,$ Lemmas \ref{porrataylor}, \ref{phi222}, inequality \eqref{firstconcl} and estimates \eqref{r,dr,k=1}, \eqref{geraldr} of $r(t)$ imply for any $l\in\mathbb{N}\cup\{0\}$ and all $t\in\mathbb{R}$ that
\begin{equation*}
 \left\vert \frac{d^{l}}{dt^{l}} \left\langle \Lambda(\varphi_{2})(t,x),\, H^{'}_{0,1}\left(\frac{x+r(t)-\frac{d(t)}{2}}{\sqrt{1-\frac{\dot d(t)^{2}}{4}}}\right)\right\rangle\right\vert\lesssim_{l} v^{l+6}\left[\ln{\left(\frac{1}{v^{2}}\right)}+\vert t\vert v\right]^{n_{2}+1}e^{-2\sqrt{2}\vert t\vert v}.
 \end{equation*}
Since $\varphi_{2}$ is an odd function on $x,$ the estimate above implies \eqref{orthodec2}.
\par Finally, since $d(t)$ and $r(t)$ are even functions and $\lim_{t\to+\infty}r(t)$ exists, there exists a number $e(v)$ such that $\phi_{2}(v,t,x)=\varphi_{2}(t+e(v),x)$ satisfies Theorem \ref{approximated theorem} for $k=2.$ More precisely, because
$ d(t)=2vt+\frac{1}{\sqrt{2}}\ln{\left(\frac{8}{v^{2}}\right)}+O(e^{-2\sqrt{2}vt})
$ when $t\gg 1$
and $\lim_{s\to\pm\infty} r(s) =e_{r}=O(v^{2}\ln{\left(\frac{1}{v}\right)}^{2}),$ we consider
$e(v)=\frac{-1}{2v}\left[\frac{1}{\sqrt{2}}\ln{\left(\frac{8}{v^{2}}\right)}+e_{r}\right].
$
\end{proof}
\section{Approximate solutions for $k>2$}\label{sub32}
\par We will prove the following theorem, which implies Theorem \ref{approximated theorem}: 
\begin{theorem}\label{strongerr}
There exist a sequence of approximate solutions $\left(\varphi_{k,v}(t,x)\right)_{k\geq 2},$ functions $r_{k}(v,t)$ that are smooth and even in $t,$ and numbers $n_{k}\in\mathbb{N}$ such that if $0<v\ll 1,$ then for any $k\in\mathbb{N}_{\geq 2},$ $m\in\mathbb{N}$
\begin{equation}\label{aproxr}
    \vert r_{k}(v,t)\vert\lesssim_{k} v^{2(k-1)} \left[\ln{\left(\frac{1}{v}\right)}\right]^{n_{k}},\,\left\vert\frac{\partial^{m}}{\partial t^{m}}r_{k}(v,t)\right\vert\lesssim_{k,m} v^{2(k-1)+m}\left[\ln{\left(\frac{1}{v}\right)}+\vert t\vert v\right]^{n_{k}}e^{-2\sqrt{2}\vert t\vert v},
\end{equation}
$\varphi_{k,v}(t,x)$ satisfies for $\rho_{k}(v,t)=-\frac{d_{v}(t)}{2}+\sum_{j=2}^{k}r_{j}(v,t)$ the identity
\begin{multline}\label{aproxfor}
    \varphi_{k,v}(t,x)=H_{0,1}\left(\frac{x+\rho_{k}(v,t)}{\sqrt{1-\frac{\dot d_{v}(t)^{2}}{4}}}\right)+H_{-1,0}\left(\frac{x-\rho_{k}(v,t)}{\sqrt{1-\frac{\dot d_{v}(t)^{2}}{4}}}\right)\\+e^{-\sqrt{2}d_{v}(t)}\left[\mathcal{G}\left(\frac{x+\rho_{k}(v,t)}{\sqrt{1-\frac{\dot d_{v}(t)^{2}}{4}}}\right)-\mathcal{G}\left(\frac{-x+\rho_{k}(v,t)}{\sqrt{1-\frac{\dot d_{v}(t)^{2}}{4}}}\right)\right]
    \\{+}\mathcal{T}_{k,v}\left(vt,\frac{x+\rho_{k}(v,t)}{\sqrt{1-\frac{\dot d_{v}(t)^{2}}{4}}}\right)-\mathcal{T}_{k,v}\left(vt,\frac{-x+\rho_{k}(v,t)}{\sqrt{1-\frac{\dot d_{v}(t)^{2}}{4}}}\right),\end{multline}
the following estimates for any $l\in\mathbb{N}\cup\{0\}$ and $s\geq 1$
\begin{equation}\label{geraldecay}
    \norm{\frac{\partial^{l}}{\partial t^{l}}\Lambda(\varphi_{k,v}(t,x)}_{H^{s}_{x}}\lesssim_{k,l,s}v^{2k+l}\left[\ln{\left(\frac{1}{v^{2}}\right)}+\vert t\vert v\right]^{n_{k}}e^{-2\sqrt{2}\vert t\vert v},
\end{equation}
and 
\begin{equation}\label{orthodecay}
 \left\vert\frac{d^{l}}{dt^{l}}\left[\left\langle \Lambda(\varphi_{k,v})(t,x),\, H^{'}_{0,1}\left(\frac{\pm x+\rho_{k}(v,t)}{(1-\frac{\dot d_{v}(t)^{2}}{4})^{\frac{1}{2}}}\right)\right\rangle\right]\right\vert\lesssim_{k,l} v^{2k+l+2}\left[\ln{\left(\frac{1}{v^{2}}\right)}+\vert t\vert v\right]^{n_{k}+1}e^{-2\sqrt{2}\vert t\vert v},    
\end{equation}
where $\mathcal{T}_{k}(t,x)$ is a finite sum of functions $p_{k,i,v}(t)h_{k,i}(x)$ with $h_{k,i,v}\in\mathscr{S}(\mathbb{R})\cap S^{+}_{\infty}$ and each $p_{k,i,v}(t)$ being an even function satisfying
\begin{equation*}
\left \vert\frac{d^{m}p_{k,i,v}(t)}{dt^{m}} \right\vert\lesssim_{k,m} v^{4}\left(\ln{\left(\frac{1}{v^{2}}\right)}+\vert t\vert \right)^{n_{k,i}}e^{-2\sqrt{2}\vert t\vert }
\end{equation*}
for a positive number $n_{k,i}\in\mathbb{N}$ and all $m\in \mathbb{N}\cup\{0\}.$ 
\end{theorem}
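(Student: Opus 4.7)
The plan is to proceed by strong induction on $k \geq 2$, with the base case $k=2$ already furnished by Theorem \ref{k=2} (observing that the correction there is exactly of the form $e^{-\sqrt{2}d_v(t)}\mathcal{G}$, so setting $\mathcal{R}_{2,v}$ to consist of that single term, together with $r_2$ as constructed there, fits the format of \eqref{aproxfor}). For the inductive step, assume the statement holds at level $k-1$ and write $\phi_{k,v} = \phi_{k-1,v} + \mathcal{A}_k$, where $\mathcal{A}_k$ consists of a scalar shift correction $r_k(v,t)$ inserted inside $\rho_k$ plus a new additive term $\Delta\mathcal{R}_k$ of the form $\sum_i p_{k,i,v}(t) h_{k,i}(\cdot)$ to be determined.

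First I would expand $\Lambda(\phi_{k-1,v})(t,x)$ using Taylor's theorem and the multinomial apparatus of Remark \ref{sepapplic}. Combined with the Separation Lemma (Proposition \ref{separation}) applied to all cross-interaction terms between the "left" and "right" building blocks $H_{0,1}(w_0(t,x+\rho_{k-1}))$ and $-H_{0,1}(w_0(t,-x+\rho_{k-1}))$ (and their $\mathcal{G}$ and $\mathcal{R}_{k-1,v}$ companions), Remarks \ref{sepc} and \ref{reflection}, and the time-derivative estimates from Lemmas \ref{dt2kink}, \ref{LambdaG}, \ref{interactionsize}, \ref{explemma}, \ref{porrataylor}, the error can be decomposed in the symmetric form
\begin{equation*}
    \Lambda(\phi_{k-1,v})(t,x) = \sum_{i \in J_k} q_{i,v}(t)\bigl[h_i(w_0(t,x+\rho_{k-1})) - h_i(w_0(t,-x+\rho_{k-1}))\bigr] + \mathrm{Neg}_k(t,x),
\end{equation*}
with $h_i \in S^+_{\infty} \cap \mathscr{S}(\mathbb{R})$, each $q_{i,v}$ even in $t$, and the remainder $\mathrm{Neg}_k$ already at the target order $v^{2k+l}(\ln(1/v)+|t|v)^{n_k}e^{-2\sqrt{2}|t|v}$ in $H^s_x$.

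Then I would split each $h_i = h_i^{\perp} + \kappa_i \dot H_{0,1}$ with $h_i^{\perp} \in S^+_{\infty} \cap \mathscr{S}(\mathbb{R}) \cap Ort(\dot H_{0,1})$. By Lemma \ref{secondinvert}, the correction $\Delta\mathcal{R}_k(vt,y) := \sum_i q_{i,v}(t) L_1(h_i^{\perp})(y)$ lies in $S^+_{\infty}$, and plugging it into the ansatz cancels all perpendicular terms at order $v^{2k}$ (the extra terms generated by the two time derivatives in $\Lambda$ acting on $q_{i,v}$ and on the shifted $L_1(h_i^{\perp})$ are absorbed into the next level via Lemmas \ref{dlemma}, \ref{geraldt}, \ref{explemma}). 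For the parallel parts, I would derive an ODE for $r_k$ by computing $\Lambda(\phi_{k,v}) - \Lambda(\phi_{k-1,v})$ to first order in $r_k$: the dominant contributions are $\ddot r_k(t)(1-\dot d_v^2/4)^{-1/2}\dot H_{0,1}(w_0)$ from the time derivative and $-32\, e^{-\sqrt{2}d_v(t)} r_k(t)\dot H_{0,1}(w_0)$ from the linearization of the kink-kink attraction, which after projecting onto $\dot H_{0,1}(w_0(t,\cdot+\rho_{k-1}))$ give an equation of exactly the same structural form as \eqref{odeasy}. The homogeneous equation is explicitly solvable by $\tanh(\sqrt{2}vt)$ and $\sqrt{2}vt\tanh(\sqrt{2}vt)-1$, so the variation-of-parameters argument from Step 1 of the proof of Theorem \ref{k=2} (imposing evenness, i.e.\ $\theta_1(0)=0$) delivers a unique even $r_k$ satisfying \eqref{aproxr}, provided the right-hand side has the expected decay $v^{2k}(\ln(1/v)+|t|v)^{n_k}\mathrm{sech}(\sqrt{2}vt)^{2}$.

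The main obstacle will be the bookkeeping needed to verify that after adding both $r_k$ and $\Delta\mathcal{R}_k$, the new error $\Lambda(\phi_{k,v})$ genuinely improves by a factor $v^2$ in $H^s_x$ (yielding \eqref{geraldecay}) and that its projection onto $\dot H_{0,1}$ improves by a further $v^2$ (yielding \eqref{orthodecay}). This requires carefully expanding every term in $\phi_{k,v} - \phi_{k-1,v}$ using the Taylor-type Lemma \ref{porrataylor} (the $r_k^2$ terms are borderline and must be shown to land at order $v^{2k+2}$ rather than just $v^{2k}$ thanks to the bound $|r_k| \lesssim v^{2(k-1)}\ln(1/v)^{n_k}$), re-applying the Separation Lemma to all new interactions of $\Delta\mathcal{R}_k$ with itself, with $\mathcal{R}_{k-1,v}$, and with the underlying kinks, and invoking the orthogonality-gain estimates \eqref{<>+1}-\eqref{<>+} of Lemma \ref{interactionsize} for the $\dot H_{0,1}$ projection. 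Modulo this careful accounting, which is the $k$-th level analogue of the computation performed in Step 2 of the proof of Theorem \ref{k=2}, the induction closes and yields the stated bounds with some explicit $n_k$ and $\mathcal{C}(k,s,l,\cdot)$.
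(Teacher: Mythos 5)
Your proposal follows essentially the same route as the paper's own proof: induction on $k$ with base case from Theorem \ref{k=2}, a symmetric separated representation of $\Lambda(\phi_{k-1,v})$ (the paper's Lemma \ref{represent1}), subtraction of the $\dot H_{0,1}$-component followed by inversion via $L_{1}$ from Lemma \ref{secondinvert} (the paper's $\Gamma(t,\cdot)$ and $L_{1}(\Gamma(t,\cdot))$), an even solution $r_{k}$ of the same variation-of-parameters ODE built on $\tanh(\sqrt{2}vt)$ and $\sqrt{2}vt\tanh(\sqrt{2}vt)-1$, and the same closing bookkeeping. Your term-by-term splitting $h_{i}=h_{i}^{\perp}+\kappa_{i}\dot H_{0,1}$ is just a reindexed form of the paper's single corrector $\Gamma$, so the two constructions coincide.
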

\begin{remark}\label{veryimportant}
From the result of the subsection before, we have that $\varphi_{2}(t,x)$ and $r(t)$ satisfy all the properties \eqref{aproxfor}, \eqref{aproxr} and \eqref{orthodecay} for $k=2$ if $v\ll1,$ so Theorem \ref{strongerr} is true for $k=2.$ We are going to prove that if, for any $2\leq k\leq \mathcal{M}$, there exists a smooth function $\varphi_{k,v}(t,x)$ denoted by \eqref{aproxfor} that satisfies the conclusion of Theorem \ref{strongerr}  if $0<v\ll 1,$ then there exists also $\varphi_{\mathcal{M}+1,v}(t,x)$ satisfying \eqref{aproxfor}, \eqref{orthodecay} and Theorem \ref{strongerr} if $v\ll 1.$ Next, after a time translation of order $O\left(\frac{\ln{\left(\frac{1}{v}\right)}}{v}\right),$ this function will satisfy Theorem \ref{approximated theorem}. 
\end{remark}
\begin{remark}\label{n20}
 Furthermore, from Theorem \ref{k=2}, we also have that $r_{2}$ satisfies, if $v>0$ is small enough, the following estimates
 \begin{equation*}
      \norm{r_{2}(v,\cdot)}_{L^{\infty}(\mathbb{R})}\lesssim v^{2}\ln{\left(\frac{1}{v^{2}}\right)},\,\, \left\vert \frac{\partial^{l}}{\partial t^{l}}r_{2}(v,t)\ \right\vert \lesssim_{l} v^{2+l}\left[\ln{\left(\frac{1}{v}\right)}+\vert t \vert v\right]e^{{-}2\sqrt{2}\vert t\vert v},
 \end{equation*}
for all $l\in\mathbb{N}.$
\end{remark}
\subsection{Auxiliary lemmas.}\label{subss}
\par  From now on, we assume that Theorem \ref{strongerr} is true for $2\leq k\leq \mathcal{M}.$ We also consider the following defintion.

\begin{definition}\label{negldef}
    We say that function $\mathcal{F}:(0,1)\times\mathbb{R}^{2}\to \mathbb{R}$ is negligible of order $(n,m)\in\mathbb{N}^{2}$ if there exist a constant $M(n)$ satisfying such that $\mathcal{F}$ satisfies for any $v\in (0,1)$ small enough the following estimate
\begin{equation*}
\norm{\frac{\partial^{l}}{\partial t^{l}}\mathcal{F}(v,t,x)}_{H^{s}_{x}}\lesssim_{l,s} v^{n+l}\left(\vert t\vert v+\ln{\left(\frac{1}{v}\right)}\right)^{m}e^{{-}2\sqrt{2}\vert t\vert v},
\end{equation*}
for all $t\in\mathbb{R},$ any $l\in\mathbb{N}$ and all $s\geq 0.$ Moreover, we also say for any $n\in\mathbb{N}_{>6}$ that any two real functions $f,g:(0,1)\times\mathbb{R}^{2}\to \mathbb{R}^{2}$ satisfy $f\cong_{n}g$ if $f-g$ is a negligible function of order $(n,m)$ for some $m\in\mathbb{N}.$
\end{definition}
The demonstration of Theorem \ref{strongerr} will be done by induction on $k.$ However, before the beginning of this proof, we need to prove three lemmas necessary to demonstrate Theorem \ref{strongerr}. The first lemma is the following:
\begin{lemma}\label{represent1} In notation of Theorem \ref{strongerr}, there exist natural numbers $N_{1},N_{2}$ satisfying, for $0<v\ll 1,$ the following estimate
\begin{equation*}
    \Lambda(\varphi_{\mathcal{M},v})(t,x)\cong_{2\mathcal{M}+4}\sum_{i=1}^{N_{1}}s_{i,v}(\sqrt{2}vt)\left[\mathcal{R}_{i}\left(\frac{x+\rho_{\mathcal{M}}(v,t)}{\sqrt{1-\frac{\dot d_{v}(t)^{2}}{4}}}\right)-\mathcal{R}_{i}\left(\frac{-x+\rho_{\mathcal{M}}(v,t)}{\sqrt{1-\frac{\dot d_{v}(t)^{2}}{4}}}\right)\right]
\end{equation*}
such that for all $1\leq i,j\leq N_{1}$ we have $\left\langle \mathcal{R}_{i},\,\mathcal{R}_{j}\right\rangle=\delta_{i,j},\,\mathcal{R}_{i}\in S^{+}_{\infty}\cap\mathscr{S}(\mathbb{R}),$  $s_{i,v}\in C^{\infty}(\mathbb{R})$ satisfies, for all $l\in\mathbb{N}\cup\{0\},$ $\left\vert\frac{d^{l}}{dt^{l}}s_{i,v}(t)\right\vert\lesssim_{l}v^{2\mathcal{M}}\left[\vert t \vert+\ln{\left(\frac{1}{v^{2}}\right)}\right]^{n_{\mathcal{M}}}e^{-2\sqrt{2}\vert t\vert}.$
\end{lemma}
Our demonstration of Lemma \ref{represent1} will need the following result.
\begin{lemma}\label{application1}
    For any $\zeta>1,$ let $\phi:\mathbb{R}_{\geq 1}\times\mathbb{R}^{2}\to\mathbb{R}$ be a function of the form
    \begin{equation*}
        \phi(\zeta,t,x)=H_{0,1}\left(x-\zeta\right)-H_{0,1}\left({-}x\right)+\sum_{i=1}^{\mathcal{N}} p_{i}(t)\left[I_{i}\left(x-\zeta\right)-I_{i}\left({-}x\right)\right],
    \end{equation*}
where $\mathcal{N}<{+}\infty,$ all the functions $p_{i}(t)$ are smooth with all their non-zero derivatives being in $\mathscr{S}(\mathbb{R}),$ and for all $1\leq i\leq \mathcal{N},\,I_{i}\in \mathscr{S}(\mathbb{R})\cap S^{+,m_{i}}$ for some $m_{i}\in\mathbb{N}\cup\{0\}.$
Let $Z_{\zeta}:\mathbb{R}^{2}\to\mathbb{R}$ be the following function
\begin{equation*}
    \mathcal{Z}_{\zeta}(t,x)= U^{'}(\phi(\zeta,t,x))- U^{'}\left(H_{0,1}(x-\zeta)\right)- U^{'}\left(H_{{-}1,0}(x)\right),
\end{equation*}
for any $(t,x)\in\mathbb{R}^{2},$ and $\zeta>1.$ For any $k\in\mathbb{N},$ there exist $\mathcal{N}_{1}(k)\in\mathbb{N},$ functions $h_{i}\in S^{+}_{\infty},$ and numbers $n_{i},\,l_{i}\in\mathbb{N}\cup\{0\},\,\alpha_{i,j}\in\mathbb{N}\cup\{0\}$ for all $1\leq i\leq \mathcal{N}_{1}(k)$ and $1\leq j\leq \mathcal{N}$ such that the following function 
\begin{equation*}
    \mathcal{Z}_{k,\zeta}(t,x)=\sum_{i=1}^{\mathcal{N}_{1}(k)}\left[\zeta^{l_{i}}e^{{-}\sqrt{2}n_{i}\zeta}\left( h_{i}(x-\zeta)-h_{i}({-}x)\right)\prod_{j=1}^{\mathcal{N}}p_{j}(t)^{\alpha_{i,j}}\right] \text{, for all $(\zeta,x)\in\mathbb{R}_{\geq 1}\times\mathbb{R},$}
\end{equation*}
satisfies for any $s\geq 0$ and every $(\zeta,t)\in\mathbb{R}_{\geq 1}\times \mathbb{R}$ the estimate
\begin{equation*}
    \norm{\mathcal{Z}_{\zeta}(t,x)-\mathcal{Z}_{k,\zeta}(t,x)}_{H^{s}_{x}}\leq C(\phi,s,k) e^{{-}\sqrt{2}k\zeta},
\end{equation*}
where $C(\phi,s,k)$ is a positive value depending only on $k$ and $s$ and the function $\phi.$
\end{lemma}
\begin{proof}
     Proposition \ref{separation} and Remarks \ref{sepc}, \ref{reflection} can be applied to estimate with higher precision the function
\begin{equation}\label{worstinteraction}
   \mathcal{Z}_{\zeta}(t,x)= U^{'}\left(\phi(\zeta,t,x)\right)- U^{'}\left(H_{0,1}\left(t,x-\zeta\right)\right)- U^{'}\left({-}H_{0,1}\left({-}x\right)\right), 
\end{equation}
since $ U^{'}(\phi)=2\phi-8\phi^{3}+6\phi^{5}.$ 
More precisely, since $ U^{'}$ is an odd polynomial, it is not difficult to verify from the definition of $\phi(\zeta,t,x)$ and the multinomial formula that $\mathcal{Z}_{\zeta}(t,x)$ is a finite sum of functions of the following kind
\begin{align*}
   \mathcal{X}_{\zeta}(t,x)=&\Bigg[H_{0,1}\left(x-\zeta\right)^{\alpha_{0}}\Big({-} H_{0,1}\left({-}x\right)\Big)^{\beta_{0}} \prod_{i,j=1}^{\mathcal{N}}p_{j}(t)^{\alpha_{j}}I_{j}\left(x-\zeta\right)^{\alpha_{j}}p_{i}(t)^{\beta_{i}}\Big({-}I_{i}\left({-}x\right)\Big)^{\beta_{i}}\Bigg]\\
   &{+}\Bigg[
   H_{0,1}\left(x-\zeta\right)^{\beta_{0}} \Big({-} H_{0,1}\left({-}x\right)\Big)^{\alpha_{0}}\prod_{i,j=1}^{\mathcal{N}}p_{i}(t)^{\beta_{i}}I_{i}\left(x-\zeta\right)^{\beta_{i}}p_{j}(t)^{\alpha_{j}}\Big({-}I_{j}\left({-}x\right)\Big)^{\alpha_{j}}\Bigg],
\end{align*}
such that
\begin{itemize}
    \item $\alpha_{i},\,\beta_{i}\in\mathbb{N}\cup\{0\}$ for all $0\leq i\leq \mathcal{N},$
    \item $\sum_{i=0}^{\mathcal{N}}\alpha_{i}+\beta_{i}$ is odd,
    \item either $\sum_{i=1}^{\mathcal{N}}\alpha_{i}+\beta_{i}\neq 0$ or $\min\left(\alpha_{0},\beta_{0}\right)>0.$
\end{itemize}  
Since every $I_{j}\in S^{+}_{\infty}$, we can apply Lemma \ref{multiplicative} and deduce for any natural number $1\leq j\leq \mathcal{N}$ and any $k\in\mathbb{N}$ that $I_{j}({-}x)^{2k}\in S^{-}_{\infty}$ and $I_{j}(x)^{2k-1}\in S^{+}_{\infty}.$ Moreover, Lemma \ref{multiplicative} also implies for all $k\in\mathbb{N}$ that if $(f_{i})_{1\leq i\leq 2k-1}\subset S^{+}_{\infty},$ then $\prod_{i=1}^{2k-1}f_{i}\in S^{+}_{\infty},$ and if $(f_{i})_{1\leq i\leq 2k}\subset S^{-}_{\infty},$ then
$\prod_{i=1}^{2k}f_{i}\in S^{-}_{\infty}.$
Therefore, we deduce that either
\begin{align*}
     H_{0,1}\left(x\right)^{\alpha_{0}}\prod_{j=1}^{\mathcal{N}}I_{j}\left(x\right)^{\alpha_{j}}\in S^{+}_{\infty},\,\, H_{0,1}\left({-}x\right)^{\beta_{0}}\prod_{i=1}^{\mathcal{N}}I_{i}\left({-}x\right)^{\beta_{i}} \in S^{-}_{\infty}\cup\{1\} \text{ or }\\
       H_{0,1}\left({-}x\right)^{\alpha_{0}}\prod_{j=1}^{\mathcal{N}}I_{j}\left({-}x\right)^{\alpha_{j}}\in S^{-}_{\infty}\cup\{1\},\,\, H_{0,1}\left(x\right)^{\beta_{0}}\prod_{i=1}^{\mathcal{N}}I_{i}\left(x\right)^{\beta_{i}} \in S^{+}_{\infty}.
\end{align*}
\par Consequently, we can apply the Separation Lemma and Remark \ref{reflection} in the expression 
\begin{align*}
   H_{0,1}\left(x-\zeta\right)^{\alpha_{0}}\prod_{j=1}^{\mathcal{N}}I_{j}\left(x-\zeta\right)^{\alpha_{j}}\left[\left({-}H_{0,1}\left({-}x\right)\right)^{\beta_{0}}\prod_{i=1}^{\mathcal{N}}\left({-}I_{i}\left({-}x\right)\right)^{\beta_{i}}\right]\\
   {+}\left[\left({-}H_{0,1}\left({-}x\right)\right)^{\alpha_{0}}\prod_{j=1}^{\mathcal{N}}\left({-}I_{j}\left({-}x\right)\right)^{\alpha_{j}}\right]H_{0,1}\left(x-\zeta\right)^{\beta_{0}}\prod_{i=1}^{\mathcal{N}}I_{i}\left(x-\zeta\right)^{\beta_{i}},
\end{align*} and deduce for any $k\in\mathbb{N}$ the existence of $\mathcal{N}_{2}(k)\in\mathbb{N},$ a set of numbers $l_{i,1},\,n_{i,1}\in \mathbb{N}\cup\{0\}$  and a set of functions $h_{i,1}\in S^{+}_{\infty}\cap\mathscr{S}(\mathbb{R}),$ such that the function 
\begin{equation*}
    \mathcal{X}_{k,\zeta}(t,x)=\left[\sum_{i=1}^{\mathcal{N}_{2}(k)}\zeta^{l_{i,1}}e^{{-}\sqrt{2}n_{i,1}\zeta}\Big(h_{i,1}\left(x-\zeta\right){-}h_{i,1}\left({-}x\right)\Big)\right]\prod_{j=1}^{\mathcal{N}}p_{j}(t)^{\alpha_{j}+\beta_{j}}
\end{equation*}
satisfies, if $\zeta$ is large enough, the estimate 
\begin{equation*}
    \norm{\mathcal{X}_{\zeta}(t,x)-\mathcal{X}_{k,\zeta}(t,x)}_{H^{s}_{x}}\lesssim_{s,k}
e^{-\sqrt{2}(k+1)\zeta}\prod_{j=1}^{\mathcal{N}}\left\vert p_{j}(t)\right\vert^{\alpha_{j}+\beta_{j}}.
\end{equation*}
In conclusion, using triangle inequality, we obtain the result of Lemma \ref{application1}. 
\end{proof}
\begin{corollary}\label{coapplication}
 Let the functions $I_{i}\in \mathscr{S}(\mathbb{R}),\, p_{i}\in C^{\infty}(\mathbb{R})$ be as defined in the statement of Lemma \ref{application1}. Let $\gamma:(0,1)\times\mathbb{R}\to\mathbb{R}$ be a function satisfying
\begin{equation*}
    \norm{\frac{\partial^{l}}{\partial t^{l}}\gamma(v,t)}_{L^{\infty}_{t}(\mathbb{R})}\lesssim_{l} v^{l} \text{, for any $l\in\mathbb{N}\cup\{0\},$ if $0<v\ll 1,$}
\end{equation*} and $w:(0,1)\times\mathbb{R}^{2}\to\mathbb{R}$ be the following smooth function \begin{equation*}
    \omega(v,t,x)=\frac{x-\frac{d_{v}(t)}{2}+\gamma(v,t)}{\sqrt{1-\frac{\dot d_{v}(t)^{2}}{4}}}.
\end{equation*} In addition,  let $\phi_{app}:\mathbb{R}^{2}\to\mathbb{R}$ be the following function
 \begin{equation*}
    \phi_{app}(t,x)= H_{0,1}\left(w(v,t,x)\right)-H_{0,1}\left(w(v,t,{-}x)\right)+\sum_{i=1}^{\mathcal{N}} p_{i}(t)\left[I_{i}\left(w(v,t,x)\right)-I_{i}\left(w(v,t,{-}x)\right)\right],
\end{equation*} 
for all $(t,x)\in\mathbb{R}^{2}$ and $\mathcal{Z}(t,x)$ be denoted by
\begin{equation*}
    \mathcal{Z}(t,x)= U^{'}(\phi_{app}(t,x))- U^{'}\left(H_{0,1}\left(w(v,t,x)\right)\right)-U^{'}\left({-}H_{0,1}\left(w(v,t,{-}x)\right)\right),
\end{equation*}
for any $(t,x)\in\mathbb{R}^{2}.$ If $v\ll 1$ and the functions $p_{i}$ also satisfy the following decay estimate \begin{equation*}
    \max_{1\leq i\leq \mathcal{N}} \norm{p_{i}^{(l)}(t)}\lesssim_{l} v^{l} \text{, for every $l\in\mathbb{N},$}
\end{equation*}
then, for any $k\in\mathbb{N}_{\geq 2},$ there exist $\mathcal{N}_{1}(k)\in\mathbb{N},$ functions $h_{i}\in S^{+}_{\infty},$ and numbers $n_{i},\,l_{i}\in\mathbb{N}\cup\{0\},\,\alpha_{i,j}\in\mathbb{N}\cup\{0\}$ for all $1\leq i\leq \mathcal{N}_{1}(k)$ and $1\leq j\leq \mathcal{N}$ such that the following function
\begin{align*}
   \mathcal{Z}_{k}(t,x)=\Bigg[\sum_{i=1}^{\mathcal{N}_{1}(k)}\left[\frac{d_{v}(t)-2\gamma(v,t)}{\sqrt{1-\dot d_{v}(t)^{2}}}\right]^{l_{i}}\exp\left(\frac{{-}2\sqrt{2}n_{i}\left[d_{v}(t)-2\gamma(v,t)\right]}{\sqrt{1-v(t)^{2}}}\right)\prod_{j=1}^{\mathcal{N}}p_{j}(t)^{\alpha_{j,i}}\Big( h_{i}\left(w(v,t,x)\right)\\{-}h_{i}\left(w(v,t,{-}x)\right)\Big)\Bigg] \text{, for any $(t,x)\in\mathbb{R}^{2},$}
\end{align*}
satisfies
\begin{equation*}
    \norm{\frac{\partial^{l}}{\partial t^{l}}\left[\mathcal{Z}_{k}(t,x)-\mathcal{Z}(t,x)\right]}_{H^{s}_{x}}\leq \hat{C}v^{l}e^{-2\sqrt{2}k d_{v}(t)}d_{v}(t)^{M_{2}(k)},
\end{equation*}
for every $l\in\mathbb{N}\cup\{0\}$ and $s\geq 0,$ where $\hat{C}>0$ is a constant depending only on the functions $(p_{i})_{1\leq i\leq \mathcal{N}}$ and the numbers $l,\,s$ and $k.$ 
\end{corollary}
\begin{proof}[Proof of Corollary \ref{coapplication}.] 
First, from Lemma \ref{application1}, if we replace the variables $x$ and $\zeta,$ respectively, with
${-}w(t,{-}x)$ and 
\begin{equation*}
    \frac{d_{v}(t)-2\gamma(v,t)}{\sqrt{1-\frac{\dot d_{v}(t)^{2}}{4}}},
\end{equation*} we deduce for any $k\in\mathbb{N}_{\geq 2}$ the existence of a set of functions $\left(h_{i}\right)_{i\in\mathbb{N}}\subset S^{+}_{\infty},$ a set of numbers $(\alpha_{j,i})_{(j,i)\in\mathbb{N}^{2}}\subset\mathbb{N}\cup\{0\}$ and two sequences of numbers $(l_{i})_{\in\mathbb{N}}\subset \mathbb{N}\cup\{0\},\,(n_{i})_{i\in\mathbb{N}}\subset\mathbb{N}$ such that if $0<v\ll 1,$ the following function  
\begin{align*}
    \mathcal{Z}_{k}(t,x)=\Bigg[\sum_{i=1}^{\mathcal{N}_{1}(k)}\left[\frac{d_{v}(t)-2\gamma(v,t)}{\sqrt{1-\frac{\dot d_{v}(t)^{2}}{4}}}\right]^{l_{i}}\exp\left(\frac{{-}2\sqrt{2}n_{i}\left(d(t)-2\gamma(v,t)\right)}{\sqrt{1-\frac{\dot d_{v}(t)^{2}}{4}}}\right)\prod_{j=1}^{\mathcal{N}}p_{j}(t)^{\alpha_{j,i}}\Big( h_{i}\left(w(t,x)\right)\\{-}h_{i}\left(w(t,{-}x)\right)\Big)\Bigg]
\end{align*}
satisfies, for a constant $M_{2}(k)\in\mathbb{N}$ any $m\in\mathbb{N},$ the following estimate
\begin{equation*}
    \norm{\mathcal{Z}_{k}(t,x)-\mathcal{Z}(t,x)}_{H^{m}_{x}}\lesssim_{m,k}e^{-2\sqrt{2}k y(t)}\left(1+y(t)\right)^{M_{2}(k)}.
\end{equation*}
\par Furthermore, Separation Lemma also implies the existence of $M_{1}(k)\in\mathbb{N},$ for any $k\in\mathbb{N},$ such that 
\begin{multline}\label{identofremark}
  \mathcal{Z}(t,x)-\mathcal{Z}_{k}(t,x)=\\ \sum_{i=1}^{M_{1}(k)} \exp\left(\frac{{-}\sqrt{2}N_{i}\left(d_{v}(t)-2\gamma(v,t)\right)}{\sqrt{1-\frac{d_{v}(t)^{2}}{4}}}\right)\left(\frac{d_{v}(t)-2\gamma(v,t)}{\sqrt{1-\frac{\dot d_{v}(t)^{2}}{4}}}\right)^{n_{i}} \\ \times \prod_{j=1}^{\mathcal{N}}p_{j}(t)^{\beta_{j,i}}h_{i,1}\left(w(t,x)\right)h_{i,2}\left(w(t,{-}x)\right),
\end{multline}
where for any $1\leq i\leq M_{1}(k),\,n_{i}\in\mathbb{N}\cup\{0\}$ and $\,N_{i}$ in $\mathbb{N}_{\geq k},$ the functions $h_{i,1},\,h_{i,2}\in L^{\infty}_{x}(\mathbb{R})$ are smooth and all $\beta_{j,i}\in \mathbb{N}\cup\{0\}.$   
\par In fact, from Proposition \ref{separation}, we could also say for all $1\leq i\leq M_{1}(k)$ that $2k \leq N_{i},\,n_{i}\in\mathbb{N}\cup\{0\},$ either $h_{i,1}$ or $h_{i,2}$ is in $\mathscr{S}(\mathbb{R})$ and either $h_{i,1}(x)\in S^{+}\cup S^{+}_{\infty},\,h_{j,2}(x)\in S^{-}\cup S^{-}_{\infty}$ or $h_{i,1}({-}x)\in S^{+}\cup S^{+}_{\infty},\,h_{i,2}({-}x)\in S^{-}\cup S^{-}_{\infty}.$ Moreover, since $\gamma$ satisfies the condition of Remark \ref{perturbt},
and  
\begin{align*}
\max_{1 \leq j\leq \mathcal{N}}\left\vert \frac{d^{l}}{dt^{l}}p_{j}(t) \right\vert\lesssim_{l} v^{l} \text{, for all $l\in\mathbb{N}$ and $t\in\mathbb{R},$}
\end{align*}
we deduce from Remark \ref{perturbt} and the product rule of derivative that if $v>0$ is small enough, then
\begin{equation}\label{fimcoap}
\norm{\frac{\partial^{l}}{\partial t^{l}}\left[\mathcal{Z}_{k}(t,x)-\mathcal{Z}(t,x)\right]}_{H^{s}_{x}}\lesssim_{s,k,l} v^{k+l}e^{{-}2\sqrt{2}\vert t\vert v} \text{, for any $l\in\mathbb{N}\cup\{0\}$ and $s\geq 0.$} 
\end{equation}
\par Actually, using the product rule of derivative, for every $1\leq i\leq M_{1}(k),$ we have if $v>0$ is small enough that
\begin{equation*}
    \left\vert \frac{d^{l}}{dt^{l}}\left[\prod_{j=1}^{\mathcal{N}}p_{j}(t)^{\beta_{j,i}}\exp\left(\frac{{-}2\sqrt{2}N_{i}(d_{v}(t)-2\gamma(v,t))}{\sqrt{1-\frac{\dot d_{v}(t)^{2}}{4}}}\right)\left(\frac{d_{v}(t)-2\gamma(v,t)}{\sqrt{1-\frac{d_{v}(t)^{2}}{4}}}\right)^{n_{i}} \right] \right\vert\lesssim_{l,k} v^{k+l}e^{{-}2\sqrt{2}\vert t\vert v},
\end{equation*}
for all $l\in\mathbb{N}\cup\{0\}$ and every $t\in\mathbb{R}.$ Therefore, since Remark \ref{perturbt} implies 
\begin{equation*}
\norm{\frac{\partial^{l}}{\partial t^{l}}h_{i,1}(w(t,x))}_{H^{s}_{x}}+\norm{\frac{\partial^{l}}{\partial t^{l}}h_{i,2}(w(t,x))}_{H^{s}_{x}}\lesssim_{l,s} v^{l},
\end{equation*}
for every $1\leq i\leq M_{1}(k),$ we conclude estimate \eqref{fimcoap} from the product rule, triangle inequality and identity \eqref{identofremark}.
\end{proof}

\begin{proof}[Proof of Lemma \ref{represent1}.]
First, we consider $0<v\ll 1$ and recall that $\Lambda(\cdot)= \frac{\partial^{2}}{\partial t^{2}}-\frac{\partial^{2}}{\partial x^{2}}+ U^{'}(\cdot).$ From Lemma \ref{geraldt} and Remark \ref{perturbt}, if $h\in S^{+}_{\infty}$ and $p_{v}(t)$ satisfies for constants $q_{1},\,q_{2}\in\mathbb{N}$ the following estimate
\begin{equation*}
   \left\vert\frac{d^{l}}{dt^{l}}p_{v}(t)\right\vert\lesssim_{l}v^{2q_{1}}\left[\ln{\left(\frac{1}{v}\right)}+\vert t\vert \right]^{q_{2}}e^{-2\sqrt{2}\vert t\vert } \text{, for all $l\in\mathbb{N}\cup\{0\},$}
\end{equation*}
then
\begin{equation*}
    \left[\frac{\partial^{2}}{\partial t^{2}}-\frac{\partial^{2}}{\partial x^{2}}\right]\left(p_{v}(\sqrt{2}vt)h\left(\frac{x+\rho_{M}(v,t)}{\sqrt{1-\frac{\dot d(t)^{2}}{4}}}\right)\right)
\end{equation*}
is a finite sum of functions $p_{i,v}(\sqrt{2}vt)h_{i}\left(\frac{x+\rho_{\mathcal{M}}(v,t)}{\sqrt{1-\frac{\dot d(t)^{2}}{4}}}\right)$ with $h_{i}\in S^{+}_{\infty}$ and $p_{i,v}$ satisfying for some natural numbers $m_{i}>0,\,w_{i}$ the following decay 
\begin{equation}\label{decf}
    \left\vert\frac{d^{l}}{dt^{l}}\left[p_{i}(\sqrt{2}vt)\right]\right\vert\lesssim_{l}v^{2m_{i}+l}\left[\ln{\left(\frac{1}{v}\right)}+\vert t\vert v\right]^{w_{i}}e^{-2\sqrt{2}\vert t\vert v}\text{, for all $l\in\mathbb{N}\cup\{0\}.$}
\end{equation}
\par Next, using Lemma \ref{dlemma}, Remark \ref{perturbt} and identity $ H^{''}_{0,1}(x)= U^{'}(H_{0,1}(x)),$ we can verify similarly to the proof of Lemma \ref{dt2kink} the following estimate
\begin{equation}
   \left[\frac{\partial^{2}}{\partial t^{2}}-\frac{\partial^{2}}{\partial x^{2}}\right]H_{0,1}\left(\frac{x-\rho_{\mathcal{M}}(v,t)}{\sqrt{1-\frac{\dot d(t)^{2}}{4}}}\right)={-} U^{'}\left(H_{0,1}\left(\frac{x-\rho_{\mathcal{M}}(v,t)}{\sqrt{1-\frac{\dot d(t)^{2}}{4}}}\right)\right)+residue_{0}(t,x),
\end{equation}
where $residue_{0}(t,x)$ is a finite sum of functions 
\begin{equation*}
    q_{i,v}(\sqrt{2}vt)h_{i}\left(\frac{x-\rho_{\mathcal{M}}(v,t)}{\sqrt{1-\frac{\dot d(t)^{2}}{4}}}\right),
\end{equation*}
with $h_{i} \in S^{2}_{+}$ and 
\begin{equation*}
    \left\vert\frac{d^{l}q_{i,v}(t)}{dt^{l}} \right\vert\lesssim_{l} v^{2}\left(\vert t\vert +\ln{\left(\frac{1}{v^{2}}\right)}\right)e^{-2\vert t\vert } \text{, for all $l\in\mathbb{N}\cup\{0\}.$}
\end{equation*}
Therefore, to finish the proof of Lemma \ref{represent1} we need only to study the expression
\begin{equation}\label{DU}
    DU(t,x)= U^{'}(\varphi_{\mathcal{M},v}(t,x))- U^{'}\left(H_{0,1}\left(\frac{x-\rho_{\mathcal{M}}(v,t)}{\sqrt{1-\frac{\dot d(t)^{2}}{4}}}\right)\right)- U^{'}\left(H_{-1,0}\left(\frac{x+\rho_{\mathcal{M}}(v,t)}{\sqrt{1-\frac{\dot d(t)^{2}}{4}}}\right)\right).
\end{equation}
\par Furthermore, from Corollary \ref{coapplication}, we can obtain for any natural $N\gg 1$ the existence of natural numbers $N_{1},\,N_{2},$ a set of functions $h_{\mathcal{M},j} \in S^{+}_{\infty}$ and a set of functions $p_{\mathcal{M},j,v}(t)$ satisfying property \eqref{decf} such that $DU(t,x)$ satisfies
\begin{equation}
    DU(t,x) \cong_{2N}\sum_{j=1}^{N_{1}} p_{\mathcal{M},j,v}(\sqrt{2}vt)\left[h_{\mathcal{M},j}\left(\frac{x+\rho_{\mathcal{M}}(v,t)}{\sqrt{1-\frac{\dot d(t)^{2}}{4}}}\right)-h_{\mathcal{M},j}\left(\frac{-x+\rho_{\mathcal{M}}(v,t)}{\sqrt{1-\frac{\dot d(t)^{2}}{4}}}\right)\right].
\end{equation}
Moreover, if two functions $p_{1}(t),\,p_{2}(t)$ satisfy property \eqref{decf}, then, from the product rule of derivative, $p_{1}(t)p_{2}(t)$ have much smaller decay than the right-hand side of \eqref{decf} as $\vert t\vert\to+\infty,$ because of the $e^{-4\sqrt{2}\vert t\vert}$ contribution obtained in the product of these functions.     
\par In conclusion, we proved that there exist a finite subset $I_{0}$ of $\mathbb{N},$ functions $p_{j,v}$ satisfying property \eqref{decf} and $h_{j}\in\mathscr{S}(\mathbb{R})\cap S^{+}_{\infty}$ such that
\begin{equation}\label{repfinal}
    \Lambda(\varphi_{\mathcal{M},v})(t,x)\cong_{2N}\sum_{j\in I_{0}} p_{j,v}(\sqrt{2}vt)\left[h_{j}\left(\frac{x+\rho_{\mathcal{M}}(v,t)}{\sqrt{1-\frac{\dot d(t)^{2}}{4}}}\right)-h_{j}\left(\frac{-x+\rho_{\mathcal{M}}(v,t)}{\sqrt{1-\frac{\dot d(t)^{2}}{4}}}\right)\right].
\end{equation}
 Moreover, after a finite number of applications of Proposition \ref{separation}, it is possible to obtain an estimate of the form \eqref{repfinal} for any $N\gg 1$ if we assume $v\ll 1.$ 
\par From Gram-Schmidt, we can exchange the functions $h_{j}$ in \eqref{repfinal} by functions $\mathcal{R}_{j}\in S^{+}_{\infty}\cap \mathscr{S}(\mathbb{R})$ such that $\langle \mathcal{R}_{j},\,\mathcal{R}_{i}\rangle=\delta_{i,j}$ and
\begin{equation}\label{repfinal2}
    \Lambda(\varphi_{\mathcal{M},v})(t,x)\cong_{2N}\sum_{j\in I} s_{j,v}(\sqrt{2}vt)\left[\mathcal{R}_{j}\left(\frac{x+\rho_{\mathcal{M}}(v,t)}{\sqrt{1-\frac{\dot d(t)^{2}}{4}}}\right)-\mathcal{R}_{j}\left(\frac{-x+\rho_{\mathcal{M}}(v,t)}{\sqrt{1-\frac{\dot d(t)^{2}}{4}}}\right)\right],
\end{equation}
for a finite set $I$ with the functions $s_{j,v}(t)$ also satisfying property \eqref{decf}.  In conclusion, from the assumption that the conclusion of Theorem \ref{strongerr} is true when $k=\mathcal{M},$ we deduce from Lemma \ref{interactt} and condition $\langle \mathcal{R}_{j},\,\mathcal{R}_{i}\rangle=\delta_{i,j}$ that, for any $j\in I,$ we have 
\begin{align} \nonumber
    \left\langle \Lambda(\varphi_{\mathcal{M},v})(t,x),\mathcal{R}_{j}\left(\frac{x+\rho_{\mathcal{M}}(v,t)}{\sqrt{1-\frac{\dot d(t)^{2}}{4}}}\right)\right\rangle=&\left[\left(1-\frac{\dot d(t)^{2}}{4}\right)^{\frac{1}{2}}+O(v)\right]s_{j,v}(\sqrt{2}vt)\\ \nonumber &{+}\sum_{i\neq j,\,i\in I}s_{i,v}(\sqrt{2}vt)O(v)
    \\ \label{linearAlg} &{+}O\left(v^{2N}\left(\vert t\vert v+\ln{\left(\frac{1}{v}\right)}\right)^{N_{2}}e^{{-}2\sqrt{2}\vert t\vert v}\right).
\end{align}
 Since $N> \mathcal{M}+1,$ using the identities\eqref{linearAlg} for all $j\in I$ and estimate \eqref{geraldecay}, we deduce that 
\begin{equation}\label{0s0s}
\vert s_{j,v}(t) \vert\lesssim v^{2\mathcal{M}}\left[\vert t\vert +\ln{\left(\frac{1}{v^{2}}\right)}\right]^{n_{\mathcal{M}}}e^{-2\sqrt{2}\vert t\vert}, 
\end{equation}
for all $j\in I ,$ and $t\in \mathbb{R}.$ 
\par Furthermore, we can assume the existence of $m_{0}\in\mathbb{N}\cup\{0\}$ such that 
\begin{equation}\label{hypopos}
    \left\vert\frac{d^{l}s_{j,v}(t)}{dt^{l}}\right\vert\lesssim_{l} v^{2\mathcal{M}}\left[\vert t\vert +\ln{\left(\frac{1}{v^{2}}\right)}\right]^{n_{\mathcal{M}}}e^{-2\sqrt{2}\vert t\vert},
\end{equation}
 for all $j\in I,\,l\in\mathbb{N}\cup\{0\}$ satisfying $0\leq l\leq m_{0}.$
 But, from estimate \eqref{repfinal2}, assumption \eqref{hypopos}, Lemma \ref{dlemma} and Remark \ref{perturbt}, we deduce using the product rule of derivative that
\begin{align*}
   \sum_{j\in I} 2^{\frac{m_{0}+1}{2}}v^{m_{0}+1}s^{(m_{0}+1)}_{j,v}\left(\sqrt{2}vt\right)\left[\mathcal{R}_{j}\left(\frac{x+\rho_{\mathcal{M}}(v,t)}{\sqrt{1-\frac{\dot d(t)^{2}}{4}}}\right)-\mathcal{R}_{j}\left(\frac{-x+\rho_{\mathcal{M}}(v,t)}{\sqrt{1-\frac{\dot d(t)^{2}}{4}}}\right)\right]\\\cong_{2\mathcal{M}+m_{0}+1}\frac{\partial^{m_{0}+1}\Lambda\left(\phi_{\mathcal{M},v}\right)(t,x)}{\partial t^{m_{0}+1}}.
\end{align*}
Therefore, similarly to the proof of \eqref{0s0s} for all $j\in I$ and using Remark \ref{elint} in the expressions
\begin{equation*}
   \left\langle \mathcal{R}_{j}\left(\frac{x+\rho_{\mathcal{M}}(v,t)}{\sqrt{1-\frac{\dot d(t)^{2}}{4}}}\right),\mathcal{R}_{i}\left(\frac{{-}x+\rho_{\mathcal{M}}(v,t)}{\sqrt{1-\frac{\dot d(t)^{2}}{4}}}\right)\right\rangle \text{ for all $i,\,j\in I,$}
\end{equation*}
we obtain the following estimate
\begin{equation*}
    \left\vert\frac{d^{m_{0}+1}s_{j,v}(t)}{dt^{l}}\right\vert\lesssim_{m_{0}+1} v^{2\mathcal{M}}\left[\vert t\vert +\ln{\left(\frac{1}{v^{2}}\right)}\right]^{n_{\mathcal{M}}}e^{-2\sqrt{2}\vert t\vert}.
\end{equation*}
In conclusion, from induction on $l$, the estimate of the decay of the derivatives of $s_{j,v}$ in Lemma \ref{represent1} is true for all $l\in\mathbb{N}\cup\{0\}.$   
\end{proof}
The third lemma necessary to the proof of the existence of $\phi_{\mathcal{M}+1,v}(t,x)$ is the following:
\begin{lemma}\label{projectionl}
In notation of Lemma \ref{represent1}, there is a positive number $n_{\mathcal{M}}$ such that the following function
\begin{equation*}
   Proj(t)=\left(1-\frac{\dot d(t)^{2}}{4}\right)^{\frac{1}{2}} \left[\sum_{i=1}^{N_{1}}s_{i,v}(\sqrt{2}vt)\left\langle \mathcal{R}_{i}(x),H^{'}_{0,1}(x)\right\rangle\right]
\end{equation*}
satisfies
\begin{equation*}
    \left\vert\frac{d^{l}}{dt^{l}}Proj(t)\right\vert\lesssim_{l}v^{2M+l+2}\left[\ln{\left(\frac{1}{v^{2}}\right)}+\vert t\vert v\right]^{n_{\mathcal{M}}}e^{-2\sqrt{2}\vert t\vert v} \text{ for all $l\in\mathbb{N}\cup\{0\}.$}
\end{equation*}
\end{lemma}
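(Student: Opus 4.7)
\textbf{Proof plan for Lemma \ref{projectionl}.} The plan is to rewrite $Proj(t)$ as a combination of the inner product $\langle\Lambda(\phi_{\mathcal{M},v}),\dot H_{0,1}(\omega_{+})\rangle$ (controlled by the inductive orthogonality hypothesis \eqref{orthodecay}), an off-diagonal interaction contribution, and a negligible remainder term. Write $\omega_{\pm}(t,x)=(\pm x+\rho_{\mathcal{M}}(v,t))(1-\dot d_{v}(t)^{2}/4)^{-1/2}$. Taking the inner product of the decomposition given by Lemma \ref{represent1} with $\dot H_{0,1}(\omega_{+})$ and using the change of variables $y=\omega_{+}(x)$ (which reduces the diagonal inner product to the scalar $\sqrt{1-\dot d_{v}(t)^{2}/4}\,\langle R_{i},\dot H_{0,1}\rangle$), one obtains
\begin{equation*}
Proj(t)=\bigl\langle \Lambda(\phi_{\mathcal{M},v}),\dot H_{0,1}(\omega_{+})\bigr\rangle+\sum_{i=1}^{N_{1}}s_{i,v}(\sqrt{2}vt)\bigl\langle R_{i}(\omega_{-}),\dot H_{0,1}(\omega_{+})\bigr\rangle-\bigl\langle rest(t,x),\dot H_{0,1}(\omega_{+})\bigr\rangle.
\end{equation*}

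First I would bound the first summand by direct application of \eqref{orthodecay} for $k=\mathcal{M}$ (enlarging the integer $n_{\mathcal{M}}$ if necessary, which is allowed since the lemma only asserts the existence of such a number). The third summand is bounded via Cauchy--Schwarz using \eqref{redecay}, giving an estimate of order $v^{2\mathcal{M}+4+l}$ times the usual logarithmic--exponential weight; this is strictly better than what Lemma \ref{projectionl} requires.

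The key step is the middle summand. Here I would exploit the fact that $\omega_{+}$ and $\omega_{-}$ differ by the large separation $2\rho_{\mathcal{M}}/\sqrt{1-\dot d_{v}^{2}/4}$, whose leading part equals $d_{v}(t)/\sqrt{1-\dot d_{v}^{2}/4}$. Decomposing $R_{i}\in S^{+}_{\infty}\cap\mathscr{S}(\mathbb{R})$ as a finite sum $\sum_{j}x^{j}f_{j}(x)$ with $f_{j}\in S^{+}\cap\mathscr{S}(\mathbb{R})$, and viewing $\dot H_{0,1}\in S^{+}\cap\mathscr{S}(\mathbb{R})$ as the companion factor, Lemma \ref{interactionsize} (inequalities \eqref{<>+1}, \eqref{<>-1}, \eqref{<>-}, \eqref{<>+} applied to each term of the decomposition) yields, for every $l\in\mathbb{N}\cup\{0\}$,
\begin{equation*}
\left\vert\frac{d^{l}}{dt^{l}}\bigl\langle R_{i}(\omega_{-}),\dot H_{0,1}(\omega_{+})\bigr\rangle\right\vert\lesssim_{l}v^{2+l}\left(\ln\!\left(\tfrac{1}{v}\right)+\vert t\vert v\right)^{\tilde n_{i}}e^{-2\sqrt{2}\vert t\vert v},
\end{equation*}
for some $\tilde n_{i}\in\mathbb{N}$. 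Combined with the inductive bound $|s_{i,v}^{(m)}(\sqrt{2}vt)|\lesssim_{m}v^{2\mathcal{M}+m}[\ln(1/v)+\vert t\vert v]^{n_{\mathcal{M}}}e^{-4\vert t\vert v}$ from Lemma \ref{represent1} and Leibniz' rule, the second summand is controlled by $v^{2\mathcal{M}+2+l}[\ln(1/v)+\vert t\vert v]^{n_{\mathcal{M}}+\tilde n_{i}}e^{-(2\sqrt{2}+4)\vert t\vert v}$, which, after absorbing the excess exponential decay, is strictly stronger than the bound required by Lemma \ref{projectionl} (up to updating $n_{\mathcal{M}}$).

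Assembling the three estimates yields the claim for all derivative orders $l\in\mathbb{N}\cup\{0\}$. The main obstacle is step three, namely extracting the gain of a factor $v^{2}$ in the off-diagonal interaction: this requires a careful bookkeeping of the polynomial prefactors $x^{j}$ appearing in the $S^{+}_{\infty}$ decomposition of $R_{i}$, since each such factor introduces powers of $d_{v}(t)\sim \ln(1/v)+\vert t\vert v$ which inflate the exponent $n_{\mathcal{M}}$ of the logarithmic-linear weight; however these are harmless because the exponential factor $e^{-\sqrt{2}d_{v}(t)}\lesssim v^{2}e^{-2\sqrt{2}\vert t\vert v}$ absorbs any polynomial growth and provides precisely the needed gain of $v^{2}$.
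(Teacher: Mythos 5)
Your decomposition of $Proj(t)$ is exactly the one the paper uses: you pair the representation of Lemma \ref{represent1} with $\dot H_{0,1}(\omega_{+})$, control $\left\langle \Lambda(\phi_{\mathcal{M},v}),\dot H_{0,1}(\omega_{+})\right\rangle$ by the inductive orthogonality bound \eqref{orthodecay} (enlarging $n_{\mathcal{M}}$, which is legitimate), and control $\left\langle rest,\dot H_{0,1}(\omega_{+})\right\rangle$ by Cauchy--Schwarz with \eqref{redecay} (together with Remark \ref{perturbt} when time derivatives fall on $\dot H_{0,1}(\omega_{+})$). Those two steps, and your bookkeeping of the exponents, match the paper.

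The gap is in the tool you cite for the off-diagonal term $\sum_{i}s_{i,v}(\sqrt{2}vt)\left\langle R_{i}(\omega_{-}),\dot H_{0,1}(\omega_{+})\right\rangle$. The estimates \eqref{<>+1}, \eqref{<>-1}, \eqref{<>-}, \eqref{<>+} of Lemma \ref{interactionsize} bound inner products of the form $\left\langle \omega(t,x)^{m}f\left(\omega(t,x)\right)\omega(t,{-}x)^{n}g\left({-}\omega(t,{-}x)\right),\dot H_{0,1}\left(\omega(t,\pm x)\right)\right\rangle$ with $f\in S^{+}$ and $g\in S^{-}$: the integrand there is a product of \emph{three} factors, one profile attached to each soliton plus the pairing function, and the hypotheses on $f$ and $g$ are not interchangeable. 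Your quantity has only \emph{two} factors, $R_{i}(\omega_{-})$ and $\dot H_{0,1}(\omega_{+})$. To force it into the lemma's format you would need either to take one of the two profiles identically equal to $1$ (impossible: the constant $1$ belongs to neither $S^{+}$ nor $S^{-}$, since Definitions \ref{s+} and \ref{s-} force the limit $0$ at $-\infty$, respectively $+\infty$), or to declare $g(y)=R_{i}({-}y)\in S^{-}$; this also fails, because an element of $S^{+}$ is a series in \emph{odd} powers of $e^{\sqrt{2}x}$ while $S^{-}$ requires \emph{even} powers of $e^{-\sqrt{2}x}$, so $R_{i}({-}\,\cdot)\notin S^{-}$ unless $R_{i}\equiv 0$. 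Hence the step, as written, does not type-check, and the cited inequalities cannot deliver the $v^{2+l}$ gain. The repair is immediate and is precisely what the paper does: the two-factor overlap is the object of Remark \ref{elint}, which after the change of variables $y=\omega_{+}(x)$ rewrites $\left\langle R_{i}(\omega_{-}),\dot H_{0,1}(\omega_{+})\right\rangle$ as $\sqrt{1-\frac{\dot d_{v}(t)^{2}}{4}}$ times an inner product of $\dot H_{0,1}(y)$ against a translate of $R_{i}({-}y)$ by $2\rho_{\mathcal{M}}(v,t)\left(1-\frac{\dot d_{v}(t)^{2}}{4}\right)^{-\frac{1}{2}}$, and then applies Lemma \ref{interactt} (with Lemma \ref{dlemma} and Remark \ref{perturbt} for the time derivatives) to get exactly the bound $v^{2+l}\left(\vert t\vert v+\ln{\left(\frac{1}{v}\right)}\right)^{n+1}e^{-2\sqrt{2}\vert t\vert v}$ you want, with the polynomial loss coming from the $x^{j}$ weights as you anticipated. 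With Remark \ref{elint} substituted for Lemma \ref{interactionsize} in that step, your argument coincides with the paper's proof.
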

\begin{proof}
From Lemma \ref{represent1}, there exists a function $res:(0,1)\times\mathbb{R}^{2}\to\mathbb{R}$ such that $res\cong_{2\mathcal{M}+4} 0$ and
\begin{equation*}
    \Lambda(\varphi_{\mathcal{M},v})(t,x)=\sum_{j\in I} s_{j,v}(\sqrt{2}vt)\left[\mathcal{R}_{j}\left(\frac{x+\rho_{\mathcal{M}}(v,t)}{\sqrt{1-\frac{\dot d(t)^{2}}{4}}}\right)-\mathcal{R}_{j}\left(\frac{-x+\rho_{\mathcal{M}}(v,t)}{\sqrt{1-\frac{\dot d(t)^{2}}{4}}}\right)\right]+res(v,t,x).
\end{equation*}
Therefore, we have the following identity
\begin{multline}
    \left\langle\Lambda(\varphi_{\mathcal{M},v})(t,x),\,H^{'}_{0,1}\left(\frac{x+\rho_{\mathcal{M}}(v,t)}{\sqrt{1-\frac{\dot d(t)^{2}}{4}}}\right)\right\rangle \\
    \begin{aligned}
    =& Proj(t)+\left\langle H^{'}_{0,1}\left(\frac{x+\rho_{\mathcal{M}}(v,t)}{\sqrt{1-\frac{\dot d(t)^{2}}{4}}}\right),res(v,t,x)\right\rangle\\&{-}\sum_{j}s_{j,v}(\sqrt{2}tv)\left(1-\frac{\dot d(t)^{2}}{4}\right)^{\frac{1}{2}}\left\langle H^{'}_{0,1}(x),\,\mathcal{R}_{j}\left(-x+2\frac{\rho_{\mathcal{M}}(v,t)}{\sqrt{1-\frac{\dot d(t)^{2}}{4}}}\right)\right\rangle.
\end{aligned}
\end{multline}
First, we recall the function $d(t)=\frac{1}{\sqrt{2}}\ln{\left(\frac{8}{v^{2}}\cosh{(\sqrt{2}vt)}^{2}\right)},$ which satisfies
\begin{equation*}
    \norm{\dot d(t)}_{L^{\infty}(\mathbb{R})}\lesssim v,\,\norm{d^{(k)}(t)}_{L^{\infty}(\mathbb{R})}\lesssim v^{k}e^{-2\sqrt{2}\vert t\vert v}\text{ if $k\geq 2.$}
\end{equation*}
We also recall $\rho_{\mathcal{M}}(v,t)=\sum_{j=2}^{\mathcal{M}}r_{j,v}(t)-\frac{d(t)}{2}.$ Since we are assuming the veracity of estimates \eqref{aproxr} for any natural number $k$ satisfying $2\leq k\leq \mathcal{M},$ we deduce, from Remark \ref{perturbt}, Lemma \ref{represent1}, the product rule of derivative and Cauchy-Schwarz inequality, the existence of $N_{2}\geq 0$ satisfying for any $l\in\mathbb{N}\cup\{0\}$ the following inequalities
\begin{align*}
   \left \vert \frac{d^{l}}{dt^{l}}\left[\left\langle  H^{'}_{0,1}\left(\frac{x+\rho_{\mathcal{M}}(v,t)}{\sqrt{1-\frac{\dot d(t)^{2}}{4}}}\right),res(v,t,x)\right\rangle\right]\right\vert\lesssim_{l}& \sum_{j=0}^{l}\norm{\frac{\partial^{j}}{\partial t^{j}}res(v,t,x)}_{L^{2}_{x}}v^{l-j}\\\lesssim_{l} &v^{2\mathcal{M}+4+l}\left[\ln{\left(\frac{1}{v^{2}}\right)}+\vert t\vert v\right]^{N_{2}}e^{-2\sqrt{2}\vert t\vert v}.
\end{align*}
Furthermore, Lemma \ref{dlemma} and Remark \ref{elint} imply for any $n\in\mathbb{N}\cup\{0\}$ that if $\mathcal{R}_{j}\in S^{+}_{n}$ and $0<v\ll 1,$ then  
\begin{align*}
  \left\vert \frac{d^{l}}{d t^{l}}\left[\left(1-\frac{\dot d(t)^{2}}{4}\right)^{\frac{1}{2}}\left\langle H^{'}_{0,1}(x),\,\mathcal{R}_{j}\left(-x+2\frac{\rho_{\mathcal{M}}(v,t)}{\sqrt{1-\frac{\dot d(t)^{2}}{4}}}\right)\right\rangle\right]\right\vert \\ \lesssim_{l} v^{2+l} \left[\vert t\vert v+\ln{\left(\frac{1}{v^{2}}\right)}\right]^{n+1}e^{-2\sqrt{2}v\vert t\vert} \text{, for all $l\in\mathbb{N}\cup\{0\}.$}
\end{align*}
Consequently, from Lemma \ref{represent1} and the product rule of derivative, we deduce the existence of a sufficiently large number $n_{\mathcal{M}}$ satisfying for $v\ll 1$ the following inequality
\begin{multline}
    \left\vert\frac{d^{l}}{dt^{l}}\left[\sum_{j}s_{j,v}(\sqrt{2}tv)\left(1-\frac{\dot d(t)^{2}}{4}\right)^{\frac{1}{2}}\left\langle H^{'}_{0,1}(x),\,\mathcal{R}_{j}\left(-x+2\frac{\rho_{\mathcal{M}}(v,t)}{\sqrt{1-\frac{\dot d(t)^{2}}{4}}}\right)\right\rangle\right]\right\vert\\ \lesssim_{l,\mathcal{M}} v^{2\mathcal{M}+2+l}\left[\vert t\vert v+\ln{\left(\frac{1}{v^{2}}\right)}\right]^{n_{\mathcal{M}}}e^{-2\sqrt{2}\vert t\vert v}, 
\end{multline}
for all $l\in\mathbb{N}\cup\{0\}.$ 
\par In conclusion, we obtain Lemma \ref{projectionl} from the estimates above, Lemma \ref{represent1} and triangle inequality.
\end{proof}
From now on, for $\rho_{\mathcal{M}}(v,t)=-\frac{d(t)}{2}+\sum_{j=2}^{\mathcal{M}}r_{j}(v,t),$ we consider
\begin{equation}\label{wmwm}
w_{\mathcal{M}}(t,x)=\frac{x+\rho_{\mathcal{M}}(v,t)}{\sqrt{1-\frac{\dot d(t)^{2}}{4}}}.
\end{equation}
To simplify our notation, we denote the function $r_{l,v}$ as $r_{l}$ for every $l\in\mathbb{N}_{\geq 2}.$ Using the notation of Lemma \ref{represent1} and Lemma \ref{projectionl}, we define the function
\begin{equation}\label{Bequation}
    \Gamma(t,x)=\sum_{i=1}^{N_{1}}s_{i,v}(\sqrt{2}vt)\mathcal{R}_{i}(x)- H^{'}_{0,1}(x)\frac{Proj(t)}{\norm{ H^{'}_{0,1}}_{L^{2}}^{2}\sqrt{1-\frac{\dot d(t)^{2}}{4}}}.
\end{equation}
Lemmas \ref{represent1} and \ref{projectionl} imply 
$
    \left\langle \Gamma\left(t,x\right),\,H^{'}_{0,1}\left(w_{\mathcal{M}}\left(x\right)\right)\right\rangle=0
$ 
for all $t\in\mathbb{R},$ and for any $(t,x)\in\mathbb{R}^{2}$ 
\begin{multline}\label{firstfinal}
\Lambda(\varphi_{M,v})(t,x)\cong_{2\mathcal{M}+4}\Bigg[ H^{'}_{0,1}\left(w_{\mathcal{M}}(t,x)\right)\frac{Proj(t)}{\norm{ H^{'}_{0,1}}_{L^{2}}^{2}\sqrt{1-\frac{\dot d(t)^{2}}{4}}}\\{-} H^{'}_{0,1}\left(w_{\mathcal{M}}(t,{-}x)\right)\frac{Proj(t)}{\norm{ H^{'}_{0,1}}_{L^{2}}^{2}\sqrt{1-\frac{\dot d(t)^{2}}{4}}}\Bigg]
    \\{+}\Gamma\left(t,w_{\mathcal{M}}(t,x)\right)-\Gamma\left(t,w_{\mathcal{M}}(t,{-}x)\right).
\end{multline}

 Moreover, from Lemma \ref{firstinvert} and Lemma \ref{secondinvert}, we can define the function $L_{1}(\Gamma(t,\cdot))(x)\in\mathscr{S}(\mathbb{R})\cap S^{+}_{\infty},$ more precisely, from the linearity of $L_{1},$ we have for any $(t,x)\in\mathbb{R}^{2}$ the following identity
\begin{equation}\label{explicityB}
    L_{1}(\Gamma(t,\cdot))(x)=\sum_{i=1}^{N_{1}}s_{i,v}(\sqrt{2}vt)L_{1}\left(\mathcal{R}_{i}-\frac{ H^{'}_{0,1}}{\norm{H^{'}_{0,1}}_{L^{2}}^{2}}\langle H^{'}_{0,1},\,\mathcal{R}_{i}\rangle\right)(x),
\end{equation}
and so, from Lemma \ref{represent1}, we have for any $t\in\mathbb{R},\,s>0$ and $l\in\mathbb{N}\cup\{0\}$ that
\begin{equation}\label{L1B}
    \norm{\frac{\partial^{l}}{\partial t^{l}}L_{1}(\Gamma(t,\cdot))(x)}_{H^{s}_{x}}\lesssim_{s,l} v^{2\mathcal{M}+l}\left(\vert t\vert v+\ln{\left(\frac{8}{v^{2}}\right)}\right)^{n_{\mathcal{M}}}e^{-2\sqrt{2}\vert t\vert v}.
\end{equation}
\par Next, we recall from the inductive hypothesis of Theorem \ref{strongerr} that $\varphi_{\mathcal{M},v}(t,x)$ also has the representation  \eqref{aproxfor} given by 
\begin{multline}\label{phiMM}
    \varphi_{\mathcal{M},v}(t,x)=H_{0,1}\left(w_{\mathcal{M}}(t,x)\right)-H_{0,1}\left(w_{\mathcal{M}}(t,{-}x)\right)+e^{-\sqrt{2}d(t)}\left[\mathcal{G}\left(w_{\mathcal{M}}(t,x)\right)-\mathcal{G}\left(w_{\mathcal{M}}(t,{-}x)\right)\right]\\
    +\mathcal{T}_{\mathcal{M}}\left(vt,w_{\mathcal{M}}(t,x)\right)-\mathcal{T}_{\mathcal{M}}\left(vt,w_{\mathcal{M}}(t,{-}x)\right),
\end{multline}
 where $\mathcal{T}_{\mathcal{M}}(t,x)$ is a function even on $t$ satisfying for a sufficiently large number $n_{\mathcal{M},1}\in\mathbb{N}$ and any $s>0$ the following inequality
\begin{equation}\label{RM}
    \norm{\frac{\partial^{l}}{\partial t^{l}}\mathcal{T}_{\mathcal{M}}(t,x)}_{H^{s}_{x}}\lesssim_{l,s} v^{4}\left(\vert t\vert +\ln{\left(\frac{1}{v^{2}}\right)}\right)^{n_{\mathcal{M},1}}e^{-2\sqrt{2}\vert t\vert} \text{ for all $l\in\mathbb{N}\cup\{0\},$ if $0<v\ll 1.$}
\end{equation}
\subsection{Construction of $r_{\mathcal{M}+1}(v,t).$}
From now on, for $j\in\{1,2,3,4\},$ we consider the smooth functions $I_{j}:\mathbb{R}\to\mathbb{R}$ defined by 
\begin{align}\label{I1}
    I_{1}(t)=&e^{{-}\sqrt{2}d(t)}\left\langle U^{(3)}(H_{0,1}(x))e^{{-}\sqrt{2}x}L_{1}\left(\Gamma(t,\cdot)\right)(x),H^{'}_{0,1}(x) \right\rangle,\\ \label{I2}
    I_{2}(t)=&\left\langle L_{1}\left(\Gamma(t,\cdot)\right)\left({-}x+\frac{d(t)}{\sqrt{1-\frac{\dot d(t)^{2}}{4}}}\right),\left[2- U^{''}(H_{0,1}\left(x\right))\right]H^{'}_{0,1}\left(x\right)\right\rangle\\
    \label{I3}
I_{3}(t)=&{-}e^{{-}\sqrt{2}d(t)}\left\langle U^{(3)}\left(H_{0,1}(x)\right)\mathcal{G}(x)L_{1}\left(\Gamma(t,\cdot)\right)(x), H^{'}_{0,1}(x)\right\rangle,\\ \label{I4}
    I_{4}(t)=&{-}\left\langle  \left[\frac{\partial^{2}}{\partial t^{2}}-\frac{\dot d(t)^{2}}{4-\dot d(t)^{2}}\frac{\partial^{2}}{\partial x^{2}}\right]L_{1}\left(\Gamma(t,\cdot)\right)(x),H^{'}_{0,1}(x)\right\rangle.
\end{align}
\par Denoting the function $NL_{\mathcal{M}}:\mathbb{R}\to\mathbb{R}$ by
\begin{equation*}
    NL_{\mathcal{M}}(t)=\sum_{i=1}^{4}I_{i}(t) \text{, for any $t\in\mathbb{R},$}
\end{equation*}
and recalling the function $Proj:\mathbb{R}
\to\mathbb{R}$ defined in Lemma \ref{projectionl}, we consider
\begin{equation*}
    Res_{\mathcal{M}}(t)=NL_{\mathcal{M}}(t)-Proj(t),
\end{equation*}
for any $t\in\mathbb{R},$ and the following
ordinary differential equation
\begin{multline}\label{odek}
    \begin{cases}
   \begin{aligned}\norm{ H^{'}_{0,1}}_{L^{2}_{x}}^{2} \ddot r_{\mathcal{M}+1}(t)=&{-}32\norm{ H^{'}_{0,1}}_{L^{2}_{x}}^{2} e^{-\sqrt{2}d(t)}r_{\mathcal{M}+1}(t)+Res_{\mathcal{M}}(t),
   \end{aligned}\\
   r_{\mathcal{M}+1}(t)=r_{\mathcal{M}+1}({-}t).
    \end{cases}
\end{multline}
  \par From Lemma \ref{represent1}, we recall the existence of $n_{\mathcal{M}}>0$ such that, for any $l\in\mathbb{N}\cup\{0\}$ and $1\leq i\leq N_{1},$ $\left\vert\frac{d^{l}}{dt^{l}}s_{i,v}(t)\right\vert\lesssim_{l}v^{2\mathcal{M}}\left[\vert t \vert+\ln{\left(\frac{1}{v^{2}}\right)}\right]^{n_{\mathcal{M}}}e^{-2\sqrt{2}\vert t\vert},$ if $0<v\ll 1.$ Therefore, for $0<v\ll 1$ and using Remark \ref{elint} and 
identities \eqref{explicityB}, \eqref{I2}, we deduce the existence of $n_{\mathcal{M},2}\in\mathbb{N}\cup\{0\}$ satisfying  
\begin{equation*}
\left\vert I^{(l)}_{2}(t)\right\vert\lesssim_{l}v^{2\mathcal{M}+2+l}\left(\vert t\vert v+\ln{\left(\frac{1}{v}\right)}\right)^{n_{\mathcal{M},2}}e^{{-}2\sqrt{2}\vert t\vert v} \text{, for every $t\in\mathbb{R}$ and any $l\in\mathbb{N}\cup\{0\}$.}
\end{equation*}
\par Next, from estimate \eqref{L1B}, Lemma \ref{dlemma}, identity \eqref{I1} and Cauchy-Schwarz
inequality, we obtain using the product rule of derivative that
\begin{equation}\label{I1decay}
   \left\vert I^{(l)}_{1}(t)\right\vert\lesssim_{l}v^{2\mathcal{M}+2+l}\left(\vert t\vert v+\ln{\left(\frac{1}{v}\right)}\right)^{n_{\mathcal{M}}}e^{{-}2\sqrt{2}\vert t\vert v} \text{, for every $t\in\mathbb{R}$ and any $l\in\mathbb{N}\cup\{0\}.$}
\end{equation}
Similarly to the proof of estimate \eqref{I1decay}, we deduce that
\begin{equation*}
    \left\vert I^{(l)}_{3}(t)\right\vert\lesssim_{l}v^{2\mathcal{M}+2+l}\left(\vert t\vert v+\ln{\left(\frac{1}{v}\right)}\right)^{n_{\mathcal{M}}}e^{{-}2\sqrt{2}\vert t\vert v} \text{, for every $t\in\mathbb{R}$ and any $l\in\mathbb{N}\cup\{0\}.$}
\end{equation*}
\par Furthermore, using Lemma \ref{dlemma}, estimate \eqref{L1B} and the product rule of derivative, we obtain the following decay estimate
\begin{equation*}
    \left\vert I^{(l)}_{4}(t)\right\vert\lesssim_{l}v^{2\mathcal{M}+2+l}\left(\vert t\vert v+\ln{\left(\frac{1}{v}\right)}\right)^{n_{\mathcal{M}}}e^{{-}2\sqrt{2}\vert t\vert v} \text{, for every $t\in\mathbb{R}$ and any $l\in\mathbb{N}\cup\{0\}.$}
\end{equation*}
\par In conclusion, using Lemma \ref{projectionl}, we obtain that the function $Res_{\mathcal{M}}(t)$ defined in the ordinary differential equation \eqref{odek} satisfies for some number $n_{\mathcal{M}+1}\geq 0$ the following decay estimate
\begin{equation}\label{restM}
    \left\vert\frac{d^{l}}{dt^{l}}Res_{\mathcal{M}}(t)\right\vert\lesssim_{l} v^{l+2\mathcal{M}+2}\left(\vert t\vert v+\ln{\left(\frac{1}{v^{2}}\right)}\right)^{n_{\mathcal{M}+1}}e^{-2\sqrt{2}v\vert t\vert},
\end{equation}
for every $t\in\mathbb{R}$ and any $l\in\mathbb{N}\cup\{0\}.$
\par Repeating the argument in the first step of the proof of Theorem \ref{k=2}, we have for the following functions 
\begin{align}\label{theta11}
    \theta_{\mathcal{M}+1,2}(t)=&\frac{1}{\sqrt{2}v}\int_{-\infty}^{t}Res_{\mathcal{M}}(s)\tanh{(\sqrt{2}vs)}\,ds,\\  \label{theta12} \theta_{\mathcal{M}+1,1}(t)=&\frac{-1}{\sqrt{2}v}\int_{0}^{t}Res_{\mathcal{M}}(s)\left[\sqrt{2}vs\tanh{(\sqrt{2}vs)}-1\right]\,ds,
\end{align}
that $r_{\mathcal{M}+1}(t)=\theta_{\mathcal{M}+1,1}(t)\tanh{(\sqrt{2}vt)}+\theta_{\mathcal{M}+1,2}(t)\left[\sqrt{2}vt\tanh{(\sqrt{2}vt)}-1\right]$ is even and satisfies the ordinary differential equation \eqref{odek}. Moreover, from the decay estimates of $Res_{\mathcal{M}}(t)$ in \eqref{restM}, we can deduce by induction on $l\in\mathbb{N}$ the existence of a number $n_{\mathcal{M}+1}\geq 0$ satisfying
\begin{equation}\label{rmd}
   \left\vert \frac{d^{l}}{dt^{l}}r_{\mathcal{M}+1}(t)\right\vert\lesssim_{l} v^{2\mathcal{M}+l}\left(\vert t\vert v+\ln{\left(\frac{1}{v^{2}}\right)}\right)^{n_{\mathcal{M}+1}}e^{-2\sqrt{2}\vert t\vert v} \text{, for every $t\in\mathbb{R}$ and any $l\in\mathbb{N},$}
\end{equation} so $\lim_{t\to{+}\infty}r_{\mathcal{M}+1}(t)$ exists and $\norm{r_{\mathcal{M}+1}(t)}_{L^{\infty}_{t}(\mathbb{R})}\lesssim v^{2\mathcal{M}}\ln{\left(\frac{1}{v}\right)}^{n_{\mathcal{M}+1}}.$ 
 \par Next, we are going to denote, for all $(t,x)\in\mathbb{R}^{2}$ and $0<v\ll1,$ $\phi_{\mathcal{M}+1,v,0}:\mathbb{R}^{2}\to\mathbb{R}$ by
\begin{multline}\label{phim10}
    \begin{aligned}
    \phi_{\mathcal{M}+1,v,0}(t,x)=&H_{0,1}\left(w_{\mathcal{M}}(t,x+r_{\mathcal{M}+1}(t))\right)-H_{0,1}\left(w_{\mathcal{M}}(t,{-}x+r_{\mathcal{M}+1}(t))\right)\\&{+}e^{-\sqrt{2}d(t)}\left[\mathcal{G}\left(w_{\mathcal{M}}(t,x+r_{\mathcal{M}+1}(t))\right)-\mathcal{G}\left(w_{\mathcal{M}}(t,{-}x+r_{\mathcal{M}+1}(t))\right)\right]
    \\&{+}\mathcal{T}_{\mathcal{M}}\left(vt,w_{\mathcal{M}}(t,x+r_{\mathcal{M}+1}(t))\right)-\mathcal{T}_{\mathcal{M}}\left(vt,w_{\mathcal{M}}(t,{-}x+r_{\mathcal{M}+1}(t))\right),
    \end{aligned}
\end{multline}
and use this function to construct $\varphi_{\mathcal{M}+1,v}:\mathbb{R}^{2}\to\mathbb{R}$ satisfying Theorem \ref{strongerr} for $k=\mathcal{M}+1,$ which will imply the statement of this theorem for all $k\in\mathbb{N}_{\geq 2}$ by induction. 
\par Since we assume Theorem \ref{strongerr} is true for $k=\mathcal{M},$ we deduce from Lemma \ref{porrataylor} and estimates \eqref{rmd} of $r_{\mathcal{M}+1}$ that the following function
\begin{multline}\label{phim111}
   \begin{aligned}
   \phi_{\mathcal{M}+1,v,1}(t,x)=& H_{0,1}\left(w_{\mathcal{M}}(t,x+r_{\mathcal{M}+1}(t))\right)-H_{0,1}\left(w_{\mathcal{M}}(t,{-}x+r_{\mathcal{M}+1}(t))\right)\\&{+}e^{-\sqrt{2}d(t)}\left[\mathcal{G}\left(w_{\mathcal{M}}(t,x+r_{\mathcal{M}+1}(t))\right)-\mathcal{G}\left(w_{\mathcal{M}}(t,{-}x+r_{\mathcal{M}+1}(t))\right)\right]
\\&{+}\mathcal{T}_{\mathcal{M}}\left(vt,w_{\mathcal{M}}(t,x)\right)-\mathcal{T}_{\mathcal{M}}\left(vt,w_{\mathcal{M}}(t,{-}x)\right) \text{, for every $(t,x)\in\mathbb{R}^{2},$}
\end{aligned}
\end{multline}
satisfies
\begin{equation}\label{lambdamv10}\Lambda(\phi_{\mathcal{M}+1,v,1})(t,x)\cong_{2\mathcal{M}+4} \Lambda\left(\phi_{\mathcal{M}+1,v,0}(t,x)\right).
\end{equation}
 \begin{lemma}\label{hm+1}
For any function $h \in L^{\infty}(\mathbb{R})$ such that $ h^{'}\in \mathscr{S}(\mathbb{R}),$ we have
\begin{multline}\label{dt2hM1}
   \begin{aligned}
    \frac{\partial^{2}}{\partial t^{2}}\left[h\left(w_{\mathcal{M}}\left(t,x+r_{\mathcal{M}+1}(t)\right)\right)\right]\cong_{2\mathcal{M}+4} &\frac{\partial^{2}}{\partial t_{1}^{2}}\Big\vert_{t_{1}=t}\left[h\left(w_{\mathcal{M}}\left(t_{1},x+r_{\mathcal{M}+1}(t)\right)\right)\right]\\&{+}\frac{\ddot r_{\mathcal{M}+1}(t)}{\sqrt{1-\frac{\dot d(t)^{2}}{4}}}h^{'}\left(w_{\mathcal{M}}\left(t,x+r_{\mathcal{M}+1}(t)\right)\right)\\&{-}\frac{\dot r_{\mathcal{M}+1}(t)\dot d(t)}{1-\frac{\dot d(t)^{2}}{4}}h^{''}\left(w_{\mathcal{M}}\left(t,x+r_{\mathcal{M}+1}(t)\right)\right).
\end{aligned}
\end{multline}
\end{lemma}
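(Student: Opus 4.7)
The plan is to expand $\partial_t^2[h(w_{\mathcal{M}}(t,x+r_{\mathcal{M}+1}(t)))]$ by the chain rule, separating the $t$-dependence coming through $r_{\mathcal{M}+1}(t)$ from the direct $t$-dependence of $w_{\mathcal{M}}$, and then to check that after subtracting the frozen second derivative $\partial_{t_1}^2|_{t_1=t}[h(w_{\mathcal{M}}(t_1,x+r_{\mathcal{M}+1}(t)))]$ only the two displayed correction terms survive up to negligible error. The crucial structural feature to exploit is that $w_{\mathcal{M}}(t,s)=(s+\rho_{\mathcal{M}}(v,t))/\sqrt{1-\dot d(t)^2/4}$ is affine in $s$, so $\partial_s w_{\mathcal{M}}=(1-\dot d^2/4)^{-1/2}$ and $\partial_s^2 w_{\mathcal{M}}\equiv 0$. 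This means only finitely many cross terms arise in the comparison, each of them quadratic in the ``new'' time-dependence $\dot r_{\mathcal{M}+1}$ or linear in $\ddot r_{\mathcal{M}+1}$.

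First I would set $W(t)=w_{\mathcal{M}}(t,x+r_{\mathcal{M}+1}(t))$ and compute
\begin{equation*}
\partial_tW=[\partial_{t_1}w_{\mathcal{M}}](t,x+r_{\mathcal{M}+1}(t))+\frac{\dot r_{\mathcal{M}+1}(t)}{\sqrt{1-\dot d(t)^2/4}},
\end{equation*}
and then, using $\partial_s^2w_{\mathcal{M}}\equiv 0$,
\begin{equation*}
\partial_t^2W=[\partial_{t_1}^2w_{\mathcal{M}}](t,x+r_{\mathcal{M}+1}(t))+2\dot r_{\mathcal{M}+1}(t)\,\partial_t\!\left[(1-\dot d^2/4)^{-1/2}\right]+\frac{\ddot r_{\mathcal{M}+1}(t)}{\sqrt{1-\dot d^2/4}}.
\end{equation*}
Applying the chain rule once more gives $\partial_t^2[h(W)]=\ddot h(W)(\partial_tW)^2+\dot h(W)\partial_t^2W$, and an identical formula (with $r_{\mathcal{M}+1}$ treated as constant) for the frozen derivative. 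Subtracting, all purely ``direct'' contributions cancel and I am left with
\begin{equation*}
\ddot h(W)\!\left[\frac{2\dot r_{\mathcal{M}+1}}{\sqrt{1-\dot d^2/4}}[\partial_{t_1}w_{\mathcal{M}}](t,x+r_{\mathcal{M}+1})+\frac{\dot r_{\mathcal{M}+1}^2}{1-\dot d^2/4}\right]+\dot h(W)\!\left[\frac{\ddot r_{\mathcal{M}+1}}{\sqrt{1-\dot d^2/4}}+2\dot r_{\mathcal{M}+1}\,\partial_t(1-\dot d^2/4)^{-1/2}\right].
\end{equation*}
The dominant piece of $[\partial_{t_1}w_{\mathcal{M}}](t,s)=\dot\rho_{\mathcal{M}}(v,t)/\sqrt{1-\dot d^2/4}+(s+\rho_{\mathcal{M}})\,\partial_t(1-\dot d^2/4)^{-1/2}$ comes from the $-d(t)/2$ summand of $\rho_{\mathcal{M}}$, yielding $-\dot d(t)/(2\sqrt{1-\dot d^2/4})$; substituting this into the first bracket produces exactly $-\dot r_{\mathcal{M}+1}\dot d/(1-\dot d^2/4)\cdot\ddot h(W)$, which is the second stated correction.

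It remains to argue that every other term is negligible. The hard point, though still routine, is to verify the correct weights. Estimate \eqref{rmd} gives $|\dot r_{\mathcal{M}+1}^{(l)}|\lesssim_l v^{2\mathcal{M}+1+l}(\ln(1/v)+|t|v)^{n_{\mathcal{M}+1}}e^{-2\sqrt{2}|t|v}$, so $\dot r_{\mathcal{M}+1}^2/(1-\dot d^2/4)$ carries a factor $v^{2(2\mathcal{M}+1)}e^{-4\sqrt{2}|t|v}$, well within the negligibility threshold $v^{2\mathcal{M}+4}e^{-2\sqrt{2}|t|v}$ as soon as $\mathcal{M}\ge 2$; Lemma \ref{dlemma} bounds $\partial_t(1-\dot d^2/4)^{-1/2}$ by $v^3e^{-2\sqrt{2}|t|v}$, making $\dot r_{\mathcal{M}+1}\,\partial_t(1-\dot d^2/4)^{-1/2}$ of size $v^{2\mathcal{M}+4}e^{-4\sqrt{2}|t|v}$; the subleading part of $\dot\rho_{\mathcal{M}}$, namely $\sum_{j=2}^{\mathcal{M}}\dot r_j$, is controlled by the inductive bound \eqref{aproxr} and similarly contributes only negligibly when paired with $\dot r_{\mathcal{M}+1}$; and Remark \ref{perturbt} applied to $h,\dot h,\ddot h$ controls the spatial $H^s_x$-norms of the corresponding evaluations at $W$. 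Propagating these bounds through a Leibniz expansion of the $t^l$-derivatives, exactly as in the proof of Lemma \ref{interactionsize}, delivers the required $v^{2\mathcal{M}+4+l}$ decay with exponential weight and proves \eqref{dt2hM1}.
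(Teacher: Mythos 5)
Your proof is correct and follows essentially the same route as the paper's: expand $\partial_t^2\left[h\left(w_{\mathcal{M}}(t,x+r_{\mathcal{M}+1}(t))\right)\right]$ by the chain rule against the frozen derivative, extract the dominant part $-\dot d(t)/2$ of $\dot\rho_{\mathcal{M}}$ to produce the correction $-\dot r_{\mathcal{M}+1}(t)\dot d(t)\left(1-\frac{\dot d(t)^{2}}{4}\right)^{-1}\ddot h$, and discard every remaining cross term as negligible via \eqref{rmd}, \eqref{aproxr}, Lemma \ref{dlemma} and Remark \ref{perturbt}, with higher $t$-derivatives handled by the Leibniz rule exactly as in the paper. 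If anything, your exact expansion is slightly more careful than the paper's opening display, since you retain the (harmless, negligible) term $\dot r_{\mathcal{M}+1}^{2}\left(1-\frac{\dot d(t)^{2}}{4}\right)^{-1}\ddot h(W)$ and the correct factor $2$ on $\dot r_{\mathcal{M}+1}\,\partial_t\left(1-\frac{\dot d(t)^{2}}{4}\right)^{-1/2}\dot h(W)$.
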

\begin{proof}[Proof of Lemma \ref{hm+1}.]
First, using \eqref{wmwm} and the product rule of derivative, we can verify the following identity
\begin{align*}
\frac{\partial^{2}}{\partial t^{2}}\left[h\left(w_{\mathcal{M}}\left(t,x+r_{\mathcal{M}+1}(t)\right)\right)\right]=&\frac{\partial^{2}}{\partial t_{1}^{2}}\Big\vert_{t_{1}=t}\left[h\left(w_{\mathcal{M}}\left(t_{1},x+r_{\mathcal{M}+1}(t)\right)\right)\right]\\&{+}2\frac{\dot r_{\mathcal{M}+1}(t)}{\sqrt{1-\frac{\dot d(t)^{2}}{4}}}\frac{\partial}{\partial t_{1}}\Big\vert_{t_{1}=t}\left[ h^{'}\left(w_{\mathcal{M}}(t_{1},x+r_{\mathcal{M}+1}(t))\right)\right]\\&{+}2\dot r_{\mathcal{M}+1}(t)\left[\frac{d}{dt}\left(1-\frac{\dot d(t)^{2}}{4}\right)^{{-}\frac{1}{2}}\right]h^{'}\left(w_{\mathcal{M}}(t,x+r_{\mathcal{M}+1}(t))\right)\\&{+}\frac{\dot r_{\mathcal{M}+1}(t)^{2}}{1-\frac{\dot d(t)^{2}}{4}} h^{''}\left(w_{\mathcal{M}}(t,x+r_{\mathcal{M}+1})\right)\\&{+}\frac{\ddot r_{\mathcal{M}+1}(t)}{\sqrt{1-\frac{\dot d(t)^{2}}{4}}} h^{'}\left(w_{\mathcal{M}}(t,x+r_{\mathcal{M}+1})\right). 
\end{align*}
We recall that the function $w_{\mathcal{M}}$ satisfies, for all $(t,x)\in\mathbb{R}^{2},$ the equation
\begin{equation*}w_{\mathcal{M}}(t,x)=w_{0}\left(t,x-\frac{d(t)}{2}+\sum_{j=2}^{\mathcal{M}}r_{j}(t)\right), 
\end{equation*}
and the estimates in \eqref{aproxr} are true for any $2\leq k\leq \mathcal{M}$ from the inductive hypotheses of Theorem \ref{approximated theorem}.
\par Using estimate \eqref{rmd} and the product rule of derivative, we deduce that
\begin{equation*}
    \left\vert \frac{d^{l}}{dt^{l}}\left[\dot r_{\mathcal{M}+1}(t)^{2}\right] \right\vert\lesssim_{l} v^{4\mathcal{M}+2+l}\left(\vert t\vert +\ln{\left(\frac{1}{v}\right)}\right)^{2n_{\mathcal{M}+1}}e^{{-}4\sqrt{2}\vert t\vert v} \text{, for every $t\in\mathbb{R}$ and any $l\in\mathbb{N}\cup\{0\}.$}
\end{equation*}
Therefore, the estimate above, Lemma \ref{dlemma}, Remark \ref{perturbt} and the product rule of derivative imply that
\begin{equation*}
    \frac{\dot r_{\mathcal{M}+1}(t)^{2}}{2-\frac{\dot d(t)^{2}}{4}} h^{''}\left(w_{\mathcal{M}}\left(t,x+r_{\mathcal{M}+1}\right)\right)\cong_{2\mathcal{M}+4} 0.
\end{equation*}

\par Moreover, from estimates \eqref{rmd}, we deduce using Lemma \ref{dlemma}, the chain and product rule of derivative that if $0<v\ll 1,$ then
\begin{equation*}
   \left\vert \frac{d^{l}}{dt^{l}}\left[\dot r_{\mathcal{M}+1}(t)\left[\frac{d}{dt}\left(1-\frac{\dot d(t)^{2}}{4}\right)^{{-}\frac{1}{2}}\right]\right]\right\vert\lesssim_{l}v^{2\mathcal{M}+4+l}\left(\vert t\vert v+\ln{\left(\frac{1}{v}\right)} \right)^{n_{\mathcal{M}+1}}e^{{-}2\sqrt{2}\vert t\vert v}, 
\end{equation*}
for every $t\in\mathbb{R}$ and any $l\in\mathbb{N}\cup\{0\}.$
So, using Remark \ref{perturbt} and the product rule of derivative, we obtain that
\begin{equation*}
    \dot r_{\mathcal{M}+1}(t)\left[\frac{d}{dt}\left(1-\frac{\dot d(t)^{2}}{4}\right)^{{-}\frac{1}{2}}\right]h^{'}\left(w_{\mathcal{M}}(t,x+r_{\mathcal{M}+1}(t))\right)\cong_{2\mathcal{M}+4} 0.
\end{equation*}
  
\par Next, from estimates \eqref{rmd} and $\norm{r_{\mathcal{M}+1}(t)}_{L^{\infty}}\lesssim v^{2\mathcal{M}}\ln{\left(\frac{1}{v}\right)}^{n_{\mathcal{M}+1}},$ we deduce using Lemma \ref{porrataylor} for all $s\geq 1$ and $l\in\mathbb{N}\cup\{0\}$ that
\begin{align*}
    \norm{\frac{\partial^{l}}{\partial t^{l}}\left[w_{\mathcal{M}}(t,x+r_{\mathcal{M}+1}(t)) h^{''}\left(w_{\mathcal{M}}(t,x+r_{\mathcal{M}+1}(t))\right)-w_{\mathcal{M}}(t,x)h^{''}\left(w_{\mathcal{M}}(t,x\right)\right]}_{H^{s}_{x}}\lesssim_{s,l}\\ v^{2\mathcal{M}+l}\left[\ln{\frac{1}{v}}\right]^{n_{\mathcal{M}+1}},\\
    \norm{\frac{\partial^{l}}{\partial t^{l}}\left[ h^{''}\left(w_{\mathcal{M}}(t,x+r_{\mathcal{M}+1}(t))\right)- h^{''}\left(w_{\mathcal{M}}(t,x\right)\right]}_{H^{s}_{x}}\lesssim_{s,l} v^{2\mathcal{M}+l}\left[\ln{\frac{1}{v}}\right]^{n_{\mathcal{M}+1}}.
\end{align*}
Therefore, since we are assuming the veracity of estimates  \eqref{aproxr} for any $2\leq j\leq \mathcal{M},$ using the identity
\begin{multline*}
    \frac{\partial}{\partial t_{1}}\Big\vert_{t_{1}=t} h^{'}\left(w_{\mathcal{M}}(t,x+r_{\mathcal{M}+1}(t))\right)\\=
   \left[\frac{d}{dt}\left(1-\frac{\dot d(t)^{2}}{4}\right)^{\frac{{-}1}{2}}\right]\left(1-\frac{\dot d(t)^{2}}{4}\right)^{\frac{1}{2}}w_{\mathcal{M}}(t,x+r_{\mathcal{M}+1}(t))h^{''}\left(w_{\mathcal{M}}(t,x+r_{\mathcal{M}+1}(t))\right)\\{+}\frac{\dot r_{\mathcal{M}+1}(t)+\sum_{j=2}^{\mathcal{M}}\dot r_{j}(t)-\frac{\dot d(t)}{2}}{\sqrt{1-\frac{\dot d(t)^{2}}{4}}}h^{''}\left(w_{\mathcal{M}}(t,x+r_{\mathcal{M}+1}(t))\right),
\end{multline*}
 estimate
\eqref{rmd}, Lemma \ref{dlemma} and the product rule of derivative, we deduce that
\begin{equation*}
    \frac{2\dot r_{\mathcal{M}+1}(t)}{\sqrt{1-\frac{\dot d(t)^{2}}{4}}}\frac{\partial}{\partial t_{1}}\Big\vert_{t_{1}=t}\left[h^{'
    }\left(w_{\mathcal{M}}(t_{1},x+r_{\mathcal{M}+1}(t))\right)\right]\cong_{2\mathcal{M}+4}  \frac{{-}\dot r_{\mathcal{M}+1}(t)\dot d(t)}{1-\frac{\dot d(t)^{2}}{4}}h^{''}\left(w_{\mathcal{M}}(t_{1},x)\right).
\end{equation*}
In conclusion, estimate \eqref{dt2hM1} is true. 
\end{proof}
\subsection{Proof of Theorem \ref{strongerr}.}
\begin{proof}[Proof of Theorem \ref{strongerr}]
From the observations made at the beginning of this section, we need only to construct $\varphi_{\mathcal{M}+1,v}$ satisfying Theorem \ref{strongerr} from the function $\varphi_{\mathcal{M},v}$ denoted in \eqref{phiMM}. Let $\varphi_{\mathcal{M}+1,v}:\mathbb{R}^{2}\to\mathbb{R}$ be the function satisfying the following identity
\begin{align*}
\varphi_{\mathcal{M}+1,v}(t,x)=&\phi_{\mathcal{M}+1,v,0}(t,x)-L_{1}(\Gamma(t,\cdot))\left(w_{\mathcal{M}}(t,x+r_{\mathcal{M}+1}(t))\right)\\&{+}L_{1}(\Gamma(t,\cdot))\left(w_{\mathcal{M}}(t,{-}x+r_{\mathcal{M}+1}(t)\right),
\end{align*}
for every $(t,x)\in\mathbb{R}^{2},$ where $\phi_{\mathcal{M}+1,v,0}(t,x)$ is defined in \eqref{phim10}.
\par From the definition of $\Lambda,$ we have that
\begin{multline}\label{pep1}
\begin{aligned}
    \Lambda\left(\varphi_{\mathcal{M}+1,v}\right)(t,x)=&\left[\frac{\partial^{2}}{\partial t^{2}}-\frac{\partial^{2}}{\partial x^{2}}\right]\phi_{\mathcal{M}+1,v,0}(t,x)+U^{'}\left(\varphi_{\mathcal{M}+1,v}(t,x)\right)\\
&{+}\left[\frac{\partial^{2}}{\partial t^{2}}-\frac{\partial^{2}}{\partial x^{2}}\right]\left[{-}L_{1}(\Gamma(t,\cdot))\left(w_{\mathcal{M}}(t,x+r_{\mathcal{M}+1}(t))\right)\right]\\
&{+}\left[\frac{\partial^{2}}{\partial t^{2}}-\frac{\partial^{2}}{\partial x^{2}}\right]L_{1}(\Gamma(t,\cdot))\left(w_{\mathcal{M}}(t,{-}x+r_{\mathcal{M}+1}(t)\right)
\end{aligned}
\end{multline}
Moreover, since
$
    \left[{-}\frac{\partial^{2}}{\partial x^{2}}+ U^{''}\left(H_{0,1}(x)\right)\right]L_{1}\left(\Gamma(t,\cdot)\right)(x)=\Gamma(t,x),
$
and $w_{\mathcal{M}}(t,x)=\frac{x-\rho_{\mathcal{M}}(v,t)}{\sqrt{1-\frac{\dot d(t)^{2}}{4}}},$ we have the following identity
\begin{multline}\label{pep2}
    \left[{-}\frac{4-\dot d(t)^{2}}{4}\frac{\partial^{2}}{\partial x^{2}}+ U^{''}\left(H_{0,1}\left(w_{\mathcal{M}}(t,x+r_{\mathcal{M}+1}(t))\right)\right)\right]L_{1}\left(\Gamma(t,\cdot)\right)(w_{\mathcal{M}}(t,x+r_{\mathcal{M}+1}(t)))\\ =\Gamma(t,w_{\mathcal{M}}(t,x+r_{\mathcal{M}+1}(t))).
\end{multline}
Moreover, from identity \eqref{pep1},we deduce that $\varphi_{\mathcal{M}+1,v}(t,x)$ satisfies
\begin{multline}\label{Lambdaphi0,v}
\Lambda(\varphi_{\mathcal{M}+1,v})(t,x)-\Lambda(\phi_{\mathcal{M}+1,v,0})(t,x)\\
\begin{aligned}
=&\mathcal{L}_{\mathcal{M}+1,0}(t,x)+\mathcal{L}_{\mathcal{M}+1,1}(t,x)-\mathcal{L}_{\mathcal{M}+1,1}(t,{-}x)
+\mathcal{L}_{\mathcal{M}+1,2}(t,x)
-\mathcal{L}_{\mathcal{M}+1,2}(t,{-}x),
\end{aligned}
\end{multline}
for all $(t,x)\in\mathbb{R}^{2},$
where, for $0\leq j\leq 2,$ the functions $\mathcal{L}_{\mathcal{M}+1,j}:\mathbb{R}^{2}\to\mathbb{R}$ satisfy for any $(t,x)\in\mathbb{R}^{2}$ the following identities:
\begin{align}
 \begin{split}  \label{lm0}
   \mathcal{L}_{M+1,0}(t,x)= &  U^{'}\left(\varphi_{\mathcal{M}+1,v}(t,x)\right)-U^{'}\left(\phi_{\mathcal{M}+1,v,0}(t,x)\right)\\&{-}U^{''}\left(\phi_{\mathcal{M}+1,v,0}(t,x)\right)\Big[L_{1}(\Gamma(t,\cdot))\left(w_{\mathcal{M}}(t,{-}x+r_{\mathcal{M}+1}(t))\right)\\&{-}L_{1}(\Gamma(t,\cdot))\left(w_{\mathcal{M}}(t,x+r_{\mathcal{M}+1}(t)\right)\Big],
   \end{split}\\
\begin{split}\label{lm1}
    \mathcal{L}_{\mathcal{M}+1,1}(t,x)=&{-} \left[\frac{\partial^{2}}{\partial t^{2}}-\frac{\partial^{2}}{\partial x^{2}}+ U^{''}\left(H_{0,1}\left(w_{\mathcal{M}}(t,x+r_{\mathcal{M}+1}(t))\right)\right)\right]\\ &\times L_{1}\left(\Gamma(t,\cdot)\right)\left(w_{\mathcal{M}}(t,x+r_{\mathcal{M}+1})\right),
    \end{split}
    \\
  \begin{split}\label{lm2}
    \mathcal{L}_{M+1,2}(t,x)=&{-}\Big[ U^{''}\left(\phi_{\mathcal{M}+1,v,0}(t,x)\right)\\&{-} U^{''}\left(H_{0,1}\left(w_{\mathcal{M}}(t,x+r_{\mathcal{M}+1}(t))\right)\right)\Big]L_{1}\left(\Gamma(t,\cdot)\right)\left(w_{\mathcal{M}}(t,x+r_{\mathcal{M}+1})\right).
  \end{split}
\end{align}
\par Next, for $3\leq j\leq 6,$ we denote the functions $\mathcal{L}_{\mathcal{M}+1,j}:\mathbb{R}\to\mathbb{R}$  by
\begin{align}
\begin{split}
\mathcal{L}_{\mathcal{M}+1,3}(t,x)=& U^{'}\left(H_{0,1}\left(w_{\mathcal{M}}(t,x+r_{\mathcal{M}+1})\right)-H_{0,1}\left(w_{\mathcal{M}}(t,{-}x+r_{\mathcal{M}+1})\right)\right)\\&{-} U^{'}\left(H_{0,1}\left(w_{\mathcal{M}}(t,x+r_{\mathcal{M}+1}(t))\right)\right)\\&{-}U^{'}\left({-}H_{0,1}\left(w_{\mathcal{M}}(t,{-}x+r_{\mathcal{M}+1}(t))\right)\right),
\end{split}\\
\begin{split}\label{lm4}
\mathcal{L}_{\mathcal{M}+1,4}(t,x)=&\left[\frac{\partial^{2}}{\partial t^{2}}-\frac{\partial^{2}}{\partial x^{2}}\right]\left[e^{{-}\sqrt{2}d(t)}\mathcal{G}\left(w_{\mathcal{M}}(t,x+r_{\mathcal{M}+1}(t))\right)-\mathcal{G}\left(w_{\mathcal{M}}(t,{-}x+r_{\mathcal{M}+1}(t))\right)\right]\\
&{+} U^{''}\left(H_{0,1}\left(w_{\mathcal{M}}(t,x+r_{\mathcal{M}+1}(t))\right)\right)\\&{-}U^{''}\left(H_{0,1}\left(w_{\mathcal{M}}(t,{}-x+r_{\mathcal{M}+1}(t))\right)\right)e^{{-}\sqrt{2}d(t)}\mathcal{G}\left(w_{\mathcal{M}}(t,{-}x+r_{\mathcal{M}+1}(t))\right),
    \end{split}
     \\
\begin{split}\label{lm5}
\mathcal{L}_{M+1,5}(t,x)={}&\Big[U^{''}\left(H_{0,1}\left(w_{\mathcal{M}}(t,x+r_{\mathcal{M}+1}(t))-H_{0,1}\left(t,{-}x+r_{\mathcal{M}+1}(t)\right)\right)\right)\\&{-} U^{''}\left(H_{0,1}\left(w_{\mathcal{M}}(t,x+r_{\mathcal{M}+1}(t))\right)\right)\Big]e^{{-}\sqrt{2}d(t)}\mathcal{G}\left(w_{\mathcal{M}}(t,x+r_{\mathcal{M}+1}(t))\right)\\
&{-}\Big[U^{''}\left(H_{0,1}\left(w_{\mathcal{M}}(t,x+r_{\mathcal{M}+1}(t))-H_{0,1}\left(t,{-}x+r_{\mathcal{M}+1}(t)\right)\right)\right)\\&{-}U^{''}\left({-}H_{0,1}\left(w_{\mathcal{M}}(t,{-}x+r_{\mathcal{M}+1}(t))\right)\right)\Big]e^{{-}\sqrt{2}d(t)}\mathcal{G}\left(w_{\mathcal{M}}(t,{-}x+r_{\mathcal{M}+1}(t))\right),
\end{split}\\
\begin{split}\label{lm6}
\mathcal{L}_{\mathcal{M}+1,6}(t,x)=&  U^{'}\left(\phi_{\mathcal{M}+1,v,0}(t,x)\right)\\&{-} U^{'}\left(H_{0,1}\left(w_{\mathcal{M}}(t,x+r_{\mathcal{M}+1})\right)-H_{0,1}\left(w_{\mathcal{M}}(t,{-}x+r_{\mathcal{M}+1})\right)\right)\\
   &{-} U^{''}\left(H_{0,1}\left(w_{\mathcal{M}}(t,x+r_{\mathcal{M}+1}(t))\right)-H_{0,1}\left(w_{\mathcal{M}}(t,{-}x+r_{\mathcal{M}+1}(t))\right)\right)\times\\& \Big[\mathcal{G}\left(w_{\mathcal{M}}(t,x+r_{\mathcal{M}+1}(t))\right)-\mathcal{G}\left(w_{\mathcal{M}}(t,{-}x+r_{\mathcal{M}+1}(t))\right)\Big]e^{{-}\sqrt{2}d(t)},
   \end{split}
\end{align}
and they satisfy the following equation
\begin{multline*}
    \sum_{j=3}^{6}\mathcal{L}_{\mathcal{M}+1,j}(t,x)\\= U^{'}\left(\phi_{\mathcal{M}+1,v,0}(t,x)\right)- U^{'}\left(H_{0,1}\left(w_{\mathcal{M}}(t,x+r_{\mathcal{M}+1})\right)\right)
   - U^{'}\left({-}H_{0,1}\left(w_{\mathcal{M}}(t,{-}x+r_{\mathcal{M}+1})\right)\right)\\{+}\left[\frac{\partial^{2}}{\partial t^{2}}-\frac{\partial^{2}}{\partial x^{2}}\right]\left[e^{{-}\sqrt{2}d(t)}\left(\mathcal{G}\left(w_{\mathcal{M}}(t,x+r_{\mathcal{M}+1}(t))\right)-\mathcal{G}\left(w_{\mathcal{M}}(t,{-}x+r_{\mathcal{M}+1}(t))\right)\right)\right].
\end{multline*}
We recall the function $\phi_{\mathcal{M}+1,v,1}$ defined in \eqref{phim111} and obtain from \eqref{phim10}, the identity above and estimate \eqref{lambdamv10} that
\begin{align}\label{LambdahM1}
\Lambda(\phi_{\mathcal{M}+1,v,1})(t,x)\cong_{2\mathcal{M}+4}& \Lambda(\phi_{\mathcal{M}+1,v,0})(t,x)\\ \nonumber \cong_{2\mathcal{M}+4} & \Lambda\left(H_{0,1}\left(w_{\mathcal{M}}(t,x+r_{\mathcal{M}+1})\right)\right)+\Lambda\left({-}H_{0,1}\left(w_{\mathcal{M}+1}(t,{-}x+r_{\mathcal{M}+1})\right)\right)\\ \nonumber &{+}\left[\frac{\partial^{2}}{\partial t^{2}}-\frac{\partial^{2}}{\partial x^{2}}\right]\left(\mathcal{T}_{\mathcal{M}}(vt,x+r_{\mathcal{M}+1})-\mathcal{T}_{\mathcal{M}}(vt,{-}x+r_{\mathcal{M}+1})\right)
    \\ \nonumber &{+}\sum_{j=3}^{6}\mathcal{L}_{\mathcal{M}+1,j}(t,x).
\end{align}
\par Next, using Lemma \ref{hm+1}, we obtain the following estimate
\begin{align}
    \Lambda\left(H_{0,1}\left(w_{\mathcal{M}}(t,x+r_{\mathcal{M}+1}(t))\right)\right)\cong_{2\mathcal{M}+4} \nonumber  &\left[\frac{\partial^{2}}{\partial t_{1}^{2}}\Big\vert_{t_{1}=t}-\frac{\partial^{2}}{\partial x^{2}}\right]H_{0,1}\left(w_{\mathcal{M}}(t_{1},x+r_{\mathcal{M}+1}(t))\right) \\ \nonumber&{+} U^{'}\left(H_{0,1}\left(w_{\mathcal{M}}(t,x+r_{\mathcal{M}+1}(t))\right)\right)\\ \nonumber &{+}\frac{\ddot r_{\mathcal{M}+1}(t)}{\sqrt{1-\frac{\dot d(t)^{2}}{4}}} H^{'}_{0,1}\left(w_{\mathcal{M}}\left(t,x+r_{\mathcal{M}+1}(t)\right)\right)\\ \label{LambdahM00}&{-}\frac{\dot r_{\mathcal{M}+1}(t)\dot d(t)}{1-\frac{\dot d(t)^{2}}{4}} H^{''}_{0,1}\left(w_{\mathcal{M}}\left(t,x+r_{\mathcal{M}+1}(t)\right)\right).
\end{align}

 Consequently, using  estimates \eqref{rmd} and Lemma \ref{porrataylor} in the right-hand side of \eqref{LambdahM00}, we obtain the following estimate 
 \begin{multline}\label{LambdahM001}
     \begin{aligned}
     \Lambda\left(H_{0,1}\left(w_{\mathcal{M}}(t,x+r_{\mathcal{M}+1}(t))\right)\right)\cong_{2\mathcal{M}+4} & \Lambda\left(H_{0,1}\left(w_{\mathcal{M}}(t,x)\right)\right) \\&{+}\frac{\ddot r_{\mathcal{M}+1}(t)}{\sqrt{1-\frac{\dot d(t)^{2}}{4}}}H^{'}_{0,1}\left(w_{\mathcal{M}}\left(t,x\right)\right)\\&{-}\frac{\dot r_{\mathcal{M}+1}(t)\dot d(t)}{1-\frac{\dot d(t)^{2}}{4}}H^{''}_{0,1}\left(w_{\mathcal{M}}\left(t,x\right)\right)
     \\&{+}r_{\mathcal{M}+1}(t)\frac{\partial}{\partial x}\Lambda\left(H_{0,1}\left(w_{\mathcal{M}}(t,x)\right)\right).
 \end{aligned}
 \end{multline}
\par Actually, since $ H^{''}_{0,1}(x)=U^{'}(H_{0,1}),$ we have
\begin{equation*}
    {-}\frac{\partial^{2}}{\partial  x^{2}}H_{0,1}\left(w_{\mathcal{M}}(t,x)\right)+ U^{'}\left(H_{0,1}\left(w_{\mathcal{M}}(t,x)\right)\right)=\frac{{-}\dot d(t)^{2}}{4-\dot d(t)^{2}} H^{''}_{0,1}\left(w_{\mathcal{M}}(t,x)\right),
\end{equation*}
which implies the following equation
\begin{equation*}
    \Lambda\left(H_{0,1}\left(w_{\mathcal{M}}(t,x)\right)\right)=\frac{{-}\dot d(t)^{2}}{4-\dot d(t)^{2}} H^{''}_{0,1}\left(w_{\mathcal{M}}(t,x)\right)+\frac{\partial^{2}}{\partial t^{2}}H_{0,1}\left(w_{\mathcal{M}}(t,x)\right).
\end{equation*}
Consequently, since we are assuming that the estimates in \eqref{aproxr} are true every $k\in\mathbb{N}$ satisfying $2\leq k\leq \mathcal{M},$ we deduce from Lemma \ref{porrataylor} and estimate \eqref{rmd} that
\begin{align*}
r_{\mathcal{M}+1}(t)\Lambda\left(H_{0,1}\left(w_{\mathcal{M}}(t,x)\right)\right)=&\frac{{-}r_{\mathcal{M}+1}(t)\dot d(t)^{2}}{4-\dot d(t)^{2}}H^{''}_{0,1}\left(w_{\mathcal{M}}(t,x)\right)+r_{\mathcal{M}+1}(t)\frac{\partial^{2}}{\partial t^{2}}H_{0,1}\left(w_{\mathcal{M}}(t,x)\right)\\ \cong &_{2\mathcal{M}+4}  \,\, r_{\mathcal{M}+1}(t)\Lambda\left(H_{0,1}\left(w_{0}(t,x)\right)\right).
\end{align*}
Therefore, from Lemma \ref{dt2kink} and the above estimate above, we deduce that
\begin{align*}
    r_{\mathcal{M}+1}(t)\frac{\partial}{\partial x}\Lambda\left(H_{0,1}\left(w_{\mathcal{M}}(t,x)\right)\right)
\cong_{2\mathcal{M}+4} &\,\,{-} \frac{r_{\mathcal{M}+1}(t)8\sqrt{2}e^{{-}\sqrt{2}d(t)}}{1-\frac{\dot d(t)^{2}}{4}} H^{''}_{0,1}\left(w_{\mathcal{M}}(t,x)\right)
\end{align*}
 due to Lemma \ref{porrataylor} and the assumption that estimates \eqref{aproxr} are true for $2\leq k\leq \mathcal{M}.$  
In conclusion, we have the following estimate
\begin{multline}
    \Lambda\left(H_{0,1}\left(w_{\mathcal{M}}(t,x+r_{\mathcal{M}+1}(t))\right)\right) \cong_{2\mathcal{M}+4}\, \Lambda\left(H_{0,1}\left(w_{\mathcal{M}}(t,x)\right)\right)+\frac{\ddot r_{\mathcal{M}+1}(t)}{\sqrt{1-\frac{\dot d(t)^{2}}{4}}} H^{'}_{0,1}\left(w_{\mathcal{M}}(t,x)\right)\\
    {-}\frac{\dot r_{\mathcal{M}+1}(t)\dot d(t)+r_{\mathcal{M}+1}(t)8\sqrt{2}e^{{-}\sqrt{2}d(t)}}{1-\frac{\dot d(t)^{2}}{4}} H^{''}_{0,1}\left(w_{\mathcal{M}}(t,x)\right).
\end{multline}
\par From now, we are going to divide the remaining part of the proof on different steps.\\
\textbf{Step.1}(Estimate of $\mathcal{L}_{\mathcal{M}+1,0}(t,x).$)
First, we recall the inequality $\norm{fg}_{H^{s}_{s}}\lesssim_{s}\norm{f}_{H^{s}_{s}}\norm{g}_{H^{s}_{x}}$ for all $s\geq 1.$ So, using Remark \ref{perturbt}, Lemma \ref{interactionsize}, estimate \eqref{L1B} and the facts that $U\in C^{\infty}(\mathbb{R})$ and $\phi_{\mathcal{M}+1,v,0}\in L^{\infty}(\mathbb{R}^{2})\cap C^{\infty}(\mathbb{R}^{2}),$ we obtain for any natural number $j\geq 3$ that the function
\begin{multline*}
   \mathcal{E}_{j,\mathcal{M}}(t,x)=U^{(j)}\left(\phi_{\mathcal{M}+1,v,0}(t,x)\right)\Big[L_{1}(\Gamma(t,\cdot))\left(w_{\mathcal{M}}(t,x+r_{\mathcal{M}+1}(t))\right)\\-L_{1}(\Gamma(t,\cdot))\left(w_{\mathcal{M}}(t,{-}x+r_{\mathcal{M}+1}(t)\right)\Big]^{j-1}
\end{multline*}
satisfies, for all $s\geq 1,$ $\norm{ \mathcal{E}_{j,\mathcal{M}}(t,x)}_{H^{s}_{x}}\lesssim_{s}\norm{L_{1}\left(\Gamma(t,\cdot)\right)(x)}_{H^{x}_{s}}^{(j-1)}\lesssim v^{2\mathcal{M}+4}\left(\ln{\left(\frac{1}{v}\right)}\right)^{2n_{\mathcal{M}}}e^{-2\sqrt{2}\vert t\vert v(j-1)}$ if $0<v\ll 1.$ Indeed, using Remark \ref{perturbt}, estimate \eqref{L1B} and the product rule of derivative, we obtain similarly for all natural number $j\geq 3$ that
\begin{equation*}
    \norm{\frac{\partial^{l}}{\partial t^{l}}\mathcal{E}_{j,\mathcal{M}}(t,x)}_{H^{s}_{x}}\lesssim_{s,l} v^{2\mathcal{M}+4+l}\left(\ln{\left(\frac{1}{v}\right)}\right)^{2n_{\mathcal{M}}}e^{-2\sqrt{2}\vert t\vert v} \text{ for all $l\in\mathbb{N}\cup\{0\},$ if $0<v\ll 1.$}
\end{equation*}
Therefore, from \eqref{lm0}, we have $\mathcal{L}_{\mathcal{M}+1,0}\cong_{2\mathcal{M}+4} 0.$\\
\textbf{Step 2.}(Estimate of $\mathcal{L}_{\mathcal{M}+1,3}.$)
In notation of Lemma \ref{l1}, from the definition of $w_{\mathcal{M}}$ in \eqref{wmwm} and Remark \ref{reint}, we have
\begin{multline*}
U^{'}\left(H_{0,1}\left(w_{\mathcal{M}}(t,x+r_{\mathcal{M}+1})\right)-H_{0,1}\left(w_{\mathcal{M}}(t,{-}x+r_{\mathcal{M}+1})\right)\right)\\{-} U^{'}\left(H_{0,1}\left(w_{\mathcal{M}}(t,x+r_{\mathcal{M}+1})\right)\right)- U^{'}\left({-}H_{0,1}\left(w_{\mathcal{M}}(t,{-}x+r_{\mathcal{M}+1})\right)\right) \\
\begin{aligned}
   =&24\exp\left(\frac{2\sqrt{2}\left(\rho_{\mathcal{M}}+r_{\mathcal{M}+1}\right)}{\sqrt{1-\frac{\dot d(t)^{2}}{4}}}\right)\left[M\left(w_{\mathcal{M}}(t,x+r_{\mathcal{M}+1})\right)-M\left(w_{\mathcal{M}}(t,{-}x+r_{\mathcal{M}+1})\right)\right]\\&{-}30\exp\left(\frac{2\sqrt{2}\left(\rho_{\mathcal{M}}+r_{\mathcal{M}+1}\right)}{\sqrt{1-\frac{\dot d(t)^{2}}{4}}}\right)\left[N\left(w_{\mathcal{M}}(t,x+r_{\mathcal{M}+1})\right)-N\left(w_{\mathcal{M}}(t,{-}x+r_{\mathcal{M}+1})\right)\right]\\&{+}24\exp\left(\frac{4\sqrt{2}\left(\rho_{\mathcal{M}}+r_{\mathcal{M}+1}\right)}{\sqrt{1-\frac{\dot d(t)^{2}}{4}}}\right)\left[V\left(w_{0}(t,x+r_{\mathcal{M}+1})\right)-V\left(w_{\mathcal{M}}(t,{-}x+r_{\mathcal{M}+1})\right)\right]\\&{+}\frac{60}{\sqrt{2}}\exp\left(\frac{4\sqrt{2}\left(\rho_{\mathcal{M}}+r_{\mathcal{M}+1}\right)}{\sqrt{1-\frac{\dot d(t)^{2}}{4}}}\right)\left[ H^{'}_{0,1}\left(w_{\mathcal{M}}(t,x+r_{\mathcal{M}+1})\right)-H^{'}_{0,1}\left(w_{\mathcal{M}}(t,x+r_{\mathcal{M}+1})\right)\right]\\&{+}R\left(w_{\mathcal{M}}(t,x+r_{\mathcal{M}+1}),\frac{{-}4\rho_{\mathcal{M}}-4r_{\mathcal{M}+1}}{\sqrt{4-\dot d(t)^{2}}}\right).
\end{aligned}
\end{multline*}
\par Moreover, Lemma \ref{l1} implies that $R\left(w_{\mathcal{M}}(t,x+r_{\mathcal{M}+1}),\frac{{-}4\rho_{\mathcal{M}}(v,t)-4r_{\mathcal{M}+1}(t)}{\sqrt{4-\dot d(t)^{2}}}\right)$ is a finite sum of functions
\begin{multline*}
    \exp\left(\frac{{-}4\left(2+d_{i}\right)\sqrt{2}\left(\rho_{\mathcal{M}}(v,t)-r_{\mathcal{M}+1}(t)\right)}{\sqrt{1-\frac{\dot d(t)^{2}}{4}}}\right)m_{i}\left(w_{\mathcal{M}}(t,x+r_{\mathcal{M}+1}(t))\right)\\ \times n_{i}\left({-}w_{\mathcal{M}}(t,{-}x+r_{\mathcal{M}+1}(t))\right),
\end{multline*}
where any $d_{i}\in\mathbb{N},$ every $m_{i}\in S^{+}$ and every $n_{i}\in S^{-}.$ Consequently, using the decay estimates \ref{rmd} of $r_{\mathcal{M}+1}$ and estimate \eqref{aproxr} for any $2\leq k\leq \mathcal{M},$ Lemmas \ref{porrataylor} and \ref{explemma} imply that
\begin{equation*}
    R\left(w_{\mathcal{M}}(t,x+r_{\mathcal{M}+1}),\frac{{-}4\rho_{\mathcal{M}}(v,t)-4r_{\mathcal{M}+1}(t)}{\sqrt{4-\dot d(t)^{2}}}\right)\cong_{2\mathcal{M}+4} R\left(w_{\mathcal{M}}(t,x),\frac{{-}4\rho_{\mathcal{M}}(t)}{\sqrt{4-\dot d(t)^{2}}}\right).
\end{equation*}
\par Furthermore, since we are assuming the veracity of Theorem \ref{strongerr} for any $k\leq \mathcal{M}$ belonging to $\mathbb{N}_{\geq 2},$ we deduce from the Fundamental Theorem of Calculus, Lemma \ref{dlemma}, estimates \eqref{aproxr} for $2\leq k\leq \mathcal{M}$ and estimate \eqref{rmd} that if $v\ll 1,$ then  
\begin{gather*}
   \left\vert \frac{d^{l}}{dt^{l}}\left[ e^{{-}\sqrt{2}(2\rho_{\mathcal{M}}(v,t)-2r_{\mathcal{M}+1}(t))}-e^{{-}2\sqrt{2}\rho_{\mathcal{M}}(v,t)}-2\sqrt{2}r_{\mathcal{M}+1}(t)e^{{-}2\sqrt{2}\rho_{\mathcal{M}}(v,t)}\right]\right\vert\\ \lesssim_{l} v^{4\mathcal{M}+2+l}e^{-2\sqrt{2} v\vert t\vert},\\
   \left\vert \frac{d^{l}}{dt^{l}}\left[e^{{-}2\sqrt{2}\rho_{\mathcal{M}}(v,t)}-e^{{-}\sqrt{2}d(t)}\right] \right\vert\lesssim_{l}  v^{4}\left(\ln{\frac{1}{v}}\right)^{n_{2}}e^{{-}2\sqrt{2}v\vert t\vert },
\end{gather*}
for any $l\in\mathbb{N}\cup\{0\}.$
Therefore, using estimates $\norm{r_{\mathcal{M}}(t)}_{L^{\infty}}\lesssim v^{2\mathcal{M}}\ln{\left(\frac{1}{v}\right)}^{n_{\mathcal{M}}},\,\eqref{rmd}$ and \eqref{aproxr} for $2\leq k\leq \mathcal{M},$  we deduce from Lemmas \ref{explemma}, \ref{porrataylor} the following estimate
\begin{align}\nonumber
\mathcal{L}_{\mathcal{M}+1,3}(t,x) \cong_{2\mathcal{M}+4} &
U^{'}\left(H^{w_{\mathcal{M}}}_{0,1}(t,x)\right)- U^{'}\left(H_{0,1}\left(w_{\mathcal{M}}(t,x)\right)\right)-U^{'}\left({-}H_{0,1}\left(w_{\mathcal{M}}(t,{-}x)\right)\right)\\ \nonumber
&{+}2\sqrt{2}r_{\mathcal{M}+1}(t) e^{{-}\sqrt{2}d(t)}\left[24 M\left(w_{\mathcal{M}}(t,x)\right)-30N\left(w_{\mathcal{M}}(t,x)\right)\right]
    \\ \nonumber &{-}2\sqrt{2}r_{\mathcal{M}+1}(t) e^{{-}\sqrt{2}d(t)}\left[24 M\left(w_{\mathcal{M}}(t,{-}x)\right)-30N\left(w_{\mathcal{M}}(t,{-}x)\right)\right]\\ \nonumber &{+}\frac{r_{\mathcal{M}+1}e^{{-}\sqrt{2}d(t)}}{\sqrt{1-\frac{\dot d(t)^{2}}{4}}}\left[24 M^{'}\left(w_{\mathcal{M}}(t,x)\right)-30 N^{'}\left(w_{\mathcal{M}}(t,x)\right)\right]\\ \label{intwM} & {-}\frac{r_{\mathcal{M}+1}e^{{-}\sqrt{2}d(t)}}{\sqrt{1-\frac{\dot d(t)^{2}}{4}}}\left[24 M^{'}\left(w_{\mathcal{M}}(t,{-}x)\right)-30 N^{'}\left(w_{\mathcal{M}}(t,{-}x)\right)\right].
\end{align}
\textbf{Step 3.}(Estimate of $\mathcal{L}_{\mathcal{M}+1,4}.$)
From Lemma \ref{porrataylor}, if $0<v\ll 1,$ we deduce for every $s\geq 1$ and every $l\in\mathbb{N}\cup\{0\}$ that
\begin{equation*}
    \norm{\frac{\partial^{l}}{\partial t^{l}}\left[\mathcal{G}\left(w_{\mathcal{M}}(t,x+r_{\mathcal{M}+1}(t))\right)-\mathcal{G}\left(w_{\mathcal{M}}(t,x)\right)\right]}_{H^{s}_{x}}\lesssim_{s,l} v^{2\mathcal{M}+l}\left(\ln{\left(\frac{1}{v}\right)}+\vert t\vert v\right)^{n_{\mathcal{M}+1}},
\end{equation*}
which implies with Lemma \ref{dlemma} the following estimate
\begin{equation*}
\frac{\partial^{2}}{\partial t^{2}}\left[e^{{-}\sqrt{2}d(t)}\mathcal{G}\left(w_{\mathcal{M}}(t,x+r_{\mathcal{M}+1}(t))\right)\right]\cong
\frac{\partial ^{2}}{\partial t^{2}}\left[e^{{-}\sqrt{2}d(t)}\mathcal{G}\left(w_{\mathcal{M}}(t,x)\right)\right].
\end{equation*}
Moreover, 
using Lemma \ref{dlemma} and estimate \ref{rmd}, Lemma \ref{porrataylor} also implies
\begin{multline*}
    e^{{-}\sqrt{2}d(t)}\left(\left[{-}\frac{\partial^{2}}{\partial x^{2}}+ U^{''}\left(H_{0,1}\left(w_{\mathcal{M}}(t,x+r_{\mathcal{M}+1}(t))\right)\right)\right]\mathcal{G}\left(w_{\mathcal{M}}(t,x+r_{\mathcal{M}+1}(t))\right)\right)\\
    \cong_{2\mathcal{M}+4} e^{{-}\sqrt{2}d(t)}\left(\left[{-}\frac{\partial^{2}}{\partial x^{2}}+U^{''}\left(H_{0,1}\left(w_{\mathcal{M}}(t,x)\right)\right)\right]\mathcal{G}\left(w_{\mathcal{M}}(t,x)\right)\right)\\
    +r_{\mathcal{M}+1}(t)e^{{-}\sqrt{2}d(t)}\frac{\partial}{\partial x}\left(\left[{-}\frac{\partial^{2}}{\partial x^{2}}+ U^{''}\left(H_{0,1}\left(w_{\mathcal{M}}(t,x)\right)\right)\right]\mathcal{G}\left(w_{\mathcal{M}}(t,x)\right)\right).
\end{multline*}
Therefore, 
\begin{multline*}
\left[\frac{\partial^{2}}{\partial t^{2}}-\frac{\partial^{2}}{\partial x^{2}}+ U^{''}\left(H_{0,1}\left(w_{\mathcal{M}}(t,x+r_{\mathcal{M}+1}(t))\right)\right)\right]\left(e^{{-}\sqrt{2}d(t)}\mathcal{G}\left(w_{\mathcal{M}}(t,x+r_{\mathcal{M}+1}(t))\right)\right)\\
    \begin{aligned}
     \cong_{2\mathcal{M}+4}& 
     \left[\frac{\partial^{2}}{\partial t^{2}}-\frac{\partial^{2}}{\partial x^{2}}+ U^{''}\left(H_{0,1}\left(w_{\mathcal{M}}(t,x)\right)\right)\right]\left(e^{{-}\sqrt{2}d(t)}\mathcal{G}\left(w_{\mathcal{M}}(t,x)\right)\right)\\
     &{+}r_{\mathcal{M}+1}(t)e^{{-\sqrt{2}}d(t)}\frac{\partial}{\partial x}\left(\left[{-}\frac{\partial^{2}}{\partial x^{2}}+ U^{''}\left(H_{0,1}\left(w_{\mathcal{M}}(t,x)\right)\right)\right]\mathcal{G}\left(w_{\mathcal{M}}(t,x)\right)\right).
     \end{aligned}
\end{multline*}
In conclusion, recalling the notation $h^{w_{\mathcal{M}}}(t,x)=f\left(w_{\mathcal{M}}(t,x)\right)-h\left(w_{\mathcal{M}}(t,{-}x)\right)$ for any function $h:\mathbb{R}\to\mathbb{R},$ using Lemma \ref{dlemma}, identity \eqref{lm4} and estimate \eqref{rmd}, we obtain the following estimate
\begin{multline}
  \begin{aligned}
  \mathcal{L}_{\mathcal{M}+1,4}(t,x)\cong_{2\mathcal{M}+4}&   \,r_{\mathcal{M}+1}(t)e^{{-}\sqrt{2}d(t)}\frac{\partial}{\partial x}\left(\left[{-}\mathcal{G}^{(2)}+ U^{''}(H_{0,1})\mathcal{G}\right]^{w_{\mathcal{M}}}(t,x)\right)\\
  &{+}e^{{-}\sqrt{2}d(t)}\left[{-}\mathcal{G}^{(2)}+ U^{''}(H_{0,1})\mathcal{G}\right]^{w_{\mathcal{M}}}(t,x)+\frac{\partial^{2}}{\partial t^{2}}\left(e^{{-}\sqrt{2}d(t)}\mathcal{G}^{w_{\mathcal{M}}}(t,x)\right).
\end{aligned}
\end{multline}
\textbf{Step 4.}(Estimate of $\mathcal{L}_{\mathcal{M}+1,1}.$)
Since Lemma \ref{represent1} implies for all $s\geq 1,\,l\in\mathbb{N}\cup\{0\}$ that $\norm{\frac{\partial^{l}}{\partial t^{l}}L_{1}\left(\Gamma(t,\cdot)\right)(x)}_{H^{s}}\lesssim_{s,l} v^{2\mathcal{M}+l}\left(v\vert t\vert+\ln{\left(\frac{1}{v}\right)}\right)^{n_{\mathcal{M}}}e^{{-}2\sqrt{2}v\vert t \vert}$ if $0<v\ll 1,$ we can repeat the argument in the second step and obtain, from identity \eqref{lm1}, Lemma \ref{porrataylor} and the estimates in \ref{rmd}, that
\begin{align*}
 \mathcal{L}_{\mathcal{M}+1,1}(t,x)=\cong_{2\mathcal{M}+4}&
   {-}\left[\frac{\partial^{2}}{\partial t^{2}}-\frac{\partial^{2}}{\partial x^{2}}+U^{''}\left(H_{0,1}\left(w_{\mathcal{M}}(t,x)\right)\right)\right]L_{1}\left(\Gamma(t,\cdot)\right)\left(w_{\mathcal{M}}(t,x)\right)\\ \cong_{2\mathcal{M}+4} & {-}\Gamma\left(t,w_{\mathcal{M}}(t,x)\right)+\frac{\dot d(t)^{2}}{4-\dot d(t)^{2}}\frac{\partial^{2}}{\partial y^{2}}\Big\vert_{y=w_{\mathcal{M}}(t,x)}L_{1}\left(\Gamma(t,\cdot)\right)(y)\\&{-}\frac{\partial^{2}}{\partial t^{2}}L_{1}\left(\Gamma(t,\cdot)\right)\left(w_{\mathcal{M}}(t,x)\right).
\end{align*}
\textbf{Step 5.}(Estimate of $\mathcal{L}_{\mathcal{M}+1,5}.$) Lemma \ref{porrataylor} and estimate \eqref{rmd} imply for all $m\in\mathbb{N},\,l\in\mathbb{N}\cup\{0\}$ the following estimates
\begin{align}\label{nhhm}
   \norm{\frac{\partial^{l}}{\partial t^{l}}\Big[H_{0,1}\left(w_{\mathcal{M}}(t,\pm x+r_{\mathcal{M}+1})\right)^{m}-H_{0,1}\left(w_{\mathcal{M}}(t,\pm x)\right)^{m}\Big]}_{H^{s}_{x}}\lesssim_{m,s,l} v^{2\mathcal{M}+l}\left[\ln{\frac{1}{v}}\right]^{n_{\mathcal{M}+1}},\\ \label{nggm}
   \norm{\frac{\partial^{l}}{\partial t^{l}}\Big[\mathcal{G}\left(w_{\mathcal{M}}(t,x+r_{\mathcal{M}+1})\right)^{m}-\mathcal{G}\left(w_{\mathcal{M}}(t,x)\right)^{m}\Big]}_{H^{s}_{x}}\lesssim_{m,s,l} v^{2\mathcal{M}+l}\left[\ln{\frac{1}{v}}\right]^{n_{\mathcal{M}+1}},
\end{align}
if $0<v\ll 1.$ Therefore, since 
\begin{equation*}
 U^{''}\left(H_{0,1}\left(w_{\mathcal{M}}(t,x+r_{\mathcal{M}+1})-H_{0,1}\left(t,{-}x+r_{\mathcal{M}+1}\right)\right)\right)- U^{''}\left(H_{0,1}\left(w_{\mathcal{M}}(t,x+r_{\mathcal{M}+1})\right)\right)
\end{equation*} 
is a real linear combination of functions $H_{0,1}\left(w_{\mathcal{M}}(t,x+r_{\mathcal{M}+1}\right)^{m}H_{0,1}\left(w_{\mathcal{M}}(t,{-}x+r_{\mathcal{M}+1}\right)^{n}$ such that $m\in\mathbb{N}\cup\{0\}$ and $n\in\mathbb{N},$ we deduce using the identity \eqref{lm5} and Lemma \ref{dlemma} the following estimate
\begin{equation*}
    \mathcal{L}_{M+1,5}(t,x)\cong_{2\mathcal{M}+4}
    e^{{-}\sqrt{2}d(t)} U^{''}\left(H^{w_{\mathcal{M}}}_{0,1}(t,x)\right)\mathcal{G}^{w_{\mathcal{M}}}(t,x)-e^{{-}\sqrt{2}d(t)}\left[U^{''}(H_{0,1})\mathcal{G}\right]^{w_{\mathcal{M}}}(t,x),
\end{equation*}
where $f^{w_{\mathcal{M}}}(t,x)=f\left(w_{\mathcal{M}}(t,x)\right)-f\left(w_{\mathcal{M}}(t,{-}x)\right)$ for any function $f:\mathbb{R}\to\mathbb{R}$ and $(t,x)\in\mathbb{R}^{2}.$\\
\textbf{Step 6.}(Estimate of $\mathcal{L}_{\mathcal{M}+1,6}.$)
From the definition of the functions $\varphi_{\mathcal{M},v},\,\phi_{\mathcal{M}+1,0,v},\,\mathcal{L}_{\mathcal{M}+1,6}$ respectively in \eqref{phiMM}, \eqref{phim10},  \eqref{lm6} and using the notation
\begin{equation*}
    \mathcal{S}(v,t,x)=\phi_{\mathcal{M}+1,v,0}(t,x)-H_{0,1}\left(w_{\mathcal{M}}(t,x+r_{\mathcal{M}+1})\right)+H_{0,1}\left(w_{\mathcal{M}}(t,{-}x+r_{\mathcal{M}+1})\right),
\end{equation*}
we have the following identity 
\begin{multline*}
  \begin{aligned} \mathcal{L}_{\mathcal{M}+1,6}(t,x)
=&\sum_{j=3}^{6}\frac{U^{(j)}\left(H_{0,1}\left(w_{\mathcal{M}}(t,x+r_{\mathcal{M}+1})\right)-H_{0,1}\left(w_{\mathcal{M}}(t,{-}x+r_{\mathcal{M}+1})\right)\right)}{(j-1)!} \Big[\mathcal{S}(v,t,x)\Big]^{j-1}
    \\
   &{+}  U^{''}\left(H_{0,1}\left(w_{\mathcal{M}}(t,x+r_{\mathcal{M}+1})\right)-H_{0,1}\left(w_{\mathcal{M}}(t,{-}x+r_{\mathcal{M}+1})\right)\right)\\&\times\left[\mathcal{T}_{\mathcal{M}}(vt,w_{\mathcal{M}}(t,x+r_{\mathcal{M}+1}))-\mathcal{T}_{\mathcal{M}}(vt,w_{\mathcal{M}}(t,{-}x+r_{\mathcal{M}+1}))\right].   
\end{aligned}
\end{multline*}
\par Furthermore, from the assumption that Theorem \ref{strongerr} is true for any $k\in\mathbb{N}$ satisfying $2\leq k\leq \mathcal{M},$ we have the following estimate
\begin{equation*}
    \norm{\frac{\partial^{l}}{\partial t^{l}}\left[\mathcal{T}_{\mathcal{M}}\left(vt,w_{\mathcal{M}}(t,x)\right)\right]}_{H^{s}_{x}}\lesssim_{s,l} v^{4+l}\left(\vert t\vert v+\ln{\left(\frac{1}{v}\right)}\right)^{c_{k}}e^{{-}2\sqrt{2}\vert t\vert v},
\end{equation*}
for some positive constant $c_{k},$ all $s\geq 0,$ and any $l\in\mathbb{N}\cup\{0\}$ if $0<v\ll 1.$
Therefore, using Lemma \ref{porrataylor} and estimate \eqref{rmd}, we obtain that if $0<v\ll 1,$ then the following inequality 
\begin{equation*}
    \norm{\frac{\partial^{l}}{\partial t^{l}}\left[\mathcal{T}_{\mathcal{M}}\left(vt,w_{\mathcal{M}}(t,\pm x+r_{\mathcal{M}+1}(t))\right)\right]}_{H^{s}_{x}}\lesssim_{s,l} v^{4+l}\left(\vert t\vert v+\ln{\left(\frac{1}{v}\right)}\right)^{c_{k}}e^{{-}2\sqrt{2}\vert t\vert v}  
\end{equation*}
is true for every $s\geq 0$ and any $l\in\mathbb{N}\cup\{0\}.$
Thus, using estimates \eqref{nhhm}, \eqref{nggm} and the inequality
$
    \norm{fg}_{H^{s}_{x}}\lesssim_{s}\norm{f}_{H^{s}_{x}}\norm{g}_{H^{s}_{x}}$ , for all $f,\,g\in H^{s}_{x},$
for any $s>\frac{1}{2},$
we deduce that
\begin{multline}\label{lm5es}
    \mathcal{L}_{M+1,6}(t,x)\cong_{2\mathcal{M}+4}\,
U^{''}\left(H^{w_{\mathcal{M}}}_{0,1}(t,x)\right)\Big[\mathcal{T}_{\mathcal{M}}(vt,w_{\mathcal{M}}(t,x))
   -\mathcal{T}_{\mathcal{M}}(vt,w_{\mathcal{M}}(t,{-}x))
    \Big]\\
    {+}\sum_{j=3}^{6}\frac{U^{(j)}\left(H^{w_{\mathcal{M}}}_{0,1}(t,x)\right)\left[\phi_{\mathcal{M},v}(t,x)-H^{w_{\mathcal{M}}}_{0,1}(t,x)\right]^{(j-1)}}{(j-1)!} .
\end{multline}
\textbf{Step 7.}(Estimate of $\mathcal{L}_{\mathcal{M}+1,2}.$)
We recall that $\mathcal{L}_{M+1,2}$ is defined at \eqref{lm2}.
To simplify the estimate of this function, we are going to estimate separately the functions
\begin{multline*}
\mathcal{L}_{\mathcal{M}+1,2,1}(t,x)={-}\Big[ U^{''}\left(H_{0,1}\left(w_{\mathcal{M}}(t,x+r_{\mathcal{M}+1})\right)-H_{0,1}\left(w_{\mathcal{M}}(t,{-}x+r_{\mathcal{M}+1})\right)\right)\\- U^{''}\left(H_{0,1}\left(w_{\mathcal{M}}(t,x+r_{\mathcal{M}+1})\right)\right)\Big]L_{1}\left(\Gamma(t,\cdot)\right)\left(w_{\mathcal{M}}(t,x+r_{\mathcal{M}+1})\right),
\end{multline*}
and
\begin{multline*}
\mathcal{L}_{\mathcal{M}+1,2,2}(t,x)=\Big[U^{''}\Big(H_{0,1}\left(w_{\mathcal{M}}(t,x+r_{\mathcal{M}+1})\right)-H_{0,1}\left(w_{\mathcal{M}}(t,{-}x+r_{\mathcal{M}+1})\right)\Big)\\{-}U^{''}\Big(\phi_{\mathcal{M},v,0}(t,x)\Big)\Big]L_{1}\left(\Gamma(t,\cdot)\right)\left(w_{\mathcal{M}}(t,x+r_{\mathcal{M}+1})\right),
\end{multline*}
the sum of these functions is $\mathcal{L}_{M+1,2}(t,x).$ 
\par First, from Taylor's Theorem, we have
\begin{multline*}
\mathcal{L}_{\mathcal{M}+1,2,1}(t,x)\\=L_{1}\left(\Gamma(t,\cdot)\right)\left(w_{\mathcal{M}}(t,x+r_{\mathcal{M}+1})\right)\Big[U^{(3)}\left(H_{0,1}\left(w_{\mathcal{M}}(t,x+r_{\mathcal{M}+1})\right)\right) H_{0,1}\left(w_{\mathcal{M}}(t,{-}x+r_{\mathcal{M}+1})\right)\\
    +\sum_{j=4}^{6}\frac{({-}1)^{(j-1)}}{(j-2)!}U^{(j)}\left(H_{0,1}\left(w_{\mathcal{M}}(t,x+r_{\mathcal{M}+1})\right)\right)H_{0,1}\left(w_{\mathcal{M}}(t,{-}x+r_{\mathcal{M}+1})\right)^{j-2}\Big]
\end{multline*}
Moreover, using Lemmas \ref{porrataylor},  \ref{represent1}, identity \eqref{explicityB}, estimate \eqref{rmd} and the product rule of derivative, we can verify that
\begin{multline*}
\mathcal{L}_{\mathcal{M}+1,2,1}(t,x) \cong_{2\mathcal{M}+4}L_{1}\left(\Gamma(t,\cdot)\right)\left(w_{\mathcal{M}}(t,x+r_{\mathcal{M}+1})\right)\\ \times U^{(3)}\left(H_{0,1}\left(w_{\mathcal{M}}(t,x+r_{\mathcal{M}+1})\right)\right) H_{0,1}\left(w_{\mathcal{M}}(t,{-}x+r_{\mathcal{M}+1})\right)\\ \cong_{2\mathcal{M}+4}
L_{1}\left(\Gamma(t,\cdot)\right)\left(w_{\mathcal{M}}(t,x)\right)U^{(3)}\left(H_{0,1}\left(w_{\mathcal{M}}(t,x)\right)\right) H_{0,1}\left(w_{\mathcal{M}}(t,{-}x)\right).
\end{multline*}
Furthermore, since $\left\vert \frac{d^{k}}{dx^{k}}\left[H_{0,1}(x)-e^{\sqrt{2}x}\right]\right\vert\lesssim_{k}\min\left(e^{2\sqrt{2}x},e^{\sqrt{2}x}\right),$ we can deduce using
Lemmas \ref{interactt}, \ref{porrataylor}, \ref{interactionsize} and estimate \eqref{nhhm} that
\begin{align*}
\mathcal{L}_{\mathcal{M}+1,2,1}(t,x)
\cong_{2\mathcal{M}+4} & L_{1}\left(\Gamma(t,\cdot)\right)\left(w_{\mathcal{M}}(t,x)\right)U^{(3)}\left(H_{0,1}\left(w_{\mathcal{M}}(t,x)\right)\right)e^{{-}\sqrt{2}w_{\mathcal{M}}(t,x)}e^{{-}\sqrt{2}d(t)},
\end{align*}
so $\norm{\frac{\partial^{l}}{\partial t^{l}}\mathcal{L}_{\mathcal{M}+1,2,1}(t,x)}_{H^{s}_{x}}\lesssim_{s,l} v^{2\mathcal{M}+2}\left(\vert t\vert v+\ln{\left(\frac{1}{v}\right)}\right)^{n_{\mathcal{M}}}e^{{-}2\sqrt{2}\vert t \vert v}.$
\par Next, let $w_{\mathcal{M}+1}:\mathbb{R}^{2}\to\mathbb{R}$ be the unique function satisfying
\begin{equation}\label{wM1}
    w_{\mathcal{M}+1}(t,x)=w_{\mathcal{M}}(t,x+r_{\mathcal{M}+1}(t)) \text{, for all $(t,x)\in\mathbb{R}^{2}.$}
\end{equation}
Since we are assuming that Theorem \ref{strongerr} is true for $k=\mathcal{M},$ Lemmas \ref{interactionsize}, \ref{represent1} and the following identity 
\begin{multline*}
\begin{aligned}
U^{''}\left(\phi_{\mathcal{M},v,0}(t,x)\right)
    =&  U^{''}\left(H^{w_{\mathcal{M}+1}}_{0,1}(t,x)\right)
    +e^{{-}\sqrt{2}d(t)}U^{(3)}\left(H^{w_{\mathcal{M}+1}}(t,x)\right)\mathcal{G}^{w_{\mathcal{M}+1}}(t,x)\\&{+}U^{(3)}\left(H^{w_{\mathcal{M}+1}}_{0,1}(t,x)\right)\left[\mathcal{T}_{\mathcal{M}}\left(vt,w_{\mathcal{M}+1}(t,x)\right)
    {-}\mathcal{T}_{\mathcal{M}}\left(vt,w_{\mathcal{M}+1}(t,{-}x)\right)\right]\\
    &{+}\sum_{j=4}^{6}\frac{1}{(j-2)!}U^{(j)}\left(H^{w_{\mathcal{M}+1}}_{0,1}(t,x)\right)\left[\phi_{\mathcal{M},v,0}(t,x)
-H^{w_{\mathcal{M}+1}}_{0,1}(t,x)
    \right]^{j-2}
\end{aligned}
\end{multline*}
imply 
\begin{multline*}
    L_{1}\left(\Gamma(t,\cdot)\right)\left(w_{\mathcal{M}}(t,x+r_{\mathcal{M}+1})\right) U^{''}\left(\phi_{\mathcal{M},v,0}(t,x)\right)\\
\begin{aligned}
    \cong_{2\mathcal{M}+4} & \,  U^{''}\left(H^{w_{\mathcal{M}+1}}_{0,1}(t,x)\right)L_{1}\left(\Gamma(t,\cdot)\right)\left(w_{\mathcal{M}}(t,x+r_{\mathcal{M}+1})\right)\\
&{+}e^{{-}\sqrt{2}d(t)}U^{(3)}\left(H^{w_{\mathcal{M}+1}}_{0,1}(t,x)\right)\mathcal{G}^{w_{\mathcal{M}+1}}(t,x)L_{1}\left(\Gamma(t,\cdot)\right)\left(w_{\mathcal{M}}(t,x+r_{\mathcal{M}+1})\right).
\end{aligned}
\end{multline*}
Thus, we obtain that
\begin{equation*}
    \mathcal{L}_{\mathcal{M}+1,2,2}(t,x)\cong_{2\mathcal{M}+4}{-} e^{{-}\sqrt{2}d(t)}U^{(3)}\left(H^{w_{\mathcal{M}+1}}_{0,1}(t,x)\right)\mathcal{G}^{w_{\mathcal{M}+1}}(t,x)L_{1}\left(\Gamma(t,\cdot)\right)\left(w_{\mathcal{M}}(t,x+r_{\mathcal{M}+1})\right).
\end{equation*}
Indeed, using Lemma \ref{porrataylor} and estimates \eqref{rmd}, we deduce from the estimate above that
\begin{equation*}
    \mathcal{L}_{\mathcal{M}+1,2,2}(t,x)\cong_{2\mathcal{M}+4}{-} e^{{-}\sqrt{2}d(t)}U^{(3)}\left(H^{w_{\mathcal{M}}}_{0,1}(t,x)\right)\mathcal{G}^{w_{\mathcal{M}}}(t,x)L_{1}\left(\Gamma(t,\cdot)\right)\left(w_{\mathcal{M}}(t,x)\right).
\end{equation*}
Furthermore, Lemmas \ref{represent1} and \ref{dlemma} implies, for any $1\leq i\leq N_{1},$
\begin{equation*}
\left\vert \frac{d^{l}}{dt^{l}}\left[ s_{i,v}\left(\sqrt{2}vt\right) e^{{-}\sqrt{2}d(t)} \right]\right\vert\lesssim_{l}v^{2\mathcal{M}+2+l}\left(\vert t\vert v+\ln{\left(\frac{1}{v}\right)}\right)^{n_{\mathcal{M}}}e^{{-}2\sqrt{2}\vert t\vert v},
\end{equation*}
for all $l\in\mathbb{N}\cup\{0\},$ if $0<v\ll 1.$ Moreover, using Lemma \ref{interactionsize}, estimate \eqref{L1B} and the inequality
$\norm{fg}_{H^{s}_{x}}\lesssim_{s} \norm{f}_{H^{s+1}_{x}}\norm{g}_{H^{s+1}_{x}},
$
for any $f,\,g\in\mathscr{S}(\mathbb{R})$ and all $s\geq 0,$ we deduce that
\begin{equation*}
\mathcal{L}_{\mathcal{M}+1,2,2}(t,x)\cong_{2\mathcal{M}+4} {-}e^{{-}\sqrt{2}d(t)}U^{(3)}\left(H_{0,1}\left(w_{\mathcal{M}}(t,x)\right)\right)\mathcal{G}\left(w_{\mathcal{M}}(t,x)\right)L_{1}\left(\Gamma(t,\cdot)\right)\left(w_{\mathcal{M}}(t,x)\right).
\end{equation*}
\par Consequently, we obtain that
\begin{equation}\label{seventh}
  \norm{ \frac{\partial^{l}}{\partial t^{l}}\mathcal{L}_{\mathcal{M}+1,2}(t,x)}_{H^{s}_{x}}\lesssim_{s,l} v^{2\mathcal{M}+2+l}\left(\vert t\vert v+\ln{\left(\frac{1}{v}\right)}\right)^{n_{\mathcal{M}}}e^{{-}2\sqrt{2}\vert t\vert v},
\end{equation}
and
\begin{align*}
    \mathcal{L}_{\mathcal{M}+1,2}(t,x)\cong_{2\mathcal{M}+4} {-}e^{{-}\sqrt{2}d(t)}U^{(3)}\left(H_{0,1}\left(w_{\mathcal{M}}(t,x)\right)\right)\mathcal{G}\left(w_{\mathcal{M}}(t,x)\right)L_{1}\left(\Gamma(t,\cdot)\right)\left(w_{\mathcal{M}}(t,x)\right)\\
    {+}e^{{-}\sqrt{2}d(t)}L_{1}\left(\Gamma(t,\cdot)\right)\left(w_{\mathcal{M}}(t,x)\right)U^{(3)}\left(H_{0,1}\left(w_{\mathcal{M}}(t,x)\right)\right) e^{{-}\sqrt{2}w_{\mathcal{M}}(t,x)}.
\end{align*}
\textbf{Step 8.}(Estimate of $\Lambda(\phi_{\mathcal{M}+1,v}).$) From the equation \eqref{Lambdaphi0,v} and the conclusions obtained in all the steps before, we deduce
\begin{multline*}
\Lambda(\varphi_{\mathcal{M}+1,v})(t,x)- \Lambda(\phi_{\mathcal{M}+1,v,0})(t,x)\\ 
\begin{aligned}
\cong_{2\mathcal{M}+4} &{-}\Gamma\left(t,w_{\mathcal{M}}(t,x)\right)+\Gamma\left(t,w_{\mathcal{M}}(t,{-}x)\right)\\&{+}\frac{\dot d(t)^{2}}{4-\dot d(t)^{2}}\frac{\partial^{2}}{\partial y^{2}}\Big\vert_{y=w_{\mathcal{M}}(t,x)}L_{1}\left(\Gamma(t,\cdot)\right)(y)-\frac{\dot d(t)^{2}}{4-\dot d(t)^{2}}\frac{\partial^{2}}{\partial y^{2}}\Big\vert_{y=w_{\mathcal{M}}(t,{-}x)}L_{1}\left(\Gamma(t,\cdot)\right)(y)\\ &{-}\frac{\partial^{2}}{\partial t^{2}}L_{1}\left(\Gamma(t,\cdot)\right)\left(w_{\mathcal{M}}(t,x)\right)+\frac{\partial^{2}}{\partial t^{2}}L_{1}\left(\Gamma(t,\cdot)\right)\left(w_{\mathcal{M}}(t,{-}x)\right)\\ &{+}e^{{-}\sqrt{2}d(t)}L_{1}\left(\Gamma(t,\cdot)\right)\left(w_{\mathcal{M}}(t,x)\right)U^{(3)}\left(H_{0,1}\left(w_{\mathcal{M}}(t,x)\right)\right) e^{{-}\sqrt{2}w_{\mathcal{M}}(t,x)}\\ &{-}e^{{-}\sqrt{2}d(t)}L_{1}\left(\Gamma(t,\cdot)\right)\left(w_{\mathcal{M}}(t,{-}x)\right)U^{(3)}\left(H_{0,1}\left(w_{\mathcal{M}}(t,{-}x)\right)\right)e^{{-}\sqrt{2}w_{\mathcal{M}}(t,x)}\\
   & {-}e^{{-}\sqrt{2}d(t)}U^{(3)}\left(H_{0,1}\left(w_{\mathcal{M}}(t,x)\right)\right)\mathcal{G}\left(w_{\mathcal{M}}(t,x)\right)L_{1}\left(\Gamma(t,\cdot)\right)\left(w_{\mathcal{M}}(t,x)\right)\\&{+}e^{{-}\sqrt{2}d(t)}U^{(3)}\left(H_{0,1}\left(w_{\mathcal{M}}(t,{-}x)\right)\right)\mathcal{G}\left(w_{\mathcal{M}}(t,{-}x)\right)L_{1}\left(\Gamma(t,\cdot)\right)\left(w_{\mathcal{M}}(t,{-}x)\right).
   \end{aligned}
\end{multline*}
 Furthermore, from \eqref{LambdahM1} and the estimates of $\mathcal{L}_{\mathcal{M}+1,j}$ for $3\leq j\leq 6,$ we deduce
\begin{align*}\label{ffff}
\Lambda(\phi_{\mathcal{M}+1,v,0})(t,x)\\ \cong_{2\mathcal{M}+4} & \Lambda(\varphi_{\mathcal{M},v})(t,x)+2\sqrt{2}r_{\mathcal{M}+1}(t) e^{{-}\sqrt{2}d(t)}\left[24 M^{w_{\mathcal{M}}}(t,x)-30N^{w_{\mathcal{M}}}(t,x)\right]\\&{+}\frac{r_{\mathcal{M}+1}(t)e^{{-}\sqrt{2}d(t)}}{\sqrt{1-\frac{\dot d(t)^{2}}{4}}}\left[24\left( M^{'}\right)^{w_{\mathcal{M}}}(t,x)-30 \left(N^{'}\right)^{w_{\mathcal{M}}}(t,x)\right]\\
&{+}r_{\mathcal{M}+1}(t)\frac{\partial}{\partial x}\left(\left[{-}\mathcal{G}^{(2)}+U^{''}\left(H_{0,1}\right)\mathcal{G}\right]^{w_{\mathcal{M}}}(t,x)\right)e^{{-}\sqrt{2}d(t)}\\
    &{+}\frac{\ddot r_{\mathcal{M}+1}(t)}{\sqrt{1-\frac{\dot d(t)^{2}}{4}}}\left[H^{'}_{0,1}\left(w_{\mathcal{M}}\left(t,x\right)\right)-H^{'}_{0,1}\left(w_{\mathcal{M}}\left(t,{-}x\right)\right)\right]\\ &{-}\frac{\dot r_{\mathcal{M}+1}(t)\dot d(t)}{1-\frac{\dot d(t)^{2}}{4}}\left[ H^{''}_{0,1}\left(w_{\mathcal{M}}\left(t,x\right)\right)-H^{''}_{0,1}\left(w_{\mathcal{M}}\left(t,{-}x\right)\right)\right],
\end{align*}
from which with Remark \eqref{Gequationn} we deduce that
\begin{align*}
\Lambda(\phi_{\mathcal{M}+1,v,0})(t,x)\\ \cong_{2\mathcal{M}+4} & \Lambda(\varphi_{\mathcal{M},v})(t,x)+2\sqrt{2}r_{\mathcal{M}+1}(t) e^{{-}\sqrt{2}d(t)}\left[24 M^{w_{\mathcal{M}}}(t,x)-30N^{w_{\mathcal{M}}}(t,x)\right]\\&{+}\frac{8\sqrt{2}r_{\mathcal{M}+1}(t)e^{{-}\sqrt{2}d(t)}-\dot r_{\mathcal{M}+1}(t)\dot d(t)}{\sqrt{1-\frac{\dot d(t)^{2}}{4}}}\left( H^{''}_{0,1}\right)^{w_{\mathcal{M}}}(t,x)\\&{+}\frac{\ddot r_{\mathcal{M}+1}(t)}{\sqrt{1-\frac{\dot d(t)^{2}}{4}}}\left(H^{'}_{0,1}\right)^{w_{\mathcal{M}}}(t,x),
\end{align*}
We also have, from Lemmas \ref{represent1}, \ref{projectionl}, for all $l\in\mathbb{N}\cup\{0\}$ and any $s\geq 0$ that if $0<v\ll 1,$ then
\begin{multline*}
    \norm{\frac{\partial^{l}}{\partial t^{l}}\left[\Lambda\left(\varphi_{\mathcal{M},v}\right)(t,x)- \Gamma\left(t,w_{\mathcal{M}}(t,x)\right)+\Gamma\left(t,w_{\mathcal{M}}(t,{-}x)\right)\right]}_{H^{s}_{x}}\\ \lesssim_{s,l} v^{2\mathcal{M}+2+l}\left(\vert t\vert v+\ln{\left(\frac{1}{v}\right)}\right)^{n_{\mathcal{M}}}e^{{-}2\sqrt{2}\vert t\vert v}.
\end{multline*}
Therefore, from the estimates above, inequalities \eqref{seventh}, \eqref{rmd},
Lemmas \ref{dlemma}, \ref{represent1} and Remark \ref{perturbt}, we obtain that the estimate \eqref{geraldecay} of Theorem \ref{strongerr} is true for $k=\mathcal{M}+1.$ 
\par Furthermore, Lemma \ref{porrataylor} and \eqref{rmd} imply that if $h\in S^{+}_{\infty},$ then we have for all $l\in\mathbb{N}\cup\{0\}$
the following inequality
\begin{multline*}
     \left\vert\frac{d^{l}}{d t^{l}}\left\langle h\left(w_{\mathcal{M}}(t,x+r_{\mathcal{M}+1}(t))\right)-h\left(w_{\mathcal{M}}(t,x)\right),  H^{'}_{0,1}\left(w_{\mathcal{M}}(t,x+r_{\mathcal{M}+1}(t))\right) \right\rangle\right\vert \\ \lesssim_{l}
     v^{2\mathcal{M}+l}\left[\ln{\frac{1}{v}}\right]^{n_{\mathcal{M}+1}}.
\end{multline*}
Therefore, the estimates above, Remark \ref{elint}, the ordinary differential equation \eqref{odek} satisfied by $r_{\mathcal{M}+1}$ and estimate \eqref{rmd} of the derivatives of $r_{\mathcal{M}+1}$ imply \eqref{orthodecay} for $k=\mathcal{M}+1.$ In conclusion, by induction on $k,$ we deduce that Theorem \ref{strongerr} is true for all $k\in\mathbb{N}_{\geq 2}.$
\end{proof}
\begin{remark}\label{limisup}
From Theorem \ref{strongerr}, we have that if $v\ll1,$ then
$\lim_{t\to+\infty} \sum_{k=1}^{\mathcal{M}}r_{k}(v,t) $ exists.
\end{remark}
\subsection{Proof of Theorem \ref{approximated theorem}}
\begin{proof}[Proof of Theorem \ref{approximated theorem}]
The Theorem \ref{strongerr} implies the existence, for any $k\in\mathbb{N}_{\geq 2},$ of a smooth function $\varphi_{k,v}(t,x)$ and a even function $r(t)\in L^{\infty}(\mathbb{R})$ such that if $v\ll1,$ then
\begin{gather*}
   \lim_{t\to \pm\infty} \norm{\varphi_{k,v}(t,x)-H_{0,1}\left(\frac{x\mp vt +r(t)}{\sqrt{1+v^{2}}}\right)-H_{-1,0}\left(\frac{x\pm vt -r(t)}{\sqrt{1+v^{2}}}\right)}_{H^{1}_{x}}=0,\\
   \lim_{t\to \pm\infty} \norm{\partial_{t}\varphi_{k,v}(t,x)\pm \frac{v}{\sqrt{1-v^{2}}} H^{'}_{0,1}\left(\frac{x\mp vt +r(t)}{\sqrt{1+v^{2}}}\right)\mp \frac{v}{\sqrt{1-v^{2}}} H^{'}_{-1,0}\left(\frac{x\pm vt -r(t)}{\sqrt{1+v^{2}}}\right)}_{L^{2}_{x}}\\=0,
\end{gather*}
and $\lim_{t\to+\infty} \vert r(t)\vert \lesssim v^{2}\ln{\left(\frac{1}{v}\right)}.$ In conclusion, from Lemma \ref{dlemma}, Remark \ref{limisup} and Theorem \ref{strongerr}, the function
\begin{equation*}
    \phi_{k}(v,t,x)=\varphi_{k,v}\left(t+\frac{\ln{\left(v^{2}\right)}-\ln{(8)}}{2\sqrt{2}v}+\lim_{s\to+\infty}\frac{r(s)}{v},x\right)
\end{equation*}
satisfies Theorem \ref{approximated theorem}.
\end{proof}

\section*{Acknowledgement(s)}

The author would like to thank his supervisors
Thomas Duyckaerts and Jacek Jendrej for providing helpful comments and
orientation, which were essential to conclude this paper. Finally, the author
is grateful to the math department LAGA of the University Sorbonne Paris
Nord and all the referees for providing remarks and suggestions in the writing of
this manuscript.

\section*{Funding}

The author acknowledges the support of the French State Program ”Investisse-
ment d’Avenir”, managed by the ”Angence Nationale de la Recherche” under
the grant ANR-18-EURE-0024. The author is a Ph.D. candidate at University
Sorbonne Paris Nord.
\bibliographystyle{plain}
\bibliography{interacttfpsample}
\bigskip

\appendix
\section{Complementary information}\label{auxi}
In the Appendix section, we demonstrate the complementary information necessary to understand the content of this paper.
\begin{lemma}\label{kernelbad}
The function $\xi:\mathbb{R}\to\mathbb{R}$ denoted by
$
    \xi(x)=\left(\frac{x}{4\sqrt{2}}-\frac{1}{16 e^{2\sqrt{2}x}}\right)
$
satisfies
\begin{equation*}
    \left[-\frac{d^{2}}{dx^{2}}+U^{''}(H_{0,1}(x))\right]\xi(x)H^{'}_{0,1}(x)=H^{'}_{0,1}(x).
\end{equation*}
\end{lemma}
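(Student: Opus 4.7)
The strategy is a standard reduction-of-order construction: since $\dot H_{0,1}$ lies in the kernel of $L := -\frac{d^{2}}{dx^{2}}+\ddot U(H_{0,1}(x))$, I look for a particular solution of $L(g)=\dot H_{0,1}$ of the form $g(x)=\eta(x)\dot H_{0,1}(x)$. A direct expansion using the product rule and the identity $L(\dot H_{0,1})=0$ yields the reduced equation
\begin{equation*}
L(\eta\dot H_{0,1})=-\ddot\eta\,\dot H_{0,1}-2\dot\eta\,\ddot H_{0,1},
\end{equation*}
so the problem becomes the first-order ODE $\dot u+2u\,\ddot H_{0,1}/\dot H_{0,1}=-1$ for $u=\dot\eta$, whose integrating factor is $\dot H_{0,1}^{2}$, giving $(u\dot H_{0,1}^{2})'=-\dot H_{0,1}^{2}$.

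Next, I would use the explicit formula $\dot H_{0,1}(x)=\sqrt{2}\,e^{\sqrt{2}x}(1+e^{2\sqrt{2}x})^{-3/2}$ to compute
\begin{equation*}
\int \dot H_{0,1}(x)^{2}\,dx=-\frac{1}{2\sqrt{2}(1+e^{2\sqrt{2}x})^{2}}+\text{const},
\end{equation*}
via the substitution $w=e^{2\sqrt{2}x}$. Choosing the constant of integration to be zero, one obtains $u(x)=\frac{1}{4\sqrt{2}}\bigl(1+e^{-2\sqrt{2}x}\bigr)$, and a single antiderivative gives $\eta(x)=\frac{x}{4\sqrt{2}}-\frac{e^{-2\sqrt{2}x}}{16}$, which is exactly the function $\xi$ from the statement. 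Alternatively, and perhaps more economically for the actual write-up, one simply verifies the identity by direct differentiation: compute $\dot\xi=\frac{1}{4\sqrt{2}}+\frac{1}{4\sqrt{2}}e^{-2\sqrt{2}x}$ and $\ddot\xi=-\frac{1}{2}e^{-2\sqrt{2}x}$, plug into the reduced formula above, and check that $-\ddot\xi\,\dot H_{0,1}-2\dot\xi\,\ddot H_{0,1}=\dot H_{0,1}$ using the Bogomolny-type identity $\ddot H_{0,1}=\dot U(H_{0,1})$ together with the explicit expression for $\dot H_{0,1}$.

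There is no substantial obstacle here: the computation is entirely mechanical once the ansatz $g=\eta\dot H_{0,1}$ is adopted. The only mild subtlety is keeping track of the constants of integration in the two integrations needed to recover $\eta$ from $u$, and noticing that adding any constant to $\eta$ just shifts the solution by a multiple of $\dot H_{0,1}$, which lies in $\ker L$ and hence does not affect the identity $L(\eta\dot H_{0,1})=\dot H_{0,1}$; one particular convenient choice reproduces $\xi$ on the nose.
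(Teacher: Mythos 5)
Your proposal is correct and takes essentially the same approach as the paper: the paper's proof likewise exploits that $\dot H_{0,1}\in\ker\left(-\frac{d^{2}}{dx^{2}}+\ddot U(H_{0,1})\right)$ and verifies exactly the reduced first-order identity $-\frac{d}{dx}\bigl[\dot \xi(x)\dot H_{0,1}(x)^{2}\bigr]=\dot H_{0,1}(x)^{2}$, which is the same integrating-factor relation $(u\dot H_{0,1}^{2})'=-\dot H_{0,1}^{2}$ at the heart of your argument. Your constructive integration recovering $\xi$ from scratch is a nice extra, but the ``alternative'' direct verification you describe is precisely the paper's proof.
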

\begin{proof}[Proof of Lemma \ref{kernelbad}.]
Clearly, we have that $\xi^{'}(x)=\frac{1}{4\sqrt{2}}+\frac{1}{4\sqrt{2}e^{2\sqrt{2}x}},$ so using identity $H^{'}_{0,1}(x)=\sqrt{2}e^{\sqrt{2}x}\left(1+e^{2\sqrt{2}x}\right)^{{-}\frac{3}{2}},$ we obtain that
\begin{equation*}
    {-}\frac{d}{dx}\left[  \xi^{'}(x)H^{'}_{0,1}(x)^{2}\right]=H^{'}_{0,1}(x)^{2}.
\end{equation*}
In conclusion, since  $0<H^{'}_{0,1}$ and $H^{'}_{0,1}\in\ker \left({-}\frac{d^{2}}{dx^{2}}+ U^{''}(H_{0,1})\right),$ we have 
$
    \left[{-}\frac{d^{2}}{dx^{2}}+U^{''}(H_{0,1})\right]\xi(x)H^{'}_{0,1}(x)=H^{'}_{0,1}(x).
$
\end{proof}
\begin{remark}\label{formulaG}
From the identity
\begin{equation*}
    U^{''}(H_{0,1}(x))=2-24H_{0,1}(x)^{2}+30H_{0,1}(x)^{4},
\end{equation*}
we deduce that
\begin{equation*}
    \left[{-}\frac{d^{2}}{dx^{2}}+U^{''}(H_{0,1}(x))\right]e^{-\sqrt{2}x}=\left(30H_{0,1}(x)^{4}-24H_{0,1}(x)^{2}\right)e^{{-}\sqrt{2}x}.
\end{equation*}
In conclusion, Lemma \ref{kernelbad} implies that
\begin{multline*}
     \left[{-}\frac{d^{2}}{dx^{2}}+U^{''}(H_{0,1}(x))\right]\left(e^{{-}\sqrt{2}x}+8\sqrt{2}\xi(x)H^{'}_{0,1}(x)\right)=\left(30H_{0,1}(x)^{4}-24H_{0,1}(x)^{2}\right)e^{{-}\sqrt{2}x}\\{+}8\sqrt{2}H^{'}_{0,1}(x),
\end{multline*}
so,
\begin{equation*}
    -\frac{d^{2}}{dx^{2}}\mathcal{G}(x)+U^{''}(H_{0,1}(x))\mathcal{G}(x)=\left(30H_{0,1}(x)^{4}-24H_{0,1}(x)^{2}\right)e^{{-}\sqrt{2}x}+8\sqrt{2}H^{'}_{0,1}(x),
\end{equation*}
for all $x\in\mathbb{R}.$
\end{remark}
\begin{lemma}\label{schwartzinver}
In notation of Lemma \ref{firstinvert}, if $g(x)\in \mathscr{S}(\mathbb{R})$ and $\left\langle g(x),\,H^{'}_{0,1}(x)\right\rangle_{L^{2}_{x}(\mathbb{R})}=0,$ then we have that $L_{1}(g)(x) \in \mathscr{S}(\mathbb{R}).$
\end{lemma}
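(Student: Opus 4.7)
My plan is to combine elliptic bootstrapping for smoothness with an explicit variation-of-parameters representation for rapid decay, relying crucially on the orthogonality hypothesis to kill the exponentially growing homogeneous solution.

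First I would establish that $f := L_{1}(g) \in C^{\infty}(\mathbb{R})$. We already have $f \in H^{1}(\mathbb{R})$ from Lemma \ref{firstinvert}, and $f$ satisfies the ODE
\begin{equation*}
    \ddot f(x) = \ddot U(H_{0,1}(x)) f(x) - g(x).
\end{equation*}
Since $\ddot U(H_{0,1}(x))$ is smooth with all derivatives bounded (indeed converging to $2$ at $-\infty$ and to $8$ at $+\infty$) and $g \in \mathscr{S}(\mathbb{R})\subset C^{\infty}\cap H^{s}$ for every $s$, the identity above immediately gives $\ddot f \in H^{1}$, hence $f \in H^{3}$. A straightforward iteration then yields $f \in H^{s}$ for every $s\geq 0$, so $f \in C^{\infty}(\mathbb{R})$.

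Next, for decay, I would use the two explicit solutions of $Lu=0$ already identified in the proof of Lemma \ref{secondinvert}: $\phi_{1}(x) = \dot H_{0,1}(x)$, which is Schwartz and decays like $e^{-\sqrt{2}|x|\min(1,2)}$ with the correct direction-dependent rates, and $\phi_{2}(x) = c(x)\dot H_{0,1}(x)$ with $c(x)$ as in \eqref{c(x)}, which grows like $e^{-\sqrt{2}x}/(-4)$ as $x\to-\infty$ and like $e^{2\sqrt{2}x}/(8\sqrt{2})$ as $x\to+\infty$. A direct computation with $u = e^{2\sqrt{2}x}$ gives the constant Wronskian $W(\phi_{1},\phi_{2})=c'(x)\phi_{1}(x)^{2}=1$. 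Variation of parameters produces the explicit solution
\begin{equation*}
    f_{0}(x) = -\phi_{1}(x)\int_{0}^{x}\phi_{2}(s)g(s)\,ds + \phi_{2}(x)\Big(\int_{0}^{x}\phi_{1}(s)g(s)\,ds - \int_{0}^{+\infty}\phi_{1}(s)g(s)\,ds\Big)
\end{equation*}
of $Lf_{0}=g$. The coefficient of the growing solution $\phi_{2}$ is then $-\int_{x}^{+\infty}\phi_{1}(s)g(s)\,ds$; and here is where the hypothesis $\langle g,\dot H_{0,1}\rangle=0$ enters decisively, since it allows us to also rewrite this as $\int_{-\infty}^{x}\phi_{1}(s)g(s)\,ds$.

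With these two representations of the $\phi_{2}$-coefficient, decay is straightforward. As $x\to+\infty$, using $|\phi_{1}(s)|\lesssim e^{-2\sqrt{2}s}$ and $|g(s)|\lesssim_{N}(1+|s|)^{-N}$ for all $N$, the tail integral $\int_{x}^{+\infty}\phi_{1}g$ is bounded by $C_{N}e^{-2\sqrt{2}x}(1+x)^{-N}$, which against $\phi_{2}(x)\sim e^{2\sqrt{2}x}$ leaves $(1+x)^{-N}$; as $x\to-\infty$, the orthogonality-rewritten form gives $|\int_{-\infty}^{x}\phi_{1}g|\lesssim_{N}e^{\sqrt{2}x}(1+|x|)^{-N}$, which against $\phi_{2}(x)\sim e^{-\sqrt{2}x}$ again leaves $(1+|x|)^{-N}$. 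The $\phi_{1}$-coefficient is handled identically: although $\int_{0}^{x}\phi_{2}g$ grows exponentially (dominated by $e^{-\sqrt{2}x}$ or $e^{2\sqrt{2}x}$ on the respective sides), the multiplication by $\phi_{1}(x)$ cancels this growth exactly and leaves polynomial-order quantities that, combined with the Schwartz decay of $g$, are $O_{N}((1+|x|)^{-N})$. Differentiating the integral representation and using the same estimates gives the analogous decay for every derivative of $f_{0}$, so $f_{0}\in\mathscr{S}(\mathbb{R})$. Finally, since $Lf_{0}=g=LL_{1}(g)$, the difference $L_{1}(g)-f_{0}$ lies in $\ker L \cap L^{2} = \mathbb{R}\dot H_{0,1}$ by \eqref{verygood}, and as $\dot H_{0,1}\in\mathscr{S}(\mathbb{R})$ we conclude $L_{1}(g)\in\mathscr{S}(\mathbb{R})$. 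The main technical point to be careful about is precisely the cancellation of the exponentially growing contribution of $\phi_{2}$; without the orthogonality hypothesis, the $\phi_{2}$-coefficient would not decay as $x\to-\infty$ and the argument would fail.
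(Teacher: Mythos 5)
Your argument is correct in substance, but it takes a genuinely different route from the paper's for the decay step, and it contains one sign slip you should fix. The smoothness bootstrap is identical to the paper's Step 1. For decay, however, the paper never touches the explicit kernel: it localizes with a cutoff $\chi$ at each spatial end, uses that $\ddot U(H_{0,1}(x))$ converges exponentially to the constants $8$ and $2$, and inverts the constant-coefficient operators $-\frac{d^{2}}{dx^{2}}+8$ and $-\frac{d^{2}}{dx^{2}}+2$ (which preserve $\mathscr{S}(\mathbb{R})$) against the Schwartz right-hand side $\chi g+\left[8-\ddot U(H_{0,1})\right]\chi f-2\dot \chi \dot f-\ddot \chi f$; in that proof orthogonality enters only through the existence of the $L^{2}$ solution $f=L_{1}(g)$, not in the decay argument itself. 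You instead construct a rapidly decaying solution explicitly by variation of parameters from the kernel elements $\phi_{1}=\dot H_{0,1}$ and $\phi_{2}=c\,\dot H_{0,1}$ of \eqref{c(x)}, and you use orthogonality to rewrite the $\phi_{2}$-coefficient as a tail integral at whichever end is needed to cancel the exponential growth of $\phi_{2}$; your Wronskian computation $c'(x)\dot H_{0,1}(x)^{2}=1$ is exact, and the two-sided tail estimates are correct. Your approach buys explicit pointwise control of $L_{1}(g)$ and makes visible exactly where the Fredholm condition is used, while the paper's is softer and works for any potential converging exponentially to positive constants, with no need to know the homogeneous solutions.

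The slip: with your normalization $W(\phi_{1},\phi_{2})=1$, variation of parameters for $-f''+\ddot U(H_{0,1})f=g$ gives $u_{1}'=\phi_{2}g$ and $u_{2}'=-\phi_{1}g$, so a particular solution is $\phi_{1}\int_{0}^{x}\phi_{2}g\,ds-\phi_{2}\int_{0}^{x}\phi_{1}g\,ds$. Your displayed $f_{0}$ is the negative of this plus a multiple of $\phi_{2}$, hence satisfies $Lf_{0}=-g$, not $Lf_{0}=g$. Replacing $f_{0}$ by $-f_{0}$ (equivalently, swapping the two signs in your formula) repairs this; none of the decay estimates, nor the final step identifying $L_{1}(g)-f_{0}\in\ker L\cap L^{2}_{x}(\mathbb{R})=\mathbb{R}\dot H_{0,1}$ via \eqref{verygood}, are affected.
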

\begin{proof}[Proof of Lemma \ref{schwartzinver}]
\textbf{Step 1.}($f(x) \in \cap_{k\geq 1} H^{k}_{x}(\mathbb{R}).$)
Following Lemma \ref{firstinvert}, we have the existence of the unique function $f=L_{1}(g)\in H^{1}_{x}(\mathbb{R})$ such that $\left\langle f(x),\,H^{'}_{0,1}(x)\right\rangle=0$ and 
\begin{equation}\label{eqpp}
-f^{''}(x)+U^{''}(H_{0,1}(x))f(x)=g(x).    
\end{equation}
The identity \eqref{eqpp} above implies that $f\in H^{2}_{x}(\mathbb{R}).$ Moreover, since $H_{0,1} \in L^{\infty}_{x}(\mathbb{R})$ and 
\begin{equation*}
    H^{'}_{0,1}(x)=\sqrt{2}\frac{e^{\sqrt{2}x}}{\left(1+e^{2\sqrt{2}x}\right)^{\frac{3}{2}}} \in \mathscr{S}(\mathbb{R}),
\end{equation*}
we obtain that $\frac{d^{l}}{dx^{l}}U^{''}(H_{0,1}(x)) \in \mathscr{S}(\mathbb{R})$ for all natural $l\geq 1.$
So, we obtain that if $f(x) \in H^{k}_{x}(\mathbb{R})$ for $k\geq 1,$ then, since $H^{k}_{x}(\mathbb{R})$ is an algebra for $k\geq 1,\, g(x)-U^{''}(H_{0,1}(x))f(x) \in H^{k}(\mathbb{R}).$ 
\par Then, from equation \eqref{eqpp}, if $f \in H^{k}_{x}(\mathbb{R}),$ then $f^{''}(x) \in H^{k}_{x}(\mathbb{R}),$ which would imply that $f^{(k+2)}(x)$ is in $L^{2}_{x}(\mathbb{R}),$ and by elementary Fourier analysis theory or interpolation theory we would verify obtain $f^{(l)}(x)\in L^{2}_{x}(\mathbb{R})$ for any natural $l$ satisfying $0\leq l \leq k+2.$ 
In conclusion, by a standard argument of induction, we obtain that, for any natural $k,\,f(x) \in H^{k}_{x}(\mathbb{R}),$ and as a consequence $f(x) \in C^{\infty}(\mathbb{R}).$
\\ \textbf{Step 2.}($f(x) \in \mathscr{S}(\mathbb{R}).$)
 Since
$
    U^{''}(\phi)=2-24\phi^{2}+30\phi^{4},
$
we have $\lim_{x\to+\infty} U^{''}(H_{0,1}(x))=8$ and $\lim_{x\to-\infty}U^{''}(H_{0,1}(x))=2.$ From equation \eqref{eqpp}, we have the following identities
\begin{align}\label{+infty}
    -f^{''}(x)+2f(x)=g(x)+\left[2-U^{''}(H_{0,1}(x))\right]f(x), \\ \label{-infty}
    -f^{''}(x)+8f(x)=g(x)+\left[8-U^{''}(H_{0,1}(x))\right]f(x). 
\end{align}
Next, we consider a smooth cut function $\chi:\mathbb{R}\to\mathbb{R}$ satisfying $0\leq \chi\leq 1$ and
\begin{equation*}
    \chi(x)=\begin{cases}
       0\text{, if $x\leq 4,$}\\
       1\text{, if $x\geq 5.$}
    \end{cases}
\end{equation*}
 Identity \eqref{-infty} implies that
$h(x)=\chi(x)f(x)$ satisfies
\begin{equation}\label{doido1}
    -h^{''}(x)+8h(x)=\chi(x)g(x)+\left[8-U^{''}(H_{0,1}(x))\right]\chi(x)f(x)-2\dot\chi(x)\dot f(x)-\ddot\chi(x)f(x).
\end{equation}
 From the definition of $\chi,$
$\chi^{'}$ is a smooth function with compact support, so both functions $\chi^{'},\,\chi^{''}\in \mathscr{S}(\mathbb{R}).$ In conclusion, since $f\in C^{\infty}(\mathbb{R})$ from first step, we deduce that $\chi^{'}\dot f,\,\chi^{''}f \in \mathscr{S}(\mathbb{R}).$
 Also, using estimate \eqref{le2} for $k=1$
\begin{equation*}
    \left\vert H^{'}_{0,1}(x)\right\vert\lesssim\min\left(e^{\sqrt{2}x},e^{-2\sqrt{2}x}\right),
\end{equation*}
we conclude from the Fundamental theorem of calculus the following estimate
\begin{equation*}
    \left\vert8-U^{''}(H_{0,1}(x))\right\vert\lesssim e^{-2\sqrt{2}x} \text{ for all $x>1.$}  
\end{equation*}
So, $f$ being in $ C^{\infty}\left(\mathbb{R}\right),$ the definition of $\chi$ and estimate \eqref{le2} imply 
\begin{equation*}
    \left[8-U^{''}(H_{0,1}(x))\right]\chi(x)f(x) \in \mathscr{S}(\mathbb{R}).
\end{equation*}
\par In conclusion, since $f(x)\chi(x)\in H^{k}_{x}\left(\mathbb{R}\right)$ for any $k\geq 0,$ identity  \eqref{doido1} implies that $\chi(x)f(x)\in\mathscr{S}(\mathbb{R}).$ By analogy, using \eqref{+infty} and the function $h_{1}=(1-\chi)f,$ we conclude that $(1-\chi)f\in\mathscr{S}(\mathbb{R}),$ so $f\in\mathscr{S}(\mathbb{R}).$
\end{proof}

\end{document}